\theoremstyle{plain}
\newtheorem{thm}{Theorem}[section]
\newtheorem{cor}[thm]{Corollary}
\newtheorem{lem}[thm]{Lemma}
\newtheorem{prop}[thm]{Proposition}
\newtheorem{conj}[thm]{Conjecture}
\def\@rst #1 #2other{#1}
\newcommand\MR[1]{\relax\ifhmode\unskip\spacefactor3000 \space\fi
  \MRhref{\expandafter\@rst #1 other}{#1}}
\newcommand{\MRhref}[2]{\href{http://www.ams.org/mathscinet-getitem?mr=#1}{MR#2}}
\theoremstyle{definition}
\newtheorem{defn}[thm]{Definition}
\newtheorem{remark}[thm]{Remark}
\newtheorem{ques}[thm]{Question}
\numberwithin{equation}{section}
\newcommand{\dsb}{\begin{adjustwidth}{2.5em}{0pt}
\begin{footnotesize}}
\newcommand{\dse}{\end{footnotesize}
\end{adjustwidth}}
\newcommand{\ssb}{\begin{adjustwidth}{2.5em}{0pt}}
\newcommand{\sse}{\end{adjustwidth}}
\newcommand{\aryb}{\begin{eqnarray*}}
\newcommand{\arye}{\end{eqnarray*}}
\def\alb#1\ale{\begin{align*}#1\end{align*}}
\def\allb#1\alle{\begin{align}#1\end{align}}
\newcommand{\eqb}{\begin{equation}}
\newcommand{\eqe}{\end{equation}}
\newcommand{\eqbn}{\begin{equation*}}
\newcommand{\eqen}{\end{equation*}}
\newcommand{\BB}{\mathbbm}
\newcommand{\ol}{\overline}
\newcommand{\op}{\operatorname}
\newcommand{\frk}{\mathfrak}
\newcommand{\eqD}{\overset{d}{=}}
\newcommand{\ep}{\varepsilon}
\newcommand{\rta}{\rightarrow}
\newcommand{\wt}{\widetilde}
\newcommand{\wh}{\widehat}
\newcommand{\mcl}{\mathcal}
\newcommand{\bdy}{\partial}
\newcommand{\ms}{{\mathfrak S}}
\let\originalleft\left
\let\originalright\right
\renewcommand{\left}{\mathopen{}\mathclose\bgroup\originalleft}
\renewcommand{\right}{\aftergroup\egroup\originalright}
\title{On the geometry of uniform meandric systems}
 \date{ }
 \author{
\begin{tabular}{c} Jacopo Borga\\[-5pt]\small Stanford University \end{tabular}
\begin{tabular}{c} Ewain Gwynne\\[-5pt]\small University of Chicago \end{tabular} 
\begin{tabular}{c} Minjae Park\\[-5pt]\small University of Chicago \end{tabular} 
}
\begin{document}

\maketitle

\begin{abstract}
A meandric system of size $n$ is the set of loops formed from two arc diagrams (non-crossing perfect matchings) on $\{1,\dots,2n\}$, one drawn above the real line and the other below the real line.   
A uniform random meandric system can be viewed as a random planar map decorated by a Hamiltonian path (corresponding to the real line) and a collection of loops (formed by the arcs). 
Based on physics heuristics and numerical evidence, we conjecture that the scaling limit of this decorated random planar map is given by an independent triple consisting of a Liouville quantum gravity (LQG) sphere with parameter $\gamma=\sqrt 2$, a Schramm-Loewner evolution (SLE) curve with parameter $\kappa=8$, and a conformal loop ensemble (CLE) with parameter $\kappa=6$.

We prove several rigorous results which are consistent with this conjecture. 
In particular, a uniform meandric system admits loops of nearly macroscopic graph-distance diameter with high probability.  
Furthermore, a.s., the uniform infinite meandric system with boundary has no infinite path. 
But, a.s., its boundary-modified version has a unique infinite path whose scaling limit is conjectured to be chordal SLE$_6$.  
\end{abstract}

 

\tableofcontents

\bigskip
\noindent\textbf{Acknowledgments.} We thank two anonymous referees for helpful comments on an earlier version of this article. We thank Ahmed Bou-Rabee, Valentin Féray, Gady Kozma, Ron Peled, and Xin Sun for helpful discussions. E.G.\ was partially supported by a Clay research fellowship. M.P.\ was partially supported by an NSF grant DMS 2153742.

\section{Introduction}
\label{sec-intro}

\subsection{Meandric systems}
\label{sec-system}
Throughout this paper, we identify $\BB R$ (resp.\ $\BB Z$) with the set $\BB R\times \{0\} $ (resp.\ $\BB Z\times\{0\}$).

\begin{defn} \label{def-system}
A \textbf{meandric system} of size $n\in\BB N$ is a configuration $\ms_n$ consisting of a finite collection of simple loops in $\BB R^2$ with the following properties:
\begin{itemize}
\item No two loops of $\ms_n$ intersect each other.
\item Each loop of $\ms_n$ intersects the real line $\BB R$ at least twice, and does not intersect $\BB R$ without crossing it. 
\item The total number of intersection points between the loops of $\ms_n$ and $\BB R$ is equal to $2n$. 
\end{itemize}
We view such configurations as being defined modulo orientation-preserving homeomorphisms from $\BB R^2$ to $\BB R^2$ which take $\BB R$ to $\BB R$.
\end{defn}

\begin{figure}[ht!]
\begin{center}
\includegraphics[width=0.75\textwidth]{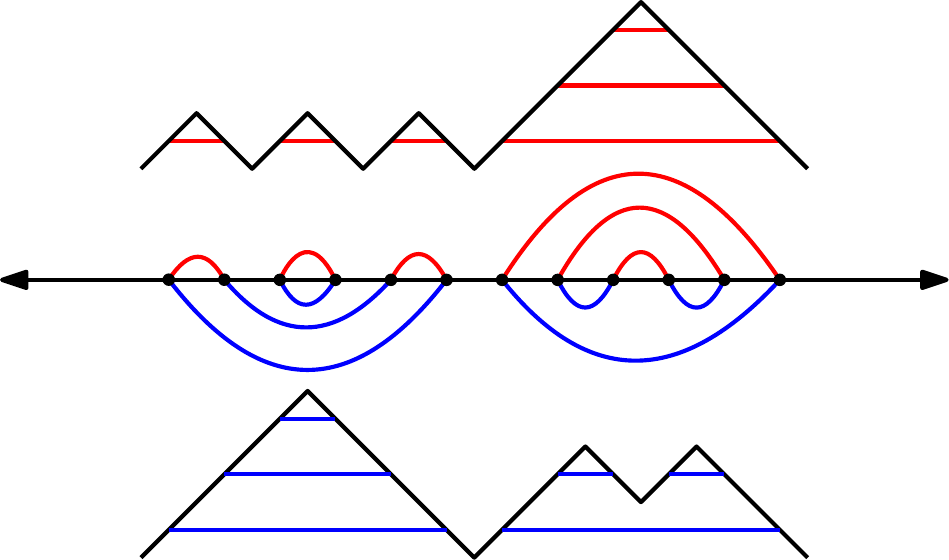}  
\caption{\label{fig-system} A meandric system of size $n=6$. The two corresponding arc diagrams are shown in red and blue. The graphs of the simple walk excursions corresponding to these arc diagrams are shown in black. The walk excursions are translated (in time) by a $1/2$-factor to the right so that each step of the walks is centered with the corresponding integer point in the real line. We made this choice (here and in all our graphical representations of meandric systems) to highlight better the correspondence between walks and arcs. See~\eqref{eqn-arc-walk} and the surrounding text for further explanation.
}
\end{center}
\vspace{-3ex}
\end{figure}

See Figure~\ref{fig-system} for an illustration of a meandric system. 
If $\ms_n$ is a meandric system of size $n$, then by applying a homeomorphism, we can always arrange the set of intersection points of arcs in $\ms_n$ with $ \BB R$ so that it is equal to $[1,2n]\cap\BB Z$. We will make this assumption throughout the paper. 
There have been several recent works in probability and combinatorics which studied meandric systems (see, e.g.,~\cite{ckst-noodle,fn-meander-system,gnp-meander-system,kargin-meander-system,fv-noodle}). 
These works were in part motivated by the study of decorated planar maps and by the connection between meandric systems, non-crossing partitions, and meanders, as we discuss just below. 
Additional motivations come from the fact that random meandric systems are equivalent to a certain percolation-type model on a random planar map (Section~\ref{sec-perc}) and to a version of the fully packed $O(0 \times 1)$ loop model on a random planar map (Section~\ref{sec-physics}). 

A \textbf{meander} of size $n$ is a meandric system of size $n$ with a single loop. 
Each of the loops in a meandric system can be viewed as a meander by forgetting the other loops. However, a typical loop in a uniformly sampled meandric system of size $n$ is not the same as a uniformly sampled meander~\cite[Section 4]{fv-noodle}. 
The study of meanders dates back to at least the work of Poincar\'e in 1912~\cite{poincare-meander} and is connected to a huge number of different areas of math and physics. 
See~\cite{lacroix-meander-survey, zvonkin-meander-survey} for surveys of results on meanders. 

Most features of meanders are notoriously difficult to analyze mathematically. For example, determining the $n\rta\infty$ asymptotics of the total number of meanders of size $n$ is a long-standing open problem (but see~\cite{dgg-meander} for a conjecture). 

Meandric systems are significantly more tractable than meanders. The main reason for this is that meandric systems are in bijection with pairs of arc diagrams (non-crossing perfect matchings).
An \textbf{arc diagram} of size $n\in\BB N$ is a collection of arcs in the upper half-plane $\BB R \times [0,\infty)$, each of which joint two points in $[1,2n]\cap\BB Z$, subject to the condition that no two of the arcs cross. If $\ms_n$ is a meandric system of size $n$, then the segments of loops in $\ms_n$ above (resp.\ below) the real line form an arc diagram. Conversely, any two arc diagrams of size $n$ give rise to a meandric system by drawing one above and one below the real line, and considering the set of loops that they form.  
It is well known that arc diagrams of size $n$ are counted by the Catalan number $\op{Cat}_n = \frac{1}{n+1} \binom{2n}{n}$. Consequently, the number of meandric systems of size $n$ is $\op{Cat}_n^2$.

Furthermore, arc diagrams of size $n$ are in bijection with $2n$-step simple walks on $\BB Z_{\ge 0}$ from $0$ to $0$, often called (non-negative) simple walk excursions or Dyck paths.
If $\mcl X : [0,2n]\cap\BB Z \rta \BB Z$ is a $2n$-step simple walk excursion, then the corresponding arc diagram is defined as follows.
Two points $x_1,x_2\in[1,2n]\cap\BB Z$ with $x_1 < x_2$ are joined by an arc if and only if 
\eqb \label{eqn-arc-walk}
 \mcl X_{x_1-1} = \mcl X_{x_2} < \min_{y \in [x_1 ,x_2-1]\cap\BB Z} \mcl X_y ;
\eqe 
see Figure~\ref{fig-system}.
So, one can sample a uniform random meandric system of size $n$ by sampling two independent simple random walk excursions with $2n$ steps, drawing one of the two corresponding arc diagrams above the real line and the other below the real line, then looking at the loops formed by the union of the two arc diagrams. 

Let $\frk S_n$ be a uniform meandric system of size $n$. There are a number of natural questions about the large-scale geometry of $\frk S_n$, e.g., the following:  
\begin{enumerate}
\item \label{item-loop-count} How many loops does $\frk S_n$ typically have?  
\item \label{item-largest} What is the size of the largest loop of $\frk S_n$, in terms of the number of intersection points with $\BB R$? What about for other notions of size, e.g., graph-distance diameter in the 4-regular graph whose vertices are the intersection points of loops with $\BB R$, and whose edges are the segments of loops and the segments of $\BB R$ between these vertices?
\item \label{item-macro} Is there typically a single loop of $\frk S_n$ which is much larger (in some sense) than the other loops, or are there multiple large loops of comparable size?
\item \label{item-limit} Is there some sort of scaling limit of $\frk S_n$ as $n\rta\infty$? 
\end{enumerate}
Due to the bijection between meandric systems and pairs of $2n$-step simple walk excursions, questions of the above type, in principle, can be reduced to questions about simple random walks on $\BB Z$. However, the encoding of the meandric system loops in terms of the pair of walk excursions is complicated, so the answers to the above questions are far from trivial.  

Question~\ref{item-loop-count} was largely solved by F\'eray and Th\'evenin~\cite{fv-noodle}, who showed that there is a constant $c > 0$ (expressed in terms of a sum over meanders) such that the number of loops in $\frk S_n$ is asymptotic to $c n$ as $n\rta\infty$.  
Regarding Question~\ref{item-largest}, Kargin~\cite{kargin-meander-system} showed that the number of intersection points with $\BB R$ of the largest loop is at least constant times $\log n$, and presented some numerical simulations which suggested that this quantity in fact behaves like $n^\alpha$ for $\alpha\approx 4/5$. 
Question~\ref{item-macro} is closely related to the question of whether there exists a so-called ``infinite noodle'', i.e., an infinite path in the infinite-volume limit of a uniform meandric system. It was shown in~\cite{ckst-noodle} that there is at most one such path. The existence is still open, but it is conjectured in~\cite{ckst-noodle} that such a path does not exist. See Section~\ref{sec-infinite} for further discussion.  

In this paper, we present conjectures for the answers to each of Questions~\ref{item-largest}, \ref{item-macro}, and~\ref{item-limit} (see Conjectures~\ref{conj-system} and~\ref{conj-largest}). In particular, if $k\in\BB N$ is fixed, then as $n\rta\infty$ the number of intersection points with $\BB R$ of the $k$th largest loop should grow like $n^\alpha$ where $\alpha = \frac12 (3-\sqrt 2) \approx 0.7929$. Moreover, the scaling limit of $\frk S_n$ should be described by a $\sqrt 2$-Liouville quantum gravity sphere, a Schramm-Loewner evolution curve with parameter $\kappa=8$, and a conformal loop ensemble with parameter $\kappa=6$.

We also prove several rigorous results in the direction of the above questions. 
We show that a uniform meandric system admits loops of nearly macroscopic graph-distance diameter (Theorem~\ref{thm-macro}). 
This leads to an explicit power-law lower bound for the size of the largest loop in a uniform meandric system (Corollary~\ref{cor-largest}).
We also construct the \emph{uniform infinite half-plane meandric system (UIHPMS)} and show that it does not admit any infinite paths of arcs (Theorem~\ref{thm-infinite-path-half}).
But, a minor modification of the UIHPMS admits a unique infinite path of arcs which should converge to SLE$_6$ (Proposition~\ref{prop-alternating}). 

Most of our proofs use only elementary discrete arguments, but we need to use the theory of Liouville quantum gravity at one step in the proof, namely in Section \ref{sec-lqg}.

\subsection{Conjectures for scaling limit and largest loop exponent}
\label{sec-conj}
 
We want to state a conjecture for the scaling limit of the uniformly sampled meandric system $\frk S_n$ as $n\rta\infty$. 
To formulate this conjecture, we let $\mathcal M_n$ be the planar map whose vertices are the $2n$ intersection points of the loops $\ell$ in $\frk S_n$ with the real line, whose edges are the segments of the loops or the line $\BB R$ between these intersection points (we consider the two infinite rays of $\BB R$ as being an edge from the leftmost to the rightmost intersection point), and whose faces are the connected components of $\BB R^2 \setminus \left( \bigcup_{\ell \in \frk S_n} \ell  \cup \BB R \right)$. 

The planar map $\mathcal M_n$ is equipped with a Hamiltonian path $P_n : [1,2n] \cap \BB Z \rta \{\text{vertices of $\mathcal M_n$}\}$ which traverses the vertices of $\mathcal M_n$ and the segments of $\BB R$ between these vertices in left-right numerical order. The planar map is also equipped with a collection of loops $\Gamma_n$ (simple cycles in $\mathcal M_n$) corresponding to the loops in $\frk S_n$. See Figure~\ref{fig-cle-sim} for a simulation of $(\mathcal M_n,P_n,\Gamma_n)$. 
 
To talk about convergence, we can, e.g., view $(\mathcal M_n,P_n,  \Gamma_n)$ as a metric measure space (equipped with the graph metric and the counting measure on vertices) decorated by a path and a collection of loops. 
We can then ask whether this decorated metric measure space has a scaling limit with respect to the generalization of the Gromov-Hausdorff topology for metric measure spaces decorated by curves and/or loops~\cite{gwynne-miller-uihpq,ghs-metric-peano}.
Alternatively, we could embed $(\mathcal M_n,P_n,\Gamma_n)$ into $\BB C$ in some manner (e.g., Tutte embedding~\cite{gms-tutte} as in Figure~\ref{fig-cle-sim} or circle packing~\cite{stephenson-circle-packing}) and ask whether the resulting metric, measure, curve, and collection of loops in $\BB C$ have a joint scaling limit in law. 

We now state a conjecture for the scaling limit of $(\mathcal M_n,P_n,\Gamma_n)$. The limiting object is described in terms of several random objects whose definitions we will not write down explicitly. 
\begin{itemize}
\item The \textbf{Liouville quantum gravity (LQG)} sphere with parameter $\gamma \in (0,2]$ is a random fractal surface with the topology of the sphere first introduced in~\cite{wedges,dkrv-lqg-sphere}. A $\gamma$-LQG sphere can be described by a random metric and a random measure on the Riemann sphere $\BB C\cup \{\infty\}$. LQG spheres (and other types of LQG surfaces) describe the scaling limits of various types of random planar maps. See~\cite{gwynne-ams-survey,sheffield-icm,bp-lqg-notes} for expository articles on LQG.
\item \textbf{Schramm-Loewner evolution (SLE$_\kappa$)} with parameter $\kappa > 0$ is a random fractal curve introduced in~\cite{schramm0}. The curve is simple for $\kappa \in (0,4]$, has self-intersections but not self-crossings for $\kappa \in (4,8)$, and is space-filling for $\kappa \geq 8$. 
\item The \textbf{conformal loop ensemble (CLE$_\kappa$)} with parameter $\kappa \in (8/3,8)$ is a random countable collection of loops which do not cross themselves or each other and which locally look like SLE$_\kappa$ curves~\cite{shef-cle}. We allow our CLE loops to be nested (i.e., we do not restrict attention to the outermost loops). CLE on the whole plane was first defined in~\cite{mww-nesting} for $\kappa\in (4,8)$ and in~\cite{werner-sphere-cle} for $\kappa \in (8/3,4]$. 
\end{itemize}

\begin{figure}[ht!]
\begin{center}
    \begin{minipage}[c]{.38\textwidth}
    \includegraphics[width=\textwidth]{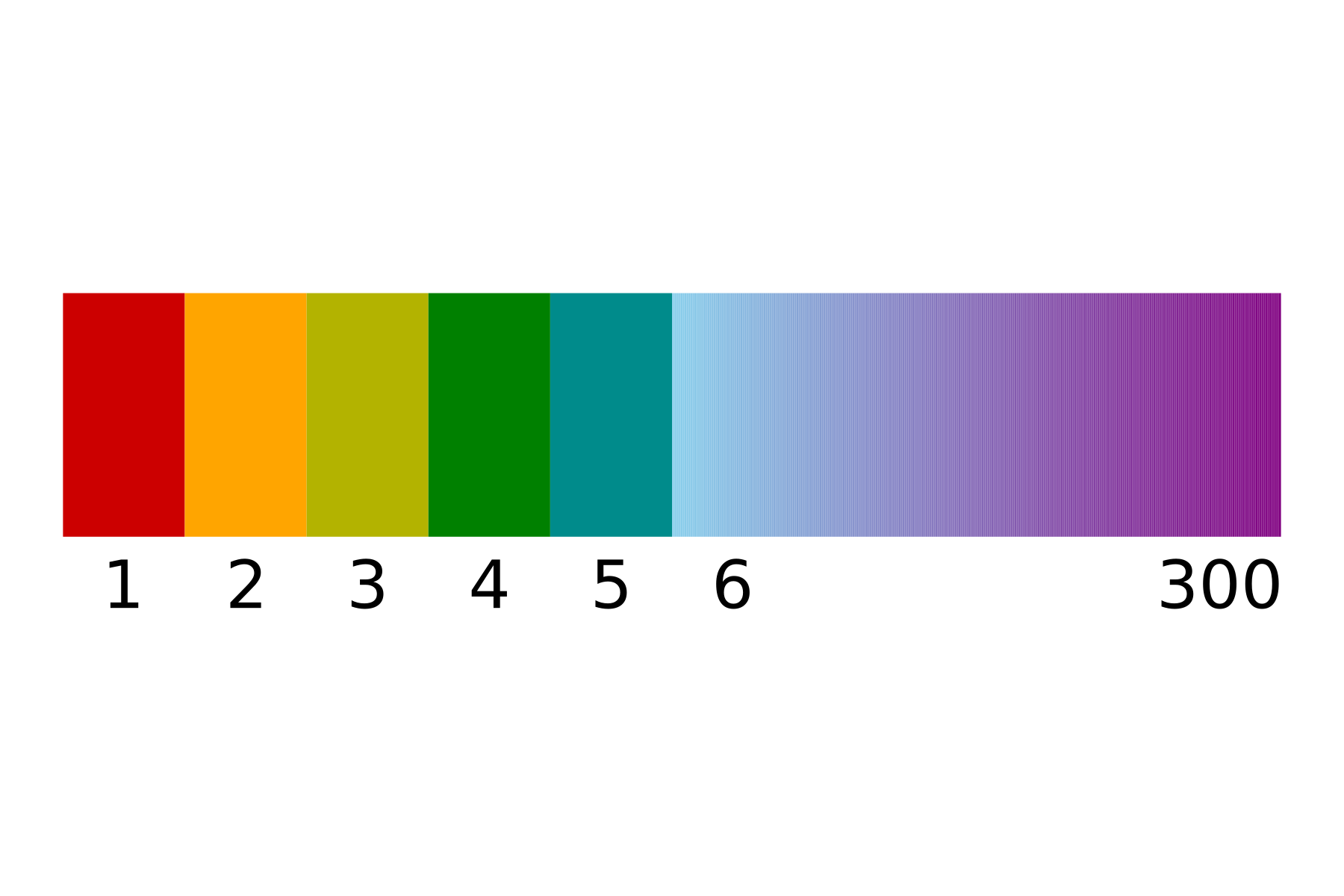}\\
    \includegraphics[width=\textwidth]{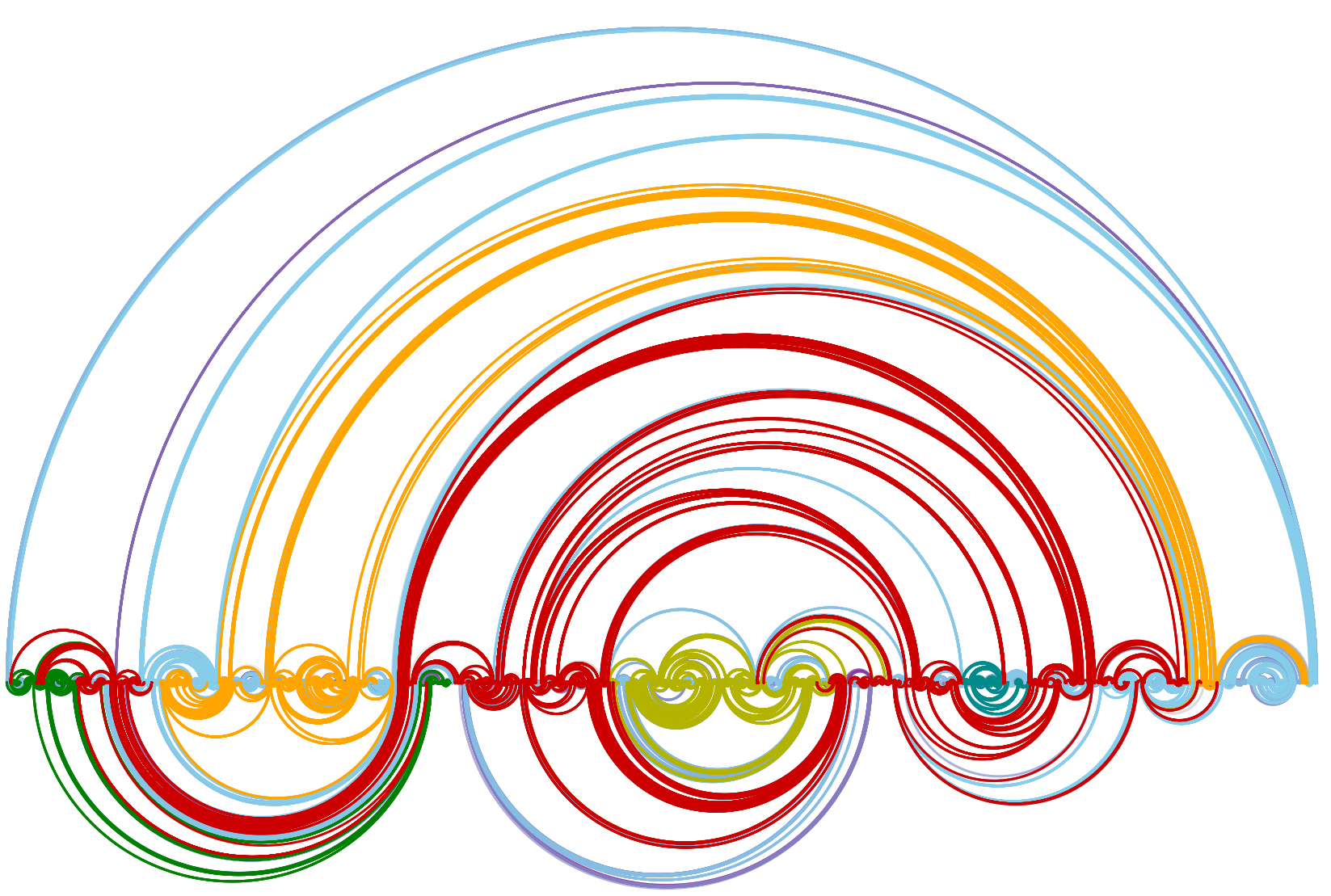}
    \end{minipage}
    \begin{minipage}[c]{.58\textwidth}
    \includegraphics[width=\textwidth]{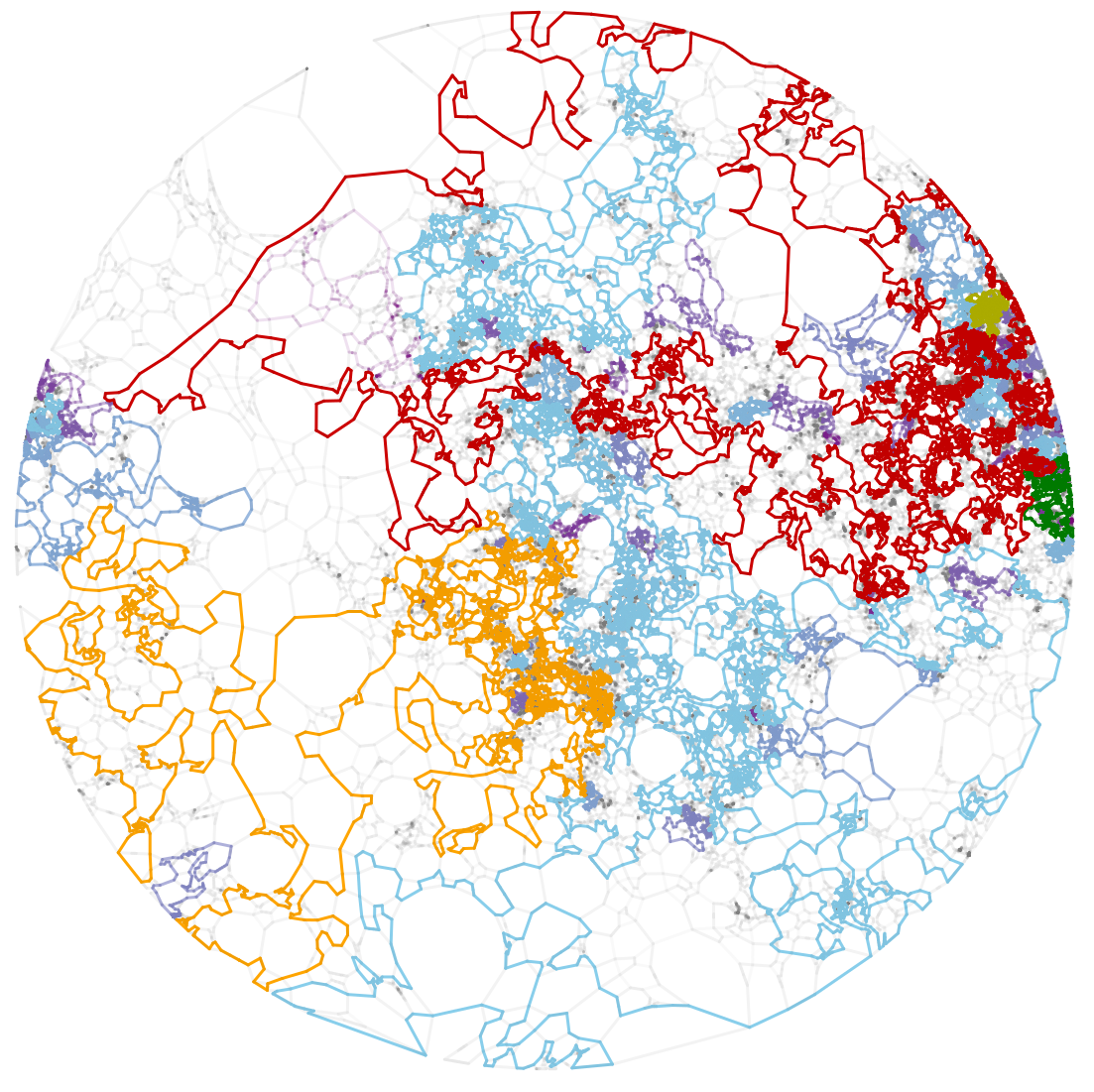}
    \end{minipage}
\caption{\label{fig-cle-sim} Simulation of a uniform meandric system with boundary of size $n = 10^6$ (see Section~\ref{sec-sim} for a precise definition and for the details of simulations). The left picture shows the corresponding arc diagrams. The right picture shows the associated planar map $\mathcal M_n$, embedded in the disk via the Tutte embedding~\cite{gms-tutte}, together with some of the loops in $\Gamma_n$.  The largest 300 loops in $\Gamma_n$ (in terms of number of vertices) are each shown in color, as indicated by the color bar. Smaller loops and edges between consecutive vertices of $\BB R$ are shown in gray. Note that the distribution of colors in the arc diagram picture is rather chaotic -- this is consistent with the fact that the meandric system loops are a complicated functional of the arc diagrams. According to Conjecture~\ref{conj-system}, the embedded planar map $\mathcal M_n$ together with the path $P_n$ and the loops in $\Gamma_n$ should converge to $\sqrt 2$-LQG decorated by SLE$_8$ and CLE$_6$.
}
\end{center}
\vspace{-3ex}
\end{figure}

\begin{conj} \label{conj-system}
Let $(\mathcal M_n,P_n ,   \Gamma_n)$ be the random planar map decorated by a Hamiltonian path and a collection of loops associated to a uniform meandric system of size $n$, as described just above. 
Then $(\mathcal M_n,P_n,\Gamma_n)$ converges under an appropriate scaling limit to an independent triple consisting of a $\sqrt 2$-LQG sphere, a whole-plane SLE$_8$ from $\infty$ to $\infty$, and a whole-plane CLE$_6$. 
\end{conj}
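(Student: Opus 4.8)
The plan is to prove Conjecture~\ref{conj-system} in stages, separating the parts that seem within reach of current technology from the genuinely hard part. The starting point is the encoding of $\frk S_n$ by two independent uniform Dyck paths of length $2n$, together with the observation that the Hamiltonian-path-decorated map $(\mathcal M_n, P_n)$ is exactly the output of a ``mating of two trees'': the red and blue arc diagrams are the two non-crossing matchings (equivalently, plane trees) which, when glued along the $2n$ marked points of $\BB R$, produce $\mathcal M_n$, and $P_n$ is the associated Peano/space-filling path. Since the two walks are independent, the correlation of the mated pair is $0$, which corresponds to $\gamma = \sqrt 2$ (and hence to space-filling $\mathrm{SLE}_{\kappa}$ with $\kappa = 16/\gamma^2 = 8$ as the scaling limit of $P_n$).

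First I would make this precise by exhibiting an explicit bijection between uniform meandric systems of size $n$ and a known class of tree-decorated maps for which mating-of-trees convergence is available --- e.g.\ $4$-regular planar maps with a distinguished Hamiltonian cycle, or spanning-tree-decorated maps via a Mullin-type bijection. One then invokes peanosphere convergence to deduce that the two encoding walks converge to a pair of independent Brownian excursions, and hence that the mating-of-trees data converges to a $\sqrt 2$-LQG sphere decorated by whole-plane $\mathrm{SLE}_8$; and, on top of that, the stronger results identifying the graph metric and (Tutte or circle-packing) embedding of such maps with $\sqrt 2$-LQG in the Gromov--Hausdorff--Prokhorov--uniform sense. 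This would give the $(\mathcal M_n, P_n) \to (\sqrt 2\text{-LQG sphere}, \mathrm{SLE}_8)$ half of the conjecture.

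Next I would turn to the loops $\Gamma_n$. The key input is the percolation interpretation of meandric systems (Section~\ref{sec-perc}): the loops of $\frk S_n$ are the interfaces of a critical percolation-type model on the underlying map. Granting the first stage, one would try to upgrade the convergence to include the loops, showing that these interfaces converge to $\mathrm{CLE}_6$ on the limiting $\sqrt 2$-LQG surface, in analogy with the (expected/partially established) convergence of critical percolation interfaces on random planar maps to $\mathrm{CLE}_6$ on LQG. Combined with the first stage, this would give convergence of the full triple $(\mathcal M_n, P_n, \Gamma_n)$ to $\sqrt 2$-LQG decorated by $\mathrm{SLE}_8$ and $\mathrm{CLE}_6$.

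The main obstacle, and the reason this remains a conjecture, is proving that in the limit the $\mathrm{CLE}_6$ is \emph{independent} of the pair $(\sqrt 2\text{-LQG}, \mathrm{SLE}_8)$. This is counterintuitive at the discrete level: $\Gamma_n$, $\mathcal M_n$, and $P_n$ are all deterministic functions of the same pair of Dyck paths, so the desired independence can emerge only in the scaling limit, and only because the loop structure is an extremely chaotic functional of the two matchings --- one that does \emph{not} pass to a continuous function of the Brownian-excursion limit of the walks --- as illustrated by the scrambled colors in Figure~\ref{fig-cle-sim}. Establishing such asymptotic decoupling would presumably require either a re-randomization argument showing that, after conditioning on enough macroscopic data, the loop configuration is distributed like an independent critical percolation on the map, or a direct analysis via LQG/SLE tools showing that the conditional law of the discrete loops given the metric--measure--path data converges to $\mathrm{CLE}_6$ regardless of the conditioning. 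Short of the full conjecture, partial results --- which the present paper initiates with Theorem~\ref{thm-macro} and Corollary~\ref{cor-largest} --- serve as consistency checks; for instance, the conjectured largest-loop exponent $\tfrac12(3-\sqrt 2)$ should be recoverable from the relation between the quantum length of a $\mathrm{CLE}_6$ loop and its number of vertices on a $\sqrt 2$-LQG surface.
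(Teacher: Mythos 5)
The statement you are addressing is a \emph{conjecture}: the paper does not prove it, and offers only a heuristic justification (Section~\ref{sec-justification}) plus consistency checks (Theorem~\ref{thm-macro}, Corollary~\ref{cor-largest}, the simulations of Section~\ref{sec-sim}). Your proposal is likewise not a proof but a research outline, and to your credit you say so explicitly. Still, to be concrete about where it falls short of a proof: (1) even the first stage, convergence of $(\mathcal M_n,P_n)$ to $\sqrt 2$-LQG decorated by SLE$_8$, is only known in the peanosphere sense (i.e., the two encoding walks converge to independent Brownian excursions); the Gromov--Hausdorff--Prokhorov--uniform or embedding convergence you invoke is not available for this model, and the paper itself only establishes diameter exponents (Proposition~\ref{prop-map-diam}) via a rough coupling with the mated-CRT map, not a scaling limit. (2) The step ``upgrade to include the loops, showing the interfaces converge to CLE$_6$'' has no existing technology behind it: there is no convergence result for critical percolation interfaces on random planar maps to CLE$_6$ on LQG that could be transferred, and the paper emphasizes (end of Section~\ref{sect-inf-meand-perco} and Section~\ref{sect-no-fkg}) that the environment and the ``percolation'' are strongly coupled and that no FKG-type inequality is known, which blocks the standard RSW-style route. (3) The asymptotic independence of the CLE$_6$ from the LQG/SLE$_8$ pair, which you correctly single out as the crux, is not addressed by either of your suggested mechanisms beyond naming them.

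It is also worth noting that your route to the CLE$_6$ part differs from the paper's own justification. You lean on the percolation interpretation of Section~\ref{sec-perc}, which the paper presents only as intuition. The paper instead identifies $(\mathcal M_n,P_n,\Gamma_n)$ with the crossing fully packed $O(0\times 1)$ loop model on a random $4$-regular map and imports the Jacobsen--Kondev / Di Francesco--Golinelli--Guitter central-charge computation, translated into SLE/LQG language in~\cite{bgs-meander}, to predict $\gamma=\sqrt 2$, $\kappa=8$ for the path and $\kappa=6$ for the loops simultaneously. That physics derivation pins down the loop parameter $\kappa=6$ by an exact (non-rigorous) computation, whereas your percolation analogy only suggests criticality and leaves the identification of $\kappa=6$ to an unproved universality belief. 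Neither route closes the conjecture, but you should be aware that the exponent predictions (e.g.\ $\alpha=\frac12(3-\sqrt 2)$ via KPZ) rest on the former derivation.
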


In the setting of Conjecture~\ref{conj-system}, the metric and measure on $\BB C$ corresponding to the $\sqrt 2$-LQG sphere, the SLE$_8$ curve (viewed modulo time parametrization), and the CLE$_6$ are independent. At first glance, this may be surprising since $(\mathcal M_n,P_n)$ and $(\mathcal M_n,\Gamma_n)$ each determine each other. However, we expect that the function which goes from $(\mathcal M_n,P_n)$ to $(\mathcal M_n,\Gamma_n)$ depends only on microscopic features of $(\mathcal M_n,P_n)$ which are not seen in the scaling limit, and the same is true for the function which goes in the opposite direction. This independence is numerically justified by Figure~\ref{fig-plots} (Right), using the discussion in Section~\ref{sec-kpz}. 

An equivalent formulation of the conjecture is that $(\mathcal M_n,P_n , \Gamma_n)$ should be in the same universality class as a uniform triple consisting of a planar map decorated by a spanning tree (represented by its associated discrete Peano curve) and a critical Bernoulli percolation configuration (represented by the loops which describe the interfaces between open and closed clusters). 

Conjecture~\ref{conj-system} is based on a combination of rigorous results, physics heuristics, and numerical simulations. We will explain the reasoning leading to the conjecture in Section~\ref{sec-justification}. A similar scaling limit conjecture for meanders, rather than meandric systems, is stated as~\cite[Conjecture 1.3]{bgs-meander} (building on~\cite{dgg-meander}). In the meander case, one has $\gamma = \sqrt{\frac13 \left( 17 - \sqrt{145} \right)}$ instead of $\gamma=\sqrt 2$ and there are two SLE$_8$ curves instead of an SLE$_8$ and a CLE$_6$. The heuristic justification for Conjecture~\ref{conj-system} is similar to the argument leading to this meander conjecture. In fact, as we will explain in Section~\ref{sec-physics}, Conjecture~\ref{conj-system} may be viewed as a special case of a conjecture for the $O(n\times m)$ loop model on a random planar map from~\cite{dgg-meander}. See also~\cite{ddgg-hamiltonian} for additional related predictions.

As we will explain in Section~\ref{sec-kpz}, Conjecture~\ref{conj-system} together with the KPZ formula~\cite{kpz-scaling} leads to the following conjectural answer to Questions~\ref{item-largest} and~\ref{item-macro} above.

\begin{conj} \label{conj-largest}
Let $\frk S_n$ be a uniform meandric system of size $n$. For each fixed $k\in\BB N$, it holds with probability tending to 1 as $n\rta\infty$ that 
\eqb \label{eqn-largest}
\# \{\text{vertices of $k$th largest loop of $\frk S_n$}\} =  n^{\alpha + o(1)} ,\quad \text{where} \quad \alpha =  \frac12 \left(3 - \sqrt 2 \right)\approx 0.7929.
\eqe
\end{conj}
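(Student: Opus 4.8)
This is a conjecture rather than a theorem, so what I would present is the heuristic derivation of the exponent $\alpha$ from Conjecture~\ref{conj-system} via the KPZ formula; a fully rigorous proof is not something I expect to be within reach. \textbf{First}, I would reduce $\#\{\text{vertices of }k\text{th largest loop}\}$ to a continuum observable. Granting Conjecture~\ref{conj-system}, the $2n$ vertices of $\mathcal M_n$ (with counting measure rescaled to total mass one) converge to the $\sqrt 2$-LQG area measure on the sphere, while $\Gamma_n$ converges to an independent whole-plane CLE$_6$. Since the loops of $\frk S_n$ partition the vertex set of $\mathcal M_n$ — each point of $\BB R$ lying on the arcs belongs to exactly one loop — the $k$th largest loop of $\frk S_n$ should correspond to the $k$th largest loop of the limiting CLE$_6$, ``largest'' being measured by the number of vertices it carries, which in the limit is a quantum notion of size. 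For fixed $k$, the corresponding CLE$_6$ loop has macroscopic Euclidean diameter (bounded away from $0$ as $n\to\infty$), so the task is to count the vertices of $\mathcal M_n$ lying on a macroscopic CLE$_6$ loop.

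\textbf{Next}, I would argue that this count equals, up to $n^{o(1)}$, the number of ``quantum cells'' of $\sqrt 2$-LQG mass $\asymp 1/n$ that the CLE$_6$ loop meets. A vertex $v$ lies on a loop $\ell$ precisely when the Hamiltonian path $P_n$ — whose scaling limit is the space-filling SLE$_8$ matched to $\sqrt 2$-LQG by mating of trees — crosses $\ell$ at $v$; heuristically $P_n$ crosses $\ell$ once for each of the $\asymp 2n$ quantum cells of mass $\asymp 1/(2n)$ through which $\ell$ passes. Thus $\#\{\text{vertices of }\ell\}=n^{1-\Delta+o(1)}$, where $\Delta$ is the quantum scaling exponent of a CLE$_6$ loop drawn on an \emph{independent} $\sqrt 2$-LQG surface, i.e.\ the number of quantum cells of mass $\delta$ met by the loop is $\delta^{\Delta-1+o(1)}$.

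\textbf{Then} I would compute $\Delta$ from the KPZ formula. The loops of CLE$_6$ locally look like SLE$_6$ and hence have Hausdorff dimension $1+\tfrac{6}{8}=\tfrac{7}{4}$, so the Euclidean scaling exponent of such a loop is $x=1-\tfrac{1}{2}\cdot\tfrac{7}{4}=\tfrac{1}{8}$. Because the CLE$_6$ is independent of the $\sqrt 2$-LQG, the KPZ relation~\cite{kpz-scaling} applies verbatim, $x=\tfrac{\gamma^2}{4}\Delta^2+\bigl(1-\tfrac{\gamma^2}{4}\bigr)\Delta$ with $\gamma=\sqrt 2$, i.e.\ $\tfrac{1}{8}=\tfrac{1}{2}\Delta^2+\tfrac{1}{2}\Delta$. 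The nonnegative root is $\Delta=\tfrac{1}{2}(\sqrt 2-1)$, so $\alpha=1-\Delta=\tfrac{1}{2}(3-\sqrt 2)$, as in~\eqref{eqn-largest}. Note that the exponent is the same for every fixed $k$: this is the conjectural answer to Question~\ref{item-macro} — there is no single dominant loop, but rather infinitely many loops of comparable polynomial size.

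\textbf{The main obstacle}, beyond the fact that the whole argument is conditional on the open Conjecture~\ref{conj-system}, is the step identifying $\#\{\text{vertices of }\ell\}$ with a clean continuum quantity. One needs to know that counting vertices genuinely ``sees'' the quantum-cell count of the limiting CLE$_6$ loop, and does so uniformly over the largest few loops rather than for a typical loop only; one also needs to rule out that the $n^{o(1)}$ error conceals a true polynomial discrepancy. Here the coupling of CLE$_6$ with $\sqrt 2$-LQG is \emph{not} the mating-of-trees coupling (which would force $\gamma=\sqrt{8/3}$), so the only input about the loop is its Euclidean law — which is exactly why KPZ is the right tool, but also why a rigorous version would require both a quantitative form of Conjecture~\ref{conj-system} and a rigorous KPZ statement at the level of this observable, both well beyond current technology. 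By contrast, a rigorous lower bound of the far weaker form $n^{c+o(1)}$ for an explicit $c>0$ is attainable by direct discrete means, and is what Corollary~\ref{cor-largest} provides.
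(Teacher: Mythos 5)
Since the statement is a conjecture, the paper offers only the KPZ heuristic of Section~\ref{sec-kpz}, and your proposal reproduces exactly that argument: identify the $k$th largest loop with the $k$th largest loop of a CLE$_6$ independent of the $\sqrt 2$-LQG sphere, take Euclidean dimension $7/4$ from~\cite{beffara-dim}, and apply the KPZ relation (your scaling-exponent form $x=\tfrac{\gamma^2}{4}\Delta^2+(1-\tfrac{\gamma^2}{4})\Delta$ is the same formula as~\eqref{eqn-kpz} after the substitution $\Delta=1-\Delta_\gamma$, $x=1-\Delta_0/2$, and yields the same $\alpha=\tfrac12(3-\sqrt2)$). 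Your closing discussion of the obstacles to making this rigorous is accurate and consistent with the paper's framing.
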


Conjecture~\ref{conj-largest} for $k=1$ is consistent with the numerical study of~\cite[Section 3]{kargin-meander-system}, which suggested that the size of the largest loop in $\frk S_n$ is of order $n^\alpha$ for $\alpha$ close to $4/5$. 
We have also run some numerical simulations of our own which are consistent with Conjecture~\ref{conj-largest} for $k=1, 2, 3, 4, 5$. See Section~\ref{sec-sim} for more details.

\subsection{Macroscopic loops in finite meandric systems}
\label{sec-finite-results}

Let $d = d_{\sqrt 2}$ be the Hausdorff dimension of the $\sqrt 2$-Liouville quantum gravity metric space (this quantity is well-defined thanks to~\cite[Corollary 1.7]{gp-kpz}).
A reader not familiar with Liouville quantum gravity can simply think of $d$ as a certain constant. 
The number $d$ is not known explicitly, but fairly good rigorous upper and lower bounds are available. In particular, it was shown in~\cite[Corollary 2.5]{gp-lfpp-bounds}, building on~\cite[Theorem 1.2]{dg-lqg-dim}, that
\eqb \label{eqn-d-bound}
3.5504 \approx 2 ( 9 +3\sqrt 5 - \sqrt 3) ( 4 - \sqrt{15} ) \leq d \leq \frac23(3 + \sqrt 6) \approx 3.6330.
\eqe
The following proposition can be proven using previously known techniques for bounding distances in random planar maps~\cite{ghs-map-dist,gp-dla}. 

\begin{prop} \label{prop-map-diam}
Let $\mathcal M_n$ be the planar map associated with a uniform meandric system of size $n$. 
For each $\zeta \in (0,1)$, it holds except on an event of probability decaying faster than any negative power of $n$ that the graph-distance diameter of $\mathcal M_n$ is between $n^{1/d-\zeta}$ and $n^{1/d+\zeta}$. 
\end{prop}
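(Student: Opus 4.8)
The plan is to leverage the bijection between a uniform meandric system of size $n$ and a pair of independent $2n$-step simple random walk excursions, together with the known connection between meandric systems and a percolation-type model on a random planar map (alluded to in Section~\ref{sec-perc}), so that the diameter question becomes a question about distances in a mated-CRT-type or hamburger–cheeseburger-type random planar map. The key point is that the planar map $\mathcal M_n$ — whose vertices are the $2n$ points on $\BB R$, with edges from loop segments and $\BB R$-segments — should be (quasi-)isometric to a planar map in the $\sqrt 2$-LQG universality class for which graph-distance estimates are already available in the literature. Concretely, I would first identify $\mathcal M_n$ (or a bounded-degree modification of it with the same large-scale geometry, since $\mathcal M_n$ is already $4$-regular) with a random planar map encoded by the pair of walk excursions, in the spirit of the mated-CRT map: the two independent walks play the role of the two correlated (here, independent, i.e.\ correlation zero, matching $\gamma=\sqrt 2$) Brownian motions. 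This is the step where I expect to invoke the discussion of Sections~\ref{sec-perc} and~\ref{sec-physics} to make the identification precise.

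Next, with $\mathcal M_n$ realized as a planar map in the relevant universality class, I would apply the general upper-bound technique for graph distances in random planar maps from~\cite{ghs-map-dist}: this gives that, except on an event of probability decaying faster than any negative power of $n$, the diameter is at most $n^{1/d+\zeta}$, where $d=d_{\sqrt 2}$ is the Hausdorff dimension of $\sqrt 2$-LQG, using that $d$ is well-defined by~\cite[Corollary 1.7]{gp-kpz}. For the matching lower bound, I would invoke the techniques of~\cite{gp-dla} (and/or the lower-bound half of~\cite{ghs-map-dist}), which provide, again with stretched-exponentially high probability, that two ``typical'' vertices are at graph distance at least $n^{1/d-\zeta}$; since the diameter is at least the distance between any two such vertices, this yields the lower bound. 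Both bounds are really statements about the $n$-vertex planar map in the $\sqrt 2$-LQG class, so once the identification of $\mathcal M_n$ with such a map is in hand the rest is a matter of quoting the cited results with the correct parameter $\gamma = \sqrt 2$ and hence the correct exponent $1/d$.

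The main obstacle, and the step deserving the most care, is the first one: rigorously matching $\mathcal M_n$ to a model for which~\cite{ghs-map-dist} and~\cite{gp-dla} directly apply. The subtlety is that those papers are typically phrased for specific encodings (e.g.\ mated-CRT maps or maps decorated by a statistical-mechanics model built from a pair of trees), whereas here we have two \emph{independent} simple random walk excursions of length $2n$, which encode a pair of arc diagrams (trees) whose superposition defines $\mathcal M_n$. One needs to check that this superposition-of-two-independent-Dyck-paths map is quasi-isometric, with the relevant high-probability control, to a map in the $\gamma=\sqrt 2$ universality class — this is essentially a discrete-encoding statement, of the type that appears when relating hamburger–cheeseburger maps or the FK-decorated maps to mated-CRT maps, but it must be done carefully because the functional taking the walk pair to the loop structure of $\mathcal M_n$ is complicated. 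I would structure the argument so that only the \emph{large-scale} geometry is used, invoking a coupling or strong-embedding statement between the pair of walks and a pair of Brownian excursions, after which the distance estimates of~\cite{ghs-map-dist,gp-dla} transfer. Given that the proposition is explicitly flagged as provable ``using previously known techniques,'' I would expect the write-up to consist mostly of setting up this identification and then citing the two distance-estimate papers, rather than developing new machinery.
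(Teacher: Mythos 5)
Your proposal is correct and follows essentially the same route as the paper: the authors couple the walk-encoded map with the mated-CRT map via the KMT-based Theorem~\ref{thm-ghs-coupling} from~\cite{ghs-map-dist}, obtain the upper bound from the mated-CRT diameter exponent $\chi=1/d$ of~\cite{ghs-dist-exponent,gp-dla}, and get the lower bound from LQG distance estimates (routed in the paper through Theorem~\ref{thm-macro}, though they note the direct argument of~\cite[Theorem 1.9]{gp-dla} that you describe also works). The only step you gloss over is the passage from the infinite-volume UIMS (unconditioned walks, to which the coupling applies) to the finite excursion-encoded $\mathcal M_n$, which the paper handles by conditioning on the event $F_n$ that $(\mcl L,\mcl R)$ form excursions on $[0,2n]$ and using $\BB P[F_n]\sim c n^{-3}$ to preserve the superpolynomial error bounds.
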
 

Proposition~\ref{prop-map-diam} is proven via a coupling with a so-called \textbf{mated-CRT map}, a certain type of random planar map which is directly connected to Liouville quantum gravity. See Section~\ref{sec-macro-proof} for details. It is possible to prove Proposition~\ref{prop-map-diam} via exactly the same argument as in~\cite[Theorem 1.9]{gp-dla}, which gives an analogous statement for spanning-tree decorated random planar maps. But, we will give a more self-contained proof in Section~\ref{sec-macro-proof}. 

Our first main result tells us that a uniform meandric system admits loops whose graph-distance diameter is nearly of the same order as the graph-distance diameter of $\mathcal M_n$, in the following sense, c.f.\ Proposition~\ref{prop-map-diam}. 

\begin{thm}  \label{thm-macro}
Let $\mathcal M_n$ be the planar map associated with a uniform meandric system of size $n$ and let $\Gamma_n$ be the associated collection of loops on $\mathcal M_n$.
For each $\zeta \in (0,1)$, 
it holds except on an event of probability decaying faster than any negative power of $n$ that the following is true.
There is a loop in $\Gamma_n$ which has $\mathcal M_n$-graph-distance diameter at least $n^{1/d-\zeta}$.
\end{thm}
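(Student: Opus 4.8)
The plan is to exploit the already-established coupling of $\mathcal{M}_n$ with a mated-CRT map (which underlies Proposition~\ref{prop-map-diam}) to produce a loop of $\Gamma_n$ that is forced to be geometrically large. The key structural observation is that a loop $\ell \in \Gamma_n$ is a simple cycle in $\mathcal{M}_n$, and its $\mathcal{M}_n$-graph-distance diameter is at least the diameter of the sub-map it bounds if we can control how deep distances can be "shortcut" from outside. But a cleaner route is available: it suffices to find two vertices $u,v$ on a common loop $\ell$ whose $\mathcal{M}_n$-distance is at least $n^{1/d-\zeta}$, and then $\ell$ itself has diameter at least that (the diameter of a subgraph is at least the ambient distance between two of its vertices). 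So the target reduces to: \emph{some} loop contains two vertices that are far apart in $\mathcal{M}_n$.

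First I would recall, from the walk-encoding, that the loops of $\frk S_n$ are the cycles of the permutation obtained by composing the two arc-diagram matchings; equivalently, walking along $\BB R$ and following arcs alternately above and below partitions $[1,2n]\cap\BB Z$ into the loops. A single loop visiting vertices $x_1, x_2$ with $x_1$ near $1$ and $x_2$ near $2n$ would do, but we have no control over which vertices land on the same loop. Instead, the idea is probabilistic: by Proposition~\ref{prop-map-diam}, with the stated probability there exist two vertices $u, v$ of $\mathcal{M}_n$ with $d_{\mathcal{M}_n}(u,v) \geq n^{1/d-\zeta/2}$ (the diameter is achieved, up to constants absorbed into the $\zeta$). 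Now I want to argue that $u$ and $v$ can be taken to lie on the same loop with high probability — or, failing that, to run a union-bound/averaging argument over loops. The cleanest version: among all $\binom{2n}{2}$ pairs of vertices, a positive fraction are far apart; I would like to show that a pair far apart is reasonably likely to share a loop. This is where the result of F\'eray--Th\'evenin, that there are $\asymp n$ loops, is relevant but points the wrong way (typical loops are small).

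The better approach, which I would actually pursue, is to flip the quantifiers using a \emph{deterministic} geometric fact about meandric systems together with the mated-CRT coupling. Concretely: in the mated-CRT map picture, the Hamiltonian path $P_n$ corresponds (in the scaling limit) to space-filling SLE$_8$, and the $\sqrt 2$-LQG distance between the two endpoints $P_n(1)$ and $P_n(2n)$ is of order $n^{1/d}$. The path $P_n$ and the loop collection $\Gamma_n$ are linked: following $\BB R$ and the arcs, the sequence of loops encountered as one moves along $P_n$ from vertex $1$ to vertex $2n$ cannot consist entirely of loops of small diameter, because then one could bound $d_{\mathcal{M}_n}(P_n(1), P_n(2n))$ by the sum of diameters of the loops traversed, times a combinatorial factor controlled by the number of distinct loops $\asymp n$ — and if every loop had diameter $\leq n^{1/d - \zeta}$, the relevant concatenation argument along the real line would contradict Proposition~\ref{prop-map-diam}. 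Making this precise requires a covering lemma: the segments of $\BB R$ between consecutive vertices are $2n-1$ edges, so any path in $\mathcal{M}_n$ can be rerouted to travel along $\BB R$-edges and loop-edges, and one shows that a geodesic between far-apart vertices must use a loop-edge belonging to a loop of large diameter. I would set this up by contradiction: assume all loops have diameter $< n^{1/d-\zeta}$, cover $\mathcal{M}_n$ by metric balls of that radius centered on the loops plus the real line, count that there are at most $\asymp n$ such balls (one per loop), and derive that the diameter of $\mathcal{M}_n$ is at most $n \cdot n^{1/d-\zeta} \cdot (\text{something})$ — which is not yet a contradiction, so this crude bound is insufficient and one needs the finer isoperimetry of the mated-CRT map.

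The main obstacle, then, is precisely this last point: a naive union bound over the $\asymp n$ loops loses too much, because $n \cdot n^{1/d - \zeta}$ need not be smaller than $n^{1/d}$. The resolution I would aim for is to use the LQG/mated-CRT structure more carefully — specifically, that in a $\sqrt 2$-LQG sphere, the number of LQG-metric balls of radius $\epsilon$ needed to cover is $\asymp \epsilon^{-d}$, so if $\Gamma_n$ had $\asymp n$ loops each of diameter $< \delta := n^{1/d-\zeta}$, and the loops "fill up" $\mathcal{M}_n$ in the sense that every vertex is within $O(1)$ of a loop (which should follow from the arc-diagram structure: every vertex lies on exactly one loop!), then $\mathcal{M}_n$ would be covered by $\asymp n$ balls of radius $\delta$, forcing $n \gtrsim (\text{diam}/\delta)^d \asymp (n^{1/d}/n^{1/d-\zeta})^d = n^{\zeta d}$, which is consistent and gives no contradiction. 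Hence the union bound genuinely fails and one must instead argue that the \emph{largest} loop is large, not that loops can't all be small. Therefore I would reorganize the proof around directly analyzing the largest loop via the mated-CRT coupling: show that the image of the largest loop under the coupling approximates a macroscopic CLE$_6$-type loop (or at least a set of macroscopic LQG-diameter), using that a positive-probability event in the scaling limit forces a loop of $\Gamma_n$ of macroscopic size, then transfer back. The hard technical work is this transfer — controlling that the discrete largest loop does not degenerate — and I expect the authors handle it by a tightness/subsequential-limit argument combined with the fact (from Proposition~\ref{prop-map-diam}) that $\mathcal{M}_n$ itself does not degenerate.
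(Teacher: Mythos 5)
There is a genuine gap. Your proposal correctly diagnoses that the naive routes fail (averaging over pairs, union bound over the $\asymp n$ loops, covering by small balls), but the fix you settle on — ``show that the image of the largest loop under the coupling approximates a macroscopic CLE$_6$-type loop \ldots using that a positive-probability event in the scaling limit forces a loop of $\Gamma_n$ of macroscopic size, then transfer back'' — is not available. The mated-CRT coupling of Theorem~\ref{thm-ghs-coupling} controls graph distances in $(\mathcal M_n,P_n)$ only; it says nothing about the loops $\Gamma_n$, and there is no known scaling limit (or even tightness statement) for $\Gamma_n$ toward CLE$_6$ — that is precisely Conjecture~\ref{conj-system}, which is open. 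So the ``transfer'' you defer to is circular: you would be assuming a continuum statement about the loops in order to prove the first rigorous non-degeneracy statement about them.

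The missing idea is a purely combinatorial one: a parity rigidity of meandric systems. In the infinite-volume system, any two points $x-1/2$, $y-1/2$ with $x\not\equiv y \pmod 2$ must be separated by a loop or infinite path (Lemma~\ref{lem-parity}, proved by a Jordan-curve crossing-count). The paper exploits this as follows (Theorem~\ref{thm-loop-dichotomy}): let $E_n$ be the event that some loop contained in $[-Cn,Cn]$ disconnects $[-n,n]$ from $\infty$. If $\BB P[E_n]$ is not bounded below, then with probability $\gtrsim \BB P[E_n^c]^2$ both $E_n^c$ and its translate by a large \emph{odd} integer $x$ occur with ``exposed'' points $y$ and $y_x$ of opposite parity; the parity lemma then forces a loop separating $y$ from $y_x$, which must reach far from $[-Cn,Cn]$, hence is macroscopic either way. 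Only after this discrete dichotomy does the LQG/mated-CRT machinery enter, and only to convert ``hits far-apart integers'' into ``has large graph-distance diameter'' (Proposition~\ref{prop-rpm-estimate}). Two further steps you do not address are also essential to reach the stated superpolynomial error probability: the positive probability $1/10$ is boosted to $1-e^{-cn^\beta}$ by applying the local event independently on $\asymp n^\beta$ disjoint subintervals, and the result is transferred from the UIMS to the finite uniform system by conditioning on the polynomially-likely excursion event $F_n$ (so superpolynomially small failure probabilities survive the conditioning).
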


The proof of Theorem~\ref{thm-macro} is based on a combination of two results. The first input is a purely discrete argument, based on a parity trick, which shows that the infinite-volume analog of $(\mathcal M_n,\Gamma_n)$ admits with positive probability loops which are ``macroscopic'' in a certain sense (Theorem~\ref{thm-loop-dichotomy}). 
The second input is a lower-bound for certain graph distances in $\mathcal M_n$ which is proven via a combination of discrete arguments and SLE/LQG techniques (Proposition~\ref{prop-rpm-estimate}). The continuum part of the argument, given in Section~\ref{sec-lqg}, is short and simple, but very far from elementary since it relies on both the mating of trees theorem~\cite{wedges} and the existence of the LQG metric~\cite{dddf-lfpp,gm-uniqueness}.

The reason why we have an error of order $n^\zeta$ in Proposition~\ref{prop-map-diam} and Theorem~\ref{thm-macro} is that we are only able to estimate graph distances in $\mathcal M_n$ up  to $o(1)$ errors in the exponent. If we had up-to-constants bounds for graph distances in $\mathcal M_n$, then our arguments would show that with high probability, there exist loops in $\Gamma_n$ whose $\mathcal M_n$-graph-distance diameter is comparable, up to constants, to the graph-distance diameter of $\mathcal M_n$. 
Hence, Theorem~\ref{thm-macro} suggests that the scaling limit of $(\mathcal M_n,\Gamma_n)$ should be non-degenerate, in the sense that the loops of $\Gamma_n$ do not collapse to points. This is consistent with Conjecture~\ref{conj-system}.

Theorem~\ref{thm-macro} is similar in spirit to the recent work~\cite{dgps-macroscopic-loops}, which proves the existence of macroscopic loops for the critical $O(n)$ loop model on the hexagonal lattice when $1\leq n\leq 2$ (see also~\cite{cghp-macroscopic-loops} for a similar result for a different range of parameter values). However, our proof of Theorem~\ref{thm-macro} is very different from the arguments in~\cite{dgps-macroscopic-loops,cghp-macroscopic-loops}. Part of the reason for this is that we are not aware of any positive association (FKG) inequality in our setting (see Question~\ref{question:FKG}), in addition to the fundamental difference that we are working on a random lattice.

Loops in $\Gamma_n$ are connected subsets of $\mathcal M_n$, so a loop of graph-distance diameter at least $n^{1/d-\zeta}$ must hit at least $n^{1/d-\zeta}$ vertices of $\mathcal M_n$. 
We therefore have the following corollary of Theorem~\ref{thm-macro}. 

\begin{cor}  \label{cor-largest}
Let $\frk S_n$ be a uniform meandric system of size $n$. 
For each $\zeta \in (0,1)$, it holds except on an event of probability decaying faster than any negative power of $n$ that there is a loop in $\frk S_n$ which crosses the real line at least $n^{1/d-\zeta}$ times.
\end{cor}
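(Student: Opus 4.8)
\textbf{Proof proposal for Corollary~\ref{cor-largest}.}
The plan is to obtain the corollary as an immediate consequence of Theorem~\ref{thm-macro} together with the bijective description of $\mathcal M_n$. Recall that the vertices of $\mathcal M_n$ are exactly the intersection points of the loops of $\frk S_n$ with $\BB R$, and that a single loop $\ell \in \frk S_n$ corresponds to a simple cycle $C_\ell$ in $\Gamma_n \subset \mathcal M_n$ whose edges are the arcs of $\ell$ lying (alternately) above and below $\BB R$ and whose vertex set $V(C_\ell)$ is precisely the set of points at which $\ell$ meets $\BB R$. By Definition~\ref{def-system}, $\ell$ does not intersect $\BB R$ without crossing it, so the number of times $\ell$ crosses $\BB R$ equals $\# V(C_\ell)$, i.e., the number of vertices of $\mathcal M_n$ lying on the corresponding loop in $\Gamma_n$.

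First I would apply Theorem~\ref{thm-macro} with the given $\zeta \in (0,1)$: except on an event of probability decaying faster than any negative power of $n$, there is a loop in $\Gamma_n$, say the cycle $C_\ell$ associated to some $\ell \in \frk S_n$, whose $\mathcal M_n$-graph-distance diameter is at least $n^{1/d - \zeta}$. Since $C_\ell$ is a connected subgraph of $\mathcal M_n$, any two of its vertices are joined by a path running inside $C_\ell$ and using at most $\# V(C_\ell) - 1$ edges; hence the $\mathcal M_n$-graph distance between any two vertices of $C_\ell$ is at most $\# V(C_\ell) - 1$, and therefore $\# V(C_\ell) \geq n^{1/d-\zeta}$. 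Combining this with the counting identity from the previous paragraph, on the same high-probability event the loop $\ell$ crosses $\BB R$ at least $n^{1/d-\zeta}$ times, which is exactly the assertion of the corollary.

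There is essentially no obstacle here beyond what is already contained in Theorem~\ref{thm-macro}: the only points requiring (minor) care are the exact translation between ``number of crossings of $\BB R$'' and ``number of vertices of the associated cycle $C_\ell$'', which uses the no-touching-without-crossing clause of Definition~\ref{def-system}, and the elementary observation that a connected subgraph of graph-distance diameter $D$ has more than $D$ vertices. Note also that this same deduction works verbatim for any finite union of loops, should one want to phrase it that way, but the single-loop statement suffices.
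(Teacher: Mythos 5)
Your proposal is correct and follows exactly the paper's argument: the paper deduces the corollary from Theorem~\ref{thm-macro} by noting that a loop in $\Gamma_n$ is a connected subgraph of $\mathcal M_n$, so graph-distance diameter at least $n^{1/d-\zeta}$ forces at least $n^{1/d-\zeta}$ vertices, and these vertices are precisely the crossings of $\BB R$. Your extra care in identifying crossings with vertices via the no-touching-without-crossing clause is a harmless elaboration of the same reasoning.
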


We note that the bounds for $d$ from~\eqref{eqn-d-bound} show that
\eqb \label{eqn-exponent-bounds}
0.2753 \approx \frac12 (3 - \sqrt 6) \leq \frac{1}{d} \leq   \frac{1}{  72 - 38 \sqrt 3 + 30 \sqrt 5 - 18 \sqrt{15} } \approx 0.2817.
\eqe 
Corollary~\ref{cor-largest} gives a power-law lower bound for the number of vertices of the largest loop in a typical meandric system. To our knowledge, the best lower bound for this quantity prior to our work is~\cite[Theorem 3.4]{kargin-meander-system}, which shows that the number of vertices in the largest loop is typically at least a constant times $\log n$. 

If Conjecture~\ref{conj-largest} is correct, then the lower bound of Corollary~\ref{cor-largest} is far from optimal. 
Nevertheless, it would require substantial new ideas to get any exponent larger than $1/d$ for the number of vertices in the largest loop of $\frk S_n$. 
Indeed, to do this one would need to show that the largest loop in $\frk S_n$ is much longer than an $\mathcal M_n$-graph distance geodesic.

\subsection{The uniform infinite meandric system (UIMS)} 
\label{sec-infinite}

The \textbf{uniform infinite meandric system (UIMS)} is the local limit (in the sense of Benjamini-Schramm~\cite{benjamini-schramm-topology}) of a uniform meandric system of size $n$ based at a uniform vertex. It is shown in~\cite[Proposition 5]{fv-noodle} that this local limit exists (in a quenched sense; see~\cite[Section 1.2]{fv-noodle} for further explanations) and is the same as the object studied in~\cite{ckst-noodle}.

\begin{figure}[ht!]
\begin{center}
\includegraphics[scale=1]{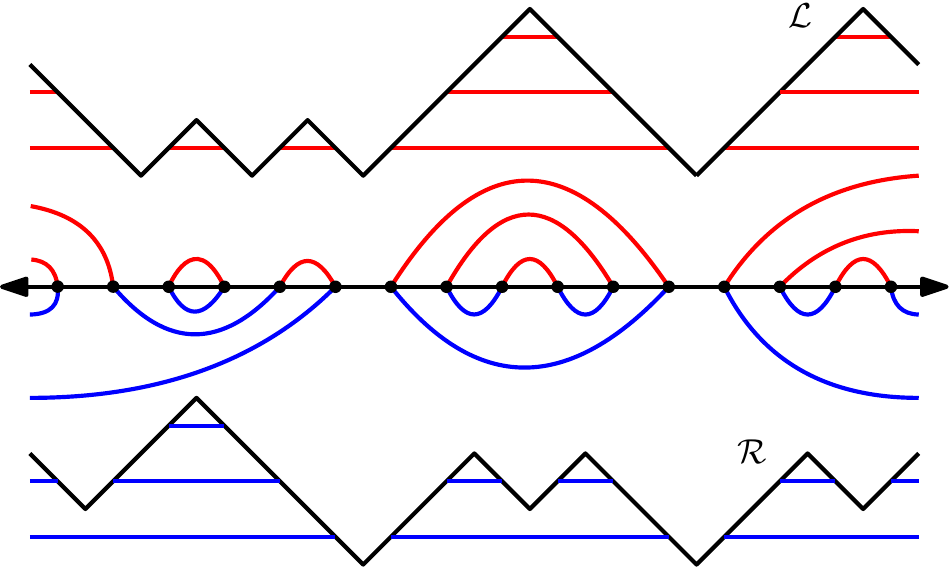}  
\caption{\label{fig-uniform-system} A subset of the uniform infinite meandric system (UIMS) together with the graphs of the corresponding increments of the encoding walks $\mcl L$ and $\mcl R$. Each horizontal line below the graph of $\mcl L$ (resp.\ $\mcl R$) corresponds to an arc above (resp.\ below) the real line. See~\eqref{eqn-rw-inf-adjacency} and the surrounding text for further explanation.
}
\end{center}
\vspace{-3ex}
\end{figure}

We now define the UIMS, following~\cite{ckst-noodle}. Let\footnote{
The reason why we write $\mcl L$ and $\mcl R$ for the walks is that $\mcl L$ (resp.\ $\mcl R$) describes the arcs which lie to the left (resp.\ right) of the real line when we traverse the real line from left to right. This notation is chosen to be consistent with the mating of trees literature~\cite{wedges,ghs-mating-survey,ghs-map-dist}.} 
$\mcl L$ and $\mcl R$ be independent two-sided simple random walks on $\BB Z$ with $\mcl L_0 = \mcl R_0 = 0$. 
See Figure~\ref{fig-uniform-system} for an illustration. 
We define two infinite arc diagrams (non-crossing perfect matchings of $\BB Z$), one above $\BB R$ and one below $\BB R$, as follows. 
For $x_1,x_2\in \BB Z$ with $x_1 <x_2$, we draw an arc above (resp.\ below) the real line joining $x_1$ and $x_2$ if and only if 
\eqb \label{eqn-rw-inf-adjacency}
 \mcl L_{x_1-1} = \mcl L_{x_2} < \min_{y \in [x_1 ,x_2-1]\cap\BB Z} \mcl L_y \qquad \text{(resp.}  \:  \mcl R_{x_1-1} = \mcl R_{x_2} < \min_{y \in [x_1 ,x_2-1]\cap\BB Z} \mcl R_y \: \text{)} ;
\eqe
c.f.~\eqref{eqn-arc-walk}.
It is easy to see that there is exactly one arc above (resp.\ below) the real line incident to each $x\in \BB Z$, and that the arcs above (resp.\ below) the real line can be taken to be non-intersecting. 
We then define the UIMS to be the set of the loops and bi-infinite paths formed by the union of the arcs in the above two arc diagrams.

It is also possible to recover the walks $\mcl L$ and $\mcl R$ from the arc diagrams. Indeed, for each $x\in\BB Z$, the increment $\mcl L_x - \mcl L_{x-1}$ is $+1$ (resp.\ $-1$) if and only if the arc above $\BB R$ which is incident to $x$ has its other endpoint greater than (resp.\ less than) $x$. A similar statement holds for $\mcl R$. 

The UIMS is easier to work with than a finite uniform meandric system since there is no conditioning on the walks. Most of our proofs will be in the setting of the UIMS. 
 
It is shown in~\cite[Theorem 1]{ckst-noodle} that either a.s.\ the collection of loops (and possibly bi-infinite paths) in the UIMS has a unique infinite path, or a.s.\ it has no infinite paths. This infinite path, if it exists, is called the \textbf{infinite noodle} in~\cite{ckst-noodle}. 
It is not known rigorously which of these two possibilities holds. But, the following is conjectured in~\cite{ckst-noodle}.

\begin{conj}[\!\cite{ckst-noodle}] \label{conj-infinite-path}
Almost surely, the UIMS has no infinite path.
\end{conj}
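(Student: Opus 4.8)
The plan is to first reduce the conjecture to a statement about a single orbit and then attempt to prove recurrence of that orbit. By the dichotomy of~\cite{ckst-noodle} (a.s.\ there is a unique infinite path, or a.s.\ none) together with the translation invariance of the UIMS, the conjecture is equivalent to: a.s.\ the loop or path through the origin is finite (if the orbit of every fixed vertex is a.s.\ finite, then a.s.\ all orbits are finite, by countable intersection). So one fixes the orbit $0=x_0,x_1,x_2,\dots$ of the origin, where consecutive vertices are joined alternately by an above-arc read off from $\mcl L$ via~\eqref{eqn-rw-inf-adjacency} and a below-arc read off from $\mcl R$, and similarly for negative indices; the goal is to show this two-sided sequence a.s.\ returns to $0$, i.e.\ that the orbit is ``recurrent.''

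The natural strategy for recurrence is a regeneration argument: find random times along the orbit at which its future is conditionally independent of its past, and show that between consecutive such times there is a uniformly positive chance of closing up the loop. The obvious candidate regeneration points --- integers $y$ that no arc straddles --- do not exist: for a two-sided simple random walk $\mcl L$ one a.s.\ finds above-arcs straddling any fixed $y$ of arbitrarily large length (e.g.\ the arc sitting at level $-k$ that passes over $y$, which exists because $\mcl L$ visits $-k$ at arbitrarily negative times and returns to $-k$ at a positive time), and likewise for $\mcl R$. A workable notion of regeneration must therefore be defined jointly from the orbit's trajectory and the two walks, capturing moments where the orbit has ``used up'' the relevant increments. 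I expect this to be the main obstacle, and the reason the conjecture is still open: the orbit zigzags, so it repeatedly re-enters portions of $\mcl L$ and $\mcl R$ that it has already explored, which destroys any evident Markov or martingale structure for the position process $(x_k)$. In addition, the UIMS has no apparent monotonicity or FKG property (cf.\ Question~\ref{question:FKG}), and the loop through the origin is a highly non-local functional of $(\mcl L,\mcl R)$, so soft ergodic arguments and correlation inequalities do not obviously apply.

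Absent a direct recurrence proof, there are three fallback routes, each with its own obstruction. First, because $\frk S_n$ converges to the UIMS in the Benjamini--Schramm sense, the conjecture is equivalent to tightness of $|L_n|$, the number of vertices on the loop through a uniform vertex of $\frk S_n$; equivalently, the number of vertices lying on loops of size between $K$ and $n/K$ should be $o(n)$ as $n\to\infty$ and then $K\to\infty$. The results currently available do not rule out, say, order $n^{1/2}$ loops of size order $n^{1/2}$ --- a scenario consistent with the loop count of order $n$ from~\cite{fv-noodle} and with the macroscopic loops of Theorem~\ref{thm-macro} (and not ruled out by Corollary~\ref{cor-largest}) --- so genuinely new control on the loop-size distribution would be needed. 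Second, combining the analysis of~\cite{fv-noodle} with the local convergence gives the per-vertex density $a_m$ of loops of size $2m$ in the UIMS as a sum over meanders of size $m$; since $\sum_m 2m\,a_m\le 1$ with equality precisely when there is no infinite path, the conjecture reduces to the identity $\sum_m 2m\,a_m=1$, an explicit statement about meander numbers that appears intractable. Third, the conjecture follows from establishing enough of Conjecture~\ref{conj-system}, because whole-plane $\mathrm{CLE}_6$ has only bounded loops --- but that scaling limit is far out of reach. I note finally that Theorem~\ref{thm-infinite-path-half} does not transfer directly: the comparison runs the wrong way, since the full-plane system has ``more room'' than the half-plane one for an infinite path, although an argument behind Theorem~\ref{thm-infinite-path-half} that admits a local version might still be adaptable.
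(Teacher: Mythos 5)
You were asked about Conjecture~\ref{conj-infinite-path}, which is an open problem attributed to \cite{ckst-noodle}; the paper contains no proof of it and explicitly says that it is not known which alternative of the dichotomy of \cite{ckst-noodle} holds. Your proposal, correctly, does not claim to prove it either: it is an assessment of possible attack routes and the obstruction to each, and judged as such it is accurate. The reduction to a.s.\ finiteness of the orbit of the origin via the zero-one dichotomy and translation invariance is right; the observation that naive regeneration points (integers straddled by no arc) a.s.\ do not exist for a two-sided simple random walk is right; the absence of any known positive-association inequality is exactly Question~\ref{question:FKG}; the consistency with, but non-derivability from, Conjecture~\ref{conj-system-infty} via compactness of whole-plane CLE$_6$ loops matches the paper's own remark; and you are correct that Theorem~\ref{thm-infinite-path-half} does not transfer, since the cutting construction relating the UIMS to the UIHPMS gives no implication in either direction, and the paper's proof of that theorem leans essentially on the boundary structure (good boundary points, blocks, and a Burton--Keane count of semi-infinite boundary paths), none of which has a whole-plane analogue. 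Your remark that Theorem~\ref{thm-macro} and Corollary~\ref{cor-largest} leave open scenarios such as order $n^{1/2}$ loops of size $n^{1/2}$ is likewise consistent with what is actually proved.

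The gap, then, is simply that no proof is given --- which is unavoidable, since the statement is a genuinely open conjecture. The obstacle you isolate (the orbit through $0$ repeatedly re-enters portions of $\mcl L$ and $\mcl R$ it has already explored, destroying any Markov or regeneration structure) is indeed where the known approaches stall. One small caveat on your second fallback: the identity $\sum_m 2m\, a_m = 1$, with $a_m$ the per-vertex density of loops of size $2m$ from \cite{fv-noodle}, is equivalent to the conjecture (the deficit $1-\sum_m 2m\,a_m$ equals the probability that $0$ lies on an infinite path, which by translation invariance vanishes if and only if there is a.s.\ no infinite path), so this is an exact reformulation rather than a strictly weaker target --- as you essentially acknowledge by calling it intractable.
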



Exactly as in the case of a finite meandric system, we can associate to the UIMS an infinite planar map $\mathcal M$ decorated by a bi-infinite Hamiltonian path $P$ and a collection of loops (and possibly bi-infinite paths) $\Gamma$.
Namely, the vertex set of $\mathcal M$ is $\BB Z$; the edges of $\mathcal M$ are the arcs above and below the real line together with the segments $[x-1,x]$ for $x\in\BB Z$; the path $P$ traverses the vertex set $\BB Z$ in left-right numerical order; and $\Gamma$ is the set of loops (and possibly bi-infinite paths) formed by the two arc diagrams as above. 

We will now state the infinite-volume analog of Conjecture~\ref{conj-system}. 
The \textbf{$\sqrt 2$-quantum cone} is the most natural $\sqrt 2$-LQG surface with the topology of the plane. 
It arises as the local limit of the $\sqrt 2$-quantum sphere based at a point sampled from its associated area measure~\cite[Proposition 4.13(ii)]{wedges}.

\begin{conj} \label{conj-system-infty}
Let $(\mathcal M , P , \Gamma)$ be the infinite random planar map decorated by a bi-infinite Hamiltonian path and a collection of loops (and possibly bi-infinite paths) associated to the UIMS, as described just above.
Then $(\mathcal M , P , \Gamma)$ converges under an appropriate scaling limit to an independent triple consisting of a $\sqrt 2$-LQG cone, a whole-plane SLE$_8$ from $\infty$ to $\infty$, and a whole-plane CLE$_6$. 
\end{conj}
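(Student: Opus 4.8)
The plan is to route Conjecture~\ref{conj-system-infty} through the \emph{mating of trees} framework, exploiting the fact that the adjacency rule~\eqref{eqn-rw-inf-adjacency} defining the UIMS is exactly the discrete analog of the tree-gluing relation used to encode a quantum cone decorated by space-filling SLE. Concretely, the UIMS is a deterministic functional of the pair of \emph{independent} two-sided simple random walks $\mcl L, \mcl R$, which by Donsker's theorem converges to a pair of independent two-sided Brownian motions. The role of the parameter $\gamma = \sqrt 2$ is precisely that the covariance constant $-\cos(\pi\gamma^2/4)$ appearing in the mating of trees theorem of~\cite{wedges} vanishes when $\gamma^2 = 2$ (equivalently $16/\gamma^2 = 8$), so this Brownian pair is exactly the one which, by~\cite{wedges}, encodes an independent pair consisting of a $\sqrt 2$-quantum cone and a whole-plane (space-filling) SLE$_8$ from $\infty$ to $\infty$ parametrized by $\sqrt 2$-LQG area. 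On the discrete side, $(\mcl M, P)$ is the corresponding ``peanosphere'' structure: the walk $\mcl L$ (resp.\ $\mcl R$) encodes the tree of arcs above (resp.\ below) $\BB R$ via~\eqref{eqn-rw-inf-adjacency}, and the bi-infinite Hamiltonian path $P$ is the interface between these two trees. In this sense $(\mcl M,P)$ should be in the universality class of the spanning-tree-decorated random planar map, whose scaling limit is $\sqrt 2$-LQG decorated by SLE$_8$.

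The first step of the proof is to upgrade the convergence of the encoding walks to a scaling-limit statement for $(\mcl M, P)$ itself, in whatever topology one works with (Gromov-Hausdorff-type convergence of the decorated metric measure space, or convergence of a conformal embedding). For $\gamma = \sqrt 2$ this should be within reach of existing technology: one (a) sets up an exact combinatorial coupling of $(\mcl M, P)$ with a mated-CRT map of parameter $\gamma = \sqrt 2$, using the arc--walk encoding; (b) controls graph distances in $\mcl M$ in terms of the mated-CRT map, which is essentially the content of Proposition~\ref{prop-map-diam} and the estimates of~\cite{ghs-map-dist, gp-dla} on which it rests; and (c) deduces convergence of the embedding, e.g.\ along the lines of the Tutte-embedding results of~\cite{gms-tutte}. (One would also want the analogous finite-volume statement underlying Conjecture~\ref{conj-system}; the sphere and cone versions should be related by a standard local-limit / absolute-continuity argument, since the $\sqrt 2$-cone is the local limit of the $\sqrt 2$-sphere about a typical point.) None of (a)--(c) is routine, but each is the kind of statement for which a roadmap already exists in the literature on spanning-tree-weighted maps.

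The genuinely new and difficult part is the loop collection $\Gamma$, and especially the claim that it converges to a CLE$_6$ which is \emph{independent} of the $(\sqrt 2\text{-LQG cone},\, \text{SLE}_8)$ pair. Here the plan has two components. First, exhibit $\Gamma$ as the set of interfaces of a critical percolation-type configuration on $\mcl M$ (the percolation interpretation developed in Section~\ref{sec-perc}, consistent with the prediction that the UIMS realizes the fully packed $O(0\times 1)$ loop model of Section~\ref{sec-physics}, whose loop family lies in the CLE$_6$ universality class), and prove that critical percolation on this sequence of spanning-tree-decorated random planar maps has interfaces converging to CLE$_6$. Second --- and this is the subtle point --- prove a \emph{decoupling} statement: although $(\mcl M, P, \Gamma)$ is a single deterministic functional of $(\mcl L, \mcl R)$, the configuration whose interfaces form $\Gamma$ should depend, asymptotically, only on \emph{microscopic} features of $(\mcl L, \mcl R)$ that are invisible in the Brownian limit, so that in the scaling limit $\Gamma$ becomes independent of the LQG/SLE$_8$ data. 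A natural strategy is resampling: modify the fine structure of the arc diagrams in a way that leaves the macroscopic $(\mcl M,P)$-geometry (approximately) unchanged in law while re-randomizing $\Gamma$, and show the limiting CLE$_6$ is unaffected; the parity trick behind Theorem~\ref{thm-loop-dichotomy} is an indication that such local resamplings are available.

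I expect the CLE$_6$ part to be the main obstacle. First, a scaling-limit theory for critical percolation on random planar maps converging to $\gamma$-LQG is available (to the extent it is available at all) only near $\gamma = \sqrt{8/3}$, which is the LQG parameter canonically \emph{coupled} to CLE$_6$; in Conjecture~\ref{conj-system-infty} the CLE$_6$ is instead carried by a $\sqrt 2$-LQG surface to which it is not adapted, so one cannot invoke an existing coupling and must build the convergence essentially from scratch on a random lattice, with no FKG inequality available (cf.\ Question~\ref{question:FKG}). Second, the independence of $\Gamma$ from the LQG/SLE$_8$ pair --- the feature the paper flags as ``at first glance surprising'' --- runs exactly opposite to the canonical percolation/LQG coupling, and making a decoupling argument rigorous at the level of the full scaling limit would require controlling the map-to-loops functional far more precisely than anything currently known. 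Realistic intermediate targets are therefore the statement $(\mcl M,P) \to \sqrt 2\text{-LQG}+\text{SLE}_8$ and, separately, convergence of $\Gamma$ alone to CLE$_6$, with the joint independence left as the deepest remaining point.
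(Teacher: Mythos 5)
The statement you are addressing is Conjecture~\ref{conj-system-infty}: the paper does not prove it, and neither do you. What you have written is a research program, not a proof, and you say so yourself in your final paragraph. Every load-bearing step is left open: your items (a)--(c) for the convergence of $(\mathcal M,P)$ are not carried out (the paper only establishes the walk coupling and distance comparisons of Theorem~\ref{thm-ghs-coupling} and Proposition~\ref{prop-map-diam}, which control diameters up to $n^{o(1)}$ errors, far short of a Gromov--Hausdorff or embedding scaling limit); the convergence of $\Gamma$ to CLE$_6$ has no rigorous input at all; and the asymptotic independence of $\Gamma$ from the LQG/SLE$_8$ pair is asserted via an unimplemented ``resampling'' idea. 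So as a proof attempt this has a genuine, indeed total, gap --- but the gap is the conjecture itself, which is open.

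That said, your heuristic reasoning tracks the paper's own justification closely. The identification of~\eqref{eqn-rw-inf-adjacency} with the mated-CRT/peanosphere construction at $\gamma=\sqrt2$ (independent encoding walks since $-\cos(\pi\gamma^2/4)=0$) is exactly Section~\ref{sec-mating}; the identification of $\Gamma$ with the fully packed $O(0\times1)$ loop model and the percolation interpretation is Sections~\ref{sec-perc} and~\ref{sec-physics}; and the ``microscopic dependence'' explanation for the conjectured independence is the discussion following Conjecture~\ref{conj-system}, supported numerically in Section~\ref{sec-sim}. One substantive point you make that the paper leaves implicit: the CLE$_6$ here is not carried by its canonically coupled $\sqrt{8/3}$-LQG surface, so no existing percolation-on-random-maps technology applies, and the absence of an FKG inequality (Question~\ref{question:FKG}) blocks the standard critical-percolation toolbox. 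Your assessment of where the difficulty lies is accurate; just be clear that what you have produced is a justification of the conjecture's plausibility in the spirit of Section~\ref{sec-justification}, not a proof of it.
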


Conjecture~\ref{conj-system-infty} is consistent with Conjecture~\ref{conj-infinite-path}, since all of the loops in a whole-plane CLE$_6$ are compact sets~\cite{mww-nesting}.
Our main result concerning the UIMS gives a polynomial lower tail bound for the graph-distance diameter (and hence also the number of vertices) of the loop containing the origin. It turns out to be an easy consequence of one of the intermediate results in the proof of Theorem~\ref{thm-macro} (see, in particular, Proposition~\ref{prop-loop-dist}).

\begin{thm} \label{thm-origin-loop}
Let $(\mathcal M,\Gamma)$ be the planar map and collection of loops associated with the UIMS and let $\ell_0$ be the loop (or infinite path) in $\Gamma$ which passes through $0\in\BB Z$. Then with $d$ as in~\eqref{eqn-d-bound},
\eqb \label{eqn-origin-loop}
\BB P\left[ \left(\text{$\mathcal M$-graph-distance diameter of $\ell_0$} \right) \geq k \right] \geq k^{-(d-1) - o(1)} , \quad \text{as $k\rta\infty$}.
\eqe
\end{thm}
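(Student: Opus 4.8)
The plan is to feed the two main inputs assembled for Theorem~\ref{thm-macro} — the parity‑trick construction of a macroscopic loop in the infinite‑volume model (Theorem~\ref{thm-loop-dichotomy}) and the SLE/LQG lower bound for graph distances in the meandric map (Proposition~\ref{prop-rpm-estimate}, in the form packaged by Proposition~\ref{prop-loop-dist}) — into a size‑biasing argument. The only additional, soft, ingredient is that the UIMS is invariant under the shift $x\mapsto x+1$ of $\BB Z$: it is a deterministic functional of the two‑sided simple random walks $\mcl L,\mcl R$, which have stationary increments. Hence for every $x\in\BB Z$ the loop $\ell_x$ of $\Gamma$ through $x$ has, modulo translation, the same law as $\ell_0$; write $D(\ell)$ for the $\mathcal M$‑graph‑distance diameter of a loop $\ell$.

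I would proceed as follows. Fix $\zeta\in(0,1)$. By Theorem~\ref{thm-loop-dichotomy} combined with Proposition~\ref{prop-rpm-estimate} (transferred to the UIMS exactly as in the proof of Proposition~\ref{prop-loop-dist}), there should be a constant $c=c(\zeta)>0$ so that for every $j\geq 1$ there is an event $G_j$ with $\BB P[G_j]\geq c$ on which one can select a loop $\ell^{\bullet}$ of $\Gamma$ — set $\ell^{\bullet}=\emptyset$ off $G_j$ — all of whose vertices lie in the window $[-j,j)\cap\BB Z$ and which contains two vertices at $\mathcal M$‑graph distance at least $j^{1/d-\zeta}$; since a connected subgraph of $\mathcal M$ containing two vertices at distance $\geq r$ has more than $r$ vertices, such $\ell^{\bullet}$ has at least $j^{1/d-\zeta}$ vertices, all in the window, and $D(\ell^{\bullet})\geq j^{1/d-\zeta}$. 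Now set $k:=\lceil j^{1/d-\zeta}\rceil$ and size‑bias over the $2j$ vertices of the window, using shift invariance:
\eqbn
2j\,\BB P\!\left[D(\ell_0)\geq k\right]
=\sum_{x\in[-j,j)\cap\BB Z}\BB P\!\left[D(\ell_x)\geq k\right]
\geq\BB E\!\left[\#\{x\in[-j,j)\cap\BB Z:x\in\ell^{\bullet}\}\,\mathbbm 1_{G_j}\right]
\geq c\,j^{1/d-\zeta},
\eqen
where the middle inequality uses that on $G_j$ any vertex $x$ of $\ell^{\bullet}$ has $D(\ell_x)=D(\ell^{\bullet})\geq k$, and the last uses that on $G_j$ the loop $\ell^{\bullet}$ lies in the window and has at least $j^{1/d-\zeta}$ vertices. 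Dividing by $2j$ and using $j\asymp k^{d/(1-\zeta d)}=k^{d+O(\zeta)}$, this gives $\BB P[D(\ell_0)\geq k]\geq\tfrac c2\,j^{-(d-1)/d-\zeta}=k^{-(d-1)-O(\zeta)}$ for all large $k$; sending $\zeta\downarrow 0$ along a sequence yields~\eqref{eqn-origin-loop}. The parenthetical statement about the number of vertices is then immediate, since a loop of graph‑distance diameter at least $k$ has at least $k$ vertices.

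All the real work lives in the two cited inputs, which I am treating as given. For the present statement, the one point requiring care is the \emph{confinement} built into $G_j$: the macroscopic loop produced by the parity trick must have all of its $\BB R$‑intersection points inside a window of size $\asymp j$ (not merely large graph‑distance diameter while possibly wandering far along $\BB R$), for otherwise the denominator in the size‑biasing step would be polynomially larger than $2j$ and the exponent $d-1$ would degrade. If Proposition~\ref{prop-loop-dist} is already formulated as exactly this windowed estimate — or directly as a lower bound for $\BB P[D(\ell_0)\geq k]$ — then the proof is essentially the display above. The remaining details (a union bound over the $O(j^2)$ pairs in the window to make Proposition~\ref{prop-rpm-estimate} hold simultaneously for all far‑apart $x,y\in[-j,j)$, and tracking the $o(1)$'s in the exponents) are routine.
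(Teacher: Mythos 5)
Your proposal is correct and is essentially the paper's own argument: Proposition~\ref{prop-loop-dist} supplies exactly the windowed estimate you ask for (a segment confined to $[-n,n]\cap\BB Z$ whose submap diameter \emph{and} distance to the complement of the window are both at least $n^{1/d-\zeta}$, which together give the full-$\mathcal M$ diameter bound), and the paper then size-biases by taking a uniform point $X$ in a window of size $\asymp k^{d+\zeta}$, which is the same computation as your sum over $x$ via translation invariance. No substantive differences.
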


Unlike in Theorem~\ref{thm-macro}, we expect that the exponent in Theorem~\ref{thm-origin-loop} is not optimal. Rather, we have the following conjecture. 

\begin{conj} \label{conj-origin-loop}
Let $\ell_0$ be as in Theorem~\ref{thm-origin-loop}. Then with $\alpha = \frac12 (3-\sqrt 2)$ as in Conjecture~\ref{conj-largest}, 
\eqb
\BB P\left[ \left(\text{$\mathcal M$-graph-distance diameter of $\ell_0$} \right) \geq k \right] \geq k^{-d(1-\alpha) - o(1)} , \quad \text{as $k\rta\infty$},
\eqe
and
\eqb
\BB P\left[ \left(\text{number of vertices in $\ell_0$} \right) \geq k \right] \geq k^{-(1/\alpha - 1) - o(1)} , \quad \text{as $k\rta\infty$}.
\eqe
\end{conj}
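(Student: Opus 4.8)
Since Conjecture~\ref{conj-origin-loop} is stated as a conjecture, I will describe a conditional route: how Conjecture~\ref{conj-system-infty} (together with the heuristics of Section~\ref{sec-kpz}) would imply it, and where the main obstruction lies. Both displayed estimates are lower bounds, so it suffices to exhibit, in a finite uniform meandric system $\frk S_n$ with $n$ chosen appropriately as a function of $k$, a single loop which is ``large'' in the relevant sense, and to count the chance that a uniformly chosen vertex lands on it. The elementary ingredient is that for any meandric system of size $n$ and any loop $\ell$ of it, a uniformly chosen vertex lies on $\ell$ with probability exactly $\#\{\text{vertices of }\ell\}/(2n)$; hence if $\ell$ has at least $m$ vertices (resp.\ $\mathcal M_n$-diameter at least $D$) then the loop $\ell_0^{(n)}$ of $\frk S_n$ through a uniform vertex has at least $m$ vertices (resp.\ $\mathcal M_n$-diameter at least $D$) with probability at least $\#\{\text{vertices of }\ell\}/(2n)$. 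One then transfers to the UIMS using that the UIMS is the local limit of $(\frk S_n,\text{uniform vertex})$~\cite{fv-noodle}.

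For the vertex-count bound, Conjecture~\ref{conj-largest} with $k=1$ gives that, with probability tending to $1$, $\frk S_n$ has a loop with at least $n^{\alpha-\zeta}$ vertices. Choosing $n=n(k)$ with $n^{\alpha-\zeta}=k$, the identity above yields $\BB P[\#\{\text{vertices of }\ell_0^{(n)}\}\ge k]\ge \tfrac12 k^{\,1-1/(\alpha-\zeta)}(1-o(1))$, and letting $\zeta\rta 0$ and absorbing the slack into the error gives the claimed $k^{-(1/\alpha-1)-o(1)}$. For the diameter bound one needs the \emph{joint} statement that, with probability tending to $1$, $\frk S_n$ contains a loop whose $\mathcal M_n$-diameter is at least $n^{1/d-\zeta}$ \emph{and} which has at least $n^{\alpha-\zeta}$ vertices. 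This is what Conjecture~\ref{conj-system-infty} should provide once combined with: (i) the KPZ computation of Section~\ref{sec-kpz}, to the effect that a CLE$_6$ loop enclosing quantum area $A$ is crossed by the space-filling SLE$_8$ about $A^{\alpha}$ times (this being the continuum incarnation of ``number of vertices on the loop''); and (ii) the LQG area--diameter relation behind Proposition~\ref{prop-map-diam}, by which such a loop has LQG (hence $\mathcal M_n$-graph) diameter about $A^{1/d}$. Applying (i)--(ii) to the CLE$_6$ loop of largest enclosed area --- which has area comparable to the whole surface --- gives a loop with $n^{\alpha+o(1)}$ vertices and $\mathcal M_n$-diameter $n^{1/d+o(1)}$. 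Choosing $n=n(k)$ with $n^{1/d-\zeta}=k$, the root-swap identity then gives $\BB P[(\mathcal M_n\text{-diam of }\ell_0^{(n)})\ge k]\ge\tfrac12 n^{\alpha-1-\zeta}(1-o(1))$, which up to constants equals $k^{\,d(\alpha-1-\zeta)/(1-d\zeta)}$ and tends to $k^{-d(1-\alpha)}$ as $\zeta\rta 0$.

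Two points need care even within this conditional framework. First, the local-limit transfer is being invoked with $k=k(n)\rta\infty$ rather than with $k$ fixed, so one cannot quote it verbatim; instead one should work directly in the UIMS and show that a window $[1,n^{1+o(1)}]\cap\BB Z$ contains, with probability bounded below, a loop of the required size all of whose crossings of $\BB R$ lie in that window. (Mesoscopic loops of a meandric system should be localized at scale comparable to their number of vertices, and this too must be extracted from the scaling limit.) Second, the diameter bound genuinely uses that a \emph{single} loop is simultaneously large in quantum area, in diameter, and in vertex count; this is stronger than Theorem~\ref{thm-macro} or Conjecture~\ref{conj-largest} individually, and requires controlling the joint law of a CLE$_6$ loop's quantum area and its LQG diameter.

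The principal obstacle is that Conjecture~\ref{conj-system-infty} is itself open, so at best this argument produces a conditional result. But even granting the scaling limit, the delicate ingredient is the KPZ input (i): in the conjectured limit the CLE$_6$ is \emph{independent} of the $\sqrt2$-LQG surface --- it is the whole-plane SLE$_8$, not the CLE$_6$, that is the ``matched'' decoration --- so the exact mating-of-trees and quantum-zipper machinery, which presupposes that the SLE/CLE arises as an interface of the LQG, does not apply to the CLE$_6$ here. One would need a robust KPZ argument valid for an \emph{independent} CLE$_6$ decoration, plus regularity estimates precluding degeneration of the mesoscopic loops (e.g.\ many tiny loops conspiring to imitate one large one). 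It is precisely the absence of such tools that forces the paper to record only the weaker unconditional Theorem~\ref{thm-origin-loop}, proved by the same root-swap but with the lossy bound ``a loop of diameter $\ge D$ has $\ge D$ vertices'' in place of the conjectural ``$\ge D^{d\alpha}$''.
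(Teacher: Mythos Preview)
Your heuristic is aligned with the paper's one-sentence justification, which simply says that if Conjecture~\ref{conj-largest} held, then ``the same arguments as in the proof of Theorem~\ref{thm-origin-loop}'' would give Conjecture~\ref{conj-origin-loop}. The core mechanism is exactly your root-swap: a uniformly placed root lands on a given loop with probability proportional to its vertex count, so a loop with $n^{\alpha+o(1)}$ vertices in a window of size $n$ yields a $k^{-(1/\alpha-1)-o(1)}$ tail for the vertex count of $\ell_0$ (take $n\approx k^{1/\alpha}$), and if that same loop also has diameter $n^{1/d+o(1)}$ one gets a $k^{-d(1-\alpha)-o(1)}$ tail for the diameter (take $n\approx k^d$).

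Two refinements. First, the paper's template (the proof of Theorem~\ref{thm-origin-loop}) works directly in the UIMS via translation invariance --- sample $X$ uniformly from $[-n,n]\cap\BB Z$ and use that $(\mathcal M-X,\Gamma-X)\eqD(\mathcal M,\Gamma)$ --- so the local-limit transfer you worry about never enters; you correctly identify this fix yourself. Second, your final paragraph somewhat inverts the role of independence in KPZ: the formula~\eqref{eqn-kpz} is stated \emph{precisely} for random sets sampled independently of the LQG surface, and Section~\ref{sec-kpz} already invokes it in exactly this mode to derive $\alpha$. The conjectured independence of the CLE$_6$ from the $\sqrt 2$-LQG is therefore a feature for KPZ, not an obstruction; the genuine obstacles are the scaling-limit conjecture itself and, as you rightly emphasize, the need for \emph{joint} control of diameter and vertex count on a single loop --- a strengthening of Conjecture~\ref{conj-largest} that the paper leaves implicit.
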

 
If Conjecture~\ref{conj-largest} were true, then, at least heuristically, the same arguments as in the proof of Theorem~\ref{thm-origin-loop} would lead to a proof of Conjecture~\ref{conj-origin-loop}. 

\subsection{The uniform infinite half-plane meandric system (UIHPMS)} 
\label{sec-half-plane}

The \textbf{uniform infinite half-plane meandric system (UIHPMS)} is a natural variant of the UIMS which is half-plane like in the sense that it has a bi-infinite sequence of distinguished ``boundary vertices'' (points in $\BB Z$ which are not disconnected from $\infty$ below the real line by any path in the UIHPMS), but ``most'' vertices are not boundary vertices. We will now give two equivalent definitions of this object, see Figure~\ref{fig-cutting}.

\medskip

\noindent \emph{By cutting:}
Start with a sample of the UIMS on $\mathbb Z$, represented by a pair of arc diagrams as in Section~\ref{sec-infinite}. 
We assume that the arcs of the two arc diagrams are drawn in $\mathbb R^2$ in such a way that the arcs do not cross each other and each arc with endpoints $x <y$ is contained in the vertical strip $[x,y] \times \BB R$ (the arc diagrams in all of the figures in this paper have this property).
We cut all of the arcs below the real line which intersect the vertical ray $\{1/2\}\times (-\infty,0]$, leaving two unmatched ends corresponding to each arc.
Then, we rewire successive pairs of unmatched ends to form new arcs. That is, we enumerate the unmatched ends from left to right as $\{e_j\}_{j\in\BB Z}$, with positive indices corresponding to ends to the right of $\{1/2\}\times (-\infty,0]$ and non-positive indices corresponding to ends to the left. Then, we link up $e_{2j-1}$ and $e_{2j }$ for each  $ j\in\BB Z$. See Figure~\ref{fig-cutting} (Left). The resulting infinite collection of the loops (and possibly bi-infinite paths) is the UIHPMS.

A point $x\in \BB Z$ is called a \textbf{boundary point} if it can be connected to the ray $\{1/2\}\times (-\infty, 0]$ by some continuous path without crossing any arc or the real line. Each endpoint of arcs which were cut is a boundary point, but there are also other boundary points. See Figure~\ref{fig-cutting} (Left) for an illustration.
It is clear from the random walk description of the meandric system~\eqref{eqn-rw-inf-adjacency} that each time $x\in\BB Z$ at which the walk $\mcl R$ attains a running infimum when run forward or backward from time 0 gives rise to a boundary point of the UIHPMS (but not every boundary point arises in this way).
Hence, there are infinitely many boundary points. 
Denote by $J_k  \in \BB Z $ (resp.\ $J_{-k+1} \in \BB Z$) the $k$th positive (resp.\ non-positive) boundary point.
Note that in the UIHPMS we always have that $J_0=0$ and $J_1=1$ and that $J_0$ and $J_1$ are not joined by an arc.

\medskip
\noindent \emph{By random walks:} We also have a random walk description for the UIHPMS similar to that of the UIMS. The only difference is that we use a \emph{reflected} simple random walk instead of an ordinary simple random walk for the arc diagram below the real line.
Let $\mcl L$ and $\mcl R$ be independent two-sided simple random walks on $\mathbb Z$ with $\mcl L_0 = \mcl R_0 = 0$.
For $x_1,x_2\in \BB Z$ with $x_1 <x_2$, we draw an arc above (resp.\ below) the real line joining $x_1$ and $x_2$ if and only if 
\eqb \label{eqn-rw-inf-adjacency-half}
 \mcl L_{x_1-1} = \mcl L_{x_2} < \min_{y \in [x_1 ,x_2-1]\cap\BB Z} \mcl L_y \quad \text{(resp.}  \:  |\mcl R|_{x_1-1} = |\mcl R|_{x_2} < \min_{y \in [x_1 ,x_2-1]\cap\BB Z} |\mcl R|_y \: \text{)} ;
\eqe
c.f.~\eqref{eqn-rw-inf-adjacency}.

A point $x\in \BB Z$ is called a \textbf{boundary point} if $|\mcl R|$ crosses $1/2$ in between time $x-1$ and $x$, i.e., $\mcl R_{x-1}, \mcl R_x \in \{-1,0,1\}$.
As before, denote by $J_k \in \BB Z$ (resp.\ $J_{-k+1} \in \BB Z$) the $k$th positive (resp.\ non-positive) boundary point. See Figure~\ref{fig-cutting} (Right) for an illustration. Since $\mcl R_0=0$, we always have $J_0=0$, $J_1=1$, and $J_0$ not matched with $J_1$ as in the previous definition. 

\medskip

\begin{figure}[ht!]
    \begin{center}
    \begin{minipage}[c]{.53\textwidth}
    \includegraphics[width=\textwidth]{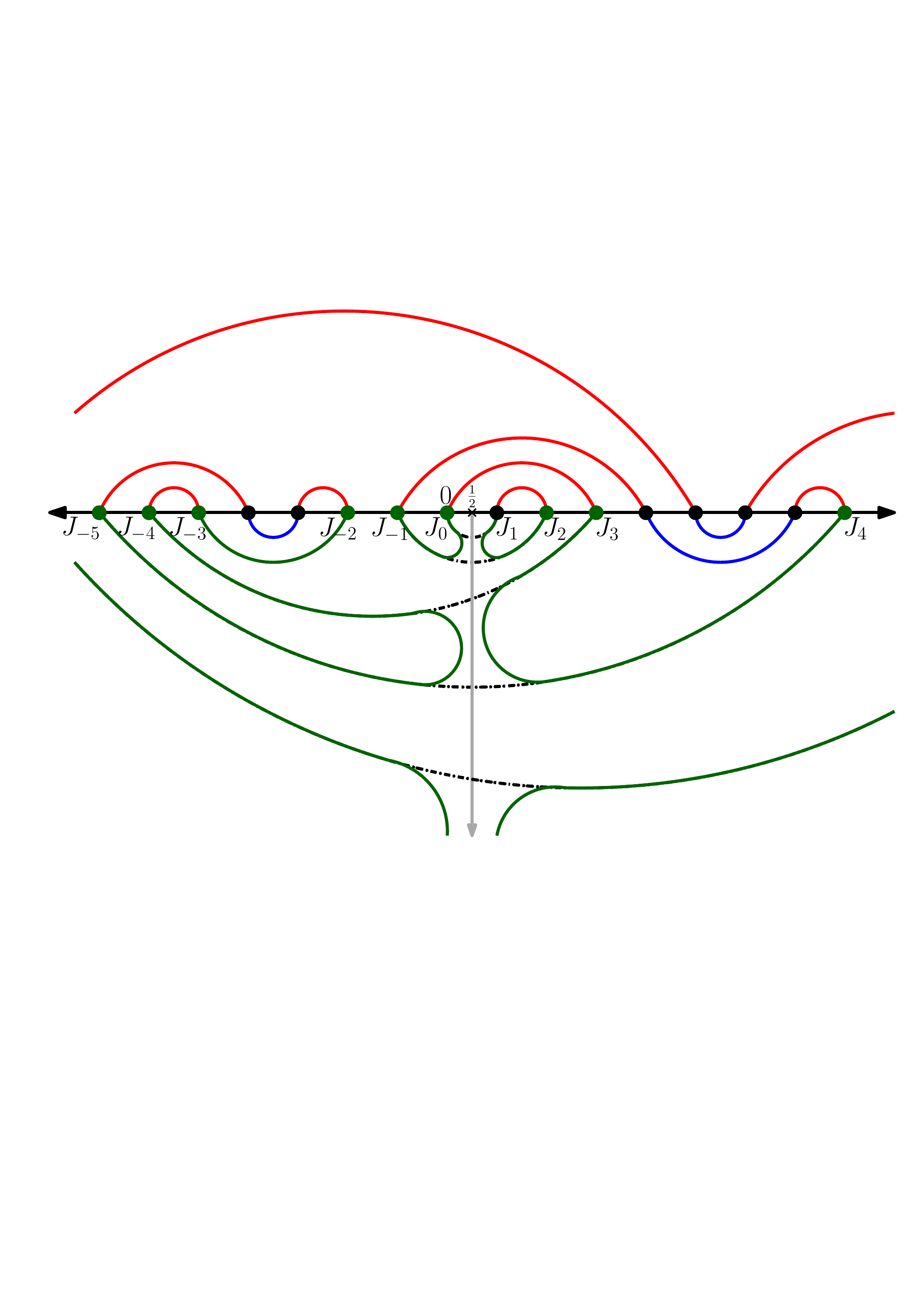}
    \end{minipage}
    \hspace{.02\textwidth}
    \begin{minipage}[c]{.43\textwidth}
    \includegraphics[width=\textwidth]{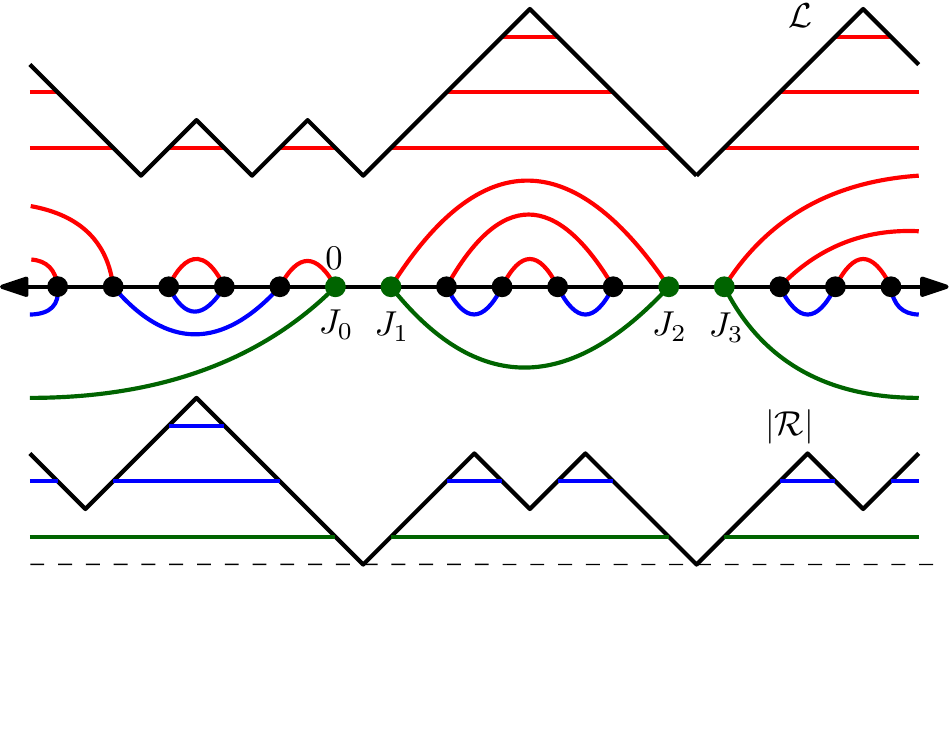}
    \end{minipage}
    \caption{\label{fig-cutting} \textbf{Left:} The UIHPMS constructed by cutting.
    The dash-dotted arcs straddling $0$ below the real line in the UIMS (defined in Section~\ref{sec-infinite}) are cut by the gray ray $\{1/2\}\times (-\infty,0]$, and these arcs are rewired successively to result in the UIHPMS.
    Boundary points are labeled as $\{J_k\}_{k\in\BB Z}$ and are colored green together with their connecting arcs. Note that the points $J_{-2}$ and $J_{-3}$ are boundary points but are not endpoints of any cut arcs. \textbf{Right:} The UIHPMS constructed by random walks. We show a subset of the UIHPMS together with the graphs of the corresponding increments of the encoding walks $\mcl L$ and $|\mcl R|$. Each horizontal line below the graph of $\mcl L$ (resp.\ |$\mcl R$|) corresponds to an arc above (resp.\ below) the real line. The boundary points and arcs incident to them are colored green.
    }
    \end{center}
    \vspace{-3ex}
\end{figure}

The following lemma will be proven in Section~\ref{sec-uihpms-equiv} using a discrete version of L\'evy's theorem~\cite{simons1983discrete} which relates (roughly speaking) the zero set and the running minimum times of a random walk.
 
\begin{lem} \label{lem-uihpms-equiv}
The law of the UIHPMS is the same under the above two definitions.
\end{lem}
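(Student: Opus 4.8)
The plan is to show that both constructions produce the same pair of (infinite) arc diagrams, in law. The arc diagram above $\BB R$ is identical in both constructions (it is encoded by the same walk $\mcl L$), so we only need to match the arc diagram below $\BB R$. In the ``by random walks'' definition, this arc diagram is the one associated to the reflected walk $|\mcl R|$ via~\eqref{eqn-rw-inf-adjacency-half}. In the ``by cutting'' definition, it is obtained from the arc diagram of an ordinary two-sided simple random walk $\mcl R'$ (via~\eqref{eqn-rw-inf-adjacency}) by cutting the arcs crossing $\{1/2\}\times(-\infty,0]$ and rewiring the free ends in left-to-right consecutive pairs. So the lemma reduces to the following deterministic-plus-distributional statement: \emph{the arc diagram produced by cutting-and-rewiring the arc diagram of a two-sided SRW $\mcl R'$ has the same law as the arc diagram of $|\mcl R|$ where $\mcl R$ is a two-sided SRW.}

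The key step is to identify, \emph{pathwise}, what the cut-and-rewire operation does to the encoding walk. Recall from the discussion after~\eqref{eqn-rw-inf-adjacency} that the increment of the encoding walk at $x$ records whether the arc incident to $x$ goes right ($+1$) or left ($-1$); equivalently, reading the arc diagram from left to right, the encoding walk is the ``height process'' that goes up when an arc opens and down when an arc closes. The arcs of $\mcl R'$ that get cut are exactly those straddling $1/2$, i.e., the arcs corresponding to the portion of $\mcl R'$ lying strictly below $\mcl R'_0 = 0$ when traversed — more precisely, an arc with endpoints $x_1 < x_2$ is cut iff $x_1 \le 0 < x_2$, which by~\eqref{eqn-rw-inf-adjacency} happens iff $\mcl R'$ dips to a value $< \min$ over $(x_1,x_2)$ while spanning time $0$. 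The rewiring in consecutive pairs then has the following effect on the height function: it reflects the part of the walk below $0$. I would make this precise by checking that after cutting and rewiring, the new arc incident to a point $x$ with $\mcl R'_{x-1}, \mcl R'_x < 0$ behaves exactly as it would under the reflected walk $|\mcl R'|$, and that the boundary points $J_k$ (endpoints of cut arcs, together with the extra ones) are exactly the times when $\mcl R'$ (equivalently $|\mcl R'|$) crosses $1/2$. This is where the discrete L\'evy theorem of~\cite{simons1983discrete} enters: it gives that the pair (zero set of $|\mcl R|$, running-minimum set of $\mcl R$) has the same joint law, or more usefully that $(|\mcl R|, \text{local time at }0)$ and $(\mcl R - \underline{\mcl R}, -\underline{\mcl R})$ agree in law, which is precisely the statement that reflecting a SRW at its minimum yields a reflected SRW with the crossing times of level $1/2$ corresponding to the running-minimum times.

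Concretely, I would proceed as follows. First, set up the bijection between non-crossing perfect matchings of $\BB Z$ (with the strip condition) and two-sided walk increments via~\eqref{eqn-rw-inf-adjacency}, and record the ``height function'' interpretation. Second, analyze the cut-and-rewire map on the level of arc diagrams and show it corresponds to replacing the encoding walk $\mcl R'$ by a walk which, above its time-$0$ running minimum (both forward and backward), agrees with $\mcl R'$ and, below, is the reflection; also verify the claim about which points become boundary points. Third, invoke~\cite{simons1983discrete} (the discrete L\'evy theorem) to conclude that this transformed walk has the law of $|\mcl R|$ for a two-sided SRW $\mcl R$, with the boundary points $J_k$ matching up in law. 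Since the above-$\BB R$ arc diagram is untouched and independent of the below-$\BB R$ one in both constructions, this yields equality in law of the full UIHPMS configurations, including the marked boundary points.

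The main obstacle I expect is the bookkeeping in the second step: carefully tracking how cutting the straddling arcs and re-pairing the free ends left-to-right translates into an operation on the two-sided encoding walk, in particular getting the behavior at and near the boundary points $J_0 = 0$, $J_1 = 1$ exactly right (the lemma asserts these are always boundary points, unmatched with each other, in both pictures), and handling the fact that the set of boundary points is strictly larger than the set of cut-arc endpoints — one must check those ``extra'' boundary points are precisely the non-minimum-record crossings of level $1/2$, which is exactly the subtlety the discrete L\'evy theorem is designed to handle. Everything else is routine once the pathwise picture is pinned down.
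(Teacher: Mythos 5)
Your proposal is correct and follows essentially the same route as the paper: reduce to the lower arc diagram, express the cut-and-rewire operation as a pathwise modification of the encoding walk $\mcl R$, and invoke the joint-law form of the discrete L\'evy theorem to identify the modified walk with $|\mcl R|$. The one place your sketch is loose is the description of the modified walk as ``the reflection of the part below $0$'': the precise statement is that one flips exactly one step per cut arc (the steps at which the running minimum first reaches an odd level), giving $\mcl R_x - M_x^{\mcl R} + \mathbf{1}_{\{M_x^{\mcl R}\ \mathrm{odd}\}}$, and it is the parity indicator here that is matched, via the joint law in the L\'evy theorem, with the indicator $\mathbf{1}_{\{\ell_x^{\mcl R}\ \mathrm{odd}\}} = \mathbf{1}_{\{\mcl R_x > 0\}}$ on the reflected side — exactly the bookkeeping you flagged as the remaining work.
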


Just as in the case of other types of meandric systems, we can view the UIHPMS as a planar map $ \mathcal M'$ decorated by a collection of loops (plus possibly bi-infinite paths) $\Gamma'$ and a Hamiltonian path $P'$. 
The law of the UIHPMS is invariant under even translations along the boundary. That is, if $\{J_k\}_{k\in\BB Z}$ is the set of boundary points as above, then
\eqb \label{eqn-bdy-translate}
(\mathcal M' - J_{2k}, P' - J_{2k} , \Gamma'  -J_{2k}) \eqD (\mathcal M' , P', \Gamma') ,\quad \forall k \in \BB Z .
\eqe
This is clear from the random walk description. 

As in Conjecture~\ref{conj-infinite-path}, we can also ask if there exists an infinite path in the UIHPMS. The answer should be no if the scaling limit conjecture is true.
Unlike in the whole-plane setting, we are able to prove this in the half-plane setting.

\begin{thm} \label{thm-infinite-path-half}
    Almost surely, the set $\Gamma'$ for the UIHPMS has no infinite path.
\end{thm}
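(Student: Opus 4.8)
The plan is to work with the random-walk description~\eqref{eqn-rw-inf-adjacency-half} and to exploit the structure that the reflection forces on the arcs below the line. The zeros of the walk $\mcl R$ partition $\BB Z$ into finite intervals, the ``blocks'', and the matching rule for $|\mcl R|$ in~\eqref{eqn-rw-inf-adjacency-half} shows that every arc below the line joins two points of the same block, that within each block these arcs are nested with a unique outermost one, and that --- drawing each arc with endpoints $x<y$ inside the strip $[x,y]\times\BB R$ --- the regions enclosed by the outermost below-arcs, together with one infinite face, tile the open lower half-plane. In particular, no below-arc meets a vertical line whose abscissa lies in the gap between two consecutive blocks. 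I would begin by recording these facts, together with the a.s.\ statement that $\mcl R$ has infinitely many zeros on each side of $0$, so that a.s.\ the blocks are finite and infinite in number.

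Suppose, for contradiction (and working on the above a.s.\ event), that $\Gamma'$ contains an infinite path $\rho$; note $\rho$ is necessarily bi-infinite, since in $\mathcal M'$ every vertex has arc-degree two. Along $\rho$ the arcs alternate between above and below the line, so $\rho$ uses infinitely many below-arcs, and since each block is finite these lie in infinitely many distinct blocks; hence the positions of the vertices of $\rho$ leave every bounded subinterval of $\BB R$. Fix a vertical line $\Lambda$ whose abscissa lies in a between-blocks gap; by the first paragraph $\rho$ can meet $\Lambda$ only through above-arcs. Either $\rho$ crosses $\Lambda$ infinitely often, or --- as must happen, for instance, when the vertex positions along $\rho$ are bounded on one side --- $\rho$ eventually stays on one side of $\Lambda$ and ``marches'' toward a single end of $\BB R$.

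The heart of the argument is a planar-topology analysis in these two regimes, using that $\rho\cup\{\infty\}$ is a Jordan curve in the sphere, that $\rho$ passes between the two sides of $\Lambda$ only through the upper half-plane, and that the part of $\rho$ below the line is confined to finite blocks. In the first regime, the above-arcs of $\rho$ crossing $\Lambda$ form an infinite nested family $G_1\subset G_2\subset\cdots$; each time $\rho$ returns to a prescribed side of $\Lambda$ it must re-enter the bounded region cut off by the then-innermost crossing arc, necessarily through that arc's chord, which contains only finitely many vertices --- contradicting the simplicity of $\rho$. The ``marching'' regime is the delicate case: there I would follow $\rho$ across the finitely many block-gaps it does cross and combine the nesting of the relevant above-arcs with the block confinement to force either a self-intersection of $\rho$ or a single finite block through which $\rho$ passes infinitely often. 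I expect this second case to be the main obstacle; it is exactly the behaviour which, for the whole-plane UIMS, is not ruled out (Conjecture~\ref{conj-infinite-path}), so the argument must use the block structure of the reflected walk in an essential way precisely here. Ruling out an infinite path on a full-measure event gives the theorem directly; if one instead only contradicts a positive-probability assumption, a countability argument over the countably many loops, together with the boundary-translation invariance~\eqref{eqn-bdy-translate}, upgrades the conclusion to the almost-sure statement.
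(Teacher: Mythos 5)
Your structural observations are sound and do appear in the paper's argument: the lower arcs of the UIHPMS are confined to the finite excursion intervals of $|\mcl R|$, no arc crosses a downward ray at a boundary point, and a bi-infinite path must pass between these intervals through the upper half-plane. The gap is exactly where you place it, and it is fatal as the argument stands: the ``marching'' regime cannot be closed by planar topology plus block confinement, because the theorem is not a deterministic consequence of that structure. Concretely, take the (measure-zero but structurally admissible) trajectories with $\mcl L_{2k}=0$, $\mcl L_{2k+1}=-1$ and $|\mcl R|_{2k}=0$, $|\mcl R|_{2k+1}=1$ for all $k\in\BB Z$. Then every lower block is a two-point excursion giving the lower arc $\{2k+1,2k+2\}$, every upper arc is $\{2k,2k+1\}$, all blocks are finite and infinite in number on both sides --- yet the arcs concatenate into a single simple bi-infinite path that marches from $-\infty$ to $+\infty$, visiting each block exactly once and crossing each between-block vertical line exactly once. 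So no refinement of the nesting analysis can ``force either a self-intersection of $\rho$ or a single finite block through which $\rho$ passes infinitely often''; some genuinely probabilistic input beyond ``blocks are a.s.\ finite'' is required. (Your first regime is also not closed as written: after traversing the innermost crossing arc, $\rho$ can reach an outer crossing arc via lower arcs and via upper arcs that never re-enter the region under the innermost one --- a lower arc can pass beneath the endpoint of an upper arc --- so the claim that each return to a given side forces a re-entry through the innermost arc's chord fails.)

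The paper supplies the missing probabilistic input in two stages before running a topological ``shield'' argument of the kind you envisage. First, a Burton--Keane-type argument (Lemma~\ref{lem-path-finiteness}) shows that a.s.\ no path of arcs started from a good boundary point is semi-infinite: the key estimate (Lemma~\ref{lem-block}, proved via a Kochen--Stone second-moment bound) produces infinitely many special times $J_{2k}$ that are preceded by many boundary points but few ``upper blocks'', so a positive density of semi-infinite boundary paths would have too few escape routes. Second, the resulting perfect matching of boundary indices is shown to be non-crossing, stationary and strongly ergodic (using the aperiodicity device of Section~\ref{sec-aperiodic}), and a parity/ergodicity lemma for such matchings (Lemma~\ref{lem-ergodic-matching}) yields infinitely many boundary paths joining the positive to the non-positive side of $0$ (Lemma~\ref{lem-infinite-arcs}). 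Only these probabilistically produced paths act as the shields that no bi-infinite path can cross. If you want to salvage your outline, you would need to identify and prove an almost-sure property of $(\mcl L,|\mcl R|)$ that rules out the marching configuration above; that property is precisely what the Burton--Keane and ergodic-matching lemmas provide.
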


Theorem~\ref{thm-infinite-path-half} will be proven in Section~\ref{sec-half-plane-proof} via a purely discrete argument. See the beginning of Section~\ref{sec-half-plane-proof} for an outline.
The proof is completely independent of the proofs of our theorems for finite meandric systems and for the UIMS (stated in Sections~\ref{sec-finite-results} and~\ref{sec-infinite}). So, the proofs can be read in any order.

In Conjecture~\ref{conj-system-infty}, we discussed how the UIMS is conjecturally related to the $\sqrt{2}$-quantum cone.
Another natural $\sqrt 2$-LQG surface -- with the half-plane topology instead of whole-plane topology -- is the \textbf{$\sqrt{2}$-quantum wedge}. See~\cite[Section 1.4]{wedges} or~\cite[Section 3.4]{ghs-mating-survey} for a rigorous definition.  
In the continuum setting, cutting a $\sqrt{2}$-quantum cone by an independent whole-plane SLE$_2$ from 0 to $\infty$ yields the $\sqrt{2}$-quantum wedge~\cite[Theorem 1.5]{wedges}. This is a continuum analog of the above cutting description of the UIHPMS. That is, in the scaling limit, the gray ray in Figure~\ref{fig-cutting} should converge to some simple fractal curve from 0 to $\infty$. Cutting the previous loop-decorated whole-plane along this curve yields the half-plane (or whole-plane with a slit) topology. The points located immediately on the left or right side of the curve become boundary points.
Thus, Conjecture~\ref{conj-system-infty} has a natural half-plane version as follows.

\begin{conj} \label{conj-system-infty-half}
    Let $(\mathcal M', P', \Gamma')$ be the infinite random planar map decorated by a bi-infinite Hamiltonian path and a collection of loops associated to the UIHPMS, as described just above.
    Then $(\mathcal M', P', \Gamma')$ converges under an appropriate scaling limit to an independent triple consisting of a $\sqrt 2$-LQG wedge, a space-filling SLE$_8$ from $\infty$ to $\infty$ in the half-plane, and a CLE$_6$ in the half-plane. 
\end{conj}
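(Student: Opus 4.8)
The plan is to reduce Conjecture~\ref{conj-system-infty-half} to the mating-of-trees framework, taking advantage of the fact that the UIHPMS is encoded by an explicit pair of \emph{independent} random walks. First I would package $(\mathcal M',P',\Gamma')$ in terms of the encoding pair $(\mcl L,|\mcl R|)$ from~\eqref{eqn-rw-inf-adjacency-half}: the path $P'$ is read off directly from the walks, $\mathcal M'$ is the gluing (mating) of the two infinite plane trees determined by the arc diagrams of $\mcl L$ and of $|\mcl R|$, and $\Gamma'$ is a deterministic functional of the same pair of trees. After rescaling space, $(\mcl L,|\mcl R|)$ converges to $(B,|W|)$ with $B,W$ independent two-sided Brownian motions and $|W|$ reflected at $0$; by the mating-of-trees theorem~\cite{wedges}, $(B,|W|)$ is exactly the process that encodes a $\sqrt 2$-quantum wedge decorated by an independent space-filling SLE$_8$ from $\infty$ to $\infty$, with the reflected coordinate producing the quantum boundary. (The Brownian correlation in the mating-of-trees encoding is $-\cos(\pi\gamma^2/4)$, which vanishes precisely at $\gamma=\sqrt 2$ and $\kappa=16/\gamma^2=8$, explaining why the two walks must be independent.) The real work in this first half is to upgrade the Brownian convergence of the encoding functions to convergence of the \emph{embedded} decorated map: embedding $(\mathcal M',P')$ by circle packing or the Tutte embedding, one would show, along the lines of~\cite{gms-tutte} for mated-CRT maps plus a universality comparison for the UIHPMS map, that the metric measure space, the Hamiltonian path and the distinguished boundary jointly converge to the $\sqrt2$-LQG wedge, space-filling SLE$_8$, and the SLE$_8$ boundary arc. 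This step is hard but is the sort of statement current LQG technology is built to handle.

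The genuinely new difficulty, which I expect to be the main obstacle, is the loop ensemble $\Gamma'$ and, above all, the claim that it converges to a CLE$_6$ that is \emph{independent} of the $\sqrt 2$-LQG wedge and the space-filling SLE$_8$. The guiding principle, following the discussion after Conjecture~\ref{conj-system}, is that $(\mathcal M',\Gamma')$ taken \emph{on its own} should be in the universality class of a critical-percolation-interface model on a random planar map -- concretely, $\Gamma'$ is the set of interfaces of the two-colouring of the faces of $\mathcal M'$ by loop-nesting parity -- and hence should converge to a \emph{different} surface, a $\sqrt{8/3}$-quantum wedge, decorated by CLE$_6$, while $(\mathcal M',P')$ on its own converges to the $\sqrt2$-quantum wedge with space-filling SLE$_8$ as above. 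The asserted independence then reflects the fact that the deterministic map $(\mathcal M',P')\mapsto\Gamma'$ transmits only microscopic information. To turn this into a proof one would need: (i) to identify the scaling limit of the nesting-parity colouring as critical percolation, presumably through a characterisation of CLE$_6$ by conformal invariance together with a percolation-type domain-Markov/locality property that can be shown to survive the lattice limit; and (ii) a quantitative decorrelation estimate forcing the joint law of $(\Gamma',\text{LQG},\text{SLE}_8)$ to factorise in the limit.

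I do not see how to carry out (i)--(ii) with existing methods: there is currently no technique for proving that a \emph{deterministic} functional of two independent random trees converges to an \emph{independent} critical percolation, and the lack of any FKG inequality in this model (Question~\ref{question:FKG}) closes off the monotonicity-based routes used for the $O(n)$ loop model. A realistic intermediate goal, which would already be strong evidence for the conjecture, is to establish the first paragraph in isolation -- convergence of $(\mathcal M',P')$ to the $\sqrt2$-LQG wedge decorated by space-filling SLE$_8$ -- and, separately, to show that the number and the diameters of the loops of $\Gamma'$ obey the CLE$_6$/KPZ predictions; Theorems~\ref{thm-macro} and~\ref{thm-origin-loop} of the present paper are first steps of exactly this kind. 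The half-plane case should be marginally more tractable than the whole-plane one, since the reflected-walk encoding makes the quantum boundary explicit and since Theorem~\ref{thm-infinite-path-half} already verifies the qualitative feature -- absence of an infinite path -- that the compactness of CLE$_6$ loops requires.
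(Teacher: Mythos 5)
The statement you are asked about is Conjecture~\ref{conj-system-infty-half}, not a theorem: the paper offers no proof of it, only heuristic justification (Section~\ref{sec-justification}) plus partial rigorous evidence (Theorems~\ref{thm-macro}, \ref{thm-origin-loop}, \ref{thm-infinite-path-half}). Your proposal is therefore correctly framed as a research programme rather than a proof, and your honest conclusion that steps (i)--(ii) are out of reach agrees with the paper's own position. Your first paragraph matches the paper's first line of justification: the encoding of the UIHPMS by the independent pair $(\mcl L,|\mcl R|)$, convergence to Brownian motion, and the mating-of-trees theorem of~\cite{wedges} identifying the limit of $(\mathcal M',P')$ as a $\sqrt 2$-LQG wedge decorated by space-filling SLE$_8$ (peanosphere convergence), with the embedding step being the hard but LQG-standard part.

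Where you diverge from the paper is in the justification of the CLE$_6$ part, and there is a genuine error in your heuristic. You assert that $(\mathcal M',\Gamma')$ taken on its own should converge to a $\sqrt{8/3}$-quantum wedge decorated by CLE$_6$, while $(\mathcal M',P')$ converges to a $\sqrt 2$-quantum wedge. This is internally inconsistent: $\mathcal M'$ is a single random planar map and cannot have two different scaling limits as a metric measure space (a $\sqrt{8/3}$-LQG and a $\sqrt 2$-LQG surface have different Hausdorff dimensions and volume growth). The conjectured marginal law of $(\mathcal M',\Gamma')$ is a $\sqrt 2$-LQG wedge decorated by an \emph{independent} CLE$_6$; the surface parameter $\gamma=\sqrt 2$ is forced by the Hamiltonian-path (equivalently, spanning-tree) weighting of the map, and the loops are expected to be CLE$_6$ because critical-percolation-type interfaces should converge to CLE$_6$ on essentially any background, not because the background is $\sqrt{8/3}$-LQG. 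The paper's actual route to the CLE$_6$ prediction is different from yours: it identifies meandric systems with the crossing fully packed $O(0\times 1)$ loop model on a random $4$-regular planar map (Section~\ref{sec-physics}) and imports the central-charge computations of Jacobsen--Kondev and Di Francesco--Golinelli--Guitter, as translated into SLE/LQG language in~\cite[Conjecture 6.2]{bgs-meander}; the independence of the three limiting objects is then supported numerically via the KPZ-predicted crossing exponent $\frac12(3-\sqrt 5)$ in Section~\ref{sec-kpz}. You should also note that the paper's percolation interpretation of $\Gamma'$ (Section~\ref{sec-perc}) goes through non-crossing partitions (the orange/green planar graphs), not a nesting-parity colouring of the faces of $\mathcal M'$, though the spirit is the same.
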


In the setting of Conjecture~\ref{conj-system-infty-half}, the boundary vertices of $\mathcal M'$ should correspond to boundary points for the $\sqrt 2$-LQG wedge. For example, if $(\mathcal M', P', \Gamma')$ is embedded into the half-plane appropriately (e.g., via some version of Tutte embedding or circle packing), in such a way that the boundary vertices of $\mcl M'$ are mapped to points on the real line, then one should have the convergence of the embedded objects toward the $\sqrt 2$-LQG wedge together with SLE$_8$ and CLE$_6$.

\bigskip

Starting from a CLE$_6$ in the half-plane, one can construct a chordal SLE$_6$ curve from 0 to $\infty$ by concatenating certain arcs of boundary-touching CLE$_6$ loops (see the proof of~\cite[Theorem 5.4]{shef-cle}). 
The analogous path can be also constructed in the UIHPMS by leaving exactly one boundary point unmatched as follows. 

We define the \textbf{pointed infinite half-plane meandric system (PIHPMS)} by rewiring lower arcs between non-positive boundary points in the UIHPMS as follows.
Start with the UIHPMS and recall that $J_0=0$ is not matched with $J_1=1$. Remove the arcs below the real line which join $J_{2k-1}$ and $J_{2k}$ for each $k \leq 0$.
Then, add new arcs joining $J_{2k-2}$ and $J_{2k-1}$ for each $k\leq 0$, in such a way that the new arcs do not cross any other arcs. Note that now $J_{0}=0$ is unmatched. This new configuration is defined to be the PIHPMS and we think of it as pointed at $J_{0}=0$.
One can also describe the PIHPMS in terms of random walks by replacing $|\mcl R
|_{\cdot}$ in~\eqref{eqn-rw-inf-adjacency-half} with the modified walk
\eqb\label{eqn-sle-rw}
\mcl R^\circ_{\cdot}:=|\mcl R|_{\cdot} - \textbf{1}_{\{\cdot\ge 0\}}.
\eqe
As there is no $x<0$ such that $\mcl R^\circ_{x-1} = \mcl R^\circ_{0} = -1$, we have a special boundary point $J_0=0$ which is not incident to any arc below the real line as desired. See Figure~\ref{fig-sle-sim} (Top Left) for an illustration. 

As per usual, we view the PIHPMS as a planar map $ \mathcal M^\circ$ decorated by a Hamiltonian path $ P^\circ$ and a collection of loops (plus possibly infinite paths) $\Gamma^\circ$. Note that the path started from $J_0 = 0$ must be an infinite path because no arc below the real line is incident to the origin, so the path cannot come back to its starting point to form a loop.
We prove that this is the unique infinite path.

\begin{prop} \label{prop-alternating}
Consider the PIHPMS as defined above. Almost surely, there is a unique infinite path $\gamma^\circ \in \Gamma^\circ$. This path starts at $J_0 = 0$ and hits infinitely many points in each of $\{J_k\}_{k < 0}$ and $\{J_k\}_{k > 0}$.
\end{prop}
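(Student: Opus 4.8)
The plan is to reduce the statement to the fact, established in Theorem~\ref{thm-infinite-path-half}, that the UIHPMS has no infinite path, by tracking how the rewiring in the definition of the PIHPMS acts on the loops of the UIHPMS. Existence and the basic shape of $\gamma^\circ$ are immediate: in the PIHPMS the origin $J_0=0$ is the unique vertex of $\mathcal M^\circ$ with no incident lower arc, while every other vertex has exactly one lower and one upper arc, so $0$ is the unique vertex of degree $1$ in the configuration $\Gamma^\circ$. Hence its component $\gamma^\circ$ is a semi-infinite simple path starting at $0$; it is the only component possessing an endpoint, and every other component is either a finite simple loop or a bi-infinite simple path.

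For the comparison, let $\alpha$ be the (common) upper-arc involution and let $\beta,\beta^\circ$ be the lower-arc matchings of the UIHPMS and of the PIHPMS, so $\beta^\circ$ is a fixed-point-free involution except that it is undefined at $0$. By the rewiring, $\beta$ and $\beta^\circ$ agree off the set $\mathcal J:=\{J_k:k\le 0\}$ of non-positive boundary points, while on $\mathcal J$ one has $\beta=(J_0\,J_{-1})(J_{-2}\,J_{-3})\cdots$ and $\beta^\circ=(J_{-1}\,J_{-2})(J_{-3}\,J_{-4})\cdots$ with $J_0$ unmatched. Thus the components of $\Gamma^\circ$ meeting $\mathcal J$ are obtained by cutting the (finite, by Theorem~\ref{thm-infinite-path-half}) loops of the UIHPMS along the removed arcs $(J_{2k-1},J_{2k})$, $k\le 0$, and re-gluing the resulting pieces along the added arcs $(J_{2k-2},J_{2k-1})$, $k\le 0$. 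Two observations follow: (i) a component of $\Gamma^\circ$ disjoint from $\mathcal J$ contains none of the rewired arcs, hence is literally a component of the UIHPMS, hence a finite loop; (ii) the components meeting $\mathcal J$ are precisely the connected components of an auxiliary graph $H$ whose vertices are the pieces produced by cutting the UIHPMS loops along the removed arcs and whose edges are the added arcs, and in $H$ exactly one vertex (the piece through $0$) has degree $1$ while all others have degree $2$; so the components of $H$ are one semi-infinite path — namely $\gamma^\circ$ — together with finite cycles (contributing finite loops of $\Gamma^\circ$) and, \emph{a priori}, some bi-infinite paths. Following $\gamma^\circ$ through $H$ from $0$ shows it passes through infinitely many of $J_{-1},J_{-2},J_{-3},\dots$ (exactly which ones depends on how the re-gluings short-circuit parts of loops), which gives the claim that $\gamma^\circ$ meets infinitely many $J_k$ with $k<0$.

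Uniqueness then amounts to excluding bi-infinite paths in $\Gamma^\circ$: by (i) such a path must meet $\mathcal J$, and by (ii) it would be a bi-infinite path in $H$. I would rule this out using the non-crossing structure — the removed arcs are the outermost lower arcs along the non-positive boundary, and the added arcs are the tiny arcs lying immediately to the left of the odd-indexed points $J_{-1},J_{-3},\dots$ — so that, recording the boundary indices at the ends of the pieces along any chain in $H$, planarity forces the chain, once it has left the $0$-piece, never to return ``above'' $J_0$; hence every infinite chain in $H$ either ends at the $0$-piece (and so equals $\gamma^\circ$) or closes up into a finite cycle. This leaves $\gamma^\circ$ as the unique infinite path.

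Finally, for the visits to $\{J_k:k>0\}$: every loop-piece of $\gamma^\circ$ belongs to a UIHPMS loop through an even-indexed non-positive boundary point, $\gamma^\circ$ uses infinitely many such pieces, and one must show that infinitely many of the corresponding loops reach a positive boundary vertex. This reaches into the fractal geometry of the model — such a loop reaches the positive boundary only if it spans a large $\BB Z$-distance — and I expect the argument to combine the even-translation invariance~\eqref{eqn-bdy-translate} with the loop-size lower bounds behind Theorem~\ref{thm-origin-loop} and Proposition~\ref{prop-loop-dist} (together with a $0$–$1$/ergodicity input) to conclude that infinitely many of the pieces along $\gamma^\circ$ do span far enough, almost surely. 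I expect these two delicate points — the planarity argument excluding bi-infinite chains in $H$, and the geometric argument that $\gamma^\circ$ returns to the positive boundary infinitely often — to be where essentially all of the work lies; the remainder is bookkeeping about how cutting and regluing acts on the loop configuration.
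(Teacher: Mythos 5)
Your reduction to the auxiliary graph $H$ is correct as far as it goes, and for part of the statement it is genuinely more elementary than the paper's argument: existence of $\gamma^\circ$, the fact that the $0$-piece is the unique degree-one vertex of $H$ so that its component is a semi-infinite simple path, and the visits to infinitely many $J_k$ with $k<0$ (every piece is finite because Theorem~\ref{thm-infinite-path-half} makes every UIHPMS component a finite loop, so an infinite simple path must traverse infinitely many distinct added arcs) all follow from your bookkeeping plus Theorem~\ref{thm-infinite-path-half} as a black box. Your first ``delicate point'' (no bi-infinite chains in $H$) is a gap in the write-up but is fillable along the lines you gesture at: the piece-matching on $\{J_k\}_{k\le 0}$ is non-crossing (a piece cannot cross another piece nor any downward ray $\{J_k\}\times(-\infty,0]$), and a non-crossing perfect matching of a \emph{half}-line deterministically has infinitely many ``exposed'' points --- this is exactly the non-probabilistic half of the proof of Lemma~\ref{lem-ergodic-matching}, which works unconditionally here because only finitely many nested arcs can straddle a gap $j-\tfrac12$ when finitely many points lie to its left. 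The exposed points cut $\{J_k\}_{k\le 0}$ into finite blocks that are closed under the piece-matching and linked to their neighbours by single added arcs, so every component of $H$ other than that of the $0$-piece is a finite cycle. You did not actually give this argument (``never to return above $J_0$'' is not a proof), but your setup supports it.

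The second gap is the serious one, and your proposed route would not close it. Your framework only records the matching induced on the non-positive boundary, so nothing in it explains why $\gamma^\circ$ ever reaches $\{J_k\}_{k>0}$, let alone infinitely often; and the tools you invoke --- the loop-size bounds behind Theorem~\ref{thm-origin-loop} and Proposition~\ref{prop-loop-dist} --- concern the whole-plane UIMS, control graph-distance diameters rather than whether a loop straddles the cutting ray, and only yield events of probability at least $1/10$, which no zero-one law converts into the almost sure recurrence you need (the relevant events are not translation invariant in the required sense, since the PIHPMS is rooted at $J_0$). The input that actually does the job is Lemma~\ref{lem-infinite-arcs}: almost surely infinitely many boundary paths $P_m$ run from a positive good boundary point $H_m$ to a non-positive one $H_{m'}$. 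Each such $P_m$, completed by the downward rays at its endpoints, is a ``shield'' that the semi-infinite path $\gamma^\circ$ must cross, and it can only do so by traversing $P_m$ itself; this one observation simultaneously gives the infinitely many positive visits, the infinitely many negative visits, and uniqueness (any other infinite path must traverse the same $P_m$'s and hence merge with $\gamma^\circ$). Lemma~\ref{lem-infinite-arcs} in turn rests on the Burton--Keane argument of Lemma~\ref{lem-path-finiteness} and on applying the ergodic matching lemma on all of $\BB Z$, where --- unlike on your half-line --- ergodicity and the aperiodicity device of Section~\ref{sec-aperiodic} are essential. Your proof would have to import that lemma (or an equivalent probabilistic input about arcs crossing from the positive to the non-positive boundary) to be complete.
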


The proof of Proposition~\ref{prop-alternating} is given in Section~\ref{sec-half-plane-proof}, and uses many of the same ideas as in the proof of Theorem~\ref{thm-infinite-path-half}. Another conjecture follows accordingly.

\begin{figure}[ht!]
    \begin{center}
    \begin{minipage}[c]{.38\textwidth}
    \includegraphics[width=\textwidth]{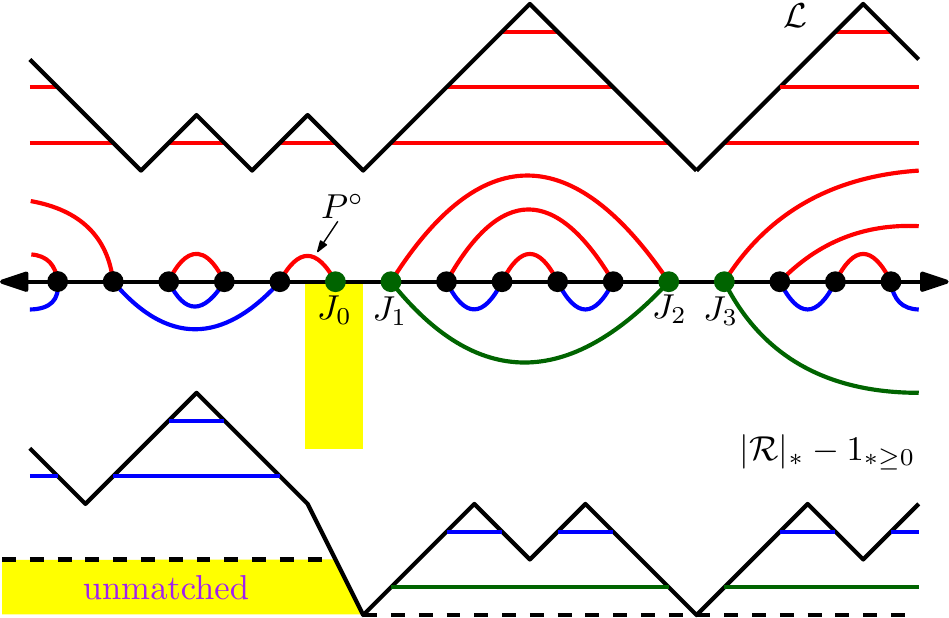}\\
    \includegraphics[width=\textwidth]{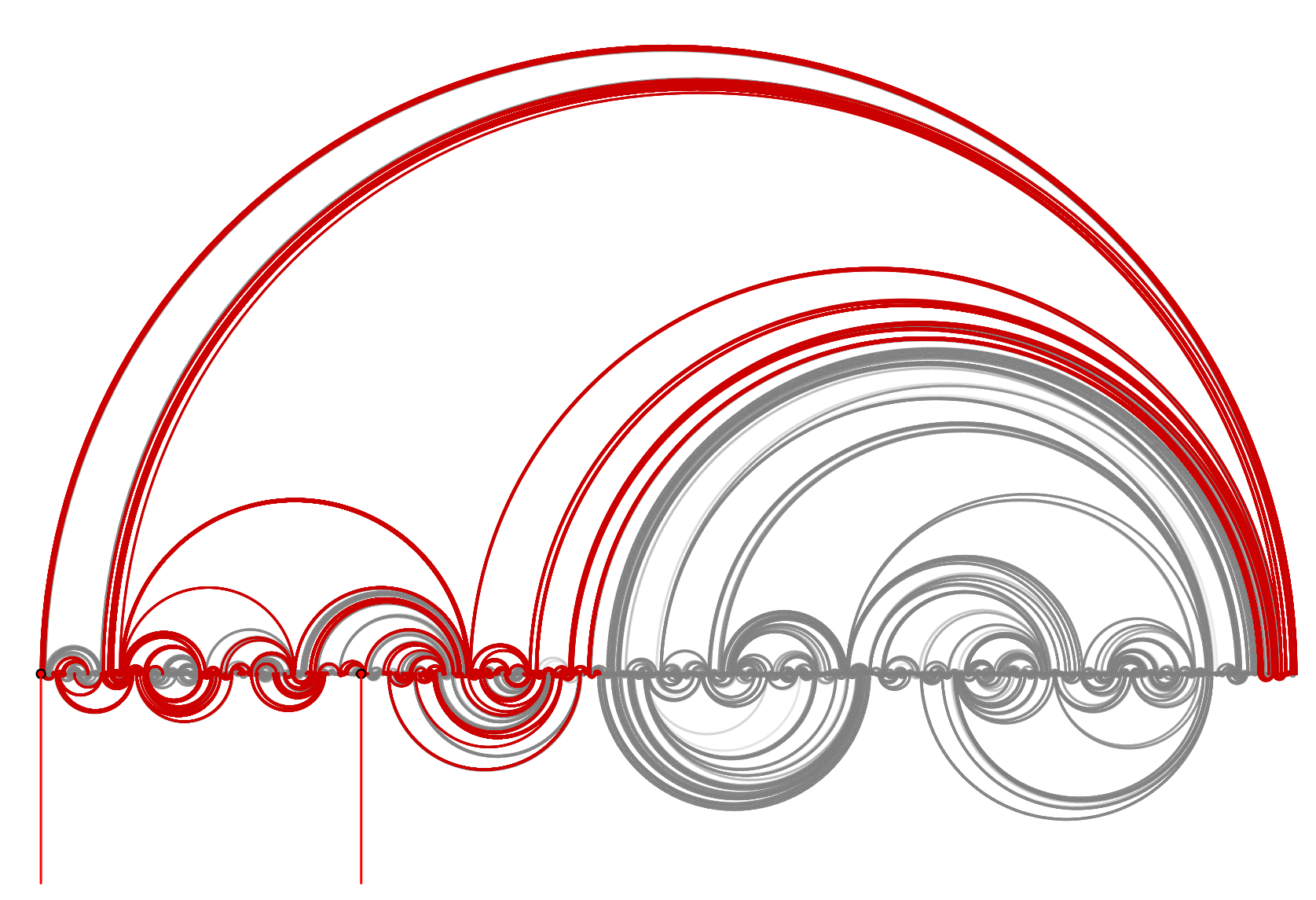}
    \end{minipage}
    \begin{minipage}[c]{.58\textwidth}
    \includegraphics[width=\textwidth]{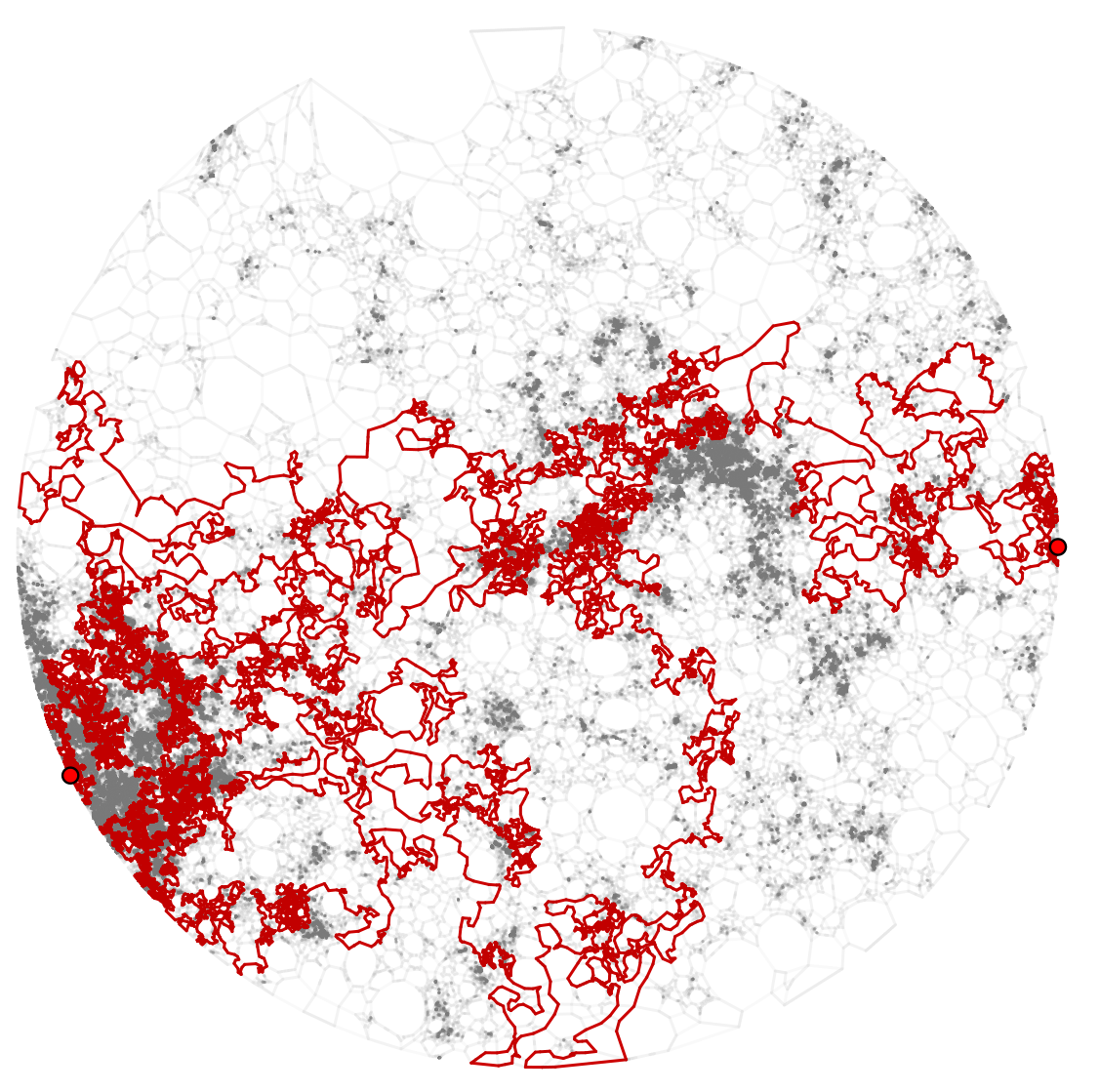}
    \end{minipage}
    \caption{\label{fig-sle-sim} \textbf{Top left:} The random walks encoding a sample of the PIHPMS obtained from the previous example in Figure~\ref{fig-cutting} (Right). The yellow strip in the PIHPMS highlights the special boundary point $J_0=0$ which is not incident to any arc below the real line as a consequence of the fact that there is no $x<0$ such that $\mcl R^\circ_{x-1} = \mcl R^\circ_{0} = -1$; see the yellow strip in the walk. \textbf{Bottom left/right:} A simulation of a finite-volume version (i.e., with the topology of the disk) of the PIHPMS with two marked points, of size $n = 9\cdot 10^6$. The bottom left picture shows the arc diagrams and the right picture shows the associated planar map, drawn in the disk via the Tutte embedding~\cite{gms-tutte}. The red curve is conjectured to converge to a chordal SLE$_6$ between the two marked boundary points. See Section~\ref{sec-sim} for the details of simulations.
    }
    \end{center}
    \vspace{-3ex}
\end{figure}

\begin{conj} \label{conj-sle}
    Let $(\mcl M^\circ, P^\circ , \Gamma^\circ )$ be the infinite random planar map decorated by a bi-infinite Hamiltonian path and a collection of loops (plus possibly infinite paths) associated to the PIHPMS, as described just above.
    Also, let $\gamma^\circ$ be the path started from 0 in $\Gamma^\circ$ (which is a.s. the unique infinite path by Proposition~\ref{prop-alternating}).
    Then $(\mcl M^\circ, P^\circ,  \gamma^\circ , \Gamma^\circ \setminus \gamma^\circ)$ converges under an appropriate scaling limit to a $\sqrt 2$-LQG wedge, a space-filling SLE$_8$ from $\infty$ to $\infty$ in the half-plane, a chordal SLE$_6$ from $0$ to $\infty$ in the half-plane, and a CLE$_6$ in the complement of the SLE$_6$ curve (i.e., a union of conditionally independent CLE$_6$s in the complementary connected components).
\end{conj}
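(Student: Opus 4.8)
\emph{Overall strategy.} The plan is to peel the convergence into three layers, starting with the undecorated piece and adding the loops and then the distinguished path. For the first layer I would show that $(\mcl M^\circ, P^\circ)$ — equivalently $(\mcl M', P')$ before the boundary rewiring — converges to a $\sqrt 2$-LQG wedge decorated by a space-filling SLE$_8$. The key point is that the two encoding walks $\mcl L$ and $|\mcl R|$ (and the slightly modified walk $\mcl R^\circ := |\mcl R| - \mathbf{1}_{\{\cdot \ge 0\}}$ of~\eqref{eqn-sle-rw}) are \emph{independent}, so the pair $(\mcl L,\mcl R)$ is precisely the Mullin-type random-walk input of the mating-of-trees encoding of a half-plane planar map decorated by a spanning tree — here the tree is the Hamiltonian path $P^\circ$ and $\mcl L,\mcl R$ are the contour functions of the two halves of the dual tree. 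The correlation parameter $-\cos(4\pi/\kappa)$ of the limiting pair of Brownian motions vanishes exactly when $\kappa = 8$, for which $\gamma = 4/\sqrt\kappa = \sqrt 2$. Thus this layer should follow from the peanosphere / mating-of-trees convergence for spanning-tree-decorated planar maps~\cite{wedges,ghs-mating-survey}, together with the metric and Tutte-embedding convergence of the associated mated-CRT map~\cite{gms-tutte}, adapted to the wedge setting; the reflected encoding $\mcl R^\circ$ is designed precisely so that the limit has the half-plane topology (a quantum wedge) with $J_0 = 0$ a boundary point.

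\emph{The loops.} The second layer is to identify, in the scaling limit, the collection $\Gamma^\circ$ of meandric loops with the loop ensemble of a \emph{critical percolation} on the decorated planar map, as anticipated in the discussion after Conjecture~\ref{conj-system}. Concretely, one would first argue that $(\mcl M^\circ, P^\circ, \Gamma^\circ)$ lies in the same universality class as a triple consisting of a spanning-tree-decorated half-plane planar map, an independent critical Bernoulli percolation configuration on it, and the percolation interface loops, the relevant $2$-coloring being read off from a microscopic parity/orientation structure of the pair of arc diagrams. Granting that, the conjecture reduces to the half-plane analogue of the convergence of critical percolation on random planar maps to CLE$_6$ on $\sqrt 2$-LQG~\cite{shef-cle,mww-nesting}, and — crucially — because the percolation coloring depends only on microscopic features of $(\mcl M^\circ,P^\circ)$ that are invisible in the limit, the limiting CLE$_6$ is independent of the limiting space-filling SLE$_8$. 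This independence, flagged as surprising just after Conjecture~\ref{conj-system}, would have to be extracted from a scale-by-scale comparison of the discrete loop observables with those of CLE$_6$ on the limiting LQG surface.

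\emph{The distinguished path $\gamma^\circ$.} Given the half-plane loop statement (Conjecture~\ref{conj-system-infty-half}), promoting it to the full Conjecture~\ref{conj-sle} should be comparatively soft. By construction the PIHPMS differs from the UIHPMS only by rewiring the lower arcs among non-positive boundary points, i.e.\ along the boundary of the wedge. In the scaling limit the boundary points converge to the boundary of the wedge and these arcs converge to the boundary-touching pieces of the limiting CLE$_6$ loops; the rewiring is then the discrete version of concatenating successive boundary-to-boundary excursions of CLE$_6$ loops, which by the proof of~\cite[Theorem 5.4]{shef-cle} produces a chordal SLE$_6$ from $0$ to $\infty$, with the remaining loop-pieces regrouping into conditionally independent CLE$_6$'s in the complementary components. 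The things to verify here are that the discrete boundary arcs of $\Gamma^\circ$ converge jointly with everything else to the corresponding continuum CLE$_6$ arcs, that the rewiring commutes with the limit because it only alters a boundary-local (measure-zero) part of the configuration — the same mechanism that underlies Theorem~\ref{thm-infinite-path-half} and Proposition~\ref{prop-alternating} — and that the curve so obtained is genuinely the SLE$_6$ associated with the CLE$_6$ rather than some other parametrization.

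\emph{Main obstacle.} The bottleneck is the loop layer: identifying $\Gamma^\circ$ with a percolation, or even merely establishing that it belongs to the percolation universality class with the asserted independence structure. As stressed throughout the paper, the meandric loops are a genuinely non-local and combinatorially intricate functional of the pair of arc diagrams, with no Mullin-type bijection available, so a direct translation seems out of reach and one is forced either to discover a new exact correspondence or to run a coupling/universality argument matching all discrete loop observables with those of CLE$_6$ on LQG at every scale. This, together with the requirement that the metric, measure, space-filling SLE$_8$, chordal SLE$_6$, and CLE$_6$ all converge \emph{jointly} with the correct independences, is exactly why Conjecture~\ref{conj-sle} — like Conjecture~\ref{conj-system-infty-half} on which it rests — remains open.
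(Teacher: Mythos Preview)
The statement is a \emph{conjecture}, and the paper does not prove it. There is therefore no ``paper's own proof'' to compare against; the paper offers only heuristic justification (the discussion immediately preceding Conjecture~\ref{conj-sle} and the physics/mating-of-trees arguments of Section~\ref{sec-justification}). Your proposal is not a proof but a proof \emph{strategy}, and you correctly recognize this in your final paragraph.

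That said, your outline tracks the paper's heuristics closely. Your first layer (mating of trees gives $\sqrt 2$-LQG wedge with SLE$_8$) is exactly the content of Section~\ref{sec-mating}. Your third layer (the rewiring is the discrete analogue of the CLE$_6 \to$ chordal SLE$_6$ construction of~\cite[Theorem~5.4]{shef-cle}) is precisely the reasoning the paper gives in the paragraph introducing the PIHPMS. Your identification of the second layer --- showing that the meandric loops land on CLE$_6$ independent of the SLE$_8$ --- as the genuine bottleneck is also the paper's view: this is what makes Conjectures~\ref{conj-system}, \ref{conj-system-infty}, and~\ref{conj-system-infty-half} open, and Conjecture~\ref{conj-sle} is explicitly built on top of Conjecture~\ref{conj-system-infty-half}. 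The percolation interpretation you allude to is developed in Section~\ref{sec-perc}, but as the paper stresses (Section~\ref{sect-no-fkg}), the randomness of the map and of the ``percolation'' are strongly coupled and no FKG-type tool is known, so your proposed ``scale-by-scale comparison of discrete loop observables with CLE$_6$'' is a description of what one would want, not of how to get it.

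In short: there is no gap to name because there is no proof to have a gap in; your proposal is an accurate summary of why the conjecture is plausible and why it is hard, and it aligns with the paper's own account.
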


\begin{remark} \label{remark-correlated}
One can also consider finite or infinite meandric systems constructed from a pair of random walks which are correlated rather than independent, i.e., the step distribution of the pair of walks assigns different weights to steps in $\{ (-1,-1) , (1,1)\}$ and steps in $\{(-1,1) ,(1,-1)\}$. We do not have a simple combinatorial description for the random meandric systems obtained in this way, so we do not emphasize them in this paper. Based on mating of trees theory~\cite{wedges,ghs-mating-survey}, it is natural to conjecture that the associated decorated planar map converges to $\gamma$-LQG decorated by a space-filling SLE$_{16/\gamma^2}$ and an independent CLE$_6$, where $\gamma \in (0,2)$ is chosen so that the correlation of the encoding walks is $-\cos(4\pi/\gamma^2)$. We have run some numerical simulations (similar to Section~\ref{sec-sim}) which are consistent with this.
Our proofs of Theorems~\ref{thm-macro} and~\ref{thm-origin-loop} work verbatim in the case of correlated walks, with $d$ replaced by the dimension of $\gamma$-LQG. However, the upper bound for the diameter of $\mcl M_n$ in Proposition~\ref{prop-map-diam} uses estimates from~\cite{gp-dla} which are only proven for $\gamma=\sqrt 2$. Most of our proofs for the UIHPMS do not work in the case of correlated walks since we cannot apply the discrete L\'evy theorem~\eqref{eqn-discrete-levy} to one coordinate of the walk independently from the other.
\end{remark}

\section{Percolation interpretation}
\label{sec-perc}

The main goal of this section is to explain how meandric systems can be viewed as a model of (critical) percolation on planar maps. Throughout the section we assume that the reader is familiar with the basic terminology and results in percolation theory. This section is included only for intuition and motivation: it is not needed for the proofs of our main results.

\subsection{Non-crossing perfect matchings and non-crossing integer partitions}

We start by recalling a classical bijection between non-crossing perfect matchings and non-crossing integer partitions, see for instance~\cite[Section 3]{gnp-meander-system}.

We write a partition $\pi$ of a finite set $A\subset \BB R$
as $\pi = \{V_1, \dots, V_k\}$, where $V_1, \dots, V_k$ are called the blocks of $\pi$ and are non-empty, pairwise disjoint sets,
with $\cup_i V_i=A$.  
We say that a partition $\pi$ is non-crossing when it is not possible to find two
distinct blocks $V, W \in \pi$ and numbers $a < b < c < d$ in $A$ such that $a, c \in V$ and
$b, d \in W$. Equivalently, $\pi$ is non-crossing if it can be plotted as a planar graph with the vertices in $A$
arranged on the real line, so that the blocks of $\pi$ are the connected components
of the planar graph drawn in the upper-half plane; see the orange planar graph in Figure~\ref{fig-bij_ms_ncp} for an example.

\begin{figure}[ht!]
	\begin{center}
		\includegraphics[scale=0.6]{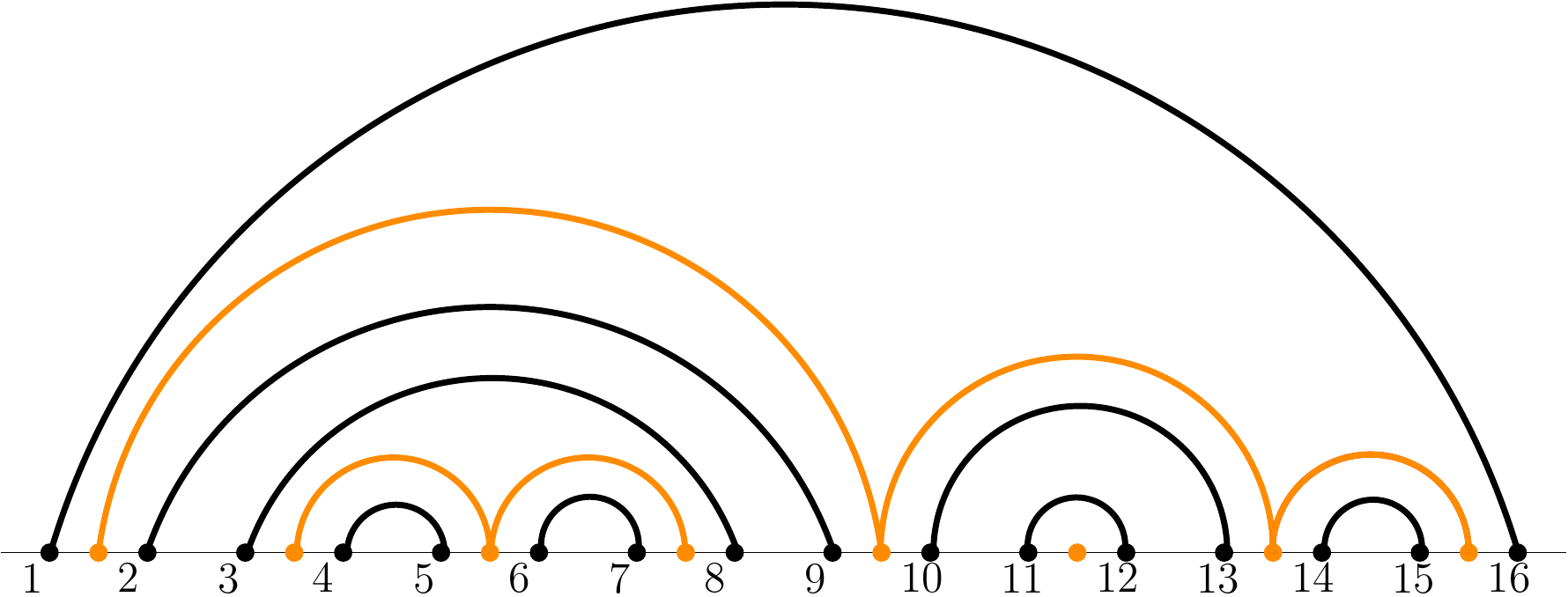}  
		\caption{\label{fig-bij_ms_ncp}  We plot in orange the planar graph associated with the non-crossing partition  $\pi=\{\{2-1/2,10-1/2,14-1/2,16-1/2\},\{4-1/2,6-1/2,8-1/2\},\{12-1/2\}\}$ of the points $2\BB Z \cap [1,16]-1/2$ and in black the corresponding planar graph of the non-crossing perfect matching $\psi(\pi)$ of the points $[1,16] \cap \BB Z$. Note that the black arcs are chosen so that they ``block'' new potential orange edges.
		}
	\end{center}
	\vspace{-3ex}
\end{figure}

We can now describe the aforementioned bijection. 
Given a non-crossing partition $\pi=\{V_1, \dots, V_k\}$ of the points $2\BB Z \cap [1,2n]-1/2$, we consider the unique non-crossing perfect matching $\psi(\pi)$ of $[1,2n] \cap \BB Z$ such that for every pair of points $z<w\in 2\BB Z \cap [1,2n]-1/2$ not contained in the same block $V_i$, there exists a pair of matched points
$x,y\in [1,2n] \cap \BB Z$ in  $\psi(\pi)$ such that either $x<z<y<w$ or $z<x<w<y$. See Figure~\ref{fig-bij_ms_ncp} for a graphical interpretation.

Conversely, given a non-crossing perfect matching $m$ of $[1,2n] \cap \BB Z$, we construct a non-crossing partition $\psi^{-1}(m)$ of the points $2\BB Z \cap [1,2n]-1/2$ as follows: for every pair of points $z,w\in 2\BB Z \cap [1,2n]-1/2$ we say that $z\sim_m w$ if there is no pair of matched points $x,y$ in $m$ with either $x<z<y<w$ or $z<x<w<y$. We then set the equivalent classes of $(2\BB Z \cap [1,2n]-1/2,\sim_m)$ to be the blocks of $\psi^{-1}(m)$.

Note that the fact that $\psi$ and $\psi^{-1}$ are inverse maps is immediate.

\subsection{Meandric systems as boundaries of clusters of open edges}\label{sect-inf-meand-perco}

Given a non-crossing perfect matching $m$ of $[1,2n] \cap \BB Z$, one can also consider (in the previous description of the inverse map $\psi^{-1}$) the points $2\BB Z \cap [0,2n]+1/2$ (see the green points in Figure~\ref{fig-meand-as-perco}). Then, as before, one determines a second non-crossing integer partition $\widetilde\psi^{-1}(m)$ of the points $2\BB Z \cap [0,2n]+1/2$ (see for instance the green planar graph on the upper-half plane in Figure~\ref{fig-meand-as-perco}).

We further notice that any element of the triple $(m,\psi^{-1}(m),\widetilde\psi^{-1}(m))$ determines the  two other elements of the triple. 

\begin{figure}[ht!]
	\begin{center}
		\includegraphics[scale=0.5]{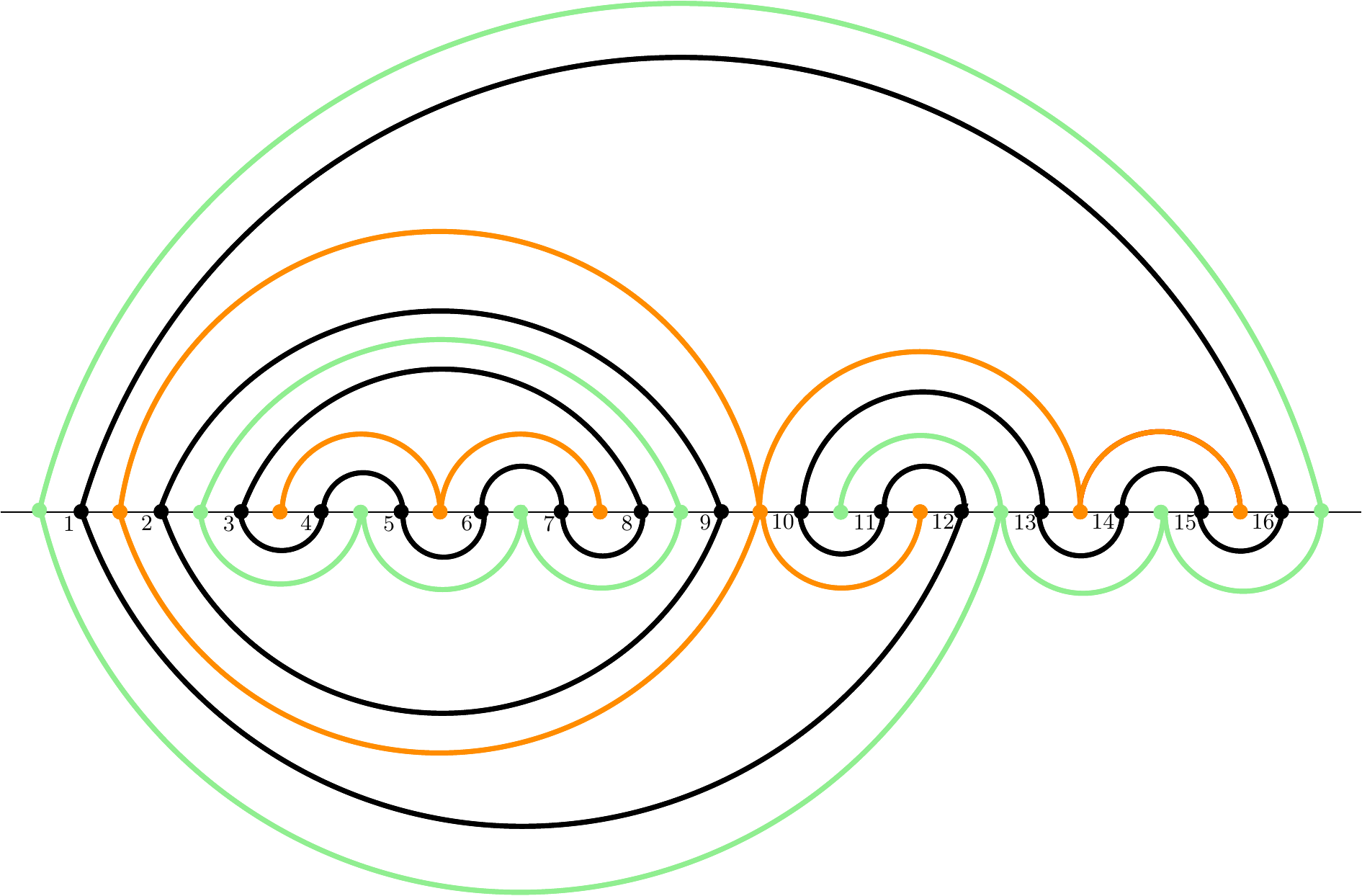}  
		\caption{\label{fig-meand-as-perco} A meandric system $m=(m_1,m_2)$ plotted in black together with its two pairs of non-crossing integer partitions $(\psi^{-1}(m_1),\psi^{-1}(m_2))$ and $(\widetilde\psi^{-1}(m_1),\widetilde\psi^{-1}(m_2))$ in orange and green, respectively.
		}
	\end{center}
	\vspace{-3ex}
\end{figure}

Consider now a meandric system of size $n$, which is a pair $(m_1,m_2)$ of non-crossing perfect matchings of $[1,2n] \cap \BB Z$; the first above and the second below the real line. Using the bijections $\psi$ and $\widetilde\psi$ described above, one can associate to the meandric system $(m_1,m_2)$ two pairs of non-crossing integer partitions of $2\BB Z \cap [1,2n]-1/2$ and $2\BB Z \cap [0,2n]+1/2$: the first pair (resp.\ the second pair) is obtained applying the map $\psi^{-1}$ (resp.\ the map $\widetilde\psi^{-1}$) to $m_1$ and $m_2$, obtaining the orange (resp.\ the green) planar graph in Figure~\ref{fig-meand-as-perco}. From now on we will refer to these two pairs of non-crossing integer partitions as the orange and the green planar graphs (associated to a meandric system).

\medskip

We now present a new way of thinking about a meandric system of size $n$ closer to percolation models (in what follows we explain what are the classical quantities in percolation models in our setting; c.f. Figure~\ref{fig-meand-as-perco}).

\medskip

\noindent \emph{Main lattice:} This is the planar map whose vertices are the union of the green and orange vertices, and whose edges are the union of the green and orange edges and the edges corresponding to the real line (between consecutive vertices of different colors).

\medskip

\noindent \emph{Dual lattice:} This is the planar map that we previously associated with a meandric system, i.e. the map whose vertices are the black vertices, and whose edges are the black edges and the edges corresponding to the real line (between consecutive black vertices).

\medskip

\noindent \emph{Open edges in the main lattice:} These are the orange edges.

\medskip

\noindent \emph{Closed edges in the main lattice:} These are the green edges.

\medskip

\noindent \emph{Boundary of clusters of open edges:} These are the black loops in the meandric system.

\medskip

Note that if one considers a uniform meandric system (or equivalently a uniform pair of non-crossing integer partitions), then one obtains (through the interpretation above) a model of percolation on random planar maps. Analogously, if one considers the UIMS or the UIHPMS (Sections~\ref{sec-infinite} and~\ref{sec-half-plane}), then one obtains (through the obvious adaptations of the constructions above\footnote{We do not give the details of these constructions since they are not needed to continue our discussion.}) some natural model of percolation on infinite random planar maps.

One of the main difficulties in the study of these models is that the randomness for the environment (i.e.\ the planar map) and the randomness for the percolation are strongly coupled. See also the beginning of Section~\ref{sect-no-fkg} for a second major difficulty.

\subsection{Meandric systems and critical percolation on random planar maps}

Our interpretation of meandric systems as a percolation model described in the previous section, gives also some natural new interpretations of our results. In particular, Theorem~\ref{thm-macro}, which states that a uniform meandric system admits loops whose graph distance diameter is nearly of the same order as the graph distance diameter of the associated planar map, implies that our percolation model admits macroscopic clusters. Hence, it does not behave like subcritical Bernoulli percolation on a fixed lattice.

Due to Corollary~\ref{cor-path-dichotomy} below, a.s.\ the UIMS has a bi-infinite path if and only if our above percolation model has an infinite open cluster. Theorem~\ref{thm-infinite-path-half} implies that there is no infinite cluster for the percolation model on the UIHPMS. 
Due to the construction of the UIHPMS by cutting the UIMS along an infinite ray (Section~\ref{sec-half-plane}), this is roughly analogous to the statement that there is no infinite cluster for, say, Bernoulli percolation on $\BB Z^2 \setminus (\{0\}\times \BB Z_+)$. 
In particular, our percolation model does not behave like supercritical Bernoulli percolation on a fixed lattice. 

By combining the preceding two paragraphs, we see that our percolation model is in some sense ``critical'' (see also Proposition~\ref{prop-crossings} for further evidence). 
Determining whether there is a bi-infinite path in the UIMS (Conjecture~\ref{conj-infinite-path}) is therefore analogous to determining whether there is percolation at criticality.
 
We note that the interpretation of meandric systems as a critical percolation model is also consistent with Conjectures~\ref{conj-system}, \ref{conj-system-infty} and \ref{conj-system-infty-half}, which state that the scaling limit of the loops of a meandric system should be the conformal loop ensemble with parameter $\kappa=6$. Indeed, the latter is also conjectured to be the scaling limit of the cluster interfaces for critical percolation models on various deterministic discrete lattices. This conjecture has been proved in the case of critical site percolation on the triangular lattice \cite{smirnov-cardy
	,camia-newman-sle6
	,camia-newman-cle6
}. 
 
\subsection{Box crossings}\label{sect-box-cross}

A classical result for critical Bernoulli ($p=1/2$) bond percolation on $\mathbb{Z}^2$ is that given a box $B_n = \BB Z^2 \cap \{[0,n]\times[0,n+1]\}$, then the probability that there exists a path of open edges connecting the top side of $B_n$ to the bottom side of $B_n$ is always equal to $1/2$, independently of the size $n$ of the box.

We prove the analogous result in the context of the UIMS. We start by defining a natural notion of box for the UIMS. 
Given the UIMS together with its orange and green planar graphs as in the left-hand side of Figure~\ref{fig-inf-meand-box}, we define the \textbf{box} of size $n$ rooted at $x\in\mathbb Z+\frac 1 2$ to be the collection of (black/orange/green) vertices in $[x,x+2n-1]$ and (black/orange/green) edges with at least one endpoint in $(x,x+2n-1)$. We denote such box by $B_n(x)$. See Figure~\ref{fig-inf-meand-box} for an example. Note that with this definition any box of size $n$ contains the same number of green and orange vertices.

\begin{figure}[ht!]
	\begin{center}
		\includegraphics[scale=0.6]{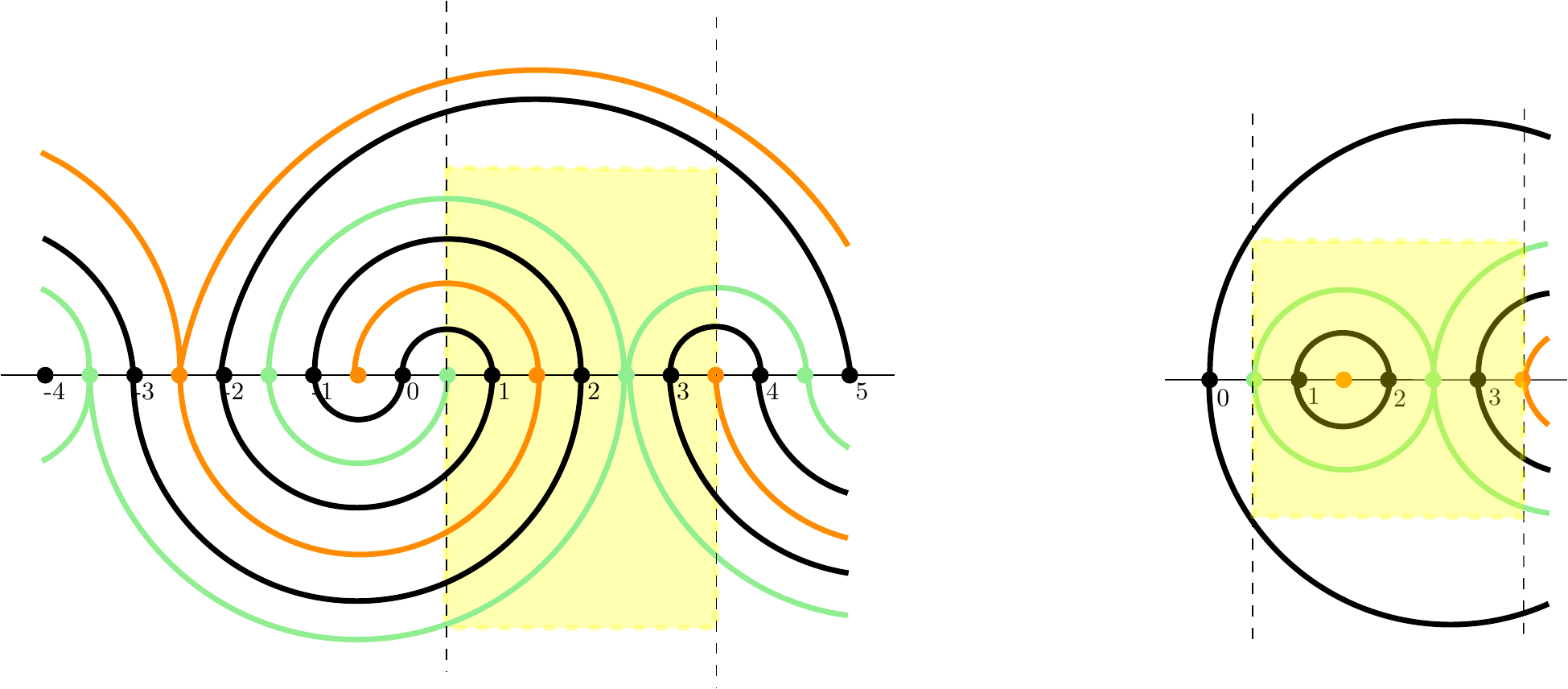}  
		\caption{\label{fig-inf-meand-box} 
		\textbf{Left:} 
		A portion of the UIMS together with its orange and green planar graphs. A box of size $2$ rooted at $1/2$ is highlighted in yellow. The bottom-green edge between $-3.5$ and $2.5$ and the top-green edge between $2.5$ and $4.5$ form a bottom-to-top green crossing of the box $B_2(1/2)$. Note that $B_2(1/2)$ contains also a top-to-bottom green crossing of the box.
		\textbf{Right:} An example of another box. Note that both the top and bottom green edges from $0.5$ to $2.5$ crosses both the top-left boundary and the bottom-left boundary of the box $B_2(1/2)$, according to our definition below.
		}
	\end{center}
	\vspace{-3ex}
\end{figure}

Given a box $B_n(x)$, we say that an edge crosses the top-left (resp.\ bottom-left, top-right, bottom-right) boundary of the box, if that edge touches the vertical line $\{x\}\times[0,\infty)$ (resp.\ $\{x\}\times (-\infty,0]$, $\{x+2n-1\}\times[0,\infty)$, $\{x+2n-1\}\times(-\infty,0]$), where we highlight that the point $(x,0)$ (resp.\ $(x+2n-1,0)$) is included in the line. See Figure \ref{fig-inf-meand-box} for an example. We then say that there exists a \textbf{top-to-bottom orange crossing} (resp.\ bottom-to-top orange crossing) of $B_n(x)$ if there exists a path of all orange edges such that the first edge in the path crosses the top-left (resp.\ bottom-left) boundary of $B_n(x)$, the last edge in the path crosses the bottom-right (resp.\ top-right) boundary of $B_n(x)$, and all the other edges have both extremities in $B_n(x)$. We similarly define top-to-bottom and bottom-to-top green crossings.

We can now state our analogous result for ``the probability that there exists a top-to-bottom crossing of open edges in a box is 1/2''. 

\begin{prop}\label{prop-crossings}
	Consider the UIMS together with its green and orange planar graphs. For all $x\in\mathbb Z+\frac 1 2$ and all $n>0$, the probability that there exists a top-to-bottom orange crossing of the box $B_n(x)$ is 1/2.
\end{prop}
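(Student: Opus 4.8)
The plan is to exhibit a measure-preserving involution on the relevant probability space which swaps the event "there is a top-to-bottom orange crossing of $B_n(x)$" with its complement, restricted to a suitable deterministic decomposition. By translation invariance we may take $x = 1/2$, so the box consists of the orange and green vertices in $[1/2, 2n-1/2]$, i.e.\ the orange points $2\BB Z \cap [1,2n-1] - 1/2 = \{3/2, 7/2, \dots, 2n-3/2\}$ (there are $n-1$ of them) and the green points $2\BB Z \cap [0,2n] + 1/2 = \{1/2, 5/2, \dots, 2n-1/2\}$ (there are $n$ of them), together with the arcs of the two arc diagrams that touch the open strip $(1/2, 2n-1/2)\times\BB R$. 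Recall that the orange and green planar graphs are both deterministic functions of the meandric system $(m_1, m_2)$ — equivalently of the encoding walks $\mcl L, \mcl R$ — via the maps $\psi^{-1}$ and $\widetilde\psi^{-1}$. The key point, which I would isolate as a lemma, is that \emph{inside the box, the orange edges and the green edges are ``dual'' non-crossing matchings}: the black arcs lying above (resp.\ below) $\BB R$ that meet the open strip separate the orange and green vertices into two interleaved non-crossing matchings, and there is a top-to-bottom orange crossing of $B_n(x)$ if and only if there is \emph{no} top-to-bottom green crossing, and likewise with top/bottom swapped. This is the random-planar-map avatar of the classical self-duality of critical bond percolation on $\BB Z^2$: in a rectangular box, a left-right open crossing exists iff a top-bottom dual-closed crossing does not.

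Granting that duality lemma, the proof reduces to producing an involution on the law of $(m_1, m_2)$ restricted to the box that interchanges "orange has a top-to-bottom crossing" with "green has a top-to-bottom crossing" while preserving the law. The natural candidate is the reflection of the relevant arc configuration across the real line combined with a shift of the green/orange indexing by one — concretely, on the level of the walks, conjugating by the map that exchanges the role of $\mcl L$ (arcs above) and $\mcl R$ (arcs below) is \emph{not} quite right because the box is defined asymmetrically in the vertical direction; instead I would reflect only the \emph{local} arc picture within the strip, which on the pair of walks $(\mcl L, \mcl R)$ corresponds to a local rerouting. Because the walk increments are i.i.d.\ fair $\pm 1$, any such local reflection that permutes finitely many increments is measure-preserving; the remaining work is to check that it sends the top-to-bottom orange crossing event precisely to the top-to-bottom green crossing event. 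Combining this with the duality lemma, the orange-crossing event and its complement are exchanged by a measure-preserving map, so each has probability $1/2$; one must also confirm that the boundary-crossing conventions (which point $(x,0)$ is included in which vertical half-line, and the fact that a single green edge can cross both top-left and bottom-left boundaries, as warned in Figure~\ref{fig-inf-meand-box}) are set up so that "orange crossing" and "no green crossing" are genuinely complementary with no exceptional null event — this is exactly why the box dimensions and the placement of orange vs.\ green vertices were chosen the way they were.

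The main obstacle I anticipate is the duality lemma: making precise the claim that, \emph{within the finite box}, "orange top-to-bottom crossing exists" is the exact logical negation of "green top-to-bottom crossing exists." On a fixed planar lattice this is a standard planar-duality / topological argument, but here the ambient planar structure is itself random and the green and orange graphs are produced by the somewhat intricate combinatorial maps $\psi, \widetilde\psi$, so one has to verify carefully that the black arcs meeting the strip, together with the two vertical boundary segments of the box, partition the boxed region into faces in a way that forces exactly one of the two colored crossings to exist. I would handle this by tracking the left-to-right sequence of arc-endpoints and boundary-crossings along the top and bottom of the box and using the non-crossing (planarity) property of each colored matching to run a planar Jordan-curve argument: an orange top-to-bottom crossing is a barrier preventing a green one and vice versa, and a parity/counting argument on the endpoints shows that one of the two must occur. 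Once that is in place, the measure-preserving involution step should be routine given the i.i.d.\ structure of the encoding walks.
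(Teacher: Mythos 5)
Your overall strategy---a deterministic duality pairing an orange crossing event with a green one, followed by a symmetry showing the paired events are equiprobable---is the same as the paper's, but your duality lemma is the wrong one, and this is a genuine gap. You pair ``top-to-bottom orange crossing'' with ``top-to-bottom \emph{green} crossing,'' whereas the event that is actually complementary to a top-to-bottom orange crossing is a \emph{bottom-to-top} green crossing. The Jordan-curve ``barrier'' argument you invoke only forces two crossings to intersect when they run in transversal directions: a path from the top-left boundary ray to the bottom-right one and a path from the bottom-left ray to the top-right one must cross (and hence cannot both be realized by non-crossing colored edge sets), but two paths both going from top-left to bottom-right can be ``parallel'' and need not meet, so an orange top-to-bottom crossing is no obstruction whatsoever to a green top-to-bottom crossing. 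The two directions are genuinely distinct events---the caption of Figure~\ref{fig-inf-meand-box} exhibits a box containing green crossings in \emph{both} directions simultaneously---so your claimed equivalence ``orange top-to-bottom iff no green top-to-bottom'' is unsupported, and with it the step ``the crossing event and its complement are exchanged by a measure-preserving map'' collapses. The statement you need is that exactly one of $\{$top-to-bottom orange crossing of $B_n(x)\}$ and $\{$bottom-to-top green crossing of $B_n(x)\}$ occurs; your proposed endpoint-tracking/parity argument is the right tool, but it must be aimed at this transversal pair.

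Once the events are paired correctly, the equiprobability step does not require the unspecified ``local rerouting'' involution you gesture at (and whose first candidate you concede fails). The box $B_n(x)$ contains exactly $n$ orange and $n$ green vertices---your count of $n-1$ orange vertices is off, since the right endpoint $x+2n-1\in 2\BB Z-\tfrac12$ is itself an orange vertex of the box---and reflection of the entire configuration through the vertical line through the center of the box preserves the law of the UIMS (the encoding walks are reversible), maps orange vertex positions to green ones and vice versa, maps the box to itself, and exchanges top-left with top-right and bottom-left with bottom-right boundaries; hence it carries a top-to-bottom orange crossing to a bottom-to-top green crossing of an identically distributed configuration. This gives $\BB P[O]=\BB P[G]$ directly, and combined with the (correct) ``exactly one occurs'' statement yields the value $1/2$.
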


\begin{proof}
	Fix $x\in\mathbb Z+\frac 1 2$ and $n>0$. 
	Note that by the construction of the orange and green planar graphs (recall from Section~\ref{sect-inf-meand-perco} how the orange arcs determine the green arcs), exactly one of the following two events holds: either 
	\[O=\{\text{There exists a top-to-bottom orange crossing of the box } B_n(x)\}\] 
	or 
	\[G=\{\text{There exists a bottom-to-top green crossing of the box } B_n(x)\}.\]
	Moreover, since $B_n(x)$ contains the same number of green and orange vertices and the events $O$ and $G$ are symmetric, then they must must have the same probability. Hence $\mathbb P(O)=\mathbb P(G)=1/2$.
\end{proof}

\subsection{The lack of positive association and two open problems}\label{sect-no-fkg}

Proposition~\ref{prop-crossings} is a further evidence that meandric systems behave like a critical percolation model on a planar map and also suggests that it might be possible to prove analogs of other standard results in the theory of percolation models. Unfortunately, as already mentioned in Section \ref{sec-finite-results}, we are not aware of any positive association (FKG) inequality in our setting and for the moment all the notions of monotonicity that we tried do not satisfy FKG. This discussion naturally leads to the following open problem.

\begin{ques}\label{question:FKG}
	Is there a natural notion of monotonicity in infinite meandric configurations such that some type of positive association (FKG) inequality holds (for instance) for our notion of top-to-bottom orange crossings of proper boxes?
\end{ques}

Another interesting question, which might be relevant to Conjecture~\ref{conj-system} (or one of its variants), is to compute the following loop crossing probability.

Recall the definition of the box $B_{n}(x)$ given at the beginning of Section~\ref{sect-box-cross}. We define a \textbf{top-to-bottom loop crossing} of $B_{n}(x)$ in the same way as we defined a top-to-bottom orange crossing of $B_{n}(x)$, where in the definition orange edges are replaced by black edges.

\begin{ques}\label{question:crossing}
Consider the UIMS. What is the asymptotic probability as $n\to\infty$ that there exists a top-to-bottom loop crossing of $B_{n}(0)$? 
\end{ques}

Conjecture~\ref{conj-system-infty} gives a natural candidate for the answer to the question above, which we now explain (assuming a certain familiarity with SLEs).
Building on Conjecture~\ref{conj-system-infty}, the points in $B_{n}(0)$ are expected to converge under an appropriate scaling limit to the points visited by the whole-plane SLE$_8$ $\eta$ between time $0$ and $1$.\footnote{Here we assume that $\eta$ is parametrized by $\sqrt 2$-LQG area with respect to an independent unit area quantum cone, so that $\eta : \BB R\rta \BB R$.}

The set $\eta((-\infty,0]) \cap \eta([0,\infty))$ is the union of two disjoint SLE$_2$-type curves~\cite[Footnote 4]{wedges}, one called the left-boundary of $\eta((-\infty,0])$ and the other one called the right-boundary of $\eta((-\infty,0])$. The same holds for $\eta((-\infty,1]) \cap \eta([1,\infty))$. Moreover, the left-boundaries (resp.\ right-boundaries) of $\eta((-\infty,0])$ and  $\eta((-\infty,1])$ a.s.\ merge into each other. As a consequence, the set $\eta([0,1])$ forms a topological rectangle. We refer to the piece of the left boundary of  $\eta((-\infty,1])$ not in common with the left boundary of $\eta((-\infty,0])$ (resp.\ the piece of the right boundary of  $\eta((-\infty,0])$ not in common with the left boundary of $\eta((-\infty,1])$)  as the top-left boundary (resp.\ bottom-right boundary) of $\eta([0,1])$.


We expect that the probability in Question~\ref{question:crossing} converges to the probability that there exists a continuous portion of a loop in the whole-plane CLE$_6$ of  Conjecture~\ref{conj-system-infty} crossing $\eta([0,1])$ from its top-left boundary to its bottom-right boundary, without leaving $\eta([0,1])$.

We highlight that the latter crossing probability is not known explicitly, and that computing it is an interesting problem in its own right.

\section{Existence of macroscopic loops in the UIMS}
\label{sec-loop-dichotomy}

Throughout this section, we let $(\mathcal M,\Gamma)$ be the infinite planar map (with vertex set $\BB Z$) and collection of loops (and possibly bi-infinite paths) associated with the UIMS, as defined in Section~\ref{sec-infinite}.

\begin{defn} \label{def-disconnect}
Let $A,B\subset \BB R^2$ and let $\ell$ be a loop or a path $\BB R^2 \cup \{\infty\}$. We say that $\ell$ \textbf{disconnects} $A$ from $B$ if every path from $A$ to $B$ hits $\ell$.
\end{defn}

The loops (and possibly bi-infinite paths) of the UIMS can be viewed as loops in $\BB R^2$ which hit $\BB R$ only at integer points and which are defined modulo orientation-preserving homeomorphisms from $\BB R^2$ to $\BB R^2$ which fix $\BB R$. 
If $A,B\subset \BB R \cup \{\infty\}$, such a homeomorphism does not alter whether a loop disconnects $A$ from $B$. Hence it makes sense to talk about loops in $\Gamma$ disconnecting subsets of $\BB R\cup \{\infty\}$.

The goal of this section is to prove the following theorem, which can roughly speaking be thought of as saying that $\Gamma$ has a positive chance to admit macroscopic loops or paths at all scales. This theorem will eventually be combined with estimates for distances in the infinite planar map $\mcl M$ (which we prove in Sections~\ref{sec-mated-crt} and~\ref{sec-lqg-mcrt}) to prove Theorems~\ref{thm-macro} and~\ref{thm-origin-loop}, see Section~\ref{sec-macro-proof}.

\begin{thm} \label{thm-loop-dichotomy}
For each sufficiently large $n\in\BB N$, it holds for each $C>1$ that with probability at least $ 5-2\sqrt 6 \approx 0.1010$, at least one of the following two conditions is satisfied:
\begin{enumerate}[A.] 
\item \label{item-di-around} There is a loop in $\Gamma$ which disconnects $[-n,n]$ from $\infty$ and which hits a point of $[n , (2C+1)n + 3]\cap\BB Z$. 
\item \label{item-di-across} There is a loop or an infinite path in  $\Gamma$ which hits a point in each of $[-n,n]\cap\BB Z$ and $\BB Z\setminus [-C n , C n] $. 
\end{enumerate}
\end{thm}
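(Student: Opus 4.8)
The plan is to run the entire argument in the random-walk encoding of the UIMS from~\eqref{eqn-rw-inf-adjacency}, writing $(\mcl M,\Gamma)$ as a deterministic functional of the two independent two-sided simple random walks $\mcl L$ (encoding the arcs above $\BB R$) and $\mcl R$ (encoding the arcs below). The proof then has a deterministic ``parity trick'' at its core and a short independent probabilistic input, and the two parts of the dichotomy are precisely the two ways the combinatorial argument can terminate.

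For the deterministic part I would first record the elementary structure of an arc diagram restricted to a finite interval $I\subset\BB Z$: its exposed points (those whose arc leaves $I$) split, read from left to right, into a nested family exiting leftward followed by a nested family exiting rightward, and the parity of the total number of exposed points, for each of the two arc diagrams, is determined by $|I|$. The additive constant $3$ in~\ref{item-di-around} is there to make the lengths of the relevant intervals --- $[-n,n]$, $[n,(2C+1)n+3]$, $[-n,(2C+1)n+3]$, and so on --- have parities that do not depend on $n$ or $C$. I would then introduce events $E_{\mcl L}$ (depending only on $\mcl L$) and $E_{\mcl R}$ (depending only on $\mcl R$) by prescribing comparisons between the running minima of the walk over $[-n,n]$, over a left extension of it, and over $[n,(2C+1)n+3]$; the effect of $E_{\mcl L}$ is to force the existence of an above-arc that straddles $[-n,n]$ (thereby ``capping'' it from above) while not reaching further to the right than $(2C+1)n+3$, and symmetrically $E_{\mcl R}$ forces the corresponding below-arc.

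The heart of the matter is the deterministic implication that, on $E_{\mcl L}\cap E_{\mcl R}$, at least one of~\ref{item-di-around}, \ref{item-di-across} holds. On this event one traces the loop (or bi-infinite path) through an innermost straddling above-arc produced by $E_{\mcl L}$; since arcs of distinct loops never cross, this object is forced to interact with the straddling below-arc produced by $E_{\mcl R}$, and following it alternately through above- and below-arcs one finds that it either (i) closes up without ever leaving $[-Cn,Cn]$ --- in which case a topological argument, using that any loop disconnecting $[-n,n]$ from $\infty$ must avoid the open interval $(-n,n)$ and encircle every loop meeting $[-n,n]$, shows it encircles $[-n,n]$ and, its rightward reach being controlled by that arc, meets $[n,(2C+1)n+3]$, giving~\ref{item-di-around} --- or (ii) it meets both $[-n,n]$ and $\BB Z\setminus[-Cn,Cn]$, giving~\ref{item-di-across}. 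The parity count on the exposed endpoints is exactly what forbids the intermediate possibility, namely that the traced object neither encircles $[-n,n]$ nor escapes $[-Cn,Cn]$. For the probabilistic input, since $\mcl L\perp\mcl R$ one has $\BB P[E_{\mcl L}\cap E_{\mcl R}]=\BB P[E_{\mcl L}]\,\BB P[E_{\mcl R}]$, and I would lower-bound each factor by $\sqrt 3-\sqrt 2$, uniformly over all large $n$ and all $C>1$, by decomposing $E_{\mcl L}$ into an intersection of gambler's-ruin events for $\mcl L$ on disjoint time windows and optimizing over an auxiliary level; this gives $\BB P[E_{\mcl L}\cap E_{\mcl R}]\ge(\sqrt 3-\sqrt 2)^2=5-2\sqrt 6$.

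I expect the main obstacle to be the deterministic dichotomy rather than the probability bound. One has to choose $E_{\mcl L}$ and $E_{\mcl R}$ robust enough that \emph{every} non-crossing completion of the two arc diagrams consistent with them realizes~\ref{item-di-around} or~\ref{item-di-across}; the delicate points are handling the boundary integers $\pm n$ (where the word ``disconnects'' is borderline), allowing the traced ``loop'' to be a bi-infinite path, and simultaneously keeping track of the nested families of straddling arcs above and below. By contrast, the reduction to the two walks is immediate and the probabilistic estimate is a routine optimization.
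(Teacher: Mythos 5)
There is a genuine gap at the heart of your proposal, in the deterministic step. You want two independent events $E_{\mcl L}$, $E_{\mcl R}$, each forcing a straddling arc over $[-n,n]$ (above, resp.\ below, the line), such that on $E_{\mcl L}\cap E_{\mcl R}$ one of A, B holds \emph{for every} compatible configuration. This implication fails for the natural choice of events: a loop can consist of two nested above-arcs jumping over $[-n,n]$ joined by two short below-arcs near $\pm(n+1)$, and a second loop can be its mirror image below the line; then straddling arcs of both types exist, yet neither loop touches $[-n,n]\cap\BB Z$, neither disconnects $[-n,n]$ from $\infty$ (a vertical ray from $1/2$ crosses each loop an even number of times), and both can sit inside $[-Cn,Cn]$. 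The local parity count you invoke (parity of the number of exposed endpoints of an interval) does force \emph{some} arc to exit $[-n,n]$, but gives no control on how far the resulting loop travels, so it cannot by itself produce condition A or B. Relatedly, your bound $\BB P[E_{\mcl L}],\BB P[E_{\mcl R}]\ge\sqrt3-\sqrt2$ is asserted rather than derived, and the factorization $5-2\sqrt6=(\sqrt3-\sqrt2)^2$ is a red herring: in the correct argument the constant is the balance point of a dichotomy, namely the solution of $p=(1-p)^2/8$.

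The paper's proof uses a different mechanism. After disposing of the case where $\Gamma$ has an infinite path (then B holds with probability $1-o_n(1)$), it sets $E_n=\{\exists$ a loop contained in $[-Cn,Cn]\cap\BB Z$ disconnecting $[-n,n]$ from $\infty\}$ and argues by dichotomy: if $\BB P[E_n]$ is large, A holds; if $\BB P[E_n^c]$ is large, one extracts on $E_n^c$ an ``exposed'' point $y\in[-n+1,n]$ not disconnected from $\infty$ by any loop inside $[-Cn,Cn]$, and compares with an independent translated copy of the same event at an \emph{odd} distance $x\in[2Cn+1,2Cn+3]$ (independence here is across disjoint time windows of the \emph{pair} $(\mcl L,\mcl R)$, not between $\mcl L$ and $\mcl R$). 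With probability at least $\BB P[E_n^c]^2/4$ the two exposed points $y,y_x$ have opposite parity, and a global parity lemma --- any two half-integer points $x-1/2,y-1/2$ with $x\not\equiv y\pmod 2$ are separated by a loop or infinite path, proved by a Jordan-curve crossing count --- forces a loop separating $y-1/2$ from $y_x-1/2$; symmetry gives the extra factor $1/2$, and the choice of $y$ forces this loop to reach outside $[-Cn,Cn]$ while hitting $[y,(2C+1)n+3]$. Your proposal contains neither the two-window independence trick nor the global parity separation lemma, and these are what actually make the argument close.
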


When we apply Theorem~\ref{thm-loop-dichotomy}, we will take $C$ large but fixed independently of $n$. For such a choice of $C$, a loop satisfying either of the two conditions of Theorem~\ref{thm-loop-dichotomy} should be thought of as being ``macroscopic'', in the sense that it should give rise to a non-trivial loop when we send $n\rta\infty$ and pass to an appropriate scaling limit (c.f.\ Conjecture~\ref{conj-system-infty}). 


Theorem~\ref{thm-loop-dichotomy} implies the following independently interesting corollary. We note that a similar statement for the $O(n)$ loop model on the hexagonal lattice, for a certain range of parameter values, is proven in~\cite[Theorem 1]{cghp-macroscopic-loops}.

\begin{cor} \label{cor-path-dichotomy}
Exactly one of the following two conditions occurs with probability one, and the other occurs with probability zero:
\begin{enumerate}[A$'$.] 
\item There is an infinite path in $\Gamma$. \label{item-infinite-path}
\item For each $x \in \BB Z$, there are infinitely many loops in $\Gamma$ which disconnect $x$ from $\infty$. \label{item-disconnect}
\end{enumerate}
\end{cor}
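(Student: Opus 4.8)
The plan is to deduce Corollary~\ref{cor-path-dichotomy} from Theorem~\ref{thm-loop-dichotomy} by a Borel--Cantelli / zero-one argument, using translation invariance of the UIMS and the fact that the event in condition~A$'$ is a tail-type event. First I would record the basic dichotomy: by \cite[Theorem 1]{ckst-noodle}, a.s.\ either $\Gamma$ has a unique infinite path or $\Gamma$ has no infinite path, and in particular the event $\{\text{there is an infinite path in }\Gamma\}$ has probability either $0$ or $1$ (this also follows directly from ergodicity of the encoding walks under the shift). So it suffices to show that if condition~A$'$ fails a.s., then condition~A occurs a.s., i.e.\ that a.s.\ for every $x\in\BB Z$ there are infinitely many loops of $\Gamma$ disconnecting $x$ from $\infty$.

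Next I would set up an infinite sequence of ``annular trials'' using Theorem~\ref{thm-loop-dichotomy}. Fix $C>1$ large and choose a sparse increasing sequence of scales $n_1 < n_2 < \dots$ growing fast enough that the regions $[n_j, (2C+1)n_j+3]$ and the corresponding annuli $\{[-n_j,n_j], \BB Z\setminus[-Cn_j,Cn_j]\}$ are, for consecutive $j$, contained in disjoint blocks of $\BB Z$ (e.g.\ take $n_{j+1} > (2C+1)n_j + 3$, or more aggressively $n_{j+1} = ((2C+1)n_j+3)^2$). Because $\mcl L$ and $\mcl R$ are independent simple random walks and the events in Theorem~\ref{thm-loop-dichotomy} at scale $n_j$ are (up to controlled overlap) measurable with respect to the walk increments in a finite window around $[-(2C+1)n_j, (2C+1)n_j]$, by choosing the $n_j$ sparse enough I can make the events ``at least one of A,B holds at scale $n_j$'' --- call it $E_j$ --- independent across $j$, or at least pairwise independent, which is enough for the second Borel--Cantelli lemma. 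Since $\BB P[E_j]\ge 5-2\sqrt6>0$ for all large $j$ by Theorem~\ref{thm-loop-dichotomy}, we get $\BB P[E_j \text{ i.o.}] = 1$.

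Now I would split according to which of the two alternatives in Theorem~\ref{thm-loop-dichotomy} occurs. On the event $\{E_j \text{ i.o.}\}$, infinitely many $j$ satisfy condition~A or condition~B. If condition~B holds for infinitely many $j$, then for each such $j$ there is a loop or infinite path of $\Gamma$ hitting both $[-n_j,n_j]$ and $\BB Z\setminus[-Cn_j,Cn_j]$, hence a connected subset of $\Gamma$ of arbitrarily large diameter; by the \cite{ckst-noodle} dichotomy this forces the existence of an infinite path in $\Gamma$ (a loop cannot have infinite diameter; if diameters are unbounded but there's no infinite path one still extracts, via a compactness/König's lemma argument in the planar map, a bi-infinite path), i.e.\ condition~A$'$ holds. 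So, on the complement of condition~A$'$, we must have condition~A holding for all but finitely many of these infinitely many $j$; thus there are infinitely many $j$ for which some loop of $\Gamma$ disconnects $[-n_j,n_j]$ --- in particular disconnects $0$ --- from $\infty$. Since the $n_j\to\infty$ and these disconnecting loops live in disjoint annular regions (by our sparsity choice they hit $[n_j,(2C+1)n_j+3]$ which are disjoint intervals), they are infinitely many \emph{distinct} loops, and each of them disconnects $0$ from $\infty$. Translation invariance of the UIMS along $\BB Z$ then upgrades ``$0$ is disconnected from $\infty$ by infinitely many loops'' to ``every $x\in\BB Z$ is'', a.s.\ (intersect over the countably many $x$). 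This gives condition~A a.s., completing the dichotomy; the mutual exclusivity of A$'$ and A is immediate since an infinite path cannot be disconnected from $\infty$ by any loop.

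The main obstacle I expect is the independence/measurability bookkeeping in the second step: making precise that the event ``A or B at scale $n_j$'' depends only on a bounded window of walk increments, so that a sparse choice of scales yields (pairwise) independence and the second Borel--Cantelli lemma applies. One has to be a little careful because conditions A and B refer to global topological features of $\Gamma$ (disconnection, infinite paths), but the key point is that \emph{whether a given loop through a fixed bounded set reaches out to a given annulus} is determined by the arcs in that bounded window together with the matching structure, which the walks encode locally; if full independence is awkward, one can instead run the trials greedily, conditioning on the configuration revealed so far and using that $\BB P[E_j \mid \mathcal F_{j-1}] \ge 5-2\sqrt6$ uniformly (a Lévy--Borel--Cantelli / conditional second Borel--Cantelli argument), which sidesteps exact independence entirely. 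The other minor point to nail down is the passage ``unbounded loop/path diameters $\Rightarrow$ infinite path,'' which should follow from \cite[Theorem 1]{ckst-noodle} together with a standard local-finiteness compactness argument in $\mcl M$.
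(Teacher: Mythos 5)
Your reduction to the case ``no infinite path a.s.\ $\Rightarrow$ condition A$'$ fails, so show A holds a.s.'' and your mutual-exclusivity observation both match the paper. But there are two genuine gaps in the main step. The central one is your treatment of alternative~\ref{item-di-across} of Theorem~\ref{thm-loop-dichotomy}: you claim that if condition~\ref{item-di-across} holds at infinitely many scales $n_j$ (with $C$ fixed), then $\Gamma$ has an infinite path, justified by ``unbounded diameters plus compactness/K\"onig.'' This is false. Condition~\ref{item-di-across} can be witnessed at every scale by \emph{distinct finite loops} of growing diameter; such loops need not pass through any common vertex (they only hit $[-n_j,n_j]\cap\BB Z$, which is a growing set), so no K\"onig-type extraction produces an infinite path, and a collection of arbitrarily large finite loops is perfectly consistent with the absence of infinite paths. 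Since a loop witnessing~\ref{item-di-across} need not enclose $0$, you cannot fold this case into condition~A either, so on the complement of A$'$ your dichotomy does not force A to hold infinitely often. The paper's fix is to reverse the roles of the parameters: \emph{fix} $n$ and send $C\to\infty$. Under the no-infinite-path assumption, only the finitely many (finite) loops through $[-n,n]\cap\BB Z$ can witness~\ref{item-di-across}, so its probability tends to $0$ as $C\to\infty$, leaving $\BB P[\exists\text{ loop disconnecting }[-n,n]\text{ from }\infty]\geq 5-2\sqrt6$ for every large $n$.

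The second gap is the independence bookkeeping. The events ``A or B at scale $n_j$'' are \emph{not} measurable with respect to a bounded window of walk increments: a loop disconnecting $[-n_j,n_j]$ from $\infty$, or a loop/path crossing the annulus, may involve arcs whose endpoints lie arbitrarily far away, so sparsity of the scales does not yield (pairwise) independence, and Theorem~\ref{thm-loop-dichotomy} gives no uniform \emph{conditional} lower bound to feed into a L\'evy--Borel--Cantelli argument. Fortunately this machinery is unnecessary: since you only need the limsup event to have \emph{positive} probability, reverse Fatou ($\BB P[\limsup_n G_n]\geq\limsup_n\BB P[G_n]\geq 5-2\sqrt6$) suffices, after which the zero-one law for translation-invariant events upgrades to probability one. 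This is exactly what the paper does; note that a single finite loop can disconnect $[-n,n]$ from $\infty$ for only finitely many $n$, so infinitely many scales with a disconnecting loop do yield infinitely many distinct disconnecting loops, as you need.
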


It is possible to prove Corollary~\ref{cor-path-dichotomy} via a more direct argument which does not use Theorem~\ref{thm-loop-dichotomy}, but we will deduce it from Theorem~\ref{thm-loop-dichotomy} for convenience. 

\begin{proof}[Proof of Corollary~\ref{cor-path-dichotomy}]
By the zero-one law for translation invariant events, the event that $\Gamma$ has an infinite path has probability zero or one (see also~\cite[Theorem 1]{ckst-noodle}). Loops and infinite paths in $\Gamma$ cannot cross each other, so for any $x\in\BB Z$ which is hit by an infinite path in $\Gamma$, there is no loop in $\Gamma$ which disconnects $x$ from $\infty$.
Therefore, to prove the corollary it suffices to assume that $\Gamma$ a.s.\ has no infinite paths and show that this implies that for each $x\in\BB Z$, a.s.\ there are infinitely many loops in $\Gamma$ which disconnect $x$ from $\infty$. The definition~\eqref{eqn-rw-inf-adjacency} of $(\mathcal M,\Gamma)$ implies that translating by a fixed $x\in\BB Z$ preserves the law of $(\mathcal M,\Gamma)$. So, we can restrict attention to the case $x=0$. 

Let $n\in\BB N$ be large enough so that the conclusion of Theorem~\ref{thm-loop-dichotomy} is satisfied. Since we are assuming that $\Gamma$ has no infinite paths, as $C\rta\infty$ ($n$ fixed) the probability that condition~\ref{item-di-across} of Theorem~\ref{thm-loop-dichotomy} tends to zero. Hence, the theorem implies that with probability at least $5-2\sqrt 6$, there is a loop in $\Gamma$ which disconnects $[-n,n]$ from $\infty$. Let $G_n$ be the event that this is the case, so that $\BB P[G_n] \geq 5-2\sqrt 6$ for each large enough $n\in\BB N$. Then
\eqbn
\BB P\left[ \bigcap_{m=1}^\infty \bigcup_{n=m}^\infty G_n \right] \geq 5-2\sqrt 6 , 
\eqen
i.e., with probability at least $5-2\sqrt 6$, there are infinitely many loops in $\Gamma$ which disconnect 0 from $\infty$. By the zero-one law for translation invariant events, this in fact holds with probability one.
\end{proof}

We now give an overview of the proof of Theorem~\ref{thm-loop-dichotomy}. 
If $\Gamma$ admits an infinite path, it is straightforward to check that condition~\ref{item-di-across} in the proposition statement holds with high probability when $n$ is large. So, we can assume without loss of generality that there is no infinite path. 

The key idea of the proof is that a meandric system satisfies rather rigid parity properties. 
In particular, any distinct $x,y\in \BB Z$ such that $x-1/2$ and $y-1/2$ are not separated by a loop or an infinite path have to have the same parity (Lemma~\ref{lem-parity}). 
Under the assumption that there is no infinite path in $\Gamma$, this allows us to force the existence of a macroscopic loop as follows. 

Fix $C>1$ and let $E_n$ be the event that there is a loop in $\Gamma$ which hits only vertices of $[-C n , C n]\cap\BB Z$ and which disconnects $[-n,n]$ from $\infty$ (see Figure~\ref{fig-around-event}). If $\BB P[E_n]$ is bounded below by an $n$-independent constant, then condition~\ref{item-di-around} in the theorem statement holds with uniformly positive probability. So, we can assume that $\BB P[E_n]$ is small, i.e., $\BB P[E_n^c]$ is large. 

The event $E_n$ depends only on the restriction of the encoding walk $(\mcl L , \mcl R)$ to $[-C n , C n] \cap\BB Z$ (Lemma~\ref{lem-disconnect-msrble}). 
Therefore, if $x \in \BB Z$ with $x > 2 C n$, then the probability that $E_n^c$ occurs, and also $E_n^c$ occurs with the translated map $\mathcal M-x$ in place of $\mathcal M$, is $\BB P[E_n^c]^2$. If we choose $x$ to be odd, then, using the definition of $E_n$, we can show that with probability at least $\BB P[E_n^c]^2/8$, there are pairs of points $(y,y_x)$ with $y \in [-n,n]$ and $y_x \in [x-n,x+n]$ which have opposite parity and which are not disconnected from $\infty$ by loops whose vertex sets are contained in $[-Cn ,C n]$ and $[x-Cn,x+Cn]$, respectively. 
Since $y$ and $y_x$ have opposite parity, there has to be a loop which disconnects $y$ from $y_x$ (Lemma~\ref{lem-loop-exists} and Figure~\ref{fig-loop-exists}).
It is then straightforward to check that this loop has to satisfy one of the two conditions of Theorem~\ref{thm-loop-dichotomy}. 

The probability $5-2\sqrt 6$ in the proposition statement comes from considering the ``worst case'' possibility for $\BB P[E_n]$. 
 
We now commence with the proof, starting with the requisite parity lemma. 

\begin{figure}[ht!]
\begin{center}
\includegraphics[width=0.75\textwidth]{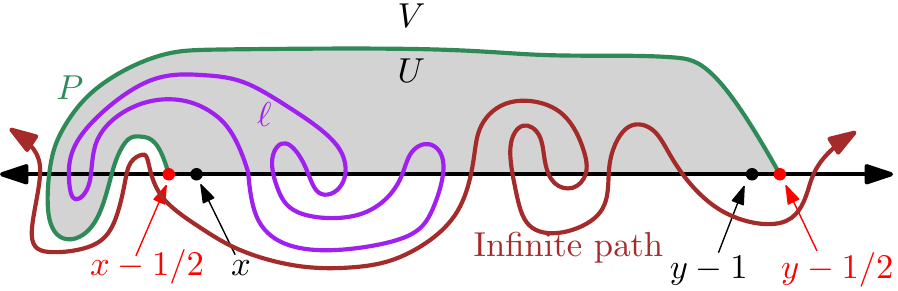}  
\caption{\label{fig-parity} Illustration of the proof of Lemma~\ref{lem-parity}. We prove the following statement: suppose $x-1/2$ and $y-1/2$ are not disconnected by any loop or infinite path in $\Gamma$, and each point $z-1/2$ with $z \in [x+1,y-1]\cap\BB Z$ is disconnected from $x$ and $y$ by a loop or an infinite path in $\Gamma$. 
Then any loop or infinite path in $\Gamma$ must hit $[x , y-1]\cap\BB Z$ an even number of times (examples of a loop and an infinite path are shown in purple and brown, respectively). 
This implies that $y-x$ is even, which gives the contrapositive of the lemma statement.
}
\end{center}
\vspace{-3ex}
\end{figure}

\begin{lem} \label{lem-parity}
Let $x \in \BB Z$ be even and let $y \in\BB Z$ be odd. 
There is a loop or an infinite path in $\Gamma$ which disconnects $x-1/2$ from $y-1/2$. 
\end{lem}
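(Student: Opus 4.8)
The plan is to prove the contrapositive: if no loop or infinite path in $\Gamma$ disconnects $x-\tfrac12$ from $y-\tfrac12$, then $x$ and $y$ have the same parity. Since disconnection is symmetric in the two points, I may assume $x<y$. I will work in the sphere $S^2=\BB R^2\cup\{\infty\}$, viewing each loop $\ell\in\Gamma$ as a Jordan curve $J_\ell:=\ell$, and each bi-infinite path $\ell\in\Gamma$ as the Jordan curve $J_\ell:=\ell\cup\{\infty\}$ obtained by adjoining the point at infinity (this matches the convention implicit in Definition~\ref{def-disconnect}, under which an infinite path is allowed to disconnect two points of $\BB R^2$). Since a meandric system is defined only up to homeomorphism fixing $\BB R$, I may choose a representative in which every arc meets $\BB R$ only at its two integer endpoints and crosses $\BB R$ there; consequently, for each $\ell$, the curve $J_\ell$ meets $\BB R$ exactly at the integer points lying on $\ell$, and at each such point $w$ the two arcs of $\ell$ incident to $w$ (one above $\BB R$, one below) locally separate $(w-\tfrac12,w)\times\{0\}$ from $(w,w+\tfrac12)\times\{0\}$, i.e.\ $J_\ell$ genuinely crosses $\BB R$ at $w$.

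The main step is a mod-$2$ crossing count, as illustrated in Figure~\ref{fig-parity}. Fix $\ell\in\Gamma$ and consider the horizontal segment $\sigma:=[x-\tfrac12,\,y-\tfrac12]\times\{0\}$, whose endpoints are non-integers and hence do not lie on $J_\ell$. By the preceding paragraph, $\sigma\cap J_\ell$ is exactly the set $\{w\in\BB Z:\ x\le w\le y-1,\ w\in\ell\}$, and $\sigma$ crosses $J_\ell$ transversally at each of these points. By the mod-$2$ Jordan curve theorem (each transversal crossing of a path with a Jordan curve in $S^2$ switches the complementary component, so the parity of the number of crossings determines whether the two endpoints lie in the same component), the points $x-\tfrac12$ and $y-\tfrac12$ lie in the same connected component of $S^2\setminus J_\ell$ if and only if $\#\{w\in\BB Z:\ x\le w\le y-1,\ w\in\ell\}$ is even. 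Under the standing assumption, $\ell$ does not disconnect $x-\tfrac12$ from $y-\tfrac12$, so these two points do lie in the same component; hence the count is even, for every $\ell\in\Gamma$.

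It remains to sum over $\ell$. Every integer point is incident to exactly one arc above $\BB R$ and one below, hence lies on exactly one loop or bi-infinite path of $\Gamma$; therefore the sets $\{w\in[x,y-1]\cap\BB Z:\ w\in\ell\}$, as $\ell$ ranges over $\Gamma$, partition $[x,y-1]\cap\BB Z$ (and all but finitely many are empty). Summing the parities from the previous paragraph yields
\[
y-x \;=\; \#\big([x,y-1]\cap\BB Z\big) \;=\; \sum_{\ell\in\Gamma}\#\{w\in[x,y-1]\cap\BB Z:\ w\in\ell\}\;\equiv\;0\pmod 2,
\]
so $x\equiv y\pmod 2$. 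This is the contrapositive of the lemma: if $x$ is even and $y$ is odd, then some loop or infinite path in $\Gamma$ must disconnect $x-\tfrac12$ from $y-\tfrac12$.

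I expect the only genuinely delicate point to be the topological bookkeeping in the first two paragraphs — fixing a nice representative of the homeomorphism class so that each $J_\ell$ meets $\BB R$ exactly at the integer points of $\ell$ and crosses it transversally there, and in particular handling the point at infinity correctly for bi-infinite paths so that the mod-$2$ Jordan curve theorem applies. Everything else is elementary. One could instead avoid an explicit appeal to the mod-$2$ Jordan curve theorem by first reducing, via a minimal-length argument, to a pair $x<y$ for which every intermediate half-integer $z-\tfrac12$ is disconnected from both $x-\tfrac12$ and $y-\tfrac12$, as the statement in Figure~\ref{fig-parity} suggests; but the crossing-parity argument above appears to be the cleanest.
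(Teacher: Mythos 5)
Your argument is correct, and it takes a genuinely different (and arguably cleaner) route than the paper's. The paper proves the same contrapositive, but does so by first reducing to \emph{consecutive} elements $x<y$ of the equivalence class defined by non-disconnection, choosing a simple path $P$ from $x-\tfrac12$ to $y-\tfrac12$ that avoids every loop and infinite path of $\Gamma$ as well as the interval $[x,y-1]$, and then counting how many times each $\ell\in\Gamma$ crosses the \emph{auxiliary} Jordan curve $[x-\tfrac12,y-\tfrac12]\cup P$ (an even number, since $\ell$ returns to its starting point, and similarly for infinite paths). You instead take each $\ell$ itself --- compactified through $\infty$ in the case of a bi-infinite path --- as the Jordan curve, and count crossings of the fixed segment $[x-\tfrac12,y-\tfrac12]$ with it via the mod-$2$ Jordan curve theorem; this dispenses with both the reduction to consecutive class elements and the construction of $P$, and your final summation over $\ell$ plays the same role as the paper's observation that every point of $[x,y-1]\cap\BB Z$ is hit by exactly one loop or path. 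The one point both arguments treat with equal brevity is the one you flag yourself: for a bi-infinite path one must know it is properly embedded (only finitely many arcs meet any compact set), so that $\ell\cup\{\infty\}$ is genuinely a Jordan curve in $S^2$ and the complement of $\ell$ in $\BB R^2$ has exactly two components. This holds for the standard drawing of the arc diagrams (each arc joining $x<y$ lying in the strip $[x,y]\times\BB R$) and is implicitly assumed in the paper's treatment of infinite paths as well, so your proof is at the same level of rigor.
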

\begin{proof}
See Figure~\ref{fig-parity} for an illustration. 
Define an equivalence relation on $\BB Z$ by $x \sim y$ if there is no loop or infinite path in $\Gamma$ which disconnects $x-1/2$ from $y-1/2$. 
Let $X$ be any equivalence class. 
If suffices to show that every element of $X$ has the same parity. 
By considering the elements of $X$ in left-right numerical order, it suffices to show the following. 
If $x$ and $y$ are two consecutive elements of $X$ (i.e., $x,y\in X$, $x<y$, and there is no element of $X$ in $[x,y]$) then $x$ and $y$ have the same parity. 

Since $x\sim y$, there exists a path $P$ in $\BB R^2$ from $x-1/2$ to $y-1/2$ which does not hit any loop or infinite path in $\Gamma$. 
By erasing loops made by $P$, we can take $P$ to be a simple path. Since loops in $\Gamma$ do not intersect $\BB R$ except at integer points, we can also arrange that $P$ hits $(x-1/2,x)$ and $(y-1/2,y)$ only at their endpoints.

Since $x$ and $y$ are consecutive elements of $X$, the path $P$ does not hit $[x,y-1]$ (otherwise, there would be an element of $X$ between $x$ and $y$). 
Therefore, $[x-1/2,y-1/2]\cup P$ is a simple closed loop in $\BB R^2$.
By the Jordan curve theorem, there are exactly two connected components of $\BB R^2 \setminus ([x-1/2,y-1/2] \cup P)$ whose common boundary is $[x-1/2,y-1/2]\cup P$. 
Let $U$ and $V$ be these two connected components.  
  
Consider a loop $\ell \in \Gamma$ which hits a point of $[x ,y-1]\cap\BB Z$. 
We traverse $\ell$ counterclockwise, say, started from a point of $\ell \setminus \BB R$. 
Since $\ell$ cannot intersect $\BB R$ without crossing it (by the definition of a meandric system) and $\ell$ cannot hit $P$ (by our choice of $P$), the number of times that $\ell$ intersects $[x,y-1]\cap\BB Z$ is equal to the number of times that $\ell$ crosses from $U$ to $V$ or from $V$ to $U$. 
Since $\ell$ starts and ends at the same point, the number of times that $\ell$ crosses from $U$ to $V$ is equal to the number of times that $\ell$ crosses from $V$ to $U$.
Hence the number of times that $\ell$ intersects $[x,y-1]\cap\BB Z$ is even.

Similarly, if $\Gamma$ has an infinite path, then the number of times that this infinite path intersects $[x,y-1]\cap\BB Z$ is even. 
Since every point of $[x,y-1]\cap\BB Z$ is hit by either a loop or an infinite path of $\Gamma$, we get that $[x,y-1]\cap\BB Z$ is even. 
Hence, $y-x$ is even. 
\end{proof}

\begin{figure}[ht!]
\begin{center}
\includegraphics[width=0.75\textwidth]{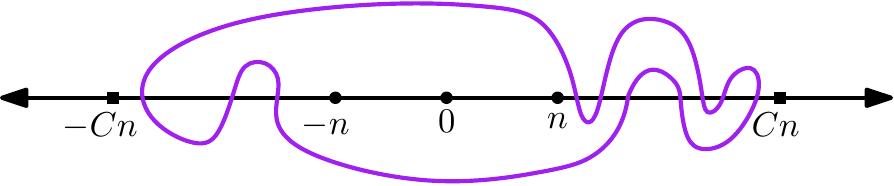}  
\caption{\label{fig-around-event} Illustration of the event $E_n$, which depends on the constant $C>1$. 
}
\end{center}
\vspace{-3ex}
\end{figure}

Fix $C  > 1$ and for $n\in\BB N$, let $E_n$ be the event that the following is true:
\begin{enumerate} \label{eqn-around-event} 
\item[$(\dagger)$] There exists a loop in $\Gamma$ which hits only vertices in $[-C n , C n ]\cap\BB Z$ and which disconnects $[-n,n]$ from $\infty$.
\end{enumerate}
\newcommand{\refE}{{(\hyperref[eqn-around-event]{$\dagger$})}}
Note that $E_n$ depends on $C$. See Figure~\ref{fig-around-event} for an illustration.

\begin{lem} \label{lem-disconnect-msrble}
The event $E_n$ is determined by the encoding walk increment $(\mcl L  , \mcl R)|_{[-C n , C n ] \cap\BB Z}$.
\end{lem}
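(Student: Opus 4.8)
The plan is to show that the increments $(\mcl L,\mcl R)|_{[-Cn,Cn]\cap\BB Z}$ determine the collection of all arcs of the UIMS --- above and below $\BB R$ --- whose two endpoints both lie in $[-Cn,Cn]\cap\BB Z$, and then that the event $E_n$ (condition \refE) is a deterministic function of this collection of arcs. Call this collection the pair of \emph{interior arc diagrams}.

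For the first step, recall from~\eqref{eqn-rw-inf-adjacency} that, for $x_1<x_2$ in $[-Cn,Cn]\cap\BB Z$, whether $x_1$ and $x_2$ are joined by an arc above (resp.\ below) the real line is decided by a condition involving only equalities and inequalities between the values of $\mcl L$ (resp.\ $\mcl R$) at the indices in $[x_1-1,x_2]\cap\BB Z\subseteq[-Cn-1,Cn]\cap\BB Z$. Telescoping, the listed increments (at indices $-Cn,-Cn+1,\dots,Cn$) determine $\mcl L$, and likewise $\mcl R$, on $\{-Cn-1,\dots,Cn\}$ up to a single additive constant --- and an additive constant is exactly what these equality/order conditions are insensitive to. Hence every arc with both endpoints in $[-Cn,Cn]\cap\BB Z$ is a deterministic function of $(\mcl L,\mcl R)|_{[-Cn,Cn]\cap\BB Z}$, so the interior arc diagrams are as well.

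For the second step, observe that a loop of $\Gamma$ which hits only vertices of $[-Cn,Cn]\cap\BB Z$ is precisely a cycle that alternates between arcs above and arcs below $\BB R$, all of whose arcs have both endpoints in $[-Cn,Cn]\cap\BB Z$; at each of its vertices both incident arcs are interior arcs used by the cycle, so such a loop is one of the finitely many loops formed by the pair of interior arc diagrams, and all of them can be enumerated combinatorially from that pair. It then remains to see that, for such a loop $\ell$, the property ``$\ell$ disconnects $[-n,n]$ from $\infty$'' is itself determined by the pair of interior arc diagrams: since $\ell$ meets $\BB R$ only at integer points of $[-Cn,Cn]$, the two components of $(\BB R^2\cup\{\infty\})\setminus\ell$ are read off from the cyclic order in which $\ell$ visits its arcs and the real-line segments between its consecutive vertices, and whether $[-n,n]$ and $\infty$ fall in the same component or in different ones is decided by this purely planar (non-crossing) data. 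Combining, $\mathbf 1_{E_n}$ is a function of the pair of interior arc diagrams, hence of $(\mcl L,\mcl R)|_{[-Cn,Cn]\cap\BB Z}$, which is the claim.

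The only genuinely delicate points are (i) the bookkeeping at the two extreme vertices $\pm Cn$ and (ii) making the topological disconnection criterion precise. For (i): an arc incident to $-Cn$ (resp.\ $Cn$) might reach outside the box, but this causes no loss --- the increment at index $-Cn$ recovers $\mcl L_{-Cn-1}$ relative to $\mcl L_{-Cn}$, and when that increment equals $-1$ the above-arc at $-Cn$ simply exits the box to the left and is absent from the interior arc diagram, so no index below $-Cn-1$ or above $Cn$ is ever needed. Point (ii) is the step requiring the most care to write cleanly, but it is standard Jordan-curve bookkeeping once one notes that a loop using only box vertices has all of its $\BB R$-crossings in $[-Cn,Cn]$, so its interior meets $\BB R$ in a union of finitely many bounded intervals with endpoints among those crossings; containment of $[-n,n]$ and of $\infty$ is then a finite combinatorial check on the interior arc diagrams.
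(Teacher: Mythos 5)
Your proposal is correct and takes essentially the same approach as the paper: the paper's proof simply observes that $E_n$ depends only on the arcs joining points of $[-Cn,Cn]\cap\BB Z$, and that these arcs are determined by the restricted walk increments via~\eqref{eqn-rw-inf-adjacency}. Your write-up just spells out the same two steps (including the harmless additive-constant and endpoint bookkeeping) in more detail.
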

\begin{proof}
The event $E_n$ depends only on the arcs of the upper and lower arc diagrams for $\mathcal M$ which join points of $[-C n , C n]\cap\BB Z$. 
These arcs are determined by $(\mcl L  , \mcl R)|_{[-C n , C n ] \cap\BB Z}$ by the relationship between the arc diagrams and the walks~\eqref{eqn-rw-inf-adjacency}. 
\end{proof}

It is clear that the conclusion of Theorem~\ref{thm-loop-dichotomy} is satisfied if $\BB P[E_n] \geq 5-2\sqrt 6$. 
So, we need to show that the conclusion of the theorem is also true if $\BB P[E_n^c] \geq 1 - (5-2\sqrt 6)$.
The following elementary topological lemma, in conjunction with Lemma~\ref{lem-parity}, will help us do so. 
 
\begin{lem} \label{lem-disconnect-exposed}
If $E_n^c$ occurs, then there exists $y \in [-n  + 1,n]\cap\BB Z$ such that $y-1/2$ is not disconnected from $\infty$ by any loop in $\Gamma$ which hits only vertices in $[-C n , C n ]\cap\BB Z$.
\end{lem}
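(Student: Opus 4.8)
The plan is to prove the contrapositive. So, suppose that for \emph{every} $y\in[-n+1,n]\cap\BB Z$ the point $y-\frac12$ is disconnected from $\infty$ by some loop in $\Gamma$ which hits only vertices in $[-Cn,Cn]\cap\BB Z$; I will produce a single loop of $\Gamma$, hitting only vertices in $[-Cn,Cn]\cap\BB Z$, which disconnects all of $[-n,n]$ from $\infty$, i.e.\ the defining condition $\refE$ of $E_n$ holds, contradicting $E_n^c$. For each such $y$, let $\mathcal F_y$ be the family of loops of $\Gamma$ that both disconnect $y-\frac12$ from $\infty$ and hit only vertices in $[-Cn,Cn]\cap\BB Z$. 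Since distinct loops of $\Gamma$ are vertex-disjoint, only finitely many loops hit only vertices of the finite set $[-Cn,Cn]\cap\BB Z$, so $\mathcal F_y$ is finite, and it is nonempty by assumption. Any two loops in $\mathcal F_y$ are disjoint simple closed curves whose bounded complementary regions both contain $y-\frac12$; by the standard trichotomy for disjoint Jordan curves (one lies strictly inside the other, or vice versa, or their closed inside-regions are disjoint) the last alternative is ruled out, so the two loops are nested. Hence $\mathcal F_y$ is totally ordered by nesting and has a unique outermost element $\ell_y$.

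The heart of the argument is to show $\ell_y=\ell_{y+1}$ for every $y\in[-n+1,n-1]\cap\BB Z$. Apply the trichotomy to $\ell_y$ and $\ell_{y+1}$. If $\ell_y$ lies strictly inside $\ell_{y+1}$, then the inside of $\ell_y$ is contained in the inside of $\ell_{y+1}$, so $\ell_{y+1}$ also disconnects $y-\frac12$ from $\infty$ while hitting only vertices in $[-Cn,Cn]\cap\BB Z$; this puts a loop strictly outside $\ell_y$ into $\mathcal F_y$, contradicting maximality of $\ell_y$. The reverse nesting is excluded symmetrically, using maximality of $\ell_{y+1}$ in $\mathcal F_{y+1}$. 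In the remaining case the closed inside-regions of $\ell_y$ and $\ell_{y+1}$ are disjoint, so $y-\frac12$ is inside $\ell_y$ and therefore outside $\ell_{y+1}$, whereas $y+\frac12$ is inside $\ell_{y+1}$; thus the segment $[y-\frac12,\,y+\frac12]\subset\BB R$ runs from the outside of $\ell_{y+1}$ to its inside and so must meet $\ell_{y+1}$. A loop of $\Gamma$ meets $\BB R$ only at integer points, and $y$ is the only integer in $(y-\frac12,\,y+\frac12)$, so $y\in\ell_{y+1}$; by the same reasoning $y\in\ell_y$. Since each integer lies on a unique loop or bi-infinite path of $\Gamma$, this forces $\ell_y=\ell_{y+1}$. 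Hence all the $\ell_y$ equal one loop $\ell^\ast\in\Gamma$, hitting only vertices in $[-Cn,Cn]\cap\BB Z$, with $y-\frac12$ inside $\ell^\ast$ for every $y\in[-n+1,n]\cap\BB Z$.

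It remains to deduce that $\ell^\ast$ disconnects the whole segment $[-n,n]$ from $\infty$. For any integer $k\in[-n,n]$, at least one of $k-\frac12,\,k+\frac12$ is a half-integer in the open interval $(-n,n)$, hence lies inside $\ell^\ast$; the length-$\frac12$ subsegment of $\BB R$ from $k$ to that neighbour meets $\ell^\ast$ at most at $k$, so $k$ lies on $\ell^\ast$ or inside it. Therefore $[-n,n]$ is contained in $\ell^\ast$ together with its inside, and every path from $[-n,n]$ to $\infty$ crosses $\ell^\ast$. Thus $E_n$ occurs, contradicting $E_n^c$; this proves the lemma.

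I expect the only real care to lie in the plane-topology bookkeeping --- the trichotomy and nesting properties of disjoint Jordan curves, and the observation (immediate from Definition~\ref{def-system}) that a loop of a meandric system meets $\BB R$ only at integers. The one genuinely load-bearing idea is the gluing step of the second paragraph: disjointness of the loops of $\Gamma$, combined with the fact that consecutive half-integers are separated by exactly one integer, forces the outermost enclosing loops of $y-\frac12$ and $y+\frac12$ to coincide. No probability is used here; the measurability Lemma~\ref{lem-disconnect-msrble} and the parity Lemma~\ref{lem-parity} enter only elsewhere in the proof of Theorem~\ref{thm-loop-dichotomy}.
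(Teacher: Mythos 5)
Your proof is correct and follows essentially the same route as the paper: both argue via the contrapositive, using that the loops hitting only $[-Cn,Cn]\cap\BB Z$ form a finite collection of disjoint, hence nested-or-disjoint, Jordan curves, that loops meet $\BB R$ only at integers, and that the connectedness of $[-n,n]$ then forces a single enclosing loop. The only differences are cosmetic: the paper takes the outermost loops of the whole family and notes that the connected set $[-n,n]$ cannot be covered by their finitely many disjoint closed insides without lying in one of them, whereas you chain the outermost enclosing loops of consecutive half-integers; also, your final claim that all of $[-n,n]$ (not just its integer and half-integer points) lies on or inside $\ell^\ast$ needs the one-line remark that each open unit interval $(k,k+1)\subset[-n,n]$ contains no integers, hence avoids $\ell^\ast$, and contains the enclosed point $k+\tfrac12$, so it lies entirely inside.
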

\begin{proof}
Let $\Gamma_{C n}$ be the set of loops in $\Gamma$ which hit only vertices in $[-C n , C n] \cap\BB Z$.  
Then $\Gamma_{C n} $ is a finite collection of simple loops in $\BB R^2$ which do not intersect each other.  
Let $\Gamma_{Cn}'$ be the set of outermost loops in $\Gamma_{C n}$ (i.e., those which are not disconnected from $\infty$ by any other loop in $\Gamma_{C n}$).  
For $\ell \in \Gamma_{Cn}'$, let $U_\ell$ be the open region disconnected from $\infty$ by $\ell$.
Then the closures $\ol U_\ell$ of the sets $U_\ell$ for $\ell \in \Gamma_{C n}'$ are disjoint (since the loops $\ell \in \Gamma_{Cn}'$ are disjoint and non-nested) and their union is the same as the set of points which are disconnected from $\infty$ by the union of the loops in $\Gamma_{C n}$.

By the definition~\refE\ of $E_n$, if $E_n^c$ occurs then $[-n,n]$ is not contained in $\ol U_\ell$ for any $\ell \in \Gamma_{C n}'$. 
Since $[-n,n]$ is connected and the sets $\ol U_\ell$ for $\ell \in \Gamma_{C n}'$ are closed and disjoint, it follows that $[-n,n]$ is not contained in the union of the sets $\ol U_\ell$ for $\ell \in \Gamma_{C n}'$.
Hence, there must be a point $z\in [-n,n]$ which is not contained in $\ol U_\ell$ for any $\ell \in \Gamma_{Cn}'$. 
The set of such $z$ is an open subset of $[-n,n]$, so we can take $z \in [-n,n]\setminus \BB Z$.  
The point $z$ is not disconnected from $\infty$ by any loop in $\Gamma_{C n}$. 
Since loops in $\Gamma$ hit $\BB R$ only at integer points, if $y \in [-n+1,n]\cap\BB Z$ is chosen so that $z\in (y-1,y)$, then also $y-1/2$ is not disconnected from $\infty$ by any loop in $\Gamma_{C n}$.
\end{proof}

The following lemma is the main step in the proof of Theorem~\ref{thm-loop-dichotomy}.

\begin{figure}[ht!]
\begin{center}
\includegraphics[width=0.75\textwidth]{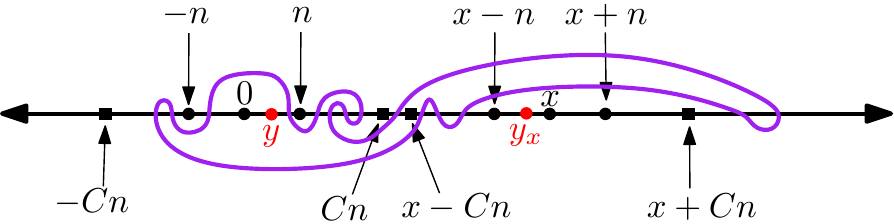}  
\caption{\label{fig-loop-exists} Illustration of the proof of Lemma~\ref{lem-loop-exists}. Let $x\in [2Cn+1,2Cn+3]$, so that $[-Cn,Cn]\cap[x-Cn,x+Cn]=\emptyset$. Note that we are considering two symmetric intervals of size $2Cn$ centered at $0$ and $x$ (recall Figure~\ref{fig-around-event}). If (i) $E_n^c$ occurs, (ii) the analogous event for $ [x-Cn,x+Cn]$ occurs, and (iii) the ``exposed'' points $y$ and $y_x$ given by Lemma~\ref{lem-disconnect-exposed} have opposite parity, then Lemma~\ref{lem-parity} (and our assumption that there is no infinite path) tells us that there is a loop in $\Gamma$ which disconnects $y-1/2$ from $y_x - 1/2$ (purple). By symmetry, we can take this loop to disconnect $y-1/2$ from $\infty$. By our choice of $y$ (from Lemma~\ref{lem-disconnect-exposed}), this loop must hit a vertex outside of $[-Cn,Cn]\cap\BB Z$. Since the loop disconnects $y$ from $y_x$, it must also hit a vertex in $[y , y_x]\cap\BB Z \subset [y,(2C+1)n + 3]\cap\BB Z$.  We note that the loop given by Lemma~\ref{lem-loop-exists} can have many possible behaviors besides what is shown in the figure. For example, it could disconnect $[-n,n]$ from $\infty$ and/or it could hit $(-\infty,-Cn]$. 
}
\end{center}
\vspace{-3ex}
\end{figure}

\begin{lem} \label{lem-loop-exists}
Assume that there is no infinite path in $\Gamma$.  
With probability at least $\BB P[E_n^c]^2/8$, there is a loop $\ell \in \Gamma$ with the following properties:
\begin{enumerate}[$(i)$]
\item \label{item-loop-dc} $\ell$ disconnects  $y-1/2$ from $\infty$ for some $y\in [-n + 1,n] \cap\BB Z$.
\item \label{item-loop-big} $\ell$ hits a point of $\BB Z\setminus [-C n , C n]$. 
\item \label{item-loop-close} $\ell$ hits a point of $[y, (2C+1) n + 3]\cap\BB Z$. 
\end{enumerate}
\end{lem}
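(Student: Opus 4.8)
The plan is to play two far-separated copies of the event $E_n^c$ against each other, using the rigid parity constraint of Lemma~\ref{lem-parity}; see Figure~\ref{fig-loop-exists}. Recall that the UIMS is invariant under translations of $\BB Z$ (manifest from~\eqref{eqn-rw-inf-adjacency}) and under the reflection $v\mapsto -v$ of $\BB Z$ (this amounts to time-reversing the two encoding walks $\mcl L,\mcl R$, which preserves their law), hence under any reflection $v\mapsto c-v$ with $c\in\BB Z$. Given $C>1$ and $n\in\BB N$, fix an odd integer $x$ with $2Cn<x\le 2Cn+3$ (such an $x$ always exists; the role of the oddness appears below). Let $A:=E_n^c$, and let $A_x$ be the event that no loop of $\Gamma$ with all of its vertices in $[x-Cn,x+Cn]\cap\BB Z$ disconnects $[x-n,x+n]$ from $\infty$; since $[-Cn,Cn]$ and $[-n,n]$ are symmetric about $0$, the event $A_x$ is both the translate and the reflection of $A$, so $\BB P[A_x]=\BB P[A]=\BB P[E_n^c]$, and by Lemma~\ref{lem-disconnect-msrble} (applied to $\mcl M$ and to $\mcl M-x$) the events $A$ and $A_x$ depend only on the increments of $(\mcl L,\mcl R)$ on $[-Cn,Cn]\cap\BB Z$ and on $[x-Cn,x+Cn]\cap\BB Z$. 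Because $x>2Cn$ these index ranges are disjoint, so $A$ and $A_x$ are independent.

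Next I would set up the parity argument. By Lemma~\ref{lem-disconnect-exposed}, on $A$ the set $\mathrm{Exp}_0$ of $y\in[-n+1,n]\cap\BB Z$ for which $y-\tfrac12$ is not disconnected from $\infty$ by any loop of $\Gamma$ with all vertices in $[-Cn,Cn]\cap\BB Z$ is nonempty; as in the proof of Lemma~\ref{lem-disconnect-msrble}, $\mathrm{Exp}_0$ is a function of the increments of $(\mcl L,\mcl R)$ on $[-Cn,Cn]\cap\BB Z$. Let $\mathrm{Exp}_x$ be the analogous set attached to the interval $[x-Cn,x+Cn]$; by the reflection invariance $v\mapsto x-v$, the pair $(\mathbf 1_{A_x},\mathrm{Exp}_x)$ has the same law as the reflection of $(\mathbf 1_A,\mathrm{Exp}_0)$, so $\mathrm{Exp}_x$ is nonempty on $A_x$, and since $x$ is odd this reflection reverses parities, whence $\BB P[\mathrm{Exp}_x\text{ contains an odd point}]=\BB P[\mathrm{Exp}_0\text{ contains an even point}]$ and the same with the parities swapped. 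Also $\mathrm{Exp}_x$ depends only on the increments on $[x-Cn,x+Cn]\cap\BB Z$, so it is independent of $\mathrm{Exp}_0$. Write $a:=\BB P[\mathrm{Exp}_0\text{ has an even point}]$, $b:=\BB P[\mathrm{Exp}_0\text{ has an odd point}]$, so $a+b\ge\BB P[\mathrm{Exp}_0\ne\emptyset]\ge\BB P[E_n^c]$. Let $G$ be the event that there are $y\in\mathrm{Exp}_0$ and $y_x\in\mathrm{Exp}_x$ of opposite parity; then $G$ contains both $\{\mathrm{Exp}_0\text{ has an even point}\}\cap\{\mathrm{Exp}_x\text{ has an odd point}\}$ and the event with the parities exchanged, so by independence of the two boxes
\[
\BB P[G]\ \ge\ \max(a^2,b^2)\ \ge\ \Bigl(\tfrac{a+b}{2}\Bigr)^2\ \ge\ \frac{\BB P[E_n^c]^2}{4}.
\]

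Then I would produce the loop. On $G$, fix an opposite-parity pair $y\in\mathrm{Exp}_0$, $y_x\in\mathrm{Exp}_x$; one checks $y<y_x$ and $y_x\le (2C+1)n+3$. By Lemma~\ref{lem-parity} some loop or bi-infinite path of $\Gamma$ disconnects $y-\tfrac12$ from $y_x-\tfrac12$; by the standing assumption $\Gamma$ has no infinite path, so it is a loop $\ell$, and by the Jordan curve theorem $\ell$ disconnects exactly one of $y-\tfrac12$, $y_x-\tfrac12$ from $\infty$. Let $H_0$ (resp.\ $H_x$) be the event that for some such opposite-parity pair there is a loop of $\Gamma$ disconnecting $y-\tfrac12$ (resp.\ $y_x-\tfrac12$) from both the other point and $\infty$; then $G\subseteq H_0\cup H_x$. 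The reflection $v\mapsto x-v$ is a law-preserving symmetry of the UIMS interchanging $[-Cn,Cn]\cap\BB Z$ with $[x-Cn,x+Cn]\cap\BB Z$, hence interchanging $H_0$ and $H_x$, so $\BB P[H_0]=\BB P[H_x]\ge\BB P[G]/2\ge\BB P[E_n^c]^2/8$. It remains to check that on $H_0$ the loop $\ell$ has the three stated properties: $(i)$ holds with this $y\in[-n+1,n]\cap\BB Z$; $(ii)$ holds because $y\in\mathrm{Exp}_0$ forbids $\ell$ from having all its vertices in $[-Cn,Cn]\cap\BB Z$, so $\ell$ meets $\BB Z\setminus[-Cn,Cn]$; and $(iii)$ holds because $\ell$ disconnects $y-\tfrac12$ from $y_x-\tfrac12$, so the segment of $\BB R$ between these two points must meet $\ell$, necessarily at an integer in $[y,y_x-1]\subseteq[y,(2C+1)n+3]$.

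The step I expect to be the main obstacle is the symmetry input: one must verify that the reflection $v\mapsto x-v$ genuinely preserves the law of the UIMS, i.e.\ identify the left--right reversal of a pair of non-crossing perfect matchings of $\BB Z$ with the time reversal of the two simple random walks encoding them via~\eqref{eqn-rw-inf-adjacency}. This is elementary but must be set up carefully, and it is exactly for it that we take $x$ odd, so that the reflection is centred at the half-integer $x/2$, swaps the two ``boxes'' $[-Cn,Cn]$ and $[x-Cn,x+Cn]$ (and hence the events $H_0$ and $H_x$), and reverses parities. The remaining ingredients --- the independence coming from disjoint stretches of $(\mcl L,\mcl R)$, and the measurability of $\mathrm{Exp}_0$ with respect to the walk increments on $[-Cn,Cn]$ --- are routine repetitions of the proof of Lemma~\ref{lem-disconnect-msrble}, and the bookkeeping of index ranges is straightforward.
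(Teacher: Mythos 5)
Your proposal is correct and follows essentially the same route as the paper's proof: two independent copies of $E_n^c$ in disjoint boxes separated by an odd offset $x$, opposite parities of exposed points forcing (via Lemma~\ref{lem-parity} and the no-infinite-path assumption) a loop separating $y-\tfrac12$ from $y_x-\tfrac12$, and a symmetry argument to guarantee that with probability at least half of $\BB P[G]$ it is the left point that is disconnected from $\infty$; working with the full exposed set $\mathrm{Exp}_0$ and the bound $\max(a^2,b^2)\ge\bigl(\tfrac{a+b}{2}\bigr)^2$ is an equivalent repackaging of the paper's ``one of the two parity events has probability $\ge p/2$'' step. The one slip is in the parity bookkeeping: the reflection $v\mapsto x-v$ with $x$ odd sends the half-integer $y-\tfrac12$ to $(x+1-y)-\tfrac12$, and $x+1-y\equiv y\pmod 2$, so this reflection \emph{preserves} the parity of the labels; the flip you need comes instead from the translation $v\mapsto v+x$ (as in the paper), which sends $y-\tfrac12$ to $(y+x)-\tfrac12$ with $y+x\equiv y+1\pmod 2$. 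Since you have already established translation invariance and the independence of the two boxes, this is a one-line fix and does not affect the bound $\BB P[G]\ge \BB P[E_n^c]^2/4$ or the rest of the argument.
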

\begin{proof}
See Figure~\ref{fig-loop-exists} for an illustration. 
Write $p : = \BB P[E_n^c]  $. 
Recall from Lemma~\ref{lem-disconnect-msrble} that $E_n$ is determined by $(\mcl L  , \mcl R)|_{[-C n , C n ] \cap\BB Z}$. 
On $E_n^c$, let $y \in [-n + 1 , n] \cap\BB Z$ be a point as in Lemma~\ref{lem-disconnect-exposed}, chosen in some manner which depends only on $(\mcl L  , \mcl R)|_{[-C n , C n ] \cap\BB Z}$ (on $E_n$, we arbitrarily set $y = 0$).
Then one of the events
\eqb
E_n^c\cap\left\{\text{$y$ is even}\right\} \quad \text{or} \quad E_n^c\cap\left\{\text{$y$ is odd}\right\}
\eqe
has probability at least $p/2$. We will assume that 
\eqb \label{eqn-even-assumption}
\BB P\left[ E_n^c\cap\left\{\text{$y$ is even}\right\} \right] \geq \frac{p}{2} .
\eqe
The other case is treated in an identical manner.

Let $x \in [2 C n + 1 , 2 C n + 3]\cap\BB Z$ be an odd integer (so that $[-Cn,Cn]\cap[x-Cn,x+Cn]=\emptyset$).
Define the event $E_n(x)$ in the same manner as the event $E_n$ from just above Lemma~\ref{lem-disconnect-msrble}, but with the translated meandric system $\mathcal M - x$ in place of $\mathcal M$.
Also let $y_x \in [x-n+1,x-n]\cap\BB Z$ be defined in the same manner as the point $y$ above but with $\mathcal M-x$ in place of $\mathcal M$. 
By~\eqref{eqn-even-assumption} and since $x$ is odd,
\eqb \label{eqn-odd-translate}
\BB P\left[ E_n^c(x) \cap\left\{\text{$y_x$ is odd}\right\} \right] \geq \frac{p}{2}  .
\eqe

The translated meandric system $\mathcal M -x$ is encoded by the translated pair of walks $j\mapsto (\mcl L_{j+x} -\mcl L_x, \mcl R_{j+x} -\mcl R_x)$ in the same manner that $\mcl M$ is encoded by $(\mcl L , \mcl R)$.
By Lemma~\ref{lem-disconnect-msrble} and the definition of $y_x$, the event $E_n(x)$ and the point $y_x$ are determined by the walk increment $(\mcl L - \mcl L_x  , \mcl R - \mcl R_x)|_{[x -C n , x +  C n ] \cap\BB Z}$. Since $[-C n , C n ] \cap [x-C n , x + C n] =\emptyset$, the pairs $(E_n , y)$ and $(E_n(x) , y_x)$ are independent. 
From this together with~\eqref{eqn-even-assumption} and~\eqref{eqn-odd-translate}, we obtain
\eqb \label{eqn-intersect-event}
\BB P\left[E_n^c\cap\left\{\text{$y$ is even}\right\}  \cap  E_n^c(x) \cap\left\{\text{$y_x$ is odd}\right\} \right] \geq \frac{p^2}{4}  .
\eqe

Recall that we are assuming that there is no infinite path in $\Gamma$. 
By Lemma~\ref{lem-parity}, if the event in~\eqref{eqn-intersect-event} occurs, then there is a loop $\ell \in \Gamma$ which disconnects $y -1/2$ from $y_x -1/2$. 
The loop $\ell$ has precisely two complementary connected components (by the Jordan curve theorem), so $\ell$ must disconnect exactly one of $y-1/2$ or $y_x-1/2$ from $\infty$.
Since the law of $\mathcal M$ is invariant under integer translation and reflection about the origin, symmetry considerations show that the probability that $\ell$ disconnects $y-1/2$ from $\infty$ is at least $p^2/8$. 

By our choice of $y$ (recall Lemma~\ref{lem-disconnect-exposed}), the loop $\ell$ must hit a point of $\BB Z\setminus [-C n , C n]$.  
Since $\ell$ disconnects $y$ from $y_x$, $\ell$ must hit an integer point in the interval
\eqbn
[y , y_x]\subset [y , x + n] \subset [y , (2C + 1) n + 3] .
\eqen
Therefore, $\ell$ satisfies the conditions in the lemma statement. 
\end{proof}

\begin{proof}[Proof of Theorem~\ref{thm-loop-dichotomy}]
First assume that there is an infinite path $\frk P$ in $\Gamma$. 
It holds with probability tending to 1 as $n\rta\infty$ that $\frk P$ intersects $[-n,n]\cap\BB Z$, so since $\frk P$ is infinite we get that condition~\ref{item-di-across} is satisfied with probability\footnote{Here and throughout this paper, if $f , g : \BB N \to (0,\infty)$ we write $g(n) = o_n(f(n))$ (resp.\ $g(n) = O_n(f(n))$) if $g(n) / f(n) \to 0$ (resp.\ $g(n) / f(n)$ remains bounded above) as $n\to\infty$.}
$1-o_n(1)$. We choose $n$ large enough so that this probability is at least $ 5-2\sqrt 6$. 

Now assume that there is no infinite path in $\Gamma$. 
Since $\BB P[E_n^c] = 1 - \BB P[E_n]  \in [0,1]$,  
\eqb \label{eqn-min-prob}
\max\left\{ \BB P[E_n] , \BB P[E_n^c]^2/8 \right\} \geq 5 - 2\sqrt 6. 
\eqe
By the definition~\refE\ of $E_n$, if $E_n$ occurs then condition~\ref{item-di-around} is satisfied, so condition~\ref{item-di-around} is satisfied with probability at least $\BB P[E_n]$.
In light of~\eqref{eqn-min-prob}, it remains to show that the probability that either~\ref{item-di-around} or~\ref{item-di-across} is satisfied is at least $\BB P[E_n^c]^2/8$. 
 
By Lemma~\ref{lem-loop-exists}, it holds with probability at least $\BB P[E_n^c]^2/8$ that there is a loop $\ell \in \Gamma$ satisfying the three properties in that lemma. 
If $\ell$ disconnects $[-n,n]$ from $\infty$, then property~\eqref{item-loop-close} of Lemma~\ref{lem-loop-exists} shows that condition~\ref{item-di-around} in the theorem statement is satisfied.
If $\ell$ does not disconnect $[-n,n]$ from $\infty$, then since $\ell$ disconnects some point of $[-n,n]\cap\BB Z$ from $\infty$ by property~\eqref{item-loop-dc}, we get that $\ell$ must hit a point of $[-n,n]\cap\BB Z$. 
By property~\eqref{item-loop-big}, this implies that condition~\ref{item-di-across} in the theorem statement is satisfied.
\end{proof}

\section{Bounding distances via the mated-CRT map}
\label{sec-mated-crt}

Recall that $\mathcal M$ denotes the planar map associated with the UIMS.
In this section, we will prove a lower bound for certain graph distances in $\mathcal M$ (Proposition~\ref{prop-rpm-estimate} just below), conditional on an estimate (Proposition~\ref{prop-mcrt-estimate}) which will be proven in Section~\ref{sec-lqg} using Liouville quantum gravity techniques.   Then, in Section~\ref{sec-macro-proof}, we will combine Proposition~\ref{prop-rpm-estimate} and Theorem~\ref{thm-loop-dichotomy} to deduce our main results on the sizes of loops in meandric systems (Proposition~\ref{prop-map-diam}, Theorem~\ref{thm-macro}, and Theorem~\ref{thm-origin-loop}).

To state our lower bound for distances in $\mathcal M$, we introduce some notation. 
We define the submaps
\eqb \label{eqn-rpm-submap}
\mathcal M[a,b] := \left(\text{submap of $\mathcal M$ induced by $[a,b]\cap\BB Z$}\right) ,\quad \forall -\infty < a < b < \infty . 
\eqe 
For a graph $G$, we also define
\eqb \label{eqn-graph-dist}
\op{dist}_G(x,y):= \left(\text{$G$-graph distance from $x$ to $y$} \right) ,\quad \text{$\forall$ vertices $x,y\in G$}. 
\eqe
If $H$ is a subgraph of $G$ and $x,y\in H$, then $\op{dist}_G(x,y) \leq \op{dist}_H(x,y)$. The inequality can be strict since the minimal-length path in $G$ between $x$ and $y$ may not be contained in $H$. We will frequently use this fact without comment, often with $H = M[a,b]$ as in~\eqref{eqn-rpm-submap}. 
 
\begin{prop} \label{prop-rpm-estimate}
For each $\zeta \in (0,1)$ and each $p\in (0,1)$, there exists $C = C(p , \zeta) > 3$ such that for each large enough $n\in\BB N$, it holds with probability at least $p$ that the following is true:
\begin{enumerate}[$(i)$]
\item \label{item-rpm-in} $\op{dist}_{\mathcal M} \left( [-n,n]\cap\BB Z , \BB Z\setminus [-C n , C n] \right) \geq n^{1/d-\zeta}$. 
\item \label{item-rpm-out} $\op{dist}_{\mathcal M} \left( [-3 C n, 3 C n]\cap\BB Z , \BB Z\setminus [-C^2 n , C^2 n] \right) \geq n^{1/d-\zeta}$. 
\item \label{item-rpm-out'} $\op{dist}_{\mathcal M} \left( [-C^2 n, C^2 n]\cap\BB Z , \BB Z\setminus [-C^3 n , C^3 n] \right) \geq n^{1/d-\zeta}$. 
\item \label{item-rpm-around} There are two paths $\Pi_1,\Pi_2$ in $\mathcal M$ each going from a vertex of $[-n  , n ]\cap\BB Z$ to a vertex of $\BB Z\setminus [- C^2 n , C^2 n] $ which lie at $\mathcal M[-C^3 n , C^3 n]$-graph distance at least $  n^{1/d-\zeta}$ from each other.
\end{enumerate}
\end{prop}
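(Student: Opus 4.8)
The plan is to recognise $\mathcal M$ as the (discrete) mated-CRT map with parameter $\gamma=\sqrt 2$ and then deduce the four distance lower bounds from the Liouville quantum gravity input of Proposition~\ref{prop-mcrt-estimate}. Recall that the $\gamma$-mated-CRT map is the graph on vertex set $\BB Z$ in which distinct $i,j$ are joined by an edge exactly when $|i-j|=1$, or when $\{i,j\}$ is an arc of the running-minimum (excursion) matching of the first coordinate of a suitable correlated pair of two-sided walks, or when $\{i,j\}$ is such an arc for the second coordinate; the independent case is precisely $\gamma=\sqrt 2$. Comparing this with the definition~\eqref{eqn-rw-inf-adjacency} of the UIMS shows that $\mathcal M$ is exactly the $\sqrt 2$-mated-CRT map built from the independent pair $(\mcl L,\mcl R)$, any resulting multiple edges being irrelevant for graph distances. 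Hence graph distances in $\mathcal M$ are graph distances in the $\sqrt 2$-mated-CRT map, which the mating-of-trees correspondence and the existence of the $\sqrt 2$-LQG metric control up to $n^{o(1)}$ errors; this control is the content of Proposition~\ref{prop-mcrt-estimate}, whose genuinely non-elementary proof is deferred to Section~\ref{sec-lqg} and used here as a black box. (If Proposition~\ref{prop-mcrt-estimate} is stated for the Brownian mated-CRT map one first couples $(\mcl L,\mcl R)$ with a Brownian motion pair via a strong approximation theorem, the coupling error being negligible on polynomial scales.)

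From Proposition~\ref{prop-mcrt-estimate}, the translation invariance of the increments of $(\mcl L,\mcl R)$, and a union bound over the $O(C^3 n)$ base points in $[-C^3 n,C^3 n]\cap\BB Z$, I would first establish the following single event $\mathcal E$: for each $\zeta,p\in(0,1)$ and all large $n$ there is $C=C(p,\zeta)>3$ such that with probability at least $p$ one has $\op{dist}_{\mathcal M}(\{a\},\BB Z\setminus[a-n,a+n])\ge n^{1/d-\zeta}$ simultaneously for every $a\in[-C^3 n,C^3 n]\cap\BB Z$. (This uses that the failure probability in Proposition~\ref{prop-mcrt-estimate} decays at least polynomially, which is available for distance lower bounds in mated-CRT maps; alternatively, the uniform version can simply be built into Proposition~\ref{prop-mcrt-estimate}.) All base points $a$ appearing below lie in $[-C^3 n,C^3 n]$, so $\mathcal E$ applies to them. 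On $\mathcal E$, items~\eqref{item-rpm-in}, \eqref{item-rpm-out} and~\eqref{item-rpm-out'} are immediate: a path realising any of these three distances runs from a vertex $a$ of the smaller interval to a vertex $b$ outside the larger interval, and once $C$ is chosen large enough that the gap between the two interval radii (namely $(C-1)n$, $C^2 n-3Cn$, and $C^3 n-C^2 n$ in the three cases) exceeds $n$, we get $b\notin[a-n,a+n]$, so that the path must exit $[a-n,a+n]$ and hence has length at least $n^{1/d-\zeta}$.

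Item~\eqref{item-rpm-around} is the one requiring a small construction. I would take $\Pi_1$ to be the nearest-neighbour path $-n,-n-1,\dots,-C^2 n-1$ and $\Pi_2$ the nearest-neighbour path $n,n+1,\dots,C^2 n+1$; these are disjoint paths of $\mathcal M$ contained in $\mathcal M[-C^3 n,C^3 n]$, each running from a vertex of $[-n,n]\cap\BB Z$ to a vertex of $\BB Z\setminus[-C^2 n,C^2 n]$. For any $a\in\Pi_1$ and $b\in\Pi_2$ we have $b-a\ge 2n>n$, hence on $\mathcal E$ every path from $a$ to $b$ --- in particular every such path that stays inside $\mathcal M[-C^3 n,C^3 n]$ --- must exit $[a-n,a+n]$ and therefore has length at least $n^{1/d-\zeta}$. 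Minimising over $a\in\Pi_1$ and $b\in\Pi_2$ gives $\op{dist}_{\mathcal M[-C^3 n,C^3 n]}(\Pi_1,\Pi_2)\ge n^{1/d-\zeta}$. No further intersection of events is needed, since all four conclusions hold on the single event $\mathcal E$.

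The genuinely hard part lies entirely outside this section: it is Proposition~\ref{prop-mcrt-estimate}, which rests on the mating-of-trees theorem and the existence of the $\sqrt 2$-LQG metric, together with the (non-explicit but rigorously bounded) LQG dimension $d$. Within the present section the only subtlety is making the distance estimate uniform over a window of $O(C^3 n)$ base points, which is routine provided Proposition~\ref{prop-mcrt-estimate} is stated with polynomial (or better) tails; after that, items~\eqref{item-rpm-in}--\eqref{item-rpm-around} follow from the elementary observations above.
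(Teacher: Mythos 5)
There is a genuine gap, and it occurs at two places.

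First, your identification ``$\mathcal M$ is exactly the $\sqrt 2$-mated-CRT map built from $(\mcl L,\mcl R)$'' is false. The adjacency rule for $\mathcal M$ (see~\eqref{eqn-rw-map-def}) requires the exact equality $\mcl L_{x_1-1}=\mcl L_{x_2}<\min_{[x_1,x_2-1]}\mcl L$, so each vertex carries exactly one upper and one lower arc, whereas the mated-CRT adjacency~\eqref{eqn-mcrt-def} only requires $\max\{\inf_{[x_1-1,x_1]},\inf_{[x_2-1,x_2]}\}\le\inf_{[x_1,x_2-1]}$ and produces vertices of unbounded degree; the paper flags this explicitly after~\eqref{eqn-rw-map-def}. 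The two graphs are genuinely different ($\mathcal M$ is only a subgraph of the discrete mated-CRT map), and the comparison of their metrics is not an identity but a theorem (Theorem~\ref{thm-ghs-coupling}, proved in~\cite{ghs-map-dist} via KMT coupling plus a geometric argument) which costs multiplicative factors of $A(\log n)^{3}$ and only compares distances \emph{within induced submaps} $\mathcal M[-n^2,n^2]$ and $\mcl G[-n^2,n^2]$ — which is why the paper also needs the containment condition~\eqref{eqn-bdy-contained} before it can pass from submap distances to distances in all of $\mathcal M$. None of this is addressed by your parenthetical remark about strong approximation.

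Second, and more seriously, your entire reduction rests on the uniform single-point event $\mathcal E$: that simultaneously for all $O(C^3n)$ points $a$ one has $\op{dist}_{\mathcal M}(a,\BB Z\setminus[a-n,a+n])\ge n^{1/d-\zeta}$. This does not follow from Proposition~\ref{prop-mcrt-estimate}, and your claim that ``the failure probability in Proposition~\ref{prop-mcrt-estimate} decays at least polynomially'' is not correct as stated: by the exact scale invariance~\eqref{eqn-cone-scale'}--\eqref{eqn-sle-scale}, the probability that $D_h(\eta([-t,t]),\bdy\eta([-Ct,Ct]))\ge t^{1/d}$ fails is a constant depending on $C$ but \emph{not} on $t$, which is precisely why the proposition is phrased as ``probability at least $p$ for $C=C(p,\zeta)$ large'' rather than with decaying tails. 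To salvage a union bound over $n$ points you would have to exploit the $n^{-\zeta}$ slack via superpolynomial lower-tail bounds for the LQG distance from $\eta(0)$ to $\bdy\eta([-1,1])$ (for all $\zeta\in(0,1)$ one effectively needs all negative moments), which is a substantially stronger input that you neither prove nor cite, and which the paper deliberately avoids: it only ever applies the continuum estimate to three fixed nested annular regions (plus one left/right separation event), and it obtains the two separated paths in item~(iv) not as the nearest-neighbour rays along $\BB Z$ but by transferring the continuum paths $\wt\Pi_1,\wt\Pi_2$ of Proposition~\ref{prop-mcrt-estimate}\eqref{item-mcrt-around} (built from $(-\infty,0]$ and $[0,\infty)$ intersected with the SLE cells) to $\mathcal M$ by concatenating the short connecting paths supplied by the coupling. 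Your derivations of (i)--(iv) from $\mathcal E$ would be fine if $\mathcal E$ held, but establishing $\mathcal E$ is the whole difficulty and is left unproved.
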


For non-uniform random planar maps, it appears to be quite difficult to estimate graph distances directly. 
So, to prove Proposition~\ref{prop-rpm-estimate}, we will use an indirect approach which was introduced in~\cite{ghs-map-dist}.
The idea is as follows. In Section~\ref{sec-mcrt}, we will define the mated-CRT map $\mcl G$, a random planar map constructed from a pair of independent two-sided Brownian motions via a semicontinuous analog of the construction of $\mathcal M$ from a pair of independent two-sided random walks.
We will also state a comparison result for distances in $\mathcal M$ and distances in $\mcl G$ (Theorem~\ref{thm-ghs-coupling}) which follows from a more general result in~\cite{ghs-map-dist}. 

This comparison result allows us to reduce Proposition~\ref{prop-rpm-estimate} to a similar estimate for the mated-CRT map (Proposition~\ref{prop-mcrt-estimate}).
The proof of this latter estimate is given in Section~\ref{sec-lqg}. 
Due to the results of~\cite{wedges}, the mated-CRT map admits an alternative description in terms of $\sqrt 2$-LQG decorated by SLE$_8$ (see Section~\ref{sec-lqg-mcrt}). 
Using this description, the needed estimate for the mated-CRT map turns out to be an easy consequence of known results for SLEs and LQG. 

\subsection{The mated-CRT map} 
\label{sec-mcrt}

%

Let $(L,R)$ be a pair of standard linear two-sided Brownian motions, with $L_0 = R_0 = 0$. 
The \textbf{mated-CRT map} associated with $(L,R)$ is the graph $\mcl G$ with vertex set $ \BB Z$ and edge set defined as follows. 
Two integers $x_1,x_2 \in \BB Z$ with $x_1<x_2$ are joined by an edge if and only if either
\allb \label{eqn-mcrt-def}
&\max\left\{ \inf_{t\in [x_1-1 , x_1]} L_t   ,\, \inf_{t\in [x_2 - 1 , x_2]} L_t  \right\} \leq \inf_{t\in [x_1  , x_2-1]} L_t \quad\op{or}\quad \notag \\
&\max\left\{ \inf_{t\in [x_1-1 , x_1]} R_t   ,\, \inf_{t\in [x_2 - 1 , x_2]} R_t  \right\} \leq \inf_{t\in [x_1  , x_2-1]} R_t  .
\alle
We note that a.s.\ both conditions in~\eqref{eqn-mcrt-def} hold whenever $x_2 = x_1+1$. 
If both conditions in~\eqref{eqn-mcrt-def} hold and $ x_2 \geq x_1+2$, we declare that $x_1$ and $x_2$ are joined by two edges. 

The edge set of $\mcl G$ naturally splits into three subsets:
\begin{itemize}
\item \textbf{Trivial edges}, which join $x$ and $x+1$ for $x\in\BB Z$.
\item \textbf{Upper edges}, which join $x_1  , x_2\in\BB Z$ with $x_2 \geq x_1+2$ and arise from the first condition (the one involving $L$) in~\eqref{eqn-mcrt-def}.
\item \textbf{Lower edges}, which join $x_1  , x_2\in\BB Z$ with $x_2 \geq x_1+2$ and arise from the second condition (the one involving $R$) in~\eqref{eqn-mcrt-def}.
\end{itemize}
We can assign a planar map structure to $\mcl G$ by associating each trivial edge with the line segment from $x$ to $x+1$ in $\BB R^2$, each upper edge $\{x_1,x_2\}$ with an arc from $x_1$ to $x_2$ in the upper half-plane, and each lower edge $\{x_1,x_2\}$ with an arc from $x_1$ to $x_2$ in the lower half-plane. In fact, $\mcl G$ is a triangulation when equipped with this planar map structure. See Figure~\ref{fig-mated-crt-map}.

\begin{figure}[ht!]
 \begin{center}
\includegraphics[scale=.75]{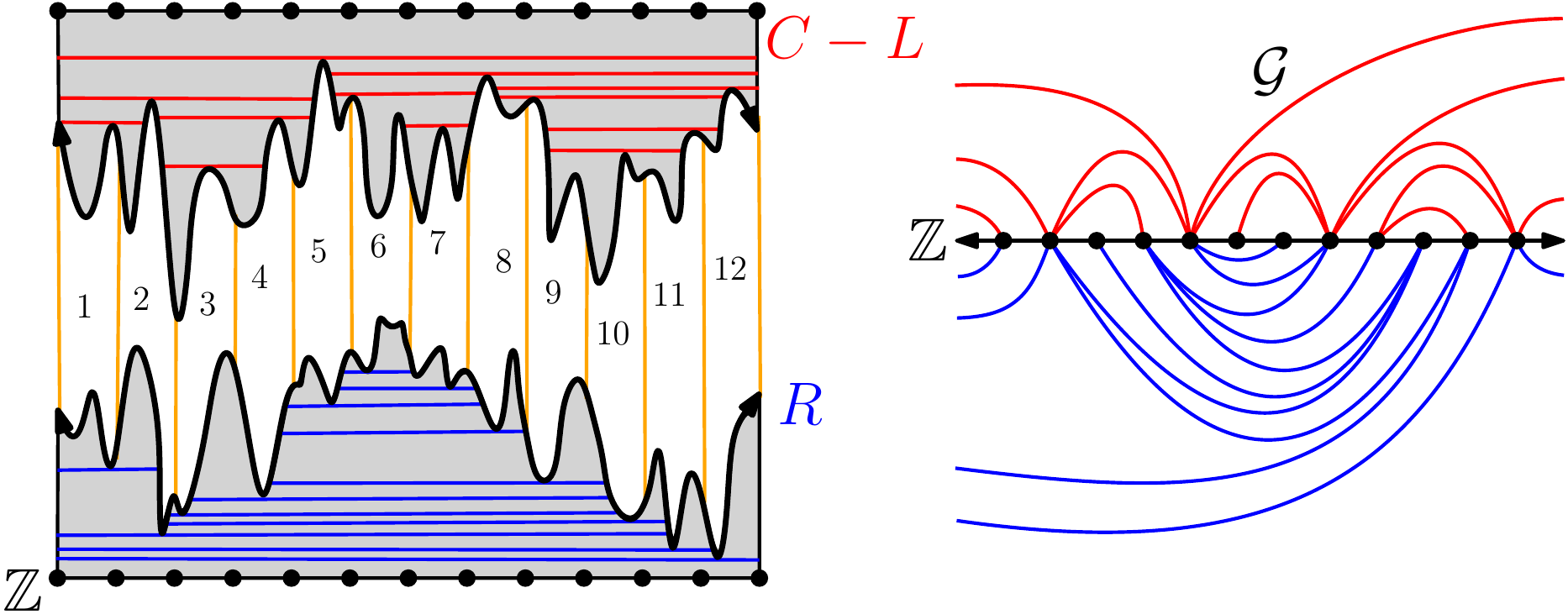} 
\caption{\textbf{Left:} To construct the mated-CRT map $\mcl G $ geometrically, one can draw the graph of $C - L$ (red) and the graph of $ R$ (blue) for some large constant $C > 0$ chosen so that the parts of the graphs over some time interval of interest do not intersect. Here, this time interval is $[0,12 ]$. One then divides the region between the graphs into vertical strips (boundaries shown in orange). Each vertical strip corresponds to the vertex $x\in   \BB Z$ which is the horizontal coordinate of its rightmost points. Vertices $x_1,x_2\in  \BB Z$ are connected by an edge if and only if the corresponding vertical strips are connected by a horizontal line segment which lies above the graph of $C - L$ or below the graph of $ R$. For each pair of vertices for which the condition holds for $C - L$ (resp.\ $ R$), we have drawn the highest (resp.\ lowest) segment which joins the corresponding vertical strips in red (resp.\ blue). Equivalently, for each $x\in  \BB Z$, we let $t_x$ be the time in $[x-1,x]$ at which $L$ attains its minimum value and we draw in red the longest horizontal segment above the graph of $C - L$ which contains $(t_x , C - L_{t_x})$; and we perform a similar procedure for $R$. 
Note that consecutive vertices are always joined by an edge.
\textbf{Right:} One can draw the graph $\mcl G $ in the plane by connecting two non-consecutive vertices $x_1,x_2 \in  \BB Z$ by an arc above (resp.\ below) the real line if the corresponding vertical strips are connected by a horizontal segment above (resp.\ below) the graph of $C - L$ (resp.\ $ R$); and connecting each pair of consecutive vertices of $ \BB Z$ by an edge. This gives $\mcl G $ a planar map structure under which it is a triangulation. A similar figure and caption appeared in~\cite{gms-harmonic}. 
}\label{fig-mated-crt-map}
\end{center}
\end{figure}

Analogously to~\eqref{eqn-rpm-submap}, we define
\eqb \label{eqn-mcrt-submap}
\mcl G[a,b] := \left(\text{submap of $\mcl G$ induced by $[a,b]\cap\BB Z$}\right) ,\quad \forall -\infty < a < b < \infty . 
\eqe

The definition of $\mcl G$ is similar to the construction of the UIMS $\mathcal M$ from the pair of bi-infinite simple random walks $(\mcl L , \mcl R)$. 
In particular, by~\eqref{eqn-rw-inf-adjacency}, two vertices $x_1,x_2\in\BB Z$ are joined by an edge of $\mathcal M$ if and only if 
\allb \label{eqn-rw-map-def}
 x_2 = x_1 + 1 \quad \text{or} \quad
 \mcl L_{x_1-1} = \mcl L_{x_2} < \min_{y \in [x_1 ,x_2-1]} \mcl L_y \quad\op{or}\quad  
 \mcl R_{x_1-1} = \mcl R_{x_2} < \min_{y \in [x_1 ,x_2-1]} \mcl R_y  
\alle 
which is a continuous time analog of~\eqref{eqn-mcrt-def}. 
We note, however, that for $x\in\BB Z$ the number of arcs of $\mcl G$ in the upper (resp.\ lower) half-plane incident to $x$ can be any non-negative integer, whereas for $\mathcal M$ the number of such arcs is always one.

Using the KMT strong coupling theorem for random walk and Brownian motion~\cite{kmt,zaitsev-kmt} and an elementary geometric argument, it was shown in~\cite[Theorem 2.1]{ghs-map-dist} that one can couple $(L,R)$ and $(\mcl L , \mcl R)$ so that the graph distances in their corresponding planar maps $\mathcal M$ and $\mcl G$ are close (actually,~\cite{ghs-map-dist} considers a more general class of pairs of random walks).

\begin{thm}[\cite{ghs-map-dist}] \label{thm-ghs-coupling} 
For each $\alpha > 0$, there exists $A = A(\alpha) > 0$ and a coupling of $(L,R)$ with $(\mcl L , \mcl R)$ such that the following is true with probability $1-O_n(n^{-\alpha})$.
Let $\mcl G$ be the mated-CRT map constructed from $(L,R)$ as in~\eqref{eqn-mcrt-def} and let $\mathcal M$ be the infinite planar map associated with the UIMS, constructed from $(\mcl L , \mcl R)$ as in Section \ref{sec-infinite}.  
For each $x,y \in [-n,n]\cap\BB Z$, 
\eqb \label{eqn-ghs-dist}
A^{-1} (\log n)^{-3} \op{dist}_{\mcl G[-n,n]}(x , y)  \leq \op{dist}_{\mathcal M[-n,n]}(x , y) \leq A (\log n)^3 \op{dist}_{\mcl G[-n,n]}(x , y) ,
\eqe 
where here we use the notations~\eqref{eqn-rpm-submap} and~\eqref{eqn-mcrt-submap}. 
\end{thm}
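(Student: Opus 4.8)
The plan is to obtain Theorem~\ref{thm-ghs-coupling} as a special case of the general graph-distance comparison result \cite[Theorem 2.1]{ghs-map-dist}. The key observation is that the UIMS planar map $\mathcal M$ is exactly the planar map encoded by the pair $(\mcl L,\mcl R)$ of \emph{independent} simple random walks via the ``local'' adjacency rule \eqref{eqn-rw-map-def}, which is the discrete counterpart of the mated-CRT adjacency rule \eqref{eqn-mcrt-def}. Hence $\mathcal M$ falls within the mating-of-trees discrete framework of \cite{ghs-map-dist} (with the pair of walk increments having two independent $\pm1$ coordinates, i.e.\ correlation $0$), and the conclusion there supplies the desired coupling together with the two-sided bound \eqref{eqn-ghs-dist}. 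One only has to match normalizations: our $(L,R)$ being a pair of standard Brownian motions corresponds to a fixed deterministic space-time rescaling of the objects in \cite{ghs-map-dist}, which affects nothing.

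For concreteness, the mechanism is as follows. First apply the KMT strong approximation theorem \cite{kmt,zaitsev-kmt} to couple $\mcl L$ with a two-sided standard Brownian motion $L$ and, independently, $\mcl R$ with $R$, so that on an event $F_n$ of probability $1-O(n^{-\alpha})$ one has $\sup_{j\in[-2n,2n]\cap\BB Z}(|\mcl L_j-L_j|+|\mcl R_j-R_j|)\le C\log n$ for a constant $C=C(\alpha)$; shrinking $F_n$ slightly, we also control the oscillation of $L$ and $R$ over $[-2n,2n]$ on intervals of unit length by $C\log n$. On $F_n$ one then compares the two adjacency structures: an edge of $\mathcal M[-n,n]$ joining $x_1<x_2$ records an excursion of $\mcl L$ (or $\mcl R$) away from its value at $x_1-1$ over $[x_1,x_2-1]$, and under the coupling this translates, up to errors of size $O(\log n)$, into an approximate coincidence of the running infima of $L$ over the strips $[x_1-1,x_1]$ and $[x_2-1,x_2]$ relative to the infimum over $[x_1,x_2-1]$. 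Such an approximate coincidence is spanned by a chain of $O((\log n)^2)$ genuine mated-CRT edges, obtained by stepping down through the $O(\log n)$-wide window of Brownian values that separates the ``exact'' and ``approximate'' infima; symmetrically, every edge of $\mcl G[-n,n]$ is spanned by an $\mathcal M[-n,n]$-path of $O((\log n)^2)$ edges. Concatenating these spanning paths along a graph geodesic --- which has length at most $2n$, so the total multiplicative error is bounded by the per-edge error --- gives \eqref{eqn-ghs-dist}; the explicit power $3$ of $\log n$ is a convenient, non-optimized bound absorbing the logarithmic factors produced by the KMT coupling error.

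The main obstacle is precisely this edge-to-path comparison on $F_n$. Two features make it delicate: (i) a vertex of $\mcl G$ can be incident to arbitrarily many arcs above or below $\BB R$ whereas a vertex of $\mathcal M$ has exactly one of each, so in the $\mcl G\rta\mathcal M$ direction one cannot merely follow arcs but must route through intermediate vertices --- this is where the ``stepping down through a window'' construction is needed; and (ii) one must check that the spanning paths can be taken inside the induced submaps $\mathcal M[-n,n]$ and $\mcl G[-n,n]$ (using that an arc joining $x_1,x_2$ depends only on the encoding processes on $[x_1-1,x_2]$ and can be spanned using only vertices of $[x_1,x_2]$), and that the polylogarithmic losses do not compound beyond $(\log n)^3$. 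Both points are handled in \cite[Section 2]{ghs-map-dist} through the forward/backward running-infimum description of the arcs, and that argument transfers to our setting without change.
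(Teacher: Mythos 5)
Your proposal is correct and matches the paper's treatment: the paper proves nothing here itself, but simply invokes \cite[Theorem 2.1]{ghs-map-dist}, noting (exactly as you do) that the UIMS falls within the mating-of-trees framework of that paper with a pair of independent walk coordinates, and that the result rests on the KMT coupling plus an elementary geometric edge-to-path comparison. Your additional sketch of the mechanism is a faithful summary of the cited argument rather than a divergence from it.
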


\subsection{Proof of lower bounds for graph distances} 
\label{sec-rpm-estimate}

In this section we prove Proposition~\ref{prop-rpm-estimate}.
We will deduce it from the combination of Theorem~\ref{thm-ghs-coupling} and the following analogous estimate for the mated-CRT map. 

\begin{prop} \label{prop-mcrt-estimate}
For each $\zeta \in (0,1)$ and each $p\in (0,1)$, there exists $C = C(p , \zeta) > 3$ such that for each large enough $n\in\BB N$, it holds with probability at least $p$ that the following is true:
\begin{enumerate}[$(a)$]
\item \label{item-mcrt-in} $\op{dist}_{\mcl G} \left( [-n,n]\cap\BB Z , \BB Z\setminus [-C n , C n] \right) \geq n^{1/d-\zeta}$. 
\item \label{item-mcrt-out} $\op{dist}_{\mcl G} \left( [-3 C n, 3 C n]\cap\BB Z , \BB Z\setminus [-C^2 n , C^2 n] \right) \geq n^{1/d-\zeta}$. 
\item \label{item-mcrt-out'} $\op{dist}_{\mcl G} \left( [-C^2 n, C^2 n]\cap\BB Z , \BB Z\setminus [-C^3 n , C^3 n] \right) \geq n^{1/d-\zeta}$. 
\item \label{item-mcrt-around} There are two paths $\Pi_1,\Pi_2$ in $\mcl G$ each going from a vertex of $[-n  , n ]\cap\BB Z$ to a vertex of $\BB Z\setminus [- C^2 n , C^2 n] $ which lie at $\mcl G$-graph distance at least $ n^{1/d-\zeta}$ from each other.
\end{enumerate}
\end{prop}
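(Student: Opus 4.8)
The plan is to reduce all four statements to a single "scale-invariant" distance estimate for the mated-CRT map $\mcl G$, and to derive that estimate from the LQG description given in Section~\ref{sec-lqg-mcrt} (to be proven there as a consequence of the mating of trees theorem and the existence of the $\sqrt 2$-LQG metric). The key point is that the mated-CRT map enjoys an exact scaling symmetry: by Brownian scaling of $(L,R)$, the map $\mcl G$ restricted to $[-m,m]\cap\BB Z$ is, after rescaling the Brownian motions by a factor $m^{-1/2}$ in space and $m^{-1}$ in time, the same object as $\mcl G$ restricted to $[-1,1]\cap\BB Z$ "at resolution $1/m$". Concretely, I would first record a clean lemma: there is a function $\phi(\eps) = \eps^{1/d + o_\eps(1)}$ such that for each $p \in (0,1)$ there is $c = c(p) > 0$ with
\eqbn
\BB P\left[ \op{dist}_{\mcl G[-m,m]}\left( [-\eps m, \eps m]\cap\BB Z ,\, \BB Z \setminus [-m,m] \right) \geq c\, m^{1/d - \zeta/2} \right] \geq p
\eqen
for all small enough $\eps$ and all large enough $m$. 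Granting the LQG input of Section~\ref{sec-lqg}, this follows because in the LQG embedding the set $[-\eps m, \eps m]$ corresponds to an LQG-area-$\eps$ neighborhood of a point, the set $\BB Z \setminus [-m,m]$ is at unit LQG area away, and the LQG distance between a small metric ball and the complement of a fixed ball is polynomial in $\eps$ with exponent $1/d + o(1)$; translating back through the mated-CRT/LQG correspondence introduces only polylogarithmic errors, which are absorbed into the $n^{\zeta/2}$ slack.

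Given this lemma, each of \eqref{item-mcrt-in}--\eqref{item-mcrt-out'} is an instance of it with an appropriate choice of the outer-to-inner ratio. For \eqref{item-mcrt-in} take $m = Cn$ and $\eps = 1/C$, so that $[-n,n] \subset [-\eps m, \eps m]$ and $\BB Z \setminus [-Cn, Cn] = \BB Z \setminus [-m,m]$; choosing $C$ large enough (depending on $p$ and $\zeta$) makes $\eps$ small enough that the lemma applies and $c\,(Cn)^{1/d-\zeta/2} \geq n^{1/d - \zeta}$ for large $n$. For \eqref{item-mcrt-out} take $m = C^2 n$, $\eps = 3/C$, noting $[-3Cn, 3Cn] \subset [-\eps m, \eps m]$; for \eqref{item-mcrt-out'} take $m = C^3 n$, $\eps = 1/C$. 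Since there are only finitely many such events, I would then require each to hold with probability at least, say, $1 - (1-p)/8$, and pick $C$ large enough that this is possible; a union bound gives all three (and, with the next paragraph, all four) simultaneously with probability at least $p$.

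For \eqref{item-mcrt-around} I would again go through the LQG picture. In the $\sqrt 2$-LQG embedding, the interval $[-n,n]$ maps to a small metric/area ball around a marked point $z$, and $\BB Z \setminus [-C^2 n, C^2 n]$ maps to the complement of a fixed ball; the relevant fact is that with positive probability there are two paths from $z$ to that complement, contained in a slightly larger fixed region (corresponding to $\mcl G[-C^3 n, C^3 n]$), which are separated by LQG distance of order $n^{1/d+o(1)}$ inside that region. This is the continuum statement that a small ball is not a "cut point" and that one can route two macroscopically separated crossings around it — it holds with positive probability for SLE$_8$/LQG and is exactly the sort of input that Section~\ref{sec-lqg} will supply (via a ball-crossing / two-arm type estimate for the $\sqrt 2$-LQG metric in an annulus). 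Pulling this back to $\mcl G$ through the mating of trees correspondence, with polylogarithmic distortion absorbed as before, gives two paths $\Pi_1, \Pi_2$ as required. I would choose the constant $C$ for this part together with the ones above and take a final union bound.

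The main obstacle is the last two items, \eqref{item-mcrt-out'} and especially \eqref{item-mcrt-around}: \eqref{item-rpm-in}--\eqref{item-rpm-out} only ask for distance lower bounds between concentric annuli, which are monotone and scale-covariant and hence clean, but \eqref{item-mcrt-around} asks for the \emph{existence of two well-separated paths crossing a fixed annulus while avoiding a small ball}, which is a more delicate positive-probability topological/metric statement about SLE$_8$-decorated $\sqrt 2$-LQG. I expect the real work in proving Proposition~\ref{prop-mcrt-estimate} to be isolating exactly the right continuum input for this in Section~\ref{sec-lqg} — in particular verifying that the two crossing paths can be taken inside the $[-C^3n, C^3 n]$-submap rather than all of $\mcl G$ — and then checking that the mated-CRT-to-LQG comparison respects the "paths stay in a fixed sub-map" constraint with only polylogarithmic loss. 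The rest (Brownian scaling, choice of $C$, union bounds) is routine.
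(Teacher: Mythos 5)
Your strategy for items $(a)$--$(c)$ is essentially the paper's: a cell-diameter comparison lemma converting $D_h$-distances to $\mcl G$-graph distances (the paper's Lemma \ref{lem-mcrt-lqg}, which costs a factor $n^{\zeta}$ rather than a polylog, but this is immaterial), plus a continuum annulus-crossing estimate $D_h(\eta([-t,t]),\bdy\eta([-Ct,Ct]))\geq t^{1/d}$ proven by exact scale invariance reducing to $t=1$, where it follows from compactness of LQG balls and the fact that $\eta$ is space-filling. Your reformulation in terms of a ratio $\eps$ and your bookkeeping of the constants are fine.

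The gap is item $(d)$, which you correctly flag as the main obstacle but do not resolve. Two problems. First, "with positive probability there are two paths\ldots" is not enough: the proposition needs all four items simultaneously with probability at least $p$ for an arbitrary $p\in(0,1)$, so the continuum separation event must have probability tending to $1$ as the separation parameter is tuned (your own union bound at probability $1-(1-p)/8$ tacitly assumes this). Second, and more importantly, you leave open what the two paths actually are and why they are separated. The paper's device is concrete and soft: take $\Pi_1^0$ and $\Pi_2^0$ to be the sets of cells meeting $(-\infty,0]\cap\ol{\eta([-C^2t,C^2t]\setminus[-t,t])}$ and $[0,\infty)\cap\ol{\eta([-C^2t,C^2t]\setminus[-t,t])}$ respectively -- i.e.\ the two half-rays of the real line through the annulus. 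After scaling to $t=1$, these are two disjoint compact sets (since $0$ lies in the interior of $\eta([-1,1])$), hence at positive Euclidean and therefore positive $D_h$-distance; scale invariance of $(h,\eta)$ then gives the $\delta t^{1/d}$ lower bound with probability tending to $1$ as $\delta\to 0$, uniformly in $t$. No two-arm or crossing estimate for SLE$_8$/LQG is needed. Finally, the constraint that the separation holds for the $\mcl G$-graph distance (not just within $\mcl G[-C^3n,C^3n]$) is handled by item $(c)$: any path between $\Pi_1$ and $\Pi_2$ that exits $[-C^3n,C^3n]$ is already long. Without identifying this mechanism your proof of $(d)$ is not complete.
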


The proof of Proposition~\ref{prop-mcrt-estimate} is given in Section~\ref{sec-lqg}, using the relationship between the mated-CRT map and SLE-decorated LQG. 
We now deduce Proposition~\ref{prop-rpm-estimate} from Proposition~\ref{prop-mcrt-estimate}.

\begin{proof}[Proof of Proposition~\ref{prop-rpm-estimate}]
We divide the proof in three main steps.

\medskip

\noindent\textit{\underline{Step 1:} Regularity event.}
We first define a high-probability event which we will work on throughout the rest of the proof.
Let $A_0 > 0$ be as in Theorem~\ref{thm-ghs-coupling} with $\alpha = 1$, say, and let $A := 8 A_0$.  
By Theorem~\ref{thm-ghs-coupling} with $n^2$ in place of $n$, we can couple $\mathcal M$ and $\mcl G$ so that with probability tending to 1 as $n\rta\infty$, it holds for each $x,y\in [-n^2 ,n^2]\cap\BB Z$ that   
\eqb \label{eqn-use-ghs-coupling}
A^{-1} (\log n)^{-3} \op{dist}_{\mcl G [-n^2,n^2] }(x , y)  \leq \op{dist}_{M [-n^2,n^2] }(x , y) \leq A (\log n)^3 \op{dist}_{\mcl G [-n^2,n^2] }(x , y) .
\eqe

In order to make sure that we can compare distances in $\mathcal M$ and $\mcl G$ from points in $[-C^2 n , C^2 n] \cap\BB Z$ to the set $\BB Z \setminus [-C^2 n , C^2 n]$, we also need to impose a further regularity condition. 
By the adjacency conditions~\eqref{eqn-rw-map-def} and~\eqref{eqn-mcrt-def} for $\mathcal M$ and $\mcl G$ in terms of $(\mcl L  , \mcl R)$ and $(L,R)$, respectively, together with basic estimates for random walk and Brownian motion, for any fixed $C>1$ it holds with probability tending to 1 as $n\rta\infty$ that
\eqb \label{eqn-bdy-contained}
\text{$\forall$ edges $ \{x,y\} \in  \mcl M$ such that $x\in [-C^3 n ,C^3 n]\cap\BB Z$, we have $y\in [-n^2 ,n^2]$} ;
\eqe
and the same is true with $\mcl G$ in place of $\mathcal M$.
Note that the quantity $n^2$ is unimportant in~\eqref{eqn-bdy-contained}: the same would be true with $n^2$ replaced by any function of $n$ which goes to $\infty$ as $n\rta\infty$. 

We now take $C = C( 1 - (1-p)/2 , \zeta/2) > 0$ as in Proposition~\ref{prop-mcrt-estimate} with $1-(1-p)/2$ instead of $p$ and $\zeta/2$ instead of $\zeta$. 
By Proposition~\ref{prop-mcrt-estimate} and our above estimates, for each large $n\in\BB N$ it holds with probability at least $p$ that~\eqref{eqn-use-ghs-coupling} and~\eqref{eqn-bdy-contained} both hold and the numbered conditions in Proposition~\ref{prop-mcrt-estimate} hold with $\zeta/2$ in place of $\zeta$. Henceforth assume that this is the case. The rest of the argument is deterministic.
\medskip

\noindent\textit{\underline{Step 2:} Proofs of~\eqref{item-rpm-in}, ~\eqref{item-rpm-out}, and~\eqref{item-rpm-out'}.}
Consider a path $P$ in $\mathcal M$ from a point $x \in [-n,n]\cap\BB Z$ to a point of $\BB Z\setminus [-C n , C n]$.
By~\eqref{eqn-bdy-contained}, $P$ hits a vertex in $  \BB Z \cap ( [-n^2 , n^2] \setminus [-C n , C n] )$. 
Let $y$ be the first such vertex.
Then the segment $P'$ of $P$ between its starting point and the first time it hits $y$ is a path in $\mathcal M[-n^2,n^2]$.  
Using our above estimates, we now get that the length of $P'$ satisfies 
\allb \label{eqn-rpm-in0}
|P'| 
&\geq  A^{-1} (\log n)^{-3} \op{dist}_{\mcl G[-n^2 ,n^2]}(x , y)  \quad \text{(by~\eqref{eqn-use-ghs-coupling})} \notag\\ 
&\geq A^{-1} (\log n)^{-3} \op{dist}_{\mcl G} \left( [-n,n]\cap\BB Z , \BB Z\setminus [-C n , C n] \right) \quad \text{(choice of $x$ and $y$)} \notag\\ 
&\geq A^{-1} (\log n)^{-3} n^{1/d-\zeta/2} \quad \text{(\eqref{item-mcrt-in} of Proposition~\ref{prop-mcrt-estimate} with $\zeta/2$ instead of $\zeta$)} .
\alle 
Since $P'$ is a sub-path of $P$, we have $|P| \geq |P'|$. Taking the infimum over all $P$ now shows that $\op{dist}_{\mathcal M} \left( [-n,n]\cap\BB Z , \BB Z\setminus [-C n , C n] \right) $ is at least the right side of~\eqref{eqn-rpm-in0}, which is at least $n^{1/d-\zeta}$ if $n$ is large enough. Thus~\eqref{item-rpm-in} in the proposition statement holds.

The proofs of~\eqref{item-rpm-out} and~\eqref{item-rpm-out'} are identical to the proof of~\eqref{item-rpm-in}, except that we use~\eqref{item-mcrt-out} and~\eqref{item-mcrt-out'} from Proposition~\ref{prop-mcrt-estimate} instead of~\eqref{item-mcrt-in}. 
\medskip

\noindent\textit{\underline{Step 3:} Proof of~\eqref{item-rpm-around}.} 
Let $\wt\Pi_1$ and $\wt\Pi_2$ be the paths in $\mcl G$ as in \eqref{item-mcrt-around} of Proposition~\ref{prop-mcrt-estimate} with $\zeta/2$ instead of $\zeta$.
By possibly replacing $\wt\Pi_1$ and $\wt\Pi_2$ by sub-paths (which can only increase the graph distance between them), we can assume that each of these paths intersects $[-n  , n ]\cap\BB Z$ and $\BB Z\setminus [- C^2 n , C^2 n] $ only at its endpoints.

We view $\wt\Pi_1$ as a function $\wt\Pi_1 : [0,N]\cap\BB Z \rta \BB Z$. For $i=1,\dots,N$, we apply~\eqref{eqn-use-ghs-coupling} with $(\wt \Pi_1(i-1) , \wt \Pi_1(i))$ in place of $(x,y)$ to get $N$ new paths in $\mathcal M[-n^2 ,n^2]$ (the ones realizing $\op{dist}_{\mathcal M[-n^2,n^2]}(\wt \Pi_1(i-1) , \wt \Pi_1(i))$), each of length at most $A(\log n)^3$. Then we concatenate the $N$ new paths in $\mathcal M[-n^2,n^2]$. This results in a path $\Pi_1$ in $\mathcal M[-n^2 ,n^2]$ with the same endpoints as $\wt\Pi_1$ with the property that each point of $\Pi_1$ lies at $\mathcal M[-n^2,n^2]$-graph distance at most $A(\log n)^3$ from a point of $\wt \Pi_1$. 
We similarly construct a path $\Pi_2$ in $\mathcal M[-n^2 ,n^2]$ but started from $\wt\Pi_2$ instead of $\wt\Pi_1$.  

Since $\Pi_1,\Pi_2$ have the same endpoints as $\wt\Pi_1,\wt\Pi_2$, these paths each go from a vertex of $[-n,n]\cap\BB Z$ to a vertex of $\BB Z\setminus[-C^2 n , C^2 n]$. 
Furthermore, by the distance estimate in the preceding paragraph and the triangle inequality, the $\mathcal M[-n^2,n^2]$-graph distance from any point of $\Pi_1$ to any point of $\Pi_2$ is at least $\op{dist}_{\mathcal M[-n^2,n^2]} (\wt\Pi_1,\wt\Pi_2)  - 2 A(\log n)^3$. By this and~\eqref{eqn-use-ghs-coupling}, 
\allb \label{eqn-split-paths-compare}
\op{dist}_{\mathcal M[-C^3 n , C^3 n]} (\Pi_1, \Pi_2) 
&\geq \op{dist}_{\mathcal M[-n^2,n^2]} (\Pi_1, \Pi_2)  \notag\\
&\geq \op{dist}_{\mathcal M[-n^2,n^2]} (\wt\Pi_1,\wt\Pi_2)  - 2 A(\log n)^3   \notag\\
&\geq  A^{-1} (\log n)^{-3}   \op{dist}_{\mcl G} (\wt\Pi_1,\wt\Pi_2) - 2 A (\log n)^3   .
\alle 
By our initial choice of $\wt\Pi_1$ and $\wt\Pi_2$  the right side of~\eqref{eqn-split-paths-compare} is at least $A^{-1} (\log n)^{-3}n^{1/d - \zeta/2}  - 2 A (\log n)^3  $, which is at least $n^{1/d-\zeta}$ for each large enough $n\in\BB N$. 
\end{proof}

\subsection{Proof of Proposition~\ref{prop-map-diam}, Theorem~\ref{thm-macro}, and Theorem~\ref{thm-origin-loop}} 
\label{sec-macro-proof}
 
Combining Theorem~\ref{thm-loop-dichotomy} and Proposition~\ref{prop-rpm-estimate} leads to the following result.

\begin{prop} \label{prop-loop-dist}
For each $\zeta \in (0,1)$ and each sufficiently large $n\in\BB N$ (depending on $\zeta$), it holds with probability at least $1/10$ that the following is true.
There is a segment $\ol\ell$ of a loop or infinite path in $\Gamma$ which is contained in $[-  n ,   n] \cap \BB Z$ such that the $\mathcal M[-  n ,   n]$-graph-distance diameter of $\ol \ell$ and the $\mathcal M$-graph distance from $\ol \ell$ to $\BB Z\setminus [- n ,  n] $ are each at least $n^{1/d-\zeta}$. 
\end{prop}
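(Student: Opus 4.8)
The plan is to combine the two inputs—Theorem~\ref{thm-loop-dichotomy} (existence of macroscopic loops with positive probability) and Proposition~\ref{prop-rpm-estimate} (distance lower bounds in $\mathcal M$)—on a common event, choosing the constant $C$ in Theorem~\ref{thm-loop-dichotomy} to be exactly the $C = C(p,\zeta)$ furnished by Proposition~\ref{prop-rpm-estimate} for a suitably large $p$ (say $p$ close to $1$, to be pinned down below). First I would fix $\zeta$, run Proposition~\ref{prop-rpm-estimate} with this $\zeta$ and with $p$ chosen so that $1 - (1-p) - (1 - (5 - 2\sqrt 6)) \geq 1/10$, i.e.\ $p$ large enough that the intersection of the Proposition-\ref{prop-rpm-estimate} event with the Theorem-\ref{thm-loop-dichotomy} event has probability at least $1/10$ (since $5 - 2\sqrt6 \approx 0.101 > 1/10$, taking $p = 0.999$ works for all large $n$). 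This yields the constant $C$; I then apply Theorem~\ref{thm-loop-dichotomy} with that same $C$. On the intersection event, call it $F_n$, we have $\mathbb P[F_n] \geq 1/10$, and both of the following hold: (1) there is a loop or infinite path in $\Gamma$ satisfying condition~\ref{item-di-around} or condition~\ref{item-di-across} of Theorem~\ref{thm-loop-dichotomy}; and (2) all four distance estimates $(i)$--$(iv)$ of Proposition~\ref{prop-rpm-estimate} hold.

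The heart of the argument is then a deterministic case analysis on $F_n$, producing in each case a segment $\ol\ell$ contained in $[-\widehat n, \widehat n]\cap\BB Z$ (for an appropriate $\widehat n$ comparable to $n$—I expect $\widehat n$ to be a fixed multiple of $n$ like $C^3 n$ or $(2C+1)n+3$, and one rescales $n$ at the end to get the clean statement) with the required diameter and distance-to-complement lower bounds. In Case A (condition~\ref{item-di-around}): the loop $\ell$ disconnects $[-n,n]$ from $\infty$ and hits a point of $[n, (2C+1)n+3]\cap\BB Z$; since it disconnects $[-n,n]$ from $\infty$ it must also hit a point of $[-n,n]\cap\BB Z$ (or of $(-\infty,-n)$—either way it surrounds the origin and so has a point near $[-n,n]$), hence $\ell$ has two points whose $\mathcal M$-distance is controlled below: a point in, roughly, $[-n,n]\cap\BB Z$ and a point forced to be far out by the disconnection property, so by $(i)$ (or $(ii)$) of Proposition~\ref{prop-rpm-estimate} the $\mathcal M$-distance between them, hence the diameter of $\ell$, is $\geq n^{1/d-\zeta}$. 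One then takes $\ol\ell$ to be the portion of $\ell$ inside $[-C^3 n, C^3 n]\cap\BB Z$, which by the crossing/disconnection structure must itself have $\mathcal M[-C^3 n, C^3 n]$-diameter $\geq n^{1/d-\zeta}$ (using $(iv)$, the two far-separated paths $\Pi_1,\Pi_2$ that $\ell$ must cross, to get the \emph{restricted}-graph-distance diameter rather than just the ambient one), while its $\mathcal M$-distance to $\BB Z\setminus[-C^3 n, C^3 n]$ is $\geq n^{1/d-\zeta}$ by $(iii)$. In Case B (condition~\ref{item-di-across}): the loop or infinite path hits both $[-n,n]\cap\BB Z$ and $\BB Z\setminus[-Cn,Cn]$, so by $(i)$ it has $\mathcal M$-diameter $\geq n^{1/d-\zeta}$ directly; one then extracts the sub-segment $\ol\ell$ between its last exit of $[-n,n]$ and reaching distance $\geq n^{1/d-\zeta}$, contained in $[-C^2 n, C^2 n]\cap\BB Z$ by $(ii)$ plus the boundary-containment estimate, and argues as before using $(iv)$ for the restricted diameter and $(ii)$ or $(iii)$ for the distance to the complement.

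The main obstacle I anticipate is the bookkeeping in Case A needed to convert an \emph{ambient} $\mathcal M$-graph-distance lower bound into a lower bound for the $\mathcal M[-n,n]$-\emph{restricted} graph-distance diameter of the segment $\ol\ell$—i.e.\ ruling out that a short path in $\mathcal M$ between the two ends of $\ol\ell$ might leave the box $[-n,n]$. This is exactly why Proposition~\ref{prop-rpm-estimate}$(iv)$ is stated the way it is: the two paths $\Pi_1,\Pi_2$ from $[-n,n]\cap\BB Z$ to $\BB Z\setminus[-C^2 n, C^2 n]$ that lie far apart in $\mathcal M[-C^3 n, C^3 n]$ act as a topological barrier—a loop $\ell$ that surrounds $[-n,n]$ and reaches out past $[-Cn,Cn]$ must cross both $\Pi_1$ and $\Pi_2$, so any path connecting the two crossing points within $[-C^3 n, C^3 n]$ inherits the $\geq n^{1/d-\zeta}$ lower bound. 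Making this planarity/Jordan-curve argument precise, and carefully matching up which box ($[-n,n]$, $[-Cn,Cn]$, $[-C^2 n, C^2 n]$, $[-C^3 n, C^3 n]$) plays the role of the containing box in the final statement after rescaling $n\mapsto c n$, is the delicate part; the probabilistic content is entirely contained in $\mathbb P[F_n]\geq 1/10$, which is immediate from the union bound once $p$ is chosen.
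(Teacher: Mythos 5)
Your proposal follows essentially the same route as the paper: intersect the Theorem~\ref{thm-loop-dichotomy} event with the Proposition~\ref{prop-rpm-estimate} event via a union bound to get probability at least $1/10$, then run a deterministic case analysis in which condition~(iv) (the barrier paths $\Pi_1,\Pi_2$ that a disconnecting loop must cross) supplies the restricted-graph-distance diameter, conditions~(i)--(iii) supply the remaining diameter and distance-to-complement bounds, and one rescales $n\mapsto \lfloor n/C^3\rfloor$ with a slightly smaller $\zeta$ at the end. One small inaccuracy: in Case~A a loop disconnecting $[-n,n]$ from $\infty$ need not hit $[-n,n]\cap\BB Z$ nor reach far out, so your first diameter argument there does not stand on its own; but the barrier argument you give next is the correct (and the paper's) mechanism, with the paper splitting Case~A according to whether $\ell\subset[-C^2n,C^2n]$ (use (iv)) or not (extract a crossing segment and use (ii)).
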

\begin{proof}
Fix $\zeta \in (0,1)$. By Theorem~\ref{thm-loop-dichotomy}, for any choice of $C>1$, for each large enough $n\in\BB N$ it holds with probability at least $5-2\sqrt 6 > 1/10$ that at least one of~\ref{item-di-around} or~\ref{item-di-across} in the statement of Theorem~\ref{thm-loop-dichotomy} is satisfied.
By Proposition~\ref{prop-rpm-estimate}, there is a universal constant $C > 1$ so that for each large enough $n$, it holds with probability at least $1 - (5-2\sqrt 6 -1/10)$ that all four of the numbered conditions in the statement of Proposition~\ref{prop-rpm-estimate} are satisfied.
Henceforth assume that the events of Theorem~\ref{thm-loop-dichotomy} and Proposition~\ref{prop-rpm-estimate} occur (both with this same choice of $C$), which happens with probability at least $1/10$ if $n$ is large enough.  

We will show that 
\begin{enumerate} \label{eqn-loop-dist-show}
\item[$(*)$] there is a segment $\ol\ell$ of a loop or infinite path in $\Gamma$ which is contained in $[- C^3 n ,   C^3 n] \cap \BB Z$ such that the $\mathcal M[-  C^3 n ,  C^3 n]$-graph-distance diameter of $\ol \ell$ and the $\mathcal M$-graph distance from $\ol \ell$ to $\BB Z\setminus [- C^3 n ,  C^3 n]\cap\BB Z$ are each at least $n^{1/d-\zeta}$. 
\end{enumerate}
\newcommand{\refShow}{{(\hyperref[eqn-loop-dist-show]{$*$})}}
To extract the proposition statement from~\refShow, we can apply~\refShow\ with $\lfloor n/C^3\rfloor$ in place of $n$, then slightly shrink $\zeta$ in order to absorb a factor of $1/C^{3\zeta}$ into a power of $n$. 
 
To prove~\refShow, we will treat the two possible scenarios in Theorem~\ref{thm-loop-dichotomy} separately. 
First suppose that 
\begin{enumerate}[A.]
\item \label{item-di-around'} there is a loop $\ell$ in $\Gamma$ which disconnects $[-n,n]$ from $\infty$ and which hits a point of $[n , (2C+1)n + 3]\cap\BB Z$. 
\end{enumerate}
If $\ell$ is contained  in $[-C^2 n , C^2 n]\cap\BB Z$, then by planarity and since $\ell$ disconnects $[-n,n]$ from $\infty$, the loop $\ell$ must intersect every path in $\mathcal M$ from $[-n,n]\cap\BB Z$ to $\BB Z\setminus [-C^2 n , C^2 n]$. In particular, $\ell$ must intersect the paths $\Pi_1$ and $\Pi_2$ from~\eqref{item-rpm-around} of Proposition~\ref{prop-rpm-estimate}. Hence, the $\mathcal M[-C^3 n , C^3 n]$-graph-distance diameter of $\ell$ is at least $ n^{1/d-\zeta}$.
Furthermore, by~\eqref{item-rpm-out'} of Proposition~\ref{prop-rpm-estimate}, the $\mathcal M$-graph distance from $\ell$ to $\BB Z\setminus [-C^3 n , C^3 n]\cap\BB Z$ is at least $n^{1/d-\zeta}$. So, we can take $\ol\ell=\ell$. 

If $\ell$ is not contained in $[-C^2 n , C^2 n]\cap\BB Z$, then since $\ell$ hits $[n,(2C+1)n+3]\cap\BB Z$, there is a segment $\ol\ell $ of $\ell$ which is a path in $\mathcal M$ from a point of $[n , (2C+1) n + 3] \cap\BB Z \subset [-3 C n , 3 C n]\cap\BB Z$ to a point of $\BB Z\setminus [-C^2 n , C^2 n]$.
By possibly replacing $\ol\ell$ with a sub-path, we can assume that $\ol\ell$ is contained in $[-C^2 n , C^2 n] \cap\BB Z$ except for its terminal endpoint.

By~\eqref{item-rpm-out} of Proposition~\ref{prop-rpm-estimate}, the $\mathcal M[-C^3 n , C^3 n]$-graph-distance diameter of $\ol\ell$ is at least $n^{1/d-\zeta}$. 
By~\eqref{item-rpm-out'} of Proposition~\ref{prop-rpm-estimate}, the $\mathcal M$-graph distance from $\ol\ell$ to $[-C^3 n , C^3 n]\cap\BB Z$ is at least $n^{1/d-\zeta}$.
  
Next suppose that
\begin{enumerate}[A.] 
\setcounter{enumi}{1}
\item \label{item-di-across'} there is a loop or an infinite path in  $\Gamma$ which hits a point of each of $[-n,n]\cap\BB Z$ and $\BB Z\setminus [-C n , C n] $. 
\end{enumerate}
Let $\ol\ell$ be a segment of this loop or infinite path which is a path from a point of $[-n,n]\cap\BB Z$ to a point of $\BB Z\setminus [-C n , C n]$. By possibly replacing $\ol\ell$ with a sub-path, we can assume that $\ol\ell$ is contained in $[-C n , C n] \cap\BB Z$ except for its terminal endpoint.
Then~\eqref{item-rpm-in} of Proposition~\ref{prop-rpm-estimate} shows that the $\mathcal M[-C^3 n , C^3 n]$-graph-distance diameter of $\ol\ell$ is at least $n^{1/d-\zeta}$ and~\eqref{item-rpm-out'} of Proposition~\ref{prop-rpm-estimate} shows that the $\mathcal M$-graph distance from $\ol\ell$ to $[-C^3 n , C^3 n]\cap\BB Z$ is at least $n^{1/d-\zeta}$.
\end{proof}
 
Using Proposition~\ref{prop-loop-dist}, we obtain our lower tail bound for the diameter of the origin-containing loop in the UIMS.

\begin{proof}[Proof of Theorem~\ref{thm-origin-loop}]
Let $\zeta \in (0,1)$, which we will eventually send to zero. 
For $k\in\BB N$, let $X = X_k$ be sampled uniformly from $[-k^{d+\zeta} ,k^{d+\zeta}]\cap\BB Z$, independently from everything else, and let $\ell_X$ be the loop in $\Gamma$ which hits $X$. By the translation invariance of the law of $(\mathcal M,\Gamma)$, the translated planar map / loop pair $(\mathcal M-X,\Gamma-X)$ has the same law as $(\mathcal M,\Gamma)$. 
Hence, 
\eqb \label{eqn-uniform-loop}
\BB P\left[ \left(\text{$\mathcal M$-graph-distance diameter of $\ell_0$} \right) \geq k \right]
= \BB P\left[ \left(\text{$\mathcal M$-graph-distance diameter of $\ell_X$} \right) \geq k \right]   .
\eqe
We will now lower-bound the second quantity in~\eqref{eqn-uniform-loop}. 

By Proposition~\ref{prop-loop-dist} (with $\lfloor k^{d+\zeta} \rfloor$ in place of $n$ and with $\zeta$ possibly replaced by a smaller positive number), if $k$ is large enough then it holds with probability at least $1/10$ that there is a segment $\ol\ell$ of a loop or infinite path in $\Gamma$ which is contained in $[-  k^{d+\zeta}  ,   k^{d+\zeta}] \cap \BB Z$ such that the $\mathcal M[-  k^{d+\zeta},   k^{d+\zeta}]$-graph-distance diameter of $\ol \ell$ and the $\mathcal M$-graph distance from $\ol \ell$ to $\BB Z\setminus [- k^{d+\zeta} ,  k^{d+\zeta}] $ are each at least $k $. Let $E_k$ be the event that such an $\ol\ell$ exists. On $E_k$ let $\ol\ell$ be a path as in the definition of $E_k$, chosen in some manner which is measurable with respect to $\sigma(\mathcal M,\Gamma)$. 

We claim that if $E_k$ occurs, then the $\mathcal M$-graph-distance diameter of $\ol\ell$, and hence also the number of vertices of $\mathcal M$ hit by $\ol\ell$, are each at least $k $. 
Indeed, by definition, there are two vertices $x,y\in [-k^{d+\zeta},k^{d+\zeta}]\cap\BB Z$ hit by $\ol\ell$ which lie at $\mathcal M[-k^{d+\zeta},k^{d+\zeta}]$-graph distance at least $k $ from each other. Any path in $\mathcal M$ from $x$ to $y$ must either stay in $[-k^{d+\zeta},k^{d+\zeta}]\cap\BB Z$, in which case its length is at least $k $ by our choice of $x,y$; or must cross from $x$ to $\BB Z\setminus [-k^{d+\zeta},k^{d+\zeta}]$, in which case its length is also at least $k $ since the $\mathcal M$-graph distance from $\ol\ell$ to $\BB Z\setminus [-k^{d+\zeta},k^{d+\zeta}]$ is at least $k $. Taking the infimum over all such paths gives our claim.
 
Since $X$ is sampled uniformly from $[-k^{d+\zeta},k^{d+\zeta}]\cap\BB Z$, independently from $(\mathcal M,\Gamma)$, the preceding paragraph implies that
\eqb \label{eqn-cond-on-map}
\BB P\left[ X \in \ol \ell \,\middle|\, (\mathcal M,\Gamma) \right] \BB 1_{E_k}   \geq  \frac12 k^{-(d-1)-\zeta} \BB 1_{E_k}   .
\eqe
Since $\BB P[E_k] \geq 1/10$ and the $\mathcal M$-graph-distance diameter of $\ol\ell$ is at least $k $ on $E_k$, taking the expectations on both sides of~\eqref{eqn-cond-on-map} gives
\eqb \label{eqn-origin-loop0}
\BB P\left[ \left(\text{$\mathcal M$-graph-distance diameter of $\ell_X$} \right) \geq k \right] \geq \frac{1}{20} k^{-(d-1) - \zeta} .
\eqe
By~\eqref{eqn-uniform-loop}, \eqref{eqn-origin-loop0} implies~\eqref{eqn-origin-loop} upon sending $\zeta\rta 0$. 
\end{proof}

The following proposition is the analog of Theorem~\ref{thm-macro} for the UIMS. It is the main input in the proof of Theorem~\ref{thm-macro}. 
 
\begin{prop}  \label{prop-plane-macro}
For each $\zeta \in (0,1)$, there exists $\beta > 0$ and $a_0,a_1> 0$, depending on $\zeta$, such that for each $n\in\BB N$, it holds with probability at least $1 - a_0 e^{-a_1 n^\beta}$ that there is a segment of a loop or an infinite path in $\Gamma$ which hits only vertices of $[1,2n]\cap\BB Z$ and has $\mathcal M$-graph-distance diameter at least $n^{1/d-\zeta}$.
\end{prop}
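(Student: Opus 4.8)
The plan is to amplify the constant‑probability statement of Proposition~\ref{prop-loop-dist} into a stretched‑exponential bound by running many \emph{independent} trials inside disjoint sub‑windows of $[1,2n]$. The catch is that a sub‑window of size $w$ only produces loop segments of $\mathcal M$‑diameter $\approx w^{1/d}$, so $w$ must be taken as a power of $n$ only slightly smaller than $n$; this in turn limits the number of disjoint trials to a small power of $n$, which is exactly the exponent $\beta$ we want.

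Concretely, fix $\zeta\in(0,1)$. Since $n^{1/d-\zeta}$ is decreasing in $\zeta$, it suffices to treat small $\zeta$ (the statement for a given $\zeta$ follows from the same statement with any smaller $\zeta$), so we assume $\zeta<2/d$. Set $\rho:=d\zeta/2\in(0,1)$ and $\zeta':=\zeta/4$, chosen so that $(1-\rho)(1/d-\zeta')>1/d-\zeta$, and put $w=w(n):=\lfloor n^{1-\rho}\rfloor$. For $a\in\BB Z$, let $G_w^{(a)}$ be the translate by $a$ of the event in Proposition~\ref{prop-loop-dist} applied with window $[-w,w]$ and with $\zeta'$ in place of $\zeta$; that is, $G_w^{(a)}$ is the event that there is a segment $\ol\ell$ of a loop or infinite path in $\Gamma$ contained in $[a-w,a+w]\cap\BB Z$ whose $\mathcal M[a-w,a+w]$‑graph‑distance diameter and whose $\mathcal M$‑graph distance to $\BB Z\setminus[a-w,a+w]$ are each at least $w^{1/d-\zeta'}$. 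By translation invariance of the law of $(\mathcal M,\Gamma)$ (as used in the proof of Corollary~\ref{cor-path-dichotomy}) together with Proposition~\ref{prop-loop-dist}, we have $\BB P[G_w^{(a)}]\geq 1/10$ for every $a$, once $w$ (hence $n$) is large enough.

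The crucial point, which refines Lemma~\ref{lem-disconnect-msrble}, is that $G_w^{(a)}$ is determined by the walk increments $(\mcl L_\cdot-\mcl L_{\cdot-1},\mcl R_\cdot-\mcl R_{\cdot-1})$ at indices in $[a-w-1,a+w]$. Indeed, by~\eqref{eqn-rw-inf-adjacency}: (i) the submap $\mathcal M[a-w,a+w]$, together with the decomposition of its edges into arcs and real‑line segments, is a function of these increments, and hence so is the collection of segments of loops/infinite paths of $\Gamma$ contained in the window; and (ii) for each $x\in[a-w,a+w]\cap\BB Z$, whether the arc incident to $x$ above (resp.\ below) the real line leaves the window is determined by whether the walk $\mcl L$ (resp.\ $\mcl R$), run forward or backward from $x$, first returns to the relevant level before exiting the window — again a function of these increments. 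Since a shortest $\mathcal M$‑path from $\ol\ell$ to $\BB Z\setminus[a-w,a+w]$ is an $\mathcal M[a-w,a+w]$‑path followed by a single edge out of the window, (i) and (ii) show that both distance quantities defining $G_w^{(a)}$ are functions of the stated increments. Consequently, if $a_1<\cdots<a_K$ lie in $[w+1,2n-w]$ with consecutive gaps at least $2w+3$, then the index intervals $[a_i-w-1,a_i+w]$ are pairwise disjoint and $G_w^{(a_1)},\dots,G_w^{(a_K)}$ are independent. We may take $K\geq c\,n/w\geq c\,n^{\rho}$ for a universal $c>0$ and all large $n$, so $\BB P\big[\bigcap_{i=1}^K (G_w^{(a_i)})^c\big]\leq (9/10)^{K}\leq \exp(-a_1 n^{\rho})$ for a suitable $a_1>0$; taking $\beta:=\rho=d\zeta/2$ and enlarging $a_0$ to absorb the finitely many small values of $n$ yields the claimed bound $1-a_0 e^{-a_1 n^\beta}$.

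On $\bigcup_{i=1}^K G_w^{(a_i)}$ the conclusion holds: choosing $i$ with $G_w^{(a_i)}$ and letting $\ol\ell$ be the corresponding segment, $\ol\ell$ is contained in $[a_i-w,a_i+w]\cap\BB Z\subseteq[1,2n]\cap\BB Z$, and there are $x,y\in\ol\ell$ with $\op{dist}_{\mathcal M[a_i-w,a_i+w]}(x,y)\geq w^{1/d-\zeta'}$. Any $\mathcal M$‑path between $x$ and $y$ either stays in $[a_i-w,a_i+w]$, so has length at least $w^{1/d-\zeta'}$, or leaves the window, in which case it joins $x\in\ol\ell$ to $\BB Z\setminus[a_i-w,a_i+w]$ and so has length at least $w^{1/d-\zeta'}$ as well. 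Hence the $\mathcal M$‑graph‑distance diameter of $\ol\ell$ is at least $w^{1/d-\zeta'}\geq (n^{1-\rho}/2)^{1/d-\zeta'}\geq n^{1/d-\zeta}$ for $n$ large, by the choice of $\rho$ and $\zeta'$. The main obstacle is the locality/measurability step: one must verify carefully that $G_w^{(a)}$, which a priori involves graph distances in the whole infinite map $\mathcal M$ (through the distance to the complement of the window), actually depends only on $O(w)$ consecutive walk increments, so that disjoint windows furnish genuinely independent trials; the rest is bookkeeping with the exponents $\rho,\zeta',\zeta$.
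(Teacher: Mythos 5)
Your proposal is correct and follows essentially the same route as the paper's proof: tile $[1,2n]$ with order $n^{\beta}$ disjoint windows of size order $n^{1-\beta}$, observe that the event of Proposition~\ref{prop-loop-dist} for each window is measurable with respect to the walk increments over that window (hence the trials are independent), and conclude via $(9/10)^{cn^{\beta}}$ together with the observation that the two distance lower bounds in Proposition~\ref{prop-loop-dist} jointly force the full $\mathcal M$-diameter of $\ol\ell$ to be large. The only difference is cosmetic bookkeeping (you use separate parameters $\rho,\zeta'$ where the paper uses a single $\beta$).
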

\begin{proof} 
Let $\beta \in(0,1)$ to be chosen later, depending on $\zeta$. 
The idea of the proof is as follows. We will consider a collection of $\op{const} \times n^\beta$ disjoint sub-intervals of $[1,2n]$ of length $2\lfloor n^{1-\beta}\rfloor$.
We will then use independence to show that with high probability, the event of Proposition~\ref{prop-loop-dist} (with $\lfloor n^{1-\beta} \rfloor$ instead of $n$) occurs for at least one of these intervals. 

For $n \in \BB N$ and $x\in \BB Z$, let $G_n(x)$ be the event that the following is true:
\begin{enumerate}
\item[$(\ddagger)$] There is a segment $\ol\ell$ of a loop or infinite path in $\Gamma$ which is contained in $[x -  n , x +   n] \cap \BB Z$ such that the $\mathcal M[x - n , x +  n]$-graph-distance diameter of $\ol \ell$ and the $\mathcal M$-graph distance from $\ol \ell$ to $\BB Z\setminus [x - n , x + n] $ are each at least $n^{1/d-\beta}$. 
\end{enumerate} 
By the translation invariance of the law of $\mathcal M$, Proposition~\ref{prop-loop-dist} implies that for each large enough $n\in\BB N$, 
\eqb \label{eqn-dist-event-prob}
\BB P\left[ G_n(x) \right] \geq \frac{1}{10} ,\quad \forall x \in \BB Z  .
\eqe
Furthermore, the event $G_n(x)$ depends only on the set of edges of $\mathcal M$ between vertices in $[x-n,x+n]\cap\BB Z$ and the set of vertices in $[x-n,x+n]\cap\BB Z$ which are joined by edges of $\mathcal M$ to vertices not in $[x-n,x+n]\cap\BB Z$. This information is determined by the restricted, shifted walk $(\mcl L - \mcl L_x , \mcl R - \mcl R_x)|_{[x-n,x+n]\cap\BB Z}$. Consequently,
\eqb \label{eqn-dist-event-ind}
\text{$G_n(x)$ and $G_n(y)$ are independent if $|x-y| \geq 2n$.} 
\eqe

There is a constant $c = c(\beta)  > 0$ such that for each $n\in\BB N$, there is a deterministic set $X_n \subset [1,2n] \cap\BB Z$ of cardinality at least $c n^\beta$ such that 
\eqb
\text{the intervals $\left[ x - \lfloor n^{1-\beta}  \rfloor , x + \lfloor n^{1-\beta} \rfloor \right]$ for $x\in X_n$ are disjoint and contained in $[1,2n]$}.  
\eqe
By~\eqref{eqn-dist-event-prob} and~\eqref{eqn-dist-event-ind},
\eqb \label{eqn-dist-event-intersect}
\BB P\left[ \text{$G_{\lfloor n^{1-\beta} \rfloor}(x)$ occurs for at least one $x\in X_n$} \right] \geq 1 -  \left(\frac{9}{10}\right)^{c n^\beta} .
\eqe

On the other hand, if $G_{\lfloor n^{1-\beta} \rfloor}(x)$ occurs, then the segment $\ol\ell$ as in the definition of $G_{\lfloor n^{1-\beta}\rfloor}$ has $\mathcal M[x -  \lfloor n^{1-\beta}  \rfloor , x +   \lfloor n^{1-\beta}  \rfloor]$-graph-distance diameter at least $ \lfloor n^{1-\beta}  \rfloor^{1/d-\beta}$ and the $\mathcal M$-graph distance from $\ol \ell$ to $\BB Z\setminus [x -  \lfloor n^{1-\beta}  \rfloor , x +  \lfloor n^{1-\beta}  \rfloor]$ is at least $ \lfloor n^{1-\beta}  \rfloor^{1/d-\beta }$.
Hence, the $\mathcal M$-graph-distance diameter of $\ol\ell$ is at least $ \lfloor n^{1-\beta}  \rfloor^{1/d-\beta}$. 
 
We now choose $\beta$ to be small enough, depending on $\zeta$, so that 
\eqbn
(1-\beta)(1/d-\beta)  > 1/d-\zeta .
\eqen
Then~\eqref{eqn-dist-event-intersect} and the preceding paragraph give that if $n$ is large enough, then with probability at least $1-(9/10)^{c n^\beta}$, there is a segment of a loop in $\Gamma$ which intersects $[1,2n]\cap\BB Z$ and has $\mathcal M$-graph-distance diameter at least $n^{1/d-\zeta}$. This gives the proposition statement for an appropriate choice of $a_0,a_1 > 0$. 
\end{proof}

\begin{proof}[Proof of Theorem~\ref{thm-macro}]
Let $(\mathcal M,\Gamma)$ be the loop-decorated planar map associated with an infinite meandric system. 
For $n\in\BB N$, let $F_n$ be the event that there is no arc of the upper or lower arc diagram associated with $\mathcal M$ which has one endpoint in $[1,2n]\cap\BB Z$ and one endpoint not in $[1,2n]\cap\BB Z$.
Equivalently, by~\eqref{eqn-rw-inf-adjacency}, $F_n$ is the event that the encoding walks satisfy $\mcl L_{2n} = \mcl R_{2n} = 0$ and $\mcl L_x , \mcl R_x \geq 0$ for each $x\in [0,2n]\cap\BB Z$. 
By a standard random walk estimate, there is a universal constant $c>0$ such that 
\eqb \label{eqn-excursion-prob}
\BB P[F_n] \sim c n^{-3 } \quad \text{as $n\rta\infty$}.
\eqe

By the definition of $F_n$, if $F_n$ occurs, then no infinite path in $\Gamma$ can hit $[1,2n]\cap\BB Z$ and the set $\Gamma_n$ of loops of $\Gamma$ which hit $   [1,2n] \cap\BB Z$ is the same as the set of loops in $\Gamma$ which do not hit any vertices in $ \BB Z \setminus [0,2n]$.  
Moreover, the conditional law of $(\mcl L , \mcl R)|_{[0,2n]\cap\BB Z}$ given $F_n$ is that of a pair of independent uniform $2n$-step simple random walk excursions. 
By the discussion surrounding~\eqref{eqn-arc-walk}, this implies that the conditional law of the  loop-decorated planar map $( M[1,2n] , \Gamma_n)$ given $F_n$ is that of the planar map associated with a uniform meandric system of size $n$. Hence, it suffices to show that if we condition on $F_n$, then except on an event of conditional probability decaying faster than any negative power of $n$, there is a loop in $\Gamma_n$ which has $\mathcal M[1,2n]$-graph-distance diameter at least $n^{1/d-\zeta}$. 

By Proposition~\ref{prop-plane-macro} and~\eqref{eqn-excursion-prob}, if $\beta,a_0,a_1$ are as in Proposition~\ref{prop-plane-macro}, then it holds with conditional probability at least $1-a_0 c^{-1}  n^3 e^{-a_1 n^\beta}$ given $F_n$ that there is a segment of a loop or an infinite path in $\Gamma$ which hits only vertices of $[1,2n]\cap\BB Z$ and has $\mathcal M$-graph-distance diameter (and hence also $\mathcal M[1,2n]$-graph-distance diameter) at least $n^{1/d-\zeta}$. By the first sentence of the preceding paragraph, on $F_n$ this segment is in fact a segment of a loop $\ell \in \Gamma_n$. This loop $\ell$ has $\mathcal M[1,2n]$-graph-distance diameter at least $n^{1/d-\zeta}$. Since $a_0 c^{-1}  n^3 e^{-a_1 n^\beta} = O(n^{-p})$ for every $p >0$, this concludes the proof. 
\end{proof}

\begin{proof}[Proof of Proposition~\ref{prop-map-diam}]
Theorem~\ref{thm-macro} immediately implies that except on an event of probability decaying faster than any negative power of $n$, the graph-distance diameter of $\mathcal M_n$ is at least $n^{1/d-\zeta}$. 

To prove an upper bound for the graph-distance diameter of $\mathcal M_n$, we first apply~\cite[Theorem 1.15]{ghs-dist-exponent}, which tells us that there exists an exponent $\chi  >0$ such that for each $\zeta\in (0,1)$ and each $n \in \BB N$, it holds except on an event of probability decaying faster than any negative power of $n$ that the graph-distance diameter of $\mcl G[1,2n]$ is at most $n^{\chi +\zeta/2}$. It was shown in~\cite[Theorem 3.1]{gp-dla} that $\chi = 1/d$.

By combining the preceding paragraph with Theorem~\ref{thm-ghs-coupling}, we get that for each $\alpha > 0$, there exists $A = A(\alpha) > 0$ such that with probability at least $1-O_n(n^{-\alpha})$, the graph-distance diameter of $\mathcal M[1,2n]$ is at most $A(\log n)^3 n^{1/d + \zeta/2}$, which is at most $n^{1/d+\zeta}$ if $n$ is large enough. 
Since $\alpha$ can be made arbitrarily large, we get that except on an event of probability decaying faster than any negative power of $n$, the graph-distance diameter of $\mathcal M[1,2n]$ is at most $n^{1/d+\zeta}$. 

To transfer this from $\mathcal M[1,2n]$ to $\mathcal M_n$, we define the event $F_n$ as in the proof of Theorem~\ref{thm-macro}, condition on $F_n$, and apply~\eqref{eqn-excursion-prob}, exactly as in the proof of Theorem~\ref{thm-macro}. 
\end{proof}

\section{Estimate for the mated-CRT map via SLE and LQG}
\label{sec-lqg}

To complete the proofs of our main results, it remains to prove Proposition~\ref{prop-mcrt-estimate}. We will do this using SLE and LQG. 

\subsection{SLE/LQG description of the mated-CRT map}
\label{sec-lqg-mcrt}

Recall that we previously defined the mated-CRT map using Brownian motion in Section~\ref{sec-mcrt}.
In this subsection we will give the SLE/LQG description of the mated-CRT map, which comes from the results of~\cite{wedges}. 
We will not need many properties of the SLE/LQG objects involved, so we will not give detailed definitions. Instead, we give precise references. 

Let $h$ be the random generalized function on $\BB C$ associated with the $\sqrt 2$-quantum cone. 
The generalized function $h$ is a minor variant of the whole-plane Gaussian free field; see~\cite[Definition 4.10]{wedges} for a precise definition. 
One can associate with $h$ a random locally finite measure $\mu_h$ on $\BB C$, the \textbf{$\sqrt 2$-LQG measure}, which is a limit of regularized versions of $e^{\sqrt 2 h} \, dx\,dy$, where $dx\,dy$ denotes Lebesgue measure on $\BB C$~\cite{kahane,shef-kpz}. The measure $\mu_h$ assigns positive mass to every open subset of $\BB C$ and zero mass to every point but is mutually singular with respect to Lebesgue measure. See~\cite[Chapter 2]{bp-lqg-notes} for a detailed account of the construction and properties of $\mu_h$.

One can similarly associate with $h$ a random metric (distance function) $D_h$ on $\BB C$, the \textbf{$\sqrt 2$-LQG metric}~\cite{dddf-lfpp,gm-uniqueness}. 
The metric $D_h$ induces the same topology on $\BB C$ as the Euclidean metric, but the Hausdorff dimension $d$ of the metric space $(\BB C , D_h)$ is strictly larger than 2 (this is the same $d$ appearing in Proposition~\ref{prop-map-diam}). See~\cite{ddg-metric-survey} for a survey of results about $D_h$. 

The metric measure space $(\BB C , D_h , \mu_h)$ possesses a scale invariance property which will be important for our purposes:
\eqb \label{eqn-cone-scale}
\text{$(\BB C , D_h , \mu_h) \eqD (\BB C , r^{1/d} D_h , r \mu_h)$ as metric measure spaces $\forall r>0$} .
\eqe
In fact, one has the following slightly stronger property: for each $r> 0$, there is a random $\rho_r > 0$ such that
\eqb \label{eqn-cone-scale'}
 \left(  D_h , \mu_h \right) \eqD \left(  r^{1/d} D_h(\rho_r \cdot , \rho_r\cdot)  , r \mu_h(\rho_r\cdot) \right) ,\quad\forall r  > 0 .
\eqe
The scaling property~\eqref{eqn-cone-scale'} follows from the scaling property of $h$~\cite[Proposition 4.13(i)]{wedges} together with the fact that adding the constant $\frac{1}{\sqrt 2} \log r$ to $h$ results in scaling $\mu_h$ by $r$ and $D_h$ by $r^{-1/d}$, both of which are immediate from the constructions of $\mu_h$ and $D_h$ (see the proof of~\cite[Proposition 2.17]{gs-lqg-minkowski} for a more detailed explanation).

Whole-plane SLE$_8$ from $\infty$ to $\infty$ is a random space-filling curve $\eta$ which travels from $\infty$ to $\infty$ in $\BB C$. 
It can be thought of as a two-sided version of chordal SLE$_8$ (see~\cite[Footnote 4]{wedges} for a precise version of this statement).
For each $z\in\BB C$, a.s.\ $z$ is hit exactly once by $\eta$, but there exist zero-Lebesgue measure sets of points which are hit twice or three times. 

Now suppose that we sample $\eta$ independently from the random generalized function $h$ above, then re-parametrize $\eta$ so that
\eqb
\eta(0) = 0 \quad \text{and} \quad \mu_h(\eta([a,b])) = b-a ,\quad\forall a,b\in\BB R \: \text{with} \: a < b. 
\eqe
The law of $\eta$ is invariant under spatial scaling (this is immediate from the definition in~\cite[Footnote 4]{wedges}), so it follows from~\eqref{eqn-cone-scale'} that
\eqb \label{eqn-sle-scale}
 \left(  D_h , \mu_h  ,\eta \right) \eqD \left(  r^{1/d} D_h(\rho_r \cdot , \rho_r\cdot)  , r \mu_h(\rho_r\cdot)  , \rho_r^{-1} \eta( \cdot /r) \right) ,\quad \forall r>0   .
\eqe

The connection between the pair $(h,\eta)$ and the mated-CRT map $\mcl G$ comes by way of the following theorem, which is a consequence of~\cite[Theorems 1.9 and 8.18]{wedges}.

\begin{thm} \label{thm-mating}
With $h$ and $\eta$ as above, let $\mcl G$ be the graph with vertex set $\BB Z$, with two distinct vertices $x,y\in\BB Z$ joined by an edge if and only if
\eqb
\eta([x-1,x])\cap \eta([y-1,y]) \not=\emptyset. 
\eqe
Then $\mcl G$ has the same law (as a graph) as the mated-CRT map as defined in~\eqref{eqn-mcrt-def}. 
\end{thm}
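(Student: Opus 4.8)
The plan is to introduce, as auxiliary objects, the left and right \emph{boundary length processes} $L$ and $R$ of the space-filling curve $\eta$ measured with respect to the $\sqrt 2$-LQG area measure $\mu_h$ (morally, the net change in the left/right boundary length of $\eta((-\infty,t])$ as $t$ ranges over $\BB R$), and then to proceed in two steps: (i) identify the joint law of $(L,R)$ via the mating of trees theorem, and (ii) show that the cell-intersection relation ``$\eta([x-1,x])\cap\eta([y-1,y])\neq\emptyset$'' is exactly the deterministic function of $(L,R)$ encoded by~\eqref{eqn-mcrt-def}. Combining (i) and (ii) then yields the theorem.

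For step (i): the mating of trees theorem~\cite[Theorems 1.9 and 8.18]{wedges}, applied to the $\sqrt 2$-quantum cone $(\BB C,h)$ decorated by an independent whole-plane SLE$_8$ from $\infty$ to $\infty$ parametrized by $\mu_h$-area as in the paragraph preceding the theorem statement, asserts that $(L,R)$ is a pair of two-sided Brownian motions started from $0$ whose covariance depends only on $\gamma$, with correlation coefficient $-\cos(\pi\gamma^2/4)$. For $\gamma=\sqrt 2$ this correlation equals $-\cos(\pi/2)=0$, so $L$ and $R$ are independent linear Brownian motions. After rescaling $L$ and $R$ by the appropriate deterministic constant (equivalently, after the standard normalization of $\mu_h$) we may take them to be standard; this does not affect the graph, since each of the two conditions in~\eqref{eqn-mcrt-def} is invariant under $(L,R)\mapsto(cL,cR)$ for $c>0$ (both sides of each inequality scale by $c$). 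Hence $(L,R)$ has precisely the law used in~\eqref{eqn-mcrt-def} to define the mated-CRT map.

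For step (ii): almost surely each cell $\eta([x-1,x])$ is a topological disk, and the common boundary of $\eta((-\infty,t])$ and $\eta((t,\infty))$ splits into a ``left'' arc and a ``right'' arc (of SLE$_2$ type), along which each newly traced cell is glued to the previously traced region. Consequently, two cells $\eta([x_1-1,x_1])$ and $\eta([x_2-1,x_2])$ with $x_1<x_2$ can intersect only along the left boundary or along the right boundary. The mating/quotient description in~\cite{wedges} gives the precise criteria: they meet along the left boundary if and only if the running infima of $L$ satisfy the first condition of~\eqref{eqn-mcrt-def}, namely
\eqbn
\max\left\{ \inf_{t\in [x_1-1 , x_1]} L_t ,\, \inf_{t\in [x_2 - 1 , x_2]} L_t  \right\} \leq \inf_{t\in [x_1  , x_2-1]} L_t ,
\eqen
which is the ``pinch'' relation for the continuum tree encoded by $L$, and likewise they meet along the right boundary if and only if $R$ satisfies the analogous condition. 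Since a cell has no boundary other than its left and right arcs, it follows that $\eta([x_1-1,x_1])\cap\eta([x_2-1,x_2])\neq\emptyset$ if and only if at least one of the two conditions of~\eqref{eqn-mcrt-def} holds, and (for $x_2\ge x_1+2$) the two cells touch along two disjoint arcs — producing the double edge in~\eqref{eqn-mcrt-def} — if and only if both conditions hold. Together with step (i), this identifies the (multi)graph built from $\eta$ in law with the mated-CRT map of~\eqref{eqn-mcrt-def}; passing to underlying simple graphs recovers the formulation in the theorem statement.

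The step I expect to require the most care is (ii): one must verify that cell-adjacency reduces to \emph{exactly} the analytic condition~\eqref{eqn-mcrt-def}, rather than to some superficially different equivalent, which means unpacking the definition of the boundary length processes for a quantum cone (two-sided, driftless, correctly normalized) and the ``slurping'' description of how whole-plane SLE$_8$ attaches each new cell to its trace, as well as dealing with the Lebesgue-null sets of double and triple points of $\eta$ so that the intersection relation is everywhere well defined. All of this is contained in~\cite{wedges}; at the level of the present paper the statement follows simply by invoking Theorems 1.9 and 8.18 there, and the discussion above merely records how those two results combine.
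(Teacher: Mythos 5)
Your proposal is correct and follows the same route as the paper: the paper gives no independent argument for this theorem but simply invokes \cite[Theorems 1.9 and 8.18]{wedges}, which is precisely what your steps (i) and (ii) unpack (independence of $L$ and $R$ from the vanishing correlation $-\cos(\pi\gamma^2/4)=0$ at $\gamma=\sqrt 2$, plus the identification of cell adjacency with the condition~\eqref{eqn-mcrt-def}). Your closing caveat about where the real work lies is also consistent with the paper's treatment, which likewise defers all of it to~\cite{wedges}.
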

 
The mated-CRT map has some double edges, but we do not worry about such edge multiplicity in Theorem~\ref{thm-mating} since in this section we are only interested in graph distances.

\begin{remark}
The results and proofs in this section all carry over verbatim if we replace $\sqrt 2$-LQG by $\gamma$-LQG for $\gamma\in (0,2)$ and SLE$_8$ by space-filling SLE$_\kappa$ for $\kappa = 16/\gamma^2$. In this setting, the corresponding mated-CRT map is constructed from a pair of correlated Brownian motions with correlation $-\cos(\pi\gamma^2/4)$, instead of a pair of independent Brownian motions as in Section~\ref{sec-mcrt}; and the value of $d$ depends on $\gamma$. 
\end{remark}

\subsection{Proof of lower bounds for mated-CRT graph distances}
\label{sec-lqg-proof}

Henceforth assume that we are in the setting of Theorem~\ref{thm-mating}. Our goal is to prove Proposition~\ref{prop-mcrt-estimate}. To this end, we first prove a lemma that allows us to compare $D_h$-distances and graph distances in $\mcl G$. 

For $n\in\BB N$ and $z,w \in \eta([-n,n])\subset \mathbb C$, we slightly abuse notation by writing
\eqb \label{eqn-mcrt-pt-dist}
\op{dist}_{\mcl G[-n,n]}(z,w) := \min\left\{ \op{dist}_{\mcl G[-n,n]}(x,y) : x,y\in [-n,n]\cap\BB Z,  z\in \eta([x-1,x]) , w\in \eta([y-1,y]) \right\} .
\eqe
Note that a.s.\ for Lebesgue-a.e.\ point $z$, the cell $\eta([x-1,x])$ containing $z$ is unique (since a.s.\ $z$ is hit exactly once by $\eta$), and that $\op{dist}_{\mcl G[-n,n]}(z,w) = 0$ if $z$ and $w$ belong to the same cell. 

\begin{lem} \label{lem-mcrt-lqg}
Fix $\zeta \in (0,1)$. It holds with polynomially high probability as $n\rta\infty$ that
\eqb \label{eqn-mcrt-lqg}
\op{dist}_{\mcl G[-n,n]}(z,w) \geq n^{-\zeta} D_{h} \left( z,w  \right)   - 1    ,\quad\forall z,w\in \eta([-n,n]) .
\eqe
\end{lem}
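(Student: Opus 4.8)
The plan is to reduce the statement to a tail bound on the $D_h$-diameter of a single mated-CRT cell, via an elementary ``chain of cells'' argument, and then to quote known moment estimates for the LQG metric. For $n\in\BB N$, let $\mathcal E_n$ be the event that $\op{diam}_{D_h}(\eta([x-1,x])) \le n^\zeta$ for every $x \in [-n,n]\cap\BB Z$. I claim that~\eqref{eqn-mcrt-lqg} holds deterministically on $\mathcal E_n$. Indeed, fix $z,w\in\eta([-n,n])$ and set $K := \op{dist}_{\mcl G[-n,n]}(z,w)$; note $K<\infty$ since the trivial edges make $\mcl G[-n,n]$ connected. By~\eqref{eqn-mcrt-pt-dist} choose $x,y\in[-n,n]\cap\BB Z$ with $z\in\eta([x-1,x])$, $w\in\eta([y-1,y])$ and $\op{dist}_{\mcl G[-n,n]}(x,y)=K$, together with a geodesic $x=x_0,x_1,\dots,x_K=y$ in $\mcl G[-n,n]$. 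By Theorem~\ref{thm-mating}, each edge $\{x_{i-1},x_i\}$ corresponds to $\eta([x_{i-1}-1,x_{i-1}])\cap\eta([x_i-1,x_i])\ne\emptyset$; since each cell $\eta([x_i-1,x_i])$ is connected (the continuous image of an interval under $\eta$), the union $\bigcup_{i=0}^K\eta([x_i-1,x_i])$ is a connected subset of $\BB C$ containing $z$ and $w$. Picking $p_i\in\eta([x_{i-1}-1,x_{i-1}])\cap\eta([x_i-1,x_i])$ and applying the triangle inequality along $z,p_1,\dots,p_K,w$ gives, on $\mathcal E_n$,
\[
D_h(z,w) \;\le\; \sum_{i=0}^{K}\op{diam}_{D_h}(\eta([x_i-1,x_i])) \;\le\; (K+1)\,n^\zeta .
\]
Rearranging yields $\op{dist}_{\mcl G[-n,n]}(z,w)=K\ge n^{-\zeta}D_h(z,w)-1$, which is~\eqref{eqn-mcrt-lqg}. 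Hence it remains only to show that $\BB P[\mathcal E_n^c]$ decays polynomially in $n$.

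For this, a union bound gives $\BB P[\mathcal E_n^c] \le \sum_{x=-n}^n \BB P[\op{diam}_{D_h}(\eta([x-1,x])) > n^\zeta]$, and two facts are used. First, the law of $\op{diam}_{D_h}(\eta([x-1,x]))$ does not depend on $x$: this is the translation invariance of the mated-CRT cell structure, which follows from the stationarity of the increments of $(L,R)$ (equivalently, from the re-rooting invariance of the $\sqrt 2$-quantum cone along the space-filling SLE) combined with Theorem~\ref{thm-mating}. Second, for every $q>0$ there is $C_q<\infty$ with $\BB E[\op{diam}_{D_h}(\eta([-1,0]))^q]\le C_q$; this is a standard moment bound for $D_h$-diameters of mated-CRT cells, available from the LQG-metric literature. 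By Markov's inequality, $\BB P[\mathcal E_n^c]\le (2n+1)\,C_q\,n^{-\zeta q}$, which tends to $0$; taking $q$ arbitrarily large shows in fact that $\BB P[\mathcal E_n^c]=O(n^{-p})$ for every $p>0$, so $\mathcal E_n$ occurs with polynomially high probability, completing the proof.

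The only non-elementary ingredient is the moment bound on the cell diameter, which relies on the existence and regularity of the $\sqrt 2$-LQG metric $D_h$ and on quantitative relations between $\mu_h$-mass and $D_h$-size; this is quoted rather than proved, and is the substantive input behind the (otherwise routine) argument. The one point to handle with care is the translation invariance used in the union bound: it is cleanest either to record it at the level of the pair $(h,\eta)$, or simply to transfer the diameter computation to the discrete $(L,R)$ picture, where stationarity of increments is immediate, and then invoke Theorem~\ref{thm-mating}. Everything else is the elementary chain-of-cells bookkeeping of the first paragraph.
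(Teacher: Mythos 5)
Your proposal is correct and follows essentially the same route as the paper: a union bound over cells combined with a high-moment (Chebyshev/Markov) tail bound to control all cell diameters by $n^\zeta$, then the chain-of-cells triangle inequality along a $\mcl G[-n,n]$-geodesic. The moment bound you quote is exactly the paper's input (\cite[Proposition 3.13]{gs-lqg-minkowski}, which is already uniform in the cell index, so the translation-invariance point you flag is handled automatically).
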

\begin{proof} 
By~\cite[Proposition 3.13]{gs-lqg-minkowski}, for each $p > 0$ and each $t \in \BB R$, the $p$th moment of the random variable $\sup_{u,v \in \eta([t-1,t])} D_{h}(u,v)$ is bounded above by a finite constant which depends only on $p$ (not on $t$).  
Therefore, we can apply Chebyshev's inequality (for large positive moments) followed by a union bound over all $x\in [-n,n] \cap \BB Z$ to get that with superpolynomially high probability as $n\rta\infty$,
\eqb \label{eqn-mcrt-lqg-sup}
\max_{x\in [-n,n]\cap\BB Z}  \sup_{u,v\in \eta([x-1,x])} D_{h}(u,v) \leq n^\zeta .
\eqe  
Henceforth assume that~\eqref{eqn-mcrt-lqg-sup} holds.

Let $z,w \in \eta([-n,n])$ and let $N := \op{dist}_{\mcl G[-n,n]}(z,w)$. 
By the description of $\mcl G$ in Theorem~\ref{thm-mating}, there exists $x_0, x_1 , \dots , x_N \in [-n,n]\cap\BB Z$ such that the union of the SLE$_8$ segments $\eta([x_j-1,x_j])$ for $j = 0,\dots,N$ contains a path from $z$ to $w$. By~\eqref{eqn-mcrt-lqg-sup}, the $D_h$-diameter of each of these cells is at most $n^\zeta$. By the triangle inequality,  
\eqbn
D_{h}(z,w) \leq n^\zeta (N+1) 
\eqen
which is~\eqref{eqn-mcrt-lqg}. 
\end{proof}

The following lemma is the main LQG estimate needed for the proof of Proposition~\ref{prop-mcrt-estimate}.

\begin{lem} \label{lem-lqg-estimate}
For each $t > 0$, it holds with probability tending to 1 as $C\rta\infty$, uniformly over the choice of $t$, that
\eqb \label{eqn-lqg-across}
D_h\left( \eta([-t,t]) , \bdy \eta([-C t , C t]) \right) \geq  t^{1/d}  .
\eqe 
Moreover, for each fixed $C >1$ it holds with probability tending to 1 as $\delta \rta 0$, uniformly over the choice of $t$, that 
\eqb \label{eqn-lqg-around}
D_h\left(  (-\infty, 0] \cap \ol{\eta\left( [-C t , C t] \setminus [-t,t] \right)} ,  [0,\infty) \cap \ol{\eta\left( [-C t , C t] \setminus [-t,t] \right) }    \right)  \geq \delta t^{1/d}  
\eqe
where here we identify $\BB R$ with $\BB R\times\{0\} \subset\BB C$. 
\end{lem}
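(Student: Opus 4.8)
The plan is to prove both statements by exploiting the scale-invariance property~\eqref{eqn-sle-scale} to reduce to a fixed-scale ($t=1$) statement, and then to use standard qualitative (a.s.\ positivity/finiteness) properties of the $\sqrt 2$-LQG metric together with the fact that $\eta([-1,1])$ is a.s.\ a compact set whose relative boundaries are well-behaved. First I would apply~\eqref{eqn-sle-scale} with $r=1/t$: this gives a coupling under which $(D_h,\mu_h,\eta)$ on the event in question is transformed so that $\eta([-t,t])$ becomes $\rho_r^{-1}\eta([-1,1])$ and $D_h$-distances are scaled by $t^{1/d}$. Concretely, $D_h(\eta([-t,t]),\partial\eta([-Ct,Ct])) \eqD t^{1/d} D_h(\eta([-1,1]),\partial\eta([-C,C]))$, and similarly the quantity in~\eqref{eqn-lqg-around} equals in law $t^{1/d}$ times its $t=1$ version. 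Hence it suffices to prove: (i) $D_h(\eta([-1,1]),\partial\eta([-C,C])) \to \infty$ in probability as $C\to\infty$; and (ii) for fixed $C$, the left-to-right distance through the annular region $\eta([-C,C])\setminus[-1,1]$ is a.s.\ strictly positive, so that it exceeds $\delta$ with probability $\to 1$ as $\delta\to 0$.

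For statement~(i): since $\eta$ is space-filling and $\eta(0)=0$, the sets $\eta([-C,C])$ increase to all of $\BB C$ as $C\to\infty$, so $\partial\eta([-C,C])$ escapes to $\infty$ in the Euclidean metric. Because $D_h$ induces the Euclidean topology and $D_h$-balls around the compact set $\eta([-1,1])$ are bounded in the Euclidean metric (indeed $D_h$ is a genuine metric whose closed balls are compact — this is part of the construction in~\cite{gm-uniqueness}), we get $D_h(\eta([-1,1]),\partial\eta([-C,C]))\to\infty$ almost surely, which in particular gives convergence in probability. The uniformity over $t$ is automatic because, after the scaling reduction, $t$ has disappeared from the probability being estimated. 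For statement~(ii): the two sets in~\eqref{eqn-lqg-around} are the portions of the negative and positive real axis lying in $\ol{\eta([-C,C])\setminus[-1,1]}$; they are disjoint closed sets. Indeed, $0\notin\ol{\eta([-C,C])\setminus[-1,1]}$ because a neighborhood of $0$ in $\BB C$ is contained in $\eta((-1,1))$ (as $0=\eta(0)$ is in the interior of the cell range $\eta([-1,1])$ — here one uses that $\mu_h$ has no atoms, so $\eta$ spends positive time in every neighborhood of $0$, and $\eta$ being space-filling fills a full neighborhood; more carefully, $0$ is not on the relative boundary of $\eta([-1,1])$ within $\BB C$ with positive probability, and one takes the event where this holds). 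Two disjoint closed sets, at least one compact (both are, being closed subsets of the compact $\ol{\eta([-C,C])}$), have strictly positive $D_h$-distance almost surely since $D_h$ is a metric inducing the Euclidean topology. Hence the $D_h$-distance is a.s.\ a strictly positive random variable, so $\BB P[\text{distance}\ge\delta]\to 1$ as $\delta\to 0$, again uniformly in $t$ after the scaling reduction.

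There is one technical subtlety I would need to handle carefully in~(ii): whether $0$ can lie on the relative boundary of $\eta([-1,1])$, i.e.\ whether a neighborhood of $0$ is really contained in $\eta([-1,1])$. Since whole-plane SLE$_8$ visits $0=\eta(0)$ and then, for small $\epsilon$, $\eta([-\epsilon,\epsilon])$ is a connected neighborhood-like region, but it need not literally contain a Euclidean ball around $0$ — $0$ could be a point where $\eta$ has a ``pinch.'' The clean fix is to not insist $0$ is an interior point but instead to observe that the two sets $(-\infty,0]\cap\ol{\eta([-C,C])\setminus[-1,1]}$ and $[0,\infty)\cap\ol{\eta([-C,C])\setminus[-1,1]}$, while both possibly containing $0$, are still at positive $D_h$-distance unless they actually intersect at $0$; and if both contain $0$ we would instead argue via the structure of SLE$_8$ that the real segment $[-1,1]$ itself, together with a $D_h$-geodesic, must be crossed — alternatively, I would simply replace ``$0$'' in the reduction by ``the point $\eta(0)$'' and note the problem is really about the two boundary arcs of the topological annulus $\eta([-C,C])\setminus\eta((-1,1))$, which by~\cite[Footnote 4]{wedges} and the discussion of boundary-touching of space-filling SLE are disjoint a.s. This boundary-structure point — confirming that the inner and outer boundary arcs of the relevant topological annulus are genuinely disjoint closed sets — is the main obstacle; the metric comparison and the scaling argument are routine once that is in hand. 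I expect this can be resolved by citing the same SLE$_8$ boundary results already invoked in Section~\ref{sect-no-fkg} (the decomposition of $\eta((-\infty,0])\cap\eta([0,\infty))$ into two disjoint SLE$_2$-type curves), applied at the two scales $1$ and $C$.
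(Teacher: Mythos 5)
Your proof is correct and follows essentially the same route as the paper: reduce to $t=1$ via the scaling relation~\eqref{eqn-sle-scale}, use compactness of LQG metric balls of finite radius to get~\eqref{eqn-lqg-across}, and use that two disjoint compact sets lie at positive $D_h$-distance (since $D_h$ induces the Euclidean topology) to get~\eqref{eqn-lqg-around}. The ``pinch'' subtlety you flag as the main obstacle is not actually an issue: the fixed point $0$ is a.s.\ hit exactly once by $\eta$ (as recorded in Section~\ref{sec-lqg-mcrt}), and together with $\eta(s)\to\infty$ as $s\to\pm\infty$ this forces $0$ to lie in the interior of $\eta([-1,1])$, which is exactly the one-line justification the paper uses to see that the two sets in~\eqref{eqn-lqg-around} are disjoint.
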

\begin{proof}
By the scale invariance property~\eqref{eqn-sle-scale}, it suffices to prove both~\eqref{eqn-lqg-across} and~\eqref{eqn-lqg-around} in the case when $t = 1$. 
We start with~\eqref{eqn-lqg-across}. 
Since $\eta$ a.s.\ fills all of $\BB C$ and LQG metric balls of finite radius are a.s.\ compact, a.s.\ there exists some $C  >1$ such that 
\eqb \label{eqn-sle-swallow}
\left\{z\in\BB C \,: \, D_h(z, \eta([-1,1]) ) \leq 1 \right\} \subset \eta([-C  , C ]) .
\eqe
Hence,~\eqref{eqn-sle-swallow} holds with probability tending to 1 as $C\rta\infty$. This shows that~\eqref{eqn-lqg-across} with $t=1$ holds with probability tending to 1 as $C \rta \infty$.

Since a.s.\ 0 is contained in the interior of $\eta([-1,1])$, for any fixed $C>1$ the compact sets 
\eqbn
 (-\infty, 0] \cap \ol{\eta\left( [-C   , C  ] \setminus [-1,1] \right)}  \quad \text{and} \quad  [0,\infty) \cap \ol{\eta\left( [-C  , C ] \setminus [-1,1] \right) }
\eqen
lie at positive Euclidean distance from each other. Moreover, since $D_h$ induces the Euclidean topology on $\BB C$, the two compact sets above lie at positive  $D_h$ distance from each other. Hence~\eqref{eqn-lqg-around} with $t =1$ holds with probability tending to 1 as $\delta \rta 0$.
\end{proof}

\begin{figure}[ht!]
	\begin{center}
		\includegraphics[width=0.75\textwidth]{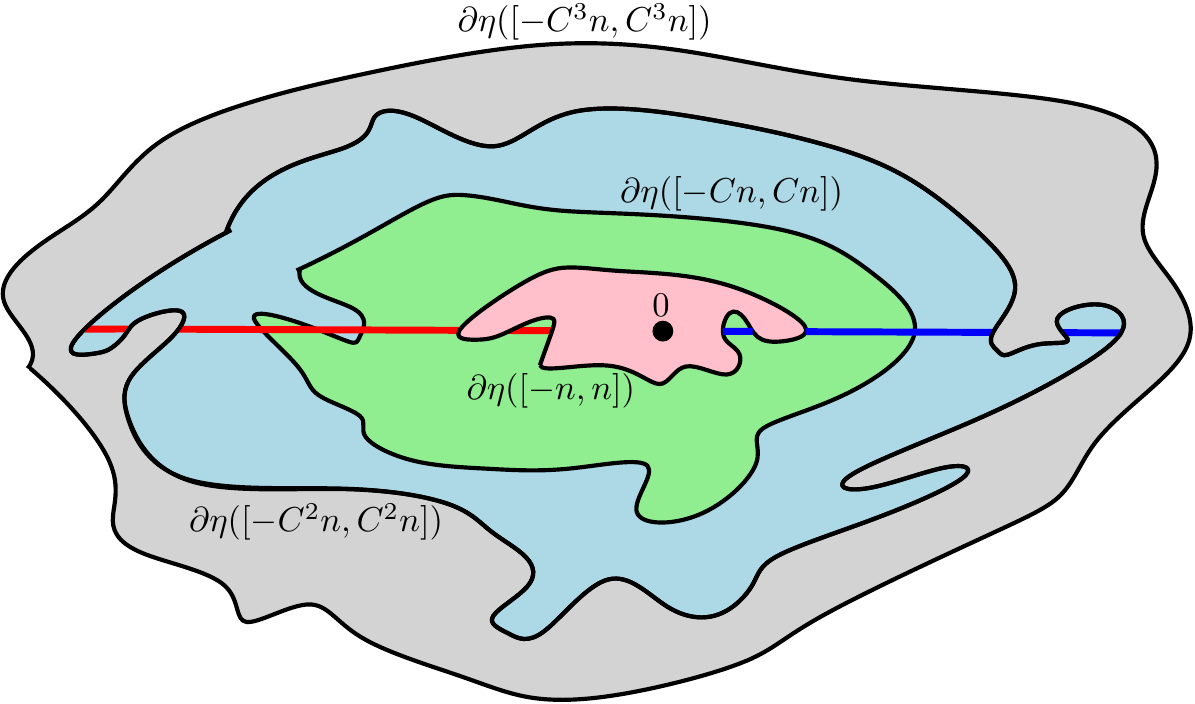}  
		\caption{\label{fig-lqg-estimate} Illustration of the SLE/LQG event used in the proof of Proposition~\ref{prop-mcrt-estimate}. On the event, the LQG distance between the inner and outer boundaries of the green, light-blue, and gray annular regions, as well as the LQG distance between the blue and red sets, are all bounded below. We note that the blue and red sets are not connected, but each of these sets contains a connected set whose closure intersects each of $\eta([-n,n])$ and $\ol{\BB C\setminus \eta([-C^2 n , C^2 n])}$. 
		}
	\end{center}
	\vspace{-3ex}
\end{figure}

\begin{proof}[Proof of Proposition~\ref{prop-mcrt-estimate}]
See Figure~\ref{fig-lqg-estimate} for an illustration.
By Lemma~\ref{lem-lqg-estimate}, we can find $C >1$ such that for each $t > 0$,~\eqref{eqn-lqg-across} holds with probability at least $1-(1-p)/5$. 
Applying this with $t = n$, $t = C n$, and $t = C^2 n$ shows that with probability at least $1-3(1-p)/5$,
\allb \label{eqn-use-lqg-across}
&D_h\left( \eta([-n,n]) , \bdy \eta([-C n , C n]) \right) \geq  n^{1/d}   ,\quad 
D_h\left( \eta([-C n, C n]) , \bdy \eta([-C^2 n , C^2 n]) \right) \geq  (C n)^{1/d}   ,\quad \text{and} \notag\\
&D_h\left( \eta([-C^2 n, C^2 n]) , \bdy \eta([-C^3 n , C^3 n]) \right) \geq  (C^2 n)^{1/d}  . 
\alle 
By~\eqref{eqn-lqg-around} of Lemma~\ref{lem-lqg-estimate} (applied with $C^2$ instead of $C$ and $t =n$), we can find $\delta \in (0,1)$ (depending on $C$) such that with probability at least $1-(1-p)/5$, 
\eqb \label{eqn-use-lqg-around}
D_h\left(  (-\infty, 0] \cap \ol{\eta\left( [-C^2 n  , C^2 n ] \setminus [- n ,  n] \right)} ,  [0,\infty) \cap \ol{\eta\left( [-C^2 n , C^2 n] \setminus [-n,n] \right)    } \right)  \geq \delta n^{1/d} .
\eqe
By Lemma~\ref{lem-mcrt-lqg} (applied with $C^3 n$ in place of $n$ and $\zeta/2$ in place of $\zeta$), for each large enough $n\in\BB N$ it holds with probability at least $1-(1-p)/5$ that
\eqb \label{eqn-use-mcrt-lqg}
\op{dist}_{\mcl G[-C^3 n, C^3 n]}(z,w) \geq n^{-\zeta/2} D_{h} \left( z,w  \right)   - 1    ,\quad\forall z,w\in \eta([-C^3 n, C^3 n]) .
\eqe

Henceforth assume that~\eqref{eqn-use-lqg-across}, \eqref{eqn-use-lqg-around}, and \eqref{eqn-use-mcrt-lqg} all occur, which happens with probability at least $p$ if $n$ is large enough. 
Recall that two vertices $x,y\in \BB Z$ are joined by an edge of $\mcl G$ if and only if $\eta([x-1,x])\cap\eta([y-1,y]) \not=\emptyset$. 
Hence each path in $\mcl G$ from $[-n,n]\cap\BB Z$ to $ \BB Z\setminus [-C n , C n] $ has a sub-path which is contained in $[-C n , C n]\cap\BB Z$ and which goes from $[-n,n]\cap\BB Z$ to a vertex $x\in \BB Z$ whose corresponding cell $\eta([x-1,x])$ intersects $\bdy\eta([-C n ,Cn])$. 
Therefore, the first inequality in~\eqref{eqn-use-lqg-across} together with~\eqref{eqn-use-mcrt-lqg} implies~\eqref{item-mcrt-in} in the lemma statement (provided $n$ is large enough that $n^{1/d-\zeta/2}-1\geq n^{1/d-\zeta}$). 
We similarly obtain~\eqref{item-mcrt-out} and~\eqref{item-mcrt-out'} from the second and third inequalities in~\eqref{eqn-use-lqg-across}.

To get~\eqref{item-mcrt-around}, let $\Pi_1^0$ (resp.\ $\Pi_2^0$) be the set of $x\in\BB Z$ such that the cell $\eta([x-1,x])$ intersects $(-\infty,0]\cap \ol{\eta\left( [-C^2 n  , C^2 n ] \setminus [- n ,  n] \right)}$  (resp.\ $[0,\infty) \cap \ol{\eta\left( [-C^2 n  , C^2 n ] \setminus [- n ,  n] \right)}$). 
Then each of $\Pi_1^0$ and $\Pi_2^0$ contains a connected subset of $\BB Z$ which intersects both $[-n,n]\cap\BB Z$ and $\BB Z\setminus [-C^2 n , C^2 ]$. So, we can find a path $\Pi_1$ (resp.\ $\Pi_2$) in $\mcl G$ from $[-n,n]\cap\BB Z$ to $\BB Z\setminus [-C^2 n , C^2 ]$ which is contained in $\Pi_1^0$ (resp.\ $\Pi_2^0$) and which is contained in $[-C^2 n , C^2 n]\cap\BB Z$ except for its terminal endpoint.  

By~\eqref{eqn-use-lqg-around} and~\eqref{eqn-use-mcrt-lqg}, the $\mcl G[-C^3 n , C^3 n]$-graph distance between $\Pi_1$ and $\Pi_2$ is at least $\delta n^{1/d-\zeta/2} - 1$, which is at least $n^{1/d-\zeta}$ if $n$ is large enough. 
Furthermore, by~\eqref{item-mcrt-out'} from the proposition statement (which we have already proven), the $\mcl G$-graph distance from each of $\Pi_1$ and $\Pi_2$ to $\BB Z\setminus [-C n^3 , C n^3]$ is at least $n^{1/d-\zeta}$. Hence, any path in $\mcl G$ from $\Pi_1$ to $\Pi_2$ has to have length at least $n^{1/d-\zeta}$. 
This gives~\eqref{item-mcrt-around}.
\end{proof}

\section{Proofs for the UIHPMS}
\label{sec-half-plane-proof}

This section is organized as follows. In Section~\ref{sec-uihpms-equiv}, we prove the equivalence of our two definitions of the UIHPMS (Lemma~\ref{lem-uihpms-equiv}). 
The rest of the section is devoted to the proofs of Theorem~\ref{thm-infinite-path-half} and Proposition~\ref{prop-alternating}. 
We start out in Section~\ref{sec-aperiodic} by introducing a special sequence of boundary points for the UIHPMS, $\{H_m\}_{m\in\BB Z}\subset\{J_k\}_{k\in\BB Z}$. 
The points $\{H_m\}_{m\in\BB Z}$ may be viewed as marked points of a minor variant of the UIHPMS that satisfies an invariance property under translation by $H_m$ as in~\eqref{eqn-bdy-translate}, but (crucially) without the constraint that $m$ is even (Lemma~\ref{lem-good-translate}). This property will be important for our purposes since we will eventually need to use some parity arguments.

Sections~\ref{sec-path-finiteness} and~\ref{sec-infinite-arcs} constitute the core part of the argument. In Section~\ref{sec-path-finiteness}, we prove that a.s.\ there are no semi-infinite paths in the UIHPMS that start at one of the special boundary points $H_m$ and never hit any other special boundary point (Lemma~\ref{lem-path-finiteness}). This is done using a Burton-Keane style argument, combined with the existence of certain special times for the pair of encoding walks $(\mcl L ,|\mcl R|)$ (Lemma~\ref{lem-block}). 

The non-existence of such semi-infinite paths allows us to define a perfect matching on $\BB Z$ by saying that $m\in\BB Z$ is matched to $m'\in\BB Z$ if and only if the path $P_m$ of arcs started from $H_m$ hits $H_{m'}$ before hitting any other special boundary point (see Definition~\ref{def-bdy-path} for a precise definition of this path). In Section~\ref{sec-infinite-arcs}, we prove a general lemma for ergodic perfect matchings on $\BB Z$ (Lemma~\ref{lem-ergodic-matching}) which in our setting implies that there are infinitely many paths between good boundary points which disconnect 0 from $\infty$ in the UIHPMS (Lemma~\ref{lem-infinite-arcs}). As explained in Section~\ref{sec-uihpms-proof}, these paths act as ``shields'' which cannot be crossed by any infinite path in the UIHPMS. This allows us to prove Theorem~\ref{thm-infinite-path-half} and Proposition~\ref{prop-alternating}.

Throughout this section, we assume that the arcs of the UIMS and the UIHPMS are drawn in such a way that each arc joining $x < y$ is contained in $[x,y] \times \BB R$. With this convention, it is easy to see from either of the two definitions of the UIHPMS from Section~\ref{sec-half-plane} that for each boundary point $J_k$  there is no arc which crosses the downward ray $\{J_k\} \times (-\infty,0]$.

\subsection{Equivalence of the definitions of the UIHPMS}
\label{sec-uihpms-equiv}

Recall the two definitions of the UIHPMS from Section~\ref{sec-half-plane}, one by cutting the UIMS and one in terms of a simple random walk and an independent reflected random walk. 
We now prove that the two definitions are equivalent. 

We use a discrete version of celebrated L\'evy's theorem~\cite{simons1983discrete}, which we now recall.
Let $\mcl X=(\mcl X_n)_{n\geq0}$ be a (one-sided) simple random walk with $\mcl X_0=0$ and $M^{\mcl X}$ be the running minimum process of $\mcl X$ from time $0$, that is 
\eqb \label{eqn-runmin}
M^{\mcl X}_n = \min_{k\in [0,n]} \mcl X_k, \qquad \text{for } n\ge 0.
\eqe
Then 
\eqb \label{eqn-discrete-levy}
    (\mcl X - M^{\mcl X}, -M^{\mcl X}) \stackrel{d}{=} (| \mcl X|-\textbf{1}_{\{\mcl X>0\}}, \ell^{\mcl X})
\eqe
where $\ell^{\mcl X}_n$ denotes the number of times that $\mcl X$ crosses $1/2$ during the time interval $[0,n]$, for all $n \geq 0$.

\begin{proof}[Proof of Lemma~\ref{lem-uihpms-equiv}]
Let $(\mcl L , \mcl R)$ be the pair of independent two-sided simple random walks on $\BB Z$ used to construct the UIMS as in Section~\ref{sec-infinite}. We assume that $\mcl L_0 = \mcl R_0 = 0$. 
We first analyze the cutting description of the UIHPMS. 
For each integer $k\in\BB Z_{>0}$, let $T_k$ be the smallest time $x \in \BB Z_{>0}$ such that $\mcl R_x = -k$. Also, let $T_{-k}$ be the largest time $x \in \BB Z_{<0}$ such that $\mcl R_x = -k$. 
By~\eqref{eqn-rw-inf-adjacency}, for each $k \in\BB Z_{>0}$ the point $T_k$ is joined to $T_{-k} + 1$ by a lower arc of the UIMS . Furthermore, arcs of this type are the only ones which cross $\{1/2\} \times (-\infty, 0]$. Therefore, the UIHPMS under the cutting definition is obtained from the UIMS by first removing the lower arcs joining $T_k$ and $T_{-k}+1$ for $k \in\BB Z_{>0}$, then adding lower arcs joining $T_{2k-1}$ and $T_{2k}$ for each $k \in\BB Z_{>0}$; and lastly joining $T_{2k+1}+1$ and $T_{2k} + 1$ for each $k \in\BB Z_{<0}$. 

Let $\wt{\mcl R}$ be the random walk  
on $\BB Z $ obtained from $\mcl R$ by replacing each of the downward steps $\mcl R_{T_{2k-1}} - \mcl R_{T_{2k-1}-1}$ for $k\in\BB Z_{>0}$ by an upward step, and replacing each of the upward steps $\mcl R_{T_{2k+1}+1} - \mcl R_{T_{2k+1}}$ for $k\in\BB Z_{<0}$ by a downward step (and otherwise leaving the steps of $\mcl R$ unchanged). Then the cutting description of the UIHPMS is equivalent to the random walk description~\eqref{eqn-rw-inf-adjacency} with $\wt{\mcl R}$ in place of $\mcl R$. Therefore, it suffices to show that $\wt{\mcl R} \eqD |\mcl R|$. 

Let $M^{\mcl R}$ and $\ell^{\mcl R}$ be as in~\eqref{eqn-runmin}~and~\eqref{eqn-discrete-levy}, with $\mcl X = \mcl R\mid_{[0,\infty)\cap\BB Z}$. Then for $x \in\BB Z_{\geq 0}$, we have
\eqb
\wt{\mcl R}_x = \mcl R_x - M_x^{\mcl R} + \textbf{1}_{\{ \text{$M_x^{\mcl R}$ is odd} \}}  .
\eqe
On the other hand, the number $\ell_x^{\mcl R}$ of crossings of $1/2$ by $\mcl R$ during the time interval $[0,x]$ is odd if and only if $\mcl R_x > 0$, so
\eqb
|\mcl R|_x = |\mcl R|_x -  \textbf{1}_{\{ \mcl R_x > 0 \}}  +    \textbf{1}_{\{ \text{$\ell_x^{\mcl R}$ is odd} \}}.
\eqe
By combining the preceding two identities with~\eqref{eqn-discrete-levy}, we get that $\wt{\mcl R}\mid_{[0,\infty)\cap\BB Z} \eqD |\mcl R| \mid_{[0,\infty)\cap\BB Z}$. We similarly get the desired equality in law for negative times.
\end{proof}

\subsection{Aperiodicity via good boundary points}
\label{sec-aperiodic}
 
In this subsection we address the following technical point. 
The law of the UIHPMS is only invariant under \emph{even} translations along the boundary, i.e., translations by $J_{2k}$ for $k\in\BB Z$; see~\eqref{eqn-bdy-translate}. 
In one step of our proof of Theorem~\ref{thm-infinite-path-half} (Section~\ref{sec-infinite-arcs}) we will use an ergodicity argument which requires us to know that a certain event for boundary points of odd index has the same probability as the corresponding event for boundary points of even index. 
For this reason, we need to introduce a variant of the UIHPMS where we have translation invariance for \emph{all} boundary points, not just even boundary points. The idea is to replace the reflected random walk $|\mcl R|$ by a walk which has some constant steps at height 0. This is similar to how one can make a Markov chain aperiodic by introducing constant steps. 

\begin{figure}[ht!]
	\begin{center}
		\begin{minipage}[c]{.8\textwidth}
			\includegraphics[width=\textwidth]{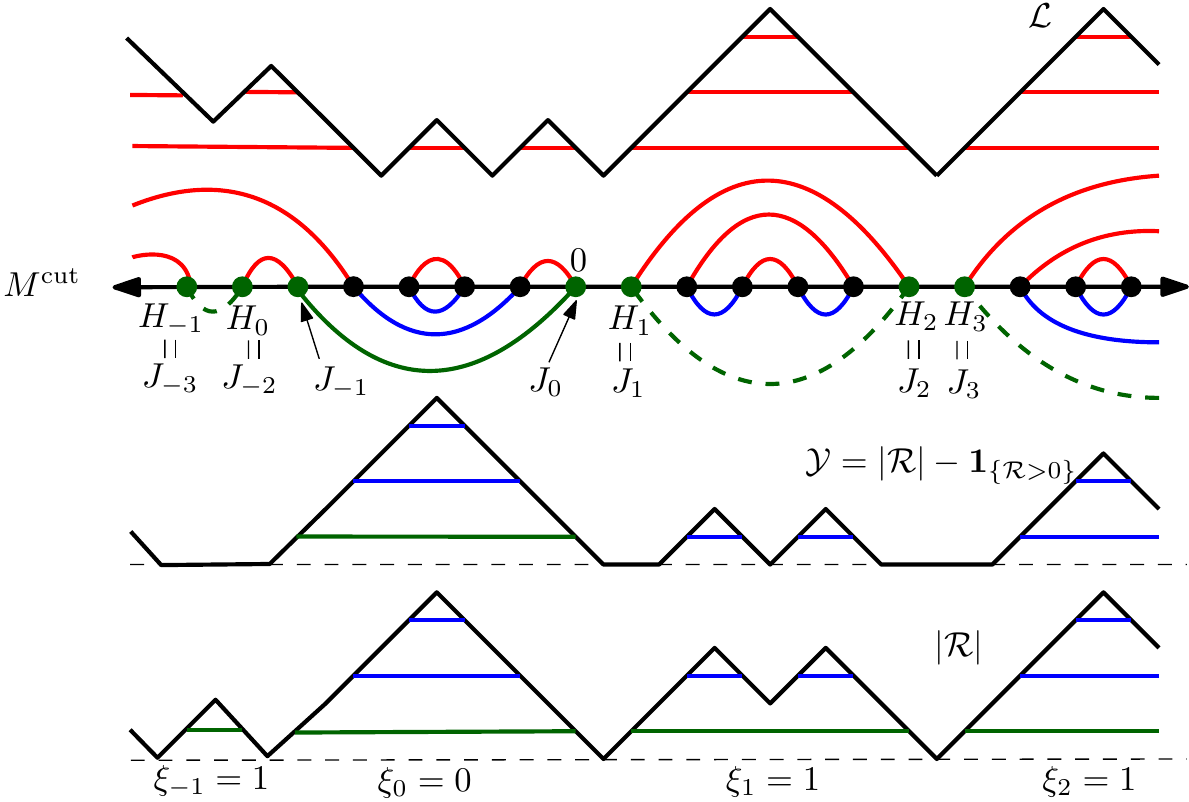}
		\end{minipage}
		\caption{\label{fig-good-bdy-def} Illustration of the variant $\mcl M^{\op{cut}}$ of the UIHPMS $\mcl M'$ constructed using the walk $\mcl Y$ of~\eqref{eqn-good-walk}. Dashed arcs are the ones which are part of the original UIHPMS $\mcl M'$ but not part of $\mcl M^{\op{cut}}$. The graph of the reflected random walk $|\mcl R|$ used in the construction of the UIHPMS is shown at the bottom of the figure. Below each excursion of $|\mcl R|$ we have also shown the outcomes of the i.i.d.\ Bernoulli$(1/2)$ random variables $\{\xi_k\}_{k\in\BB Z}$.
		}
	\end{center}
	\vspace{-3ex}
\end{figure}

Let $\mcl R$ be the two-sided simple random walk on $\BB Z$ such that $|\mcl R|$ is used to construct the UIHPMS. 
We define the modified walk 
\eqb \label{eqn-good-walk}
\mcl Y_x := |\mcl R|_x - \textbf{1}_{\{\mcl R_x > 0\}}. 
\eqe
For $m \in\BB Z$, let
\eqb \label{eqn-good-bdy-def}
  H_m
:= \left(\text{$m$th smallest $x\in\BB Z$ such that $\mcl Y_x = \mcl Y_{x-1} = 0$}\right) ,
\eqe
with the numbering chosen so that $0 \in [ H_0  ,  H_1-1]\cap\BB Z$. 

Recall that we denote by $J_k$ (resp.\ $J_{-k+1}$) the $k$th positive (resp.\ non-positive) boundary point of the UIHPMS. In particular, for each $k\in\BB Z$, $J_{2k}=J_{2k+1}-1$ and $\mcl R_{J_{2k}}=\mcl R_{J_{2k+1}-1}=0$; and these are all the zeros of $\mcl R$. 
Conditional on $|\mcl R|$, the signs of the excursions $\{\mcl R|_{[J_{2k-2}  , J_{2k } ]}\}_{k\in\BB Z}$ of $|\mcl R|$ away from 0 are i.i.d.\ Bernoulli$(1/2)$ random variables. Therefore, $\mcl Y$ can equivalently be constructed from $|\mcl R|$ as follows. Start with a collection $\{\xi_k\}_{k\in\BB Z}$ of i.i.d.\ Bernoulli$(1/2)$ random variables independent from $|\mcl R|$. Then, for each $k\in\BB Z$ such that $\xi_k = 1$, replace the steps $|\mcl R|_{J_{2k}} - |\mcl R|_{J_{2k}-1}$ and $|\mcl R|_{J_{2k-1}} - |\mcl R|_{J_{2k-1}-1}$ by constant steps. 
In particular, 
\eqb \label{eqn-good-bdy-compare}
\{  H_m \}_{m\in\BB Z} \subset \{J_k\}_{k \in \BB Z }\quad \text{and} \quad \BB P\left[J_k \in \{  H_m \}_{m\in\BB Z} \,|\, |\mcl R| \right] = \frac12 ,\quad\forall k \in \BB Z.
\eqe 
We call $H_m$ the $m$th \textbf{good boundary point} of the UIHPMS. See Figure~\ref{fig-good-bdy-def} for an illustration. 

Let $(\mcl M^{\op{cut}} , \Gamma^{\op{cut}})$ be the infinite planar map decorated by a collection of loops and paths which is defined exactly as in~\eqref{eqn-rw-inf-adjacency} but with $\mcl Y$ in place of $\mcl R$, with the convention that if $\mcl Y_x = \mcl Y_{x-1} = 0$, then there is no lower arc incident to $x\in\BB Z$. The discussion just above implies that $(\mcl M^{\op{cut}} , \Gamma^{\op{cut}})$ can be obtained from the UIHPMS by removing each lower arc joining boundary points $J_{2k-1}$ and $J_{2k}$ for $k\in\BB Z$ such that $\xi_k = 1$. The good boundary points are exactly these points $J_{2k-1}$ and $J_{2k}$ for $k\in\BB Z$ such that $\xi_k = 1$. In particular, they are not determined by the UIHPMS.

One can also see from the proof of the equivalence of the two definitions of the UIHPMS (Lemma~\ref{lem-uihpms-equiv}) that $(\mcl M^{\op{cut}} , \Gamma^{\op{cut}})$ has the same law as the decorated planar map obtained from the UIMS by removing all of the lower arcs which intersect $\{1/2\} \times (-\infty,0]$. Moreover, in this interpretation, the good boundary points are just the endpoints of such removed arcs. 

The following lemma is our main reason for introducing the objects in this subsection. It will be used in the proof of Lemma~\ref{lem-infinite-arcs} below.

\begin{lem} \label{lem-good-translate}
Let $\wh{\mcl Y}$ be sampled from the conditional law of $\mcl Y$ given the event 
\eqb 
\{ \mcl Y_0 = \mcl Y_{-1}  = 0 \} = \{H_0 = 0\} .
\eqe 
Let $(\wh{\mcl M}^{\op{cut}} , \wh\Gamma^{\op{cut}})$ be defined in the same manner as $(\mcl M^{\op{cut}} , \Gamma^{\op{cut}})$ with $\wh{\mcl Y}$ in place of $\mcl Y$ and let $\{\wh H_m\}_{m\in \BB Z}$ be as in~\eqref{eqn-good-bdy-def} with $\wh{\mcl Y}$ in place of $\mcl Y$. 
Then
\eqb \label{eqn-good-translate}
\left( \wh{\mcl M}^{\op{cut}} - \wh H_m , \wh\Gamma^{\op{cut}} - \wh H_m \right) 
\eqD \left( \wh{\mcl M}^{\op{cut}}  , \wh\Gamma^{\op{cut}}   \right) ,\quad\forall m \in \BB Z.
\eqe 
Furthermore, for each positive integer $r\in\BB Z_{>0}$, each event which is invariant under translations of the form~\eqref{eqn-good-translate} with $m$ restricted to lie in $r\BB Z$ has probability zero or one. 
\end{lem}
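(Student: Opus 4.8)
The plan is to reduce everything to a statement about the pair of encoding walks and then exploit an i.i.d.\ decomposition of the walk $\mcl Y$ between consecutive good boundary points.

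\emph{Step 1: reduction to the walk.} The decorated planar map $(\mcl M^{\op{cut}},\Gamma^{\op{cut}})$ is a deterministic, translation--equivariant function of the increments of the pair $(\mcl L,\mcl Y)$: shifting the walk by $t\in\BB Z$ (and recentering $\mcl L$ so that it starts at $0$) shifts the decorated map by $t$. This is immediate from the adjacency rule~\eqref{eqn-rw-inf-adjacency}, since that rule depends only on increments, together with the convention that there is no lower arc at a double zero of $\mcl Y$. Likewise $\{H_m\}_{m\in\BB Z}$ is a translation--equivariant point process depending only on $\mcl Y$, and $\mcl Y$ is recurrent, so a.s.\ there are infinitely many good boundary points in each direction and the $\wh H_m$ are a.s.\ well defined. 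It therefore suffices to prove: conditionally on $\{H_0=0\}$, (a) the law of $(\mcl L,\mcl Y)$ is invariant under shifting by $\wh H_m$ for each $m$, and (b) the $\sigma$-algebra of events of $(\mcl L,\mcl Y)$ invariant under all shifts by $\wh H_{rk}$, $k\in\BB Z$, is trivial.

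\emph{Step 2: i.i.d.\ blocks.} The key structural fact is that $\mcl Y$ is itself a two-sided Markov chain on $\BB Z_{\ge 0}$. A short check shows that $\mcl Y$ is the lumping of $\mcl R$ under $r\mapsto |r|-\mathbf 1_{\{r>0\}}$: the transition probabilities out of $\{\mcl R=0\}$ and $\{\mcl R=1\}$ agree (both give $\mcl Y\mapsto 0$ or $1$ with probability $\tfrac12$), and similarly for $\{\mcl R=-y\}$ and $\{\mcl R=y+1\}$, so $\mcl Y$ is Markov with kernel $p(0,0)=p(0,1)=\tfrac12$, $p(y,y-1)=p(y,y+1)=\tfrac12$ for $y\ge 1$. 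This kernel is reversible with respect to counting measure, which together with the reversibility of the two-sided SRW $\mcl R$ gives $(\mcl Y_x)_{x\in\BB Z}\eqD(\mcl Y_{-x})_{x\in\BB Z}$ as processes. The good boundary points are exactly the times of the ``lazy steps at $0$'' (the double zeros $\mcl Y_x=\mcl Y_{x-1}=0$), which are regeneration points. Conditioning on $\{H_0=0\}$ and applying the strong Markov property at each $\wh H_m$ (forward and, by reversibility, backward), the pieces of $\mcl Y$ over the intervals $(\wh H_{m-1},\wh H_m]$, $m\in\BB Z$, are i.i.d.; reversibility (reflection about $-1/2$) makes the common block law invariant under time-reversal, which is what guarantees the two-sided sequence is genuinely i.i.d.\ — same law on both half-lines — rather than merely i.i.d.\ on each half separately. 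Enriching each block with the corresponding increments of $\mcl L$ keeps them i.i.d.: conditionally on the block lengths (a $\mcl Y$-measurable quantity) the $\mcl L$-increments in distinct blocks are independent fair-coin sequences, so the enriched blocks $\{\tilde B_m\}_{m\in\BB Z}$ are i.i.d. Finally, under the conditioning, $(\wh{\mcl M}^{\op{cut}},\wh\Gamma^{\op{cut}})$ is a deterministic function $F$ of $\{\tilde B_m\}_m$ — concatenate the blocks to recover the increments of $(\mcl L,\mcl Y)$, then build the arcs — and, crucially, shifting $\{\tilde B_m\}_m$ by $m$ corresponds under $F$ to translating the decorated map by $\wh H_m$ (since $\mcl Y_{\wh H_m}=0$, no recentering of $\mcl Y$ is needed, and the $\mcl L$-recentering cancels in the adjacency rule).

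\emph{Step 3: conclusion.} Part (a), i.e.\ \eqref{eqn-good-translate}, is now immediate: the shift $\{\tilde B_k\}_k\mapsto\{\tilde B_{k+m}\}_k$ preserves the law of an i.i.d.\ sequence, and pushing forward through $F$ gives $(\wh{\mcl M}^{\op{cut}}-\wh H_m,\wh\Gamma^{\op{cut}}-\wh H_m)\eqD(\wh{\mcl M}^{\op{cut}},\wh\Gamma^{\op{cut}})$. For the ergodicity statement, fix $r\in\BB Z_{>0}$; an event $A$ of $(\wh{\mcl M}^{\op{cut}},\wh\Gamma^{\op{cut}})$ invariant under all translations by $\wh H_{rk}$, $k\in\BB Z$, pulls back under $F$ to an event of $\{\tilde B_m\}_m$ invariant under the shift $m\mapsto m+r$. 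Grouping the i.i.d.\ blocks into super-blocks of length $r$ exhibits $\{\tilde B_m\}_m$ as an i.i.d.\ sequence on which $m\mapsto m+r$ acts as the unit shift; this is mixing, hence ergodic (Kolmogorov zero--one law for i.i.d.\ sequences), so $\BB P[A]\in\{0,1\}$.

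\emph{Main obstacle.} The delicate part is Step 2: verifying that $\mcl Y$ is Markov and reversible, and then setting up the two-sided i.i.d.\ block decomposition under the Palm-type conditioning $\{H_0=0\}$ — in particular getting the block indexing around the origin right, checking that the common block law is reversal-invariant (so the two half-sequences have the same law, which is needed for invariance under shifts by $\wh H_m$ with $m\neq 0$), and handling the fact that the independent walk $\mcl L$ does not regenerate at the $\wh H_m$ (resolved by conditioning on block lengths). Once the i.i.d.\ structure is in place, Steps 1 and 3 are routine.
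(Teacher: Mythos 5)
Your overall strategy is the paper's: reduce to the walk, exhibit the pieces of $\wh{\mcl Y}$ between consecutive good boundary points as an i.i.d.\ two-sided sequence of blocks (enriched with the $\mcl L$-increments), and read off translation invariance and the zero--one law from the i.i.d.\ structure. Steps 1 and 3 are fine, and your treatment of $\mcl L$ (conditioning on block lengths) is if anything more explicit than the paper's.

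The gap is in Step 2, precisely at the point you flag as delicate. Your claim that ``reversibility makes the common block law invariant under time-reversal'' is false. A block $\wh{\mcl Y}|_{(\wh H_{m-1},\wh H_m]}$ consists of a geometric number of non-lazy excursions of $\mcl Y$ away from $0$ \emph{followed by} exactly one lazy step (the constant step into $\wh H_m$ that makes $\wh H_m$ a good boundary point); a block of length greater than $1$ therefore never begins with a lazy step, whereas its time-reversal always does. So the block law is not reversal-invariant, and this cannot be the reason the backward blocks have the same law as the forward ones. Relatedly, the reflection about $-1/2$ does not preserve the law of the unconditioned $\mcl Y$ (it moves the pinning $\mcl Y_0=0$ to time $-1$, and $\mcl Y_{-1}$ is not a.s.\ $0$), so invoking it before the two-sided block structure is established is circular. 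The correct statement, and the extra layer the paper supplies, is one level down: decompose $\mcl Y$ at \emph{all} of its zeros $\{\tau_j\}_{j\in\BB Z}$ into excursions, observe that the law $\mu$ of a single excursion (lazy with probability $1/2$, otherwise a positive simple-walk excursion) \emph{is} reversal-invariant, conclude that the full two-sided excursion sequence is i.i.d.\ from $\mu$, and only then group excursions into blocks at the lazy ones (conditioned on the excursion ending at $0$ being lazy) to get i.i.d.\ blocks. Your conclusion is true, but the justification you give for the two-sidedness is wrong and needs to be replaced by this excursion-level (or an equivalent regeneration) argument.
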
 
\begin{proof}
Since $(\wh{\mcl M}^{\op{cut}} , \wh\Gamma^{\op{cut}})$ is constructed from $\wh{\mcl Y}$ and an independent two-sided simple random walk $\mcl L$ in the manner of~\eqref{eqn-rw-inf-adjacency}, it suffices to show that
\eqb \label{eqn-good-translate-walk}
\wh{\mcl Y}_{\wh H_m +\, \cdot} \eqD \wh{\mcl Y}_{\cdot} \, ,\quad\forall m \in \BB Z,
\eqe
and that any event which is invariant under translations of the form~\eqref{eqn-good-translate-walk} with $m$ restricted to lie in $r\BB Z$ has probability zero or one. 

To this end, we first consider the unconditioned random walk $\mcl Y=|\mcl R| - \textbf{1}_{\{\mcl R > 0\}}$ from~\eqref{eqn-good-walk}. 
Let $\{\tau_j\}_{j\in\BB Z}$ be the times such that $\mcl Y_x = 0$, numbered from left to right in such a way that $\tau_0 = 0$. 

Let $(\mcl X , M^{\mcl X})$ be a one-sided simple random walk on $\BB Z$ started from $0$ at time $0$ and its running minimum process, as in~\eqref{eqn-runmin}.
Also let $\mu$ be the law of $\mcl X - M^{\mcl X}$ stopped at the first positive time when it reaches zero. 
A sample from $\mu$ can be produced as follows: Flip a fair coin. If the coin comes up heads, we run a simple random walk started from 0, conditioned on the event that its first step is upward, until the first positive time when it reaches zero. If the coin comes up tails, we instead take the path which takes one step from 0 to 0. 
From this description, we get that the law of a path sampled from $\mu$ is invariant under time reversal. 

By the discrete version of L\'evy's theorem~\eqref{eqn-discrete-levy}, the process $\mcl Y |_{[0,\infty)\cap\BB Z}$ has the same law as $\mcl X -  M^{\mcl X}$. By the strong Markov property of $\mcl X$, we get that the excursions $\mcl Y|_{[\tau_{j-1} , \tau_j] \cap\BB Z}$ for $j \geq 1$ are i.i.d.\ samples from $\mu$. 
The law of $\mcl Y$ is invariant under time reversal, so the increments $\mcl Y|_{[\tau_{j-1} , \tau_j]\cap\BB Z}$ for $j \leq 0$ are also i.i.d., and have the same law as the time reversal of a sample from $\mu$. 
From the time reversal symmetry of $\mu$, we get that the law of the whole collection of increments $\mcl Y|_{[\tau_{j-1} , \tau_j]\cap\BB Z}$ for $j \in\BB Z$ are i.i.d.\ samples from $\mu$.

The points $H_m$ for $m\in\BB Z$ coincide precisely with the times $\tau_j$ for which $\tau_j =  \tau_{j-1} + 1 $. 
From this and the above description of the increments $\mcl Y|_{[\tau_{j-1} , \tau_j]\cap\BB Z}$, we infer that the increments $\wh{\mcl Y}|_{[\wh H_{m-1}  , \wh H_m] \cap\BB Z}$ for $m\in\BB Z$ are i.i.d.\ samples from the law of $\mcl Y |_{[0,\infty)\cap\BB Z}$ stopped at the first positive time $x$ such that $\mcl Y_x = \mcl Y_{x-1} = 0$. 
From this and the fact that $\wh{\mcl Y}$ is sampled from the conditional law of $\mcl Y$ given the event $\{\mcl Y_0 = \mcl Y_{-1}  = 0 \}$, the translation invariance property~\eqref{eqn-good-translate-walk} is immediate. Furthermore, the desired ergodicity property for $\wh{\mcl Y}$ follows from the zero-one law for translation invariant events depending on a sequence of i.i.d.\ random variables, applied to the i.i.d.\ random variables $\wh{\mcl Y}|_{[\wh H_{r n  }  , \wh H_{r(n+1)}] \cap\BB Z}$ for $n\in\BB Z$. 

We remark that one could also prove~\eqref{eqn-good-translate-walk} by showing that if $U_m$ is sampled uniformly from $[-m,m]\cap\BB Z$, then the law of $ \mcl Y_{H_{U_m}+\cdot}$ converges as $m\to\infty$ to the law of $\wh{\mcl Y}$. We will not give the details. 
\end{proof}


\subsection{No semi-infinite paths started from the boundary}
\label{sec-path-finiteness}

Define the UIHPMS and its left-to-right ordered sequence of boundary points $\{J_k\}_{k\in\BB Z} \subset \BB Z$ as in Section~\ref{sec-half-plane}.
Also define the left-to-right ordered sequence of good boundary points $\{ H_m\}_{m\in\BB Z}$  as in~\eqref{eqn-good-bdy-def}.

\begin{defn} \label{def-bdy-path} 
For each good boundary point $H_m$ with $m\in \BB Z$, let $P_m$ be the unique directed path of arcs in the UIHPMS starting from $H_m$, following the arc in the \emph{upper} half-plane incident to $H_m$, and ending at the first good boundary point other than $H_m$ which is hit by the path, if it exists; otherwise let $P_m$ be the whole semi-infinite path started from $H_m$. 
We call $P_m$ the \textbf{boundary path} started from $H_m$.
\end{defn}

By definition, the boundary path $P_m$ hits good boundary points of the UIHPMS only at its endpoints. However, it is in principle possible that for some values of $m$, the path $P_m$ never hits another good boundary point other than $H_m$, in which case it is semi-infinite.
The first step in the proof of Theorem~\ref{thm-infinite-path-half} is to rule this event out. 

\begin{lem}\label{lem-path-finiteness} 
Almost surely, the boundary path $P_m$ is finite for all $m \in \BB Z$.
\end{lem}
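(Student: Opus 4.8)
The plan is a proof by contradiction of Burton--Keane type, carried out in the translation-invariant, ergodic setting provided by Lemma~\ref{lem-good-translate}, with the contradiction coming from the ``barrier'' times for $(\mcl L,|\mcl R|)$ supplied by Lemma~\ref{lem-block}. I would work in that ergodic setting, so that the family $\{P_m\}_{m\in\BB Z}$ of boundary paths is stationary under translation by the good boundary points and ``$P_0$ is semi-infinite'' has a well-defined probability $\rho$. If $\rho=0$, then a.s.\ $P_m$ is finite for each fixed $m$, and a union bound over the countably many $m\in\BB Z$ finishes the proof; so the task is to rule out $\rho>0$.

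I would first record two structural facts. (i) Since a finite loop cannot contain an infinite path, every semi-infinite boundary path is a sub-ray of a bi-infinite path of the UIHPMS; being a simple path on $\BB Z$, it therefore visits arbitrarily large integers or arbitrarily negative integers (it ``escapes to the right'' or ``to the left''). (ii) Distinct semi-infinite boundary paths are pairwise arc-disjoint: if $P_m$ and $P_{m'}$ shared an arc they would be sub-rays of a common bi-infinite path $\ell$, and since each $P_j$ stops at the first good boundary point it meets, the rays from $H_m$ and from $H_{m'}$ would have to run towards opposite ends of $\ell$ and hence be disjoint. In particular, since every vertex of the UIHPMS is incident to exactly one arc above and one below the real line, a path passing through a vertex $v$ other than as an endpoint uses both arcs at $v$; hence two distinct semi-infinite boundary paths cannot both pass through the same such vertex.

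Now suppose $\rho>0$. By the ergodic theorem a positive fraction of the $\Theta(n)$ good boundary points in $[1,2n]$ start semi-infinite boundary paths, so there are $\gtrsim n$ of them; by (i), for $n$ large, at least $\gtrsim n$ of them escape to the right, or at least $\gtrsim n$ escape to the left. Consider the first case. Lemma~\ref{lem-block} provides, with positive density among positive times, ``barrier'' windows $[x,x+K]$ (with $K$ a fixed constant) having the property that no arc of the UIHPMS has one endpoint $\le x$ and the other $\ge x+K$; consequently any path going from a vertex $\le 2n<x$ to arbitrarily large vertices must pass through a vertex of $[x,x+K]$. Fix such a barrier window with $x\in[2n,3n]$ (which exists a.s.\ for $n$ large). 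Then each of the $\gtrsim n$ right-escaping semi-infinite boundary paths uses a vertex of this $(K+1)$-vertex window, and by fact (ii) distinct such paths use distinct vertices of it (these vertices are interior vertices of the paths, not the endpoints $H_m$, since $H_m\le 2n<x$); so there are at most $K+1$ of them, contradicting $\gtrsim n$ for $n$ large. The left-escaping case is identical, using a barrier window inside $[-3n,-2n]$. Hence $\rho=0$, which proves the lemma.

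The step I expect to be the main obstacle is the proof and exploitation of Lemma~\ref{lem-block}: one must exhibit a local event for the pair $(\mcl L,|\mcl R|)$, of positive probability on a bounded window, which forces no upper arc and no lower arc to jump over that window, and then verify --- using that arcs of the meandric system are non-crossing and that no arc crosses the downward ray at a good boundary point (recalled at the start of Section~\ref{sec-half-plane-proof}) --- that a boundary path truly cannot slip past such a barrier without touching it. Once this barrier property is in hand, the Burton--Keane counting and the remaining bookkeeping (escape directions and the degenerate case $\rho=0$) are routine.
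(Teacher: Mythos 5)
Your overall strategy (Burton--Keane contradiction, ergodic theorem to get a positive density of semi-infinite boundary paths, pairwise disjointness of those paths, and a ``barrier'' to the right of the window where they start) is the same as the paper's, and your facts (i)--(ii) are fine. But the key step is wrong: the barrier property you attribute to Lemma~\ref{lem-block} is false. There is no window $[x,x+K]$ over which no arc of the UIHPMS jumps: the upper arc diagram is the non-crossing perfect matching of $\BB Z$ encoded by the two-sided walk $\mcl L$, and since $\liminf_{y\to\pm\infty}\mcl L_y=-\infty$, every half-integer point is a.s.\ straddled by infinitely many nested upper arcs; in particular arcs with left endpoint in $(-\infty,0]$ routinely jump over any window in $[2n,3n]$. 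What a \emph{block} $J_{2k}$ actually gives is weaker: no arc joins $(0,J_{2k})$ to $(J_{2k},\infty)$, so a path started in $(0,J_{2k})$ can cross the line $\{J_{2k}\}\times\BB R$ only after first reaching $(-\infty,0]$. Hence your count ``at most $K+1$ semi-infinite paths cross the barrier'' fails: arbitrarily many of them may escape by detouring through $(-\infty,0]$.

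Repairing this is exactly where the real content of Lemma~\ref{lem-block} enters, and your proposal never uses it. Each semi-infinite path started in $(0,J_{2k})$ must eventually traverse an upper arc from $(0,J_{2k}]$ to $(-\infty,0]$ (lower arcs cannot cross the ray $\{1/2\}\times(-\infty,0]$), and the right endpoint of such an arc is an \emph{upper block}; disjointness of the paths then bounds their number by the number of upper blocks in $(0,J_{2k}]$. The contradiction therefore requires choosing $k$ so that the number of upper blocks in $(0,J_{2k}]$ (at most $\ep k$ by Lemma~\ref{lem-block}) is strictly smaller than the number of semi-infinite boundary paths started there (at least $pk/2$ by the ergodic theorem). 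This quantitative comparison --- few upper blocks versus many boundary points --- is the heart of the proof, not a routine bookkeeping step, and it is precisely what your ``fixed-size barrier window'' shortcut was meant to replace but cannot.
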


The proof of Lemma~\ref{lem-path-finiteness} proceeds as follows. By even translation invariance~\eqref{eqn-bdy-translate}, the probability that $J_k$ is a good boundary point (i.e., $J_k = H_m$ for some $m\in\BB Z$) and $P_m$ is semi-infinite depends only on the parity of $k$. If this probability is positive for even values of $k$, say, then by the ergodic theorem a.s.\ $P_{ m}$ is semi-infinite for a positive fraction of the indices $m \in \BB Z$. We will show that this cannot be the case by a Burton-Keane style argument. Roughly speaking, we will use certain special times for the encoding walks (Lemma~\ref{lem-block}) to argue that there is not enough ``room'' for there to be a positive density of values of $m$ for which $P_{ m}$ is semi-infinite. We start with some bounds on the probability that a bridge has at least some number of crossings of $1/2$.

\begin{lem}\label{lem-walk-estimate}
    For a simple random walk $\mcl X$ on $\BB Z$ with $\mcl X_0=0$, recall that $\ell^{\mcl X}_n$ denotes the number of times that $\mcl X$ crosses $1/2$ during the time interval $[0,n]$. For $n, k>0$, we have
    \eqb \label{eqn-walk-estimate}
    \frac{2^{2k-1}\binom{2n-2k}{n}}{\binom{2n}{n}}\le
    \BB P\left[\ell^{\mcl X}_{2n}\ge 2k \;\middle|\; \mcl X_{2n}=0\right]
    \le \frac{2^{k}\binom{2n-k}{n}}{\binom{2n}{n}}.
    \eqe 
    As a particular consequence, for each $\ep>0$ and each integer $n > 0$,
    \eqb\label{eqn-estimate-limit}
    C_1\leq \BB P\left[\ell^{\mcl X}_{2n}\ge \ep \sqrt{n} \;\middle|\; \mcl X_{2n}=0\right]\leq C_2 \quad\text{and}\quad C_1 \leq \BB P\left[\ell^{\mcl X}_{2n}\ge \ep \sqrt{n} \;\middle|\; (\mcl X_{2n-1},\mcl X_{2n})=(1,0)\right] \leq C_2,
    \eqe
    for constants $C_1 , C_2 \in (0,1)$ only depending on $\ep$.
\end{lem}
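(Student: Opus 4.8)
The plan is to reduce the lemma to two exact formulas for the conditional probabilities, from which both \eqref{eqn-walk-estimate} and \eqref{eqn-estimate-limit} follow by elementary estimates on binomial coefficients. Concretely, I would first prove that for all integers $n,k\ge 1$,
$$\BB P\big[\ell^{\mcl X}_{2n}\ge 2k \,\big|\, \mcl X_{2n}=0\big]=\binom{2n}{n+k}\Big/\binom{2n}{n}\qquad\text{and}\qquad \BB P\big[\ell^{\mcl X}_{2n}\ge 2k \,\big|\, (\mcl X_{2n-1},\mcl X_{2n})=(1,0)\big]=\binom{2n-1}{n+k-1}\Big/\binom{2n-1}{n}.$$
To get the first identity I would invoke the discrete L\'evy theorem \eqref{eqn-discrete-levy} at the level of the joint law of the two processes: the event $\{|\mcl X_{2n}|-\mathbf 1_{\{\mcl X_{2n}>0\}}=0\}$, which (since $2n$ is even) coincides with $\{\mcl X_{2n}=0\}$, corresponds under \eqref{eqn-discrete-levy} to the event $\{\mcl X_{2n}-M^{\mcl X}_{2n}=0\}$ that $\mcl X$ is at its running minimum over $[0,2n]$ at time $2n$, and on these events the quantity $\ell^{\mcl X}_{2n}$ corresponds to $-M^{\mcl X}_{2n}$. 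Reversing the increments of $\mcl X$ on $[0,2n]$ turns ``$\mcl X$ at its running minimum at time $2n$'' into ``$\mcl X\ge 0$ throughout $[0,2n]$'', and turns $-M^{\mcl X}_{2n}$ into the terminal value. Hence the conditional law of $\ell^{\mcl X}_{2n}$ given $\mcl X_{2n}=0$ is the law of the endpoint of a length-$2n$ simple walk from $0$ conditioned to stay non-negative. The number of non-negative walks of length $2n$ from $0$ is $\binom{2n}{n}$; and the number of those ending at height $\ge 2k$ is $\binom{2n}{n+k}$, because the reflection principle gives $\binom{2n}{n+j}-\binom{2n}{n+j+1}$ for the count of non-negative length-$2n$ walks from $0$ to $2j$, and the sum over $j\ge k$ telescopes. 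For the second identity, note that on $\{(\mcl X_{2n-1},\mcl X_{2n})=(1,0)\}$ the last step is a down-crossing of $1/2$, so $\ell^{\mcl X}_{2n}=\ell^{\mcl X}_{2n-1}+1$ and the event equals $\{\mcl X_{2n-1}=1\}\cap\{\mcl X_{2n}=0\}$; deleting the last step reduces the problem to $\BB P[\ell^{\mcl X}_{2n-1}\ge 2k-1\mid \mcl X_{2n-1}=1]$, to which the same L\'evy-plus-time-reversal argument applies at time $2n-1$, producing the stated ratio of non-negative-walk counts.

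Granting these identities, \eqref{eqn-walk-estimate} is pure arithmetic. For the upper bound, $\binom{2n}{n+k}\big/\binom{2n-k}{n}=\prod_{i=1}^{k}\frac{2n-i+1}{n+i}$, and each factor is $\le 2$ since $2n-i+1\le 2(n+i)$ for $i\ge 1$; thus $\binom{2n}{n+k}\le 2^{k}\binom{2n-k}{n}$. For the lower bound we may assume $2k\le n$ (otherwise $\binom{2n-2k}{n}=0$ and there is nothing to prove). Vandermonde's identity gives $\binom{2n}{n-k}=\sum_{a}\binom{2k}{a}\binom{2n-2k}{n-k-a}$; retaining only the terms $0\le a\le k$ and using unimodality of the row $\binom{2n-2k}{\cdot}$ (whose maximum is at $n-k$) to bound $\binom{2n-2k}{n-k-a}\ge \binom{2n-2k}{n-2k}=\binom{2n-2k}{n}$ for these $a$, one obtains $\binom{2n}{n+k}\ge \binom{2n-2k}{n}\sum_{a=0}^{k}\binom{2k}{a}=\binom{2n-2k}{n}\cdot\tfrac12\big(2^{2k}+\binom{2k}{k}\big)\ge 2^{2k-1}\binom{2n-2k}{n}$. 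The analogous upper and lower bounds with $\binom{2n-1}{\cdot}$ in place of $\binom{2n}{\cdot}$, which are what is needed for the second conditioning in the lemma, follow from exactly the same manipulations applied to $\binom{2n-1}{n+k-1}$.

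For \eqref{eqn-estimate-limit}, put $k:=\lceil \ep\sqrt n/2\rceil\ge 1$, so that $\{\ell^{\mcl X}_{2n}\ge \ep\sqrt n\}=\{\ell^{\mcl X}_{2n}\ge 2k\}$; then $\BB P[\ell^{\mcl X}_{2n}\ge \ep\sqrt n\mid \mcl X_{2n}=0]=\binom{2n}{n+k}\big/\binom{2n}{n}=\prod_{i=1}^{k}\frac{n-i+1}{n+i}$. Since $n+i\le 2n$ and $1-t\le e^{-t}$, this product is at most $\exp\!\big(-\sum_{i=1}^{k}\frac{2i-1}{n+i}\big)\le \exp(-k^2/2n)\le e^{-\ep^2/8}=:C_2$; and for $n$ larger than an $\ep$-dependent threshold (so that all $\frac{2i-1}{n+i}\le\tfrac12$), the inequality $\log(1-t)\ge -t-t^2$ gives the product at least $\exp(-k^2/(n+1)-O(k^3/n^2))\ge \tfrac12 e^{-\ep^2/4}$, while the finitely many remaining values of $n$ each contribute a strictly positive constant, so a uniform $C_1\in(0,1)$ exists. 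Applying the same estimates to $\prod_{i=1}^{k-1}\frac{n-i}{n+i}=\binom{2n-1}{n+k-1}\big/\binom{2n-1}{n}$ yields the corresponding bounds for the $(\mcl X_{2n-1},\mcl X_{2n})=(1,0)$ conditioning (for $n$ above an $\ep$-dependent threshold; for the finitely many smaller $n$ the probability is trivially in $(0,1]$).

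The only step that is not mechanical is establishing the two exact identities: one has to be careful about how the conditioning event is transported across the discrete L\'evy bijection \eqref{eqn-discrete-levy} and the subsequent time-reversal, and about parity bookkeeping at times $2n$ and $2n-1$. Everything afterwards is routine binomial estimation. (An alternative, slightly longer, derivation of the identities proceeds via the excursion decomposition of a bridge as a concatenation of positive and negative simple-walk excursions, together with the generating-function computation $\sum_{p\ge k}E(x)^p/(1-E(x))^{p+1}=(C(x)-1)^k/\sqrt{1-4x}$, where $C$ is the Catalan series and $E(x)=xC(x)$, whose $x^n$-coefficient is $\binom{2n}{n+k}$; but the probabilistic route above is shorter and uses only \eqref{eqn-discrete-levy}, which is already available.)
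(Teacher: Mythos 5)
Your proof is correct, but it follows a genuinely different route from the paper's. The paper works directly with the excursion decomposition of the bridge: it constructs an explicit $2^k$-to-$1$ map from $2n$-step bridges having at least $k$ excursions onto $(2n-k)$-step walks from $0$ to $k$ (flip the last $k$ excursions positive and delete the final down-step of each), which yields the upper bound in \eqref{eqn-walk-estimate}, and a reverse construction producing at least $2^{2k-1}$ bridges from each $(2n-2k)$-step walk from $0$ to $2k$ for the lower bound; \eqref{eqn-estimate-limit} is then read off via Stirling. You instead transport the conditioning through the discrete L\'evy identity \eqref{eqn-discrete-levy} and a time reversal to obtain the \emph{exact} formulas $\BB P[\ell^{\mcl X}_{2n}\ge 2k\mid \mcl X_{2n}=0]=\binom{2n}{n+k}/\binom{2n}{n}$ and its odd-time analogue, and then recover the stated two-sided bounds by elementary binomial manipulations; I checked the key steps (the telescoping reflection count for non-negative walks, the product bound $\prod_{i=1}^{k}\tfrac{2n-i+1}{n+i}\le 2^k$, and the Vandermonde-plus-unimodality argument giving the factor $2^{2k-1}$) and they are all valid, as is the parity bookkeeping at times $2n$ and $2n-1$. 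What your approach buys is sharpness and a clean conceptual statement — the conditional law of $\ell^{\mcl X}_{2n}$ is exactly the law of the endpoint of a non-negative $2n$-step walk, of which the paper's inequalities are loose corollaries — at the cost of having to handle the L\'evy bijection and time reversal carefully; the paper's bijective argument is self-contained and does not invoke \eqref{eqn-discrete-levy} at this step. One shared blemish: for the lower bound in \eqref{eqn-estimate-limit} you assert that the finitely many small $n$ ``each contribute a strictly positive constant,'' but when $\ep\sqrt n>2n$ the probability is exactly $0$ (and for $k=1$ in the second conditioning the probability is exactly $1$, so $C_2<1$ also fails); this is a defect of the lemma as literally stated rather than of your argument — the paper's own proof ignores it too — and it is harmless since only large $n$ is used downstream in Lemma~\ref{lem-block}.
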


\begin{proof}
    We call a simple random walk path starting and ending at $0$ an \emph{excursion} if it does not hit $0$ except at its two endpoints (we allow excursions to be positive or negative). If $\mcl X_{2n} = 0$, we can decompose the path of $\mcl X\mid_{[0,2n] \cap \BB Z}$ into several excursions.
    The number $\ell^{\mcl X}_{2n}$ is exactly twice the number of positive excursions. Hence, if $\mcl X_{2n}=0$ and $\ell^{\mcl X}_{2n}\ge 2k$, then the path of $\mcl X\mid_{[0,2n] \cap \BB Z}$ can be decomposed into at least $k$ excursions. Therefore, we compute $\BB P\left[\mcl X\mid_{[0,2n] \cap \BB Z} \text{has at least $k$ excursions}\;\middle|\; \mcl X_{2n}=0\right]$ in the next paragraph to obtain an upper bound for $\BB P\left[\ell^{\mcl X}_{2n}\ge 2k \;\middle|\; \mcl X_{2n}=0\right]$.

    Assume that the path of $\mcl X\mid_{[0,2n]\cap \BB Z}$ with $\mcl X_{2n}=0$ can be decomposed into at least $k$ excursions. By making the last $k$ excursions all positive (by flipping if necessary) and removing the last downward step of each of these $k$ excursions, we obtain a simple walk from $0$ to $k$ with $2n-k$ steps, i.e. with $n$ upward steps and $n-k$ downward steps. The total number of such walks is $\binom{2n-k}{n}$. In fact, this defines a $2^{k}$-to-$1$ map, as the modified positive excursions can be recovered from the resulting walk from $0$ to $k$ (the last visit of $j\in[1,k]\cap\BB Z$ corresponds to where a downward step was removed), and there are $2^{k}$ ways to assign a sign for each excursion to obtain $\mcl X\mid_{[0,2n]}$. As there are $\binom{2n}{n}$ possible walks from $0$ to $0$ with $2n$ steps, the upper bound in~\eqref{eqn-walk-estimate} follows.
    
   
 The lower bound of~\eqref{eqn-walk-estimate} is obtained via a similar argument. It suffices to notice that a simple walk from $0$ to $2k$ with $2n-2k$ steps can be turned (as done above) in a walk from 0 to 0 with $2n$ steps and at least $k$ positive excursions among the last $2k$ ones in at least $2^{2k-1}$ ways (by symmetry). 
	
	The first bound in~\eqref{eqn-estimate-limit} is obtained by setting $k = \lfloor \ep\sqrt n/2 \rfloor$ in~\eqref{eqn-walk-estimate} and applying Stirling's formula.
    To add the extra condition $\mcl X_{2n-1}=1$ in the second bound in~\eqref{eqn-estimate-limit}, we just fix the last excursion to be positive in the excursion decomposition defined above, which gives a similar bound.
\end{proof}

Let $\mcl L$ and $\mcl R$ be the independent two-sided simple random walks on $\mathbb Z$ with $\mcl L_0 = \mcl R_0 = 0$ used in the definition of the UIHPMS from~\eqref{eqn-rw-inf-adjacency-half}.   
We call $x\in \BB Z_{>0}$ an \textbf{upper block} if $x$ is linked to some point in $(-\infty, 0]\cap \BB Z$ by an arc above the real line. If $x$ is an upper block, no arc above the real line connects $(0,x)\cap \BB Z$ and $(x,\infty)\cap \BB Z$; otherwise such an arc would have to cross the upper arc incident to $x$.
In terms of the random walk description, $x\in \BB Z_{>0}$ is an upper block if and only if $\mcl L\mid_{[0,x]\cap \BB Z}$ attains its \emph{unique} minimum value at time $x$.

Recall that for $k>0$ we denoted by $J_k$ the $k$th positive boundary point of the UIHPMS. We call $x \in 2\BB Z_{>0}$ a \textbf{block} if $x$ is an upper block and $x=J_{2k}$ for some $k>0$. That is, $x \in 2\BB Z_{>0}$ is a block if and only if $|\mcl R|_x = 0$ and $\mcl L\mid_{[0,x]\cap\BB Z}$ attains its unique minimum value at time $x$. See Figure~\ref{fig-block} for an example. Our motivation for the definition of a block is that if $x$ is a block, then no path of arcs started at a point in $(0,x)\cap\BB Z$ can cross the vertical line $\{x\} \times \BB R$ without first hitting $(-\infty,0] \cap\BB Z$.

\begin{figure}[ht!]
	\begin{center}
		\begin{minipage}[c]{.7\textwidth}
			\includegraphics[width=\textwidth]{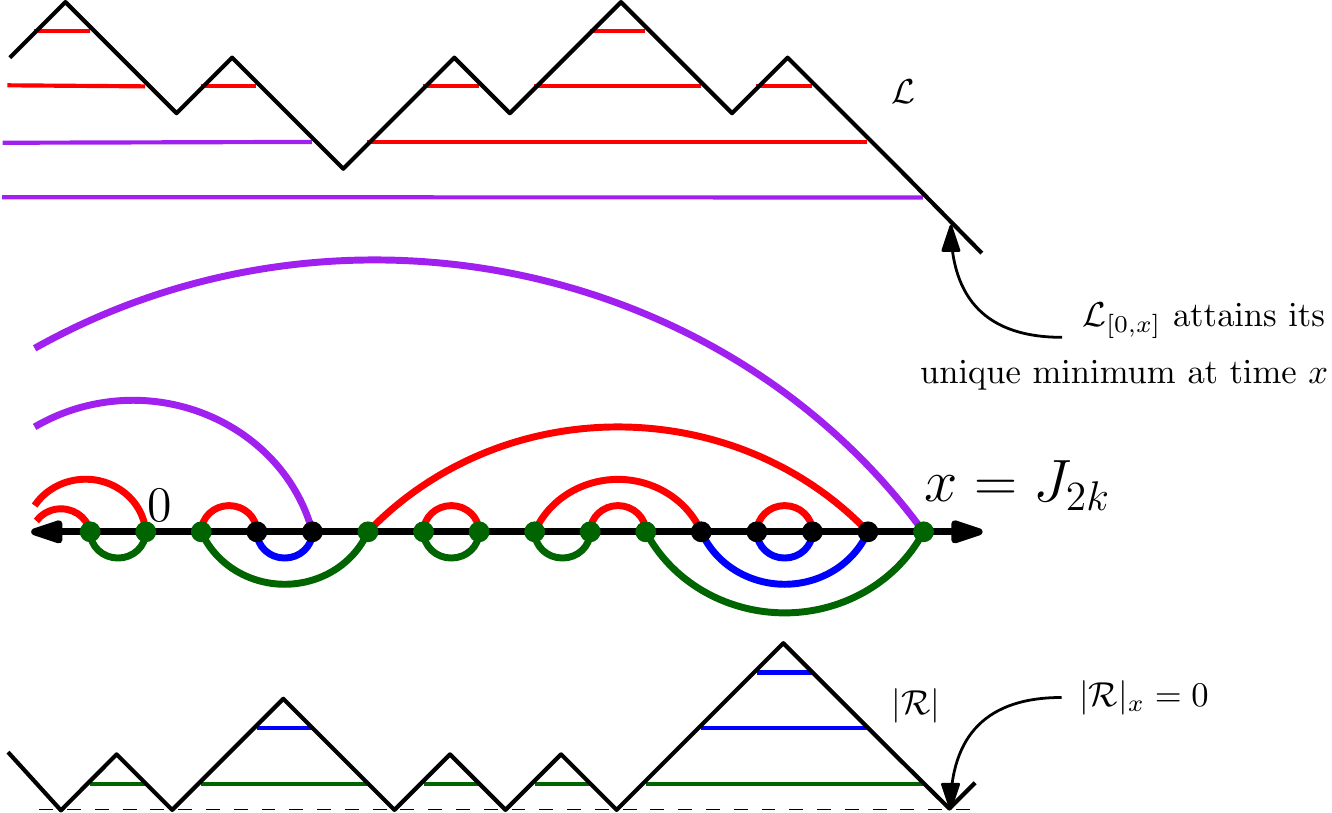}
		\end{minipage}
		\caption{\label{fig-block} An illustration of a block $x=J_{2k}$ with some upper blocks and boundary points in a subset of the UIHPMS. The right endpoints of the purple arcs are upper blocks, while the red arcs are not incident to any upper blocks. Boundary points are the endpoints of the lower green arcs. Lemma~\ref{lem-block} describes a special type of block such that the number of purple arcs is much fewer than the number of green arcs. 
		}
	\end{center}
	\vspace{-3ex}
\end{figure}

\begin{lem}\label{lem-block}
    Fix $\ep>0$. Almost surely, there are infinitely many $k > 0$ such that $J_{2k}$ is a block and there are at most $\ep k$ upper blocks in $(0, J_{2k}]\cap \BB Z$.
\end{lem}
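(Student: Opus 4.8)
The plan is to translate the statement into a property of the encoding walks $\mcl L,\mcl R$ and then reduce it to a recurrence-type statement for an auxiliary one-dimensional random walk with i.i.d.\ increments. First I would record the bookkeeping. Because $\mcl L$ is a simple walk, each time it reaches a new strict running minimum the minimum drops by exactly $1$; hence the upper blocks in $\BB Z_{>0}$ are exactly the strict descending ladder epochs $\sigma_1<\sigma_2<\dots$ of $\mcl L$ (with $\mcl L_{\sigma_j}=-j$), and the number of upper blocks in $(0,x]$ equals $-M^{\mcl L}_x$ in the notation of~\eqref{eqn-runmin}. Moreover $x=J_{2k}$ is a block iff $x\in\{\sigma_j\}_{j\ge1}$ (the condition $\mcl R_x=0$ being automatic for an even boundary point), and the number of boundary points in $(0,x]$ is twice the number of zeros of $\mcl R$ in $(0,x]$. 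So the lemma is equivalent to: a.s.\ there are infinitely many $j$ with $\sigma_j$ a zero of $\mcl R$ and $j\le\ep\,\#\{\text{zeros of }\mcl R\text{ in }(0,\sigma_j]\}$.

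Next I would build a regeneration structure. Conditionally on $\mcl L$, the excursions $\mcl L|_{[\sigma_{i-1},\sigma_i]}$ (with $\sigma_0:=0$) are i.i.d., their lengths $T_i:=\sigma_i-\sigma_{i-1}$ are i.i.d.\ copies of the first-passage time of a simple walk to $-1$, and, given the $T_i$, the increments of $\mcl R$ across these excursions are independent simple-walk segments. Hence the walk $\mcl S_j:=\mcl R_{\sigma_j}$ has i.i.d.\ increments $\Delta_i=\mcl R_{\sigma_i}-\mcl R_{\sigma_{i-1}}$ with $|\Delta_i|\asymp\sqrt{T_i}$ and $\BB P[|\Delta_i|>x]\asymp x^{-1}$; thus $\mcl S$ is symmetric and in the domain of attraction of the Cauchy law, hence recurrent. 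Its zeros are exactly the blocks; writing $0=j_0<j_1<j_2<\dots$ for those zeros, the data in the ``chunks'' between consecutive zeros are i.i.d., and from the $\ell$-th chunk I read off $\delta_\ell:=j_\ell-j_{\ell-1}$ (the number of new upper blocks), the duration $\tau_\ell:=\sigma_{j_\ell}-\sigma_{j_{\ell-1}}$, and $\lambda_\ell:=\#\{\text{zeros of }\mcl R\text{ in }(\sigma_{j_{\ell-1}},\sigma_{j_\ell}]\}$ (a function of the chunk, since $\mcl R_{\sigma_{j_{\ell-1}}}=0$). Since the $m$-th block occurs at $\sigma_{j_m}$, where $-M^{\mcl L}_{\sigma_{j_m}}=j_m=\sum_{\ell\le m}\delta_\ell$ and the number of zeros of $\mcl R$ in $(0,\sigma_{j_m}]$ is $\sum_{\ell\le m}\lambda_\ell$, the lemma becomes: the i.i.d.\ random walk $W_m:=\sum_{\ell\le m}(\delta_\ell-\ep\lambda_\ell)$ satisfies $W_m\le0$ for infinitely many $m$, a.s.

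I would prove this via the trichotomy for one-dimensional random walks: $\{W_m\le0\text{ i.o.}\}$ is a $0$--$1$ event (Hewitt--Savage), of probability $1$ unless $W_m\to+\infty$. To rule out the drift I would estimate the tails of $X_\ell:=\delta_\ell-\ep\lambda_\ell$. The first-return time $\delta_\ell$ of the recurrent Cauchy-domain walk $\mcl S$ has a slowly varying tail $\BB P[\delta_\ell>t]\asymp1/\log t$ (from $\BB P[\mcl S_n=0]\asymp1/n$ and a Tauberian argument), from which, one large $T_i$ dominating the duration, one gets $\BB P[\tau_\ell>t]\asymp1/\log t$. This is where Lemma~\ref{lem-walk-estimate} is the key input: applied to the $\mcl R$-bridge underlying a chunk it guarantees that, with probability bounded below, the chunk contains at least $c_1\sqrt{\tau_\ell}$ zeros of $\mcl R$, so $\BB P[\lambda_\ell>t]\gtrsim\BB P[\tau_\ell>(t/c_1)^2]\asymp1/\log t$. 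Consequently both $\BB P[X_\ell>t]$ and $\BB P[X_\ell<-t]$ are of order $1/\log t$ (regularly varying of index $0$), the walk $W_m$ is of single-big-jump type, and with probability bounded away from $0$ the dominant summand is negative, i.e.\ $\liminf_m\BB P[W_m\le0]>0$. By the criterion that $W_m\to+\infty$ forces $\sum_m m^{-1}\BB P[W_m\le0]<\infty$, this rules out the drift, so $W_m\le0$ infinitely often a.s.

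The hard part will be the tail comparison in this last step — in particular, making rigorous that $\lambda_\ell$ has a tail as heavy as $\delta_\ell$ (exactly the role of Lemma~\ref{lem-walk-estimate}: to rule out that a chunk with a long $\mcl L$-excursion contains only $O(1)$ zeros of $\mcl R$), and handling the fact that the $\mcl R$-part of a chunk is not a genuine bridge but a bridge conditioned to avoid $0$ at the intermediate ladder epochs of $\mcl L$, so one must check this conditioning does not destroy the estimate. A more pedestrian second-moment/Borel--Cantelli argument (showing a fixed even time $N$ is a ``good'' block with probability $\gtrsim c(\ep)/N$, then upgrading the positive-probability event) is an alternative route, but the regeneration argument seems cleaner.
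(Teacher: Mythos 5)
Your bookkeeping and reduction are correct, and your route is genuinely different from the paper's. The paper works at deterministic times: it uses the discrete L\'evy theorem to convert upper blocks of $\mcl L$ into crossings of $1/2$ by a walk $\wt{\mcl L}\eqD\mcl L$, defines $A_n$ to be the event that $(\wt{\mcl L}_{2n-1},\wt{\mcl L}_{2n})=(1,0)$, $\mcl R_{2n}=0$, $\ell^{\mcl R}_{2n}\ge 2\sqrt n$ and $\ell^{\wt{\mcl L}}_{2n}<\ep\sqrt n$, shows $\BB P[A_n]\ge C_3/n$ by combining the local limit theorem with the two-sided bounds of Lemma~\ref{lem-walk-estimate}, bounds $\BB P[A_{n_1}\cap A_{n_2}]\le C_4/(n_1(n_2-n_1))$, and applies the Kochen--Stone second-moment Borel--Cantelli lemma followed by Kolmogorov's zero--one law. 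This is exactly the ``more pedestrian'' alternative you mention in your last sentence, and its virtue is that it completely sidesteps the renewal structure: Lemma~\ref{lem-walk-estimate} is only ever applied to unconditioned bridges at fixed even times, where $\mcl L$ and $\mcl R$ are genuinely independent.

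The gap in your main argument sits where you place it, but it is more serious than your sketch suggests. To run the single-big-jump/Spitzer argument you need $\BB P[X_\ell<-t]\gtrsim 1/\log t$, i.e.\ $\BB P[\ep\lambda_\ell-\delta_\ell>t]\gtrsim 1/\log t$. Showing that a long chunk contains $\gtrsim\sqrt{\tau_\ell}$ zeros of $\mcl R$ with conditional probability bounded below is not enough: on the same event you must also have \emph{few} ladder epochs of $\mcl L$, namely $\delta_\ell\le\ep\lambda_\ell-t$, since typically $\delta_\ell$ and $\lambda_\ell$ are both of order $\sqrt{\tau_\ell}$ and $X_\ell$ is then strongly positive. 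Worse, the chunk endpoint $\sigma_{j_\ell}$ is the first ladder epoch of $\mcl L$ at which $\mcl R$ vanishes, so conditioning on $\{\tau_\ell\approx s\}$ couples the two walks: it biases against coincidences of ladder epochs and zeros, and it is not obvious that this bias does not suppress one of the two counts. Making the required joint anticoncentration statement rigorous under this conditioning is essentially as hard as the lemma itself, whereas the paper's deterministic-time formulation never conditions on anything. In addition, the Darling-type theorem for sums with two-sided slowly varying tails and the criterion ``$W_m\to+\infty$ iff $\sum_m m^{-1}\BB P[W_m\le 0]<\infty$'' are correct but heavy inputs that would need precise statements and references. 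As written, your proposal is a plausible program rather than a proof; if you want to complete it, I would recommend switching to the fixed-time second-moment argument you already identified.
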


\begin{proof}
    In light of L\'evy's theorem~\eqref{eqn-discrete-levy}, consider another two-sided simple random walk $\wt{\mcl L}\eqD \mcl L$ such that
    $n>0$ is an upper block of $\mcl L$ if and only if $\wt{\mcl L}$ crosses $1/2$ in between times $n-1$ and $n$. Hence, the number of upper blocks in $(0,2n]\cap\BB Z$ is equal to $\ell_{2n}^{\wt{\mcl L}}$, where $\ell$ is defined in~\eqref{eqn-discrete-levy}. Also, the number of zeros of $\mcl R$ in $(0,2n]\cap \BB Z$ is at least $\ell^{\mcl R}_{2n}/2$ because at most two crossings of $1/2$ can correspond to the same zero of $\mcl R$. 
    Therefore, it is enough to prove that a.s.\ there are infinitely many integers $n>0$ such that
    \eqb\label{eqn-local-time-n}
        (\wt{\mcl L}_{2n-1}, \wt{\mcl L}_{2n})=(1,0),\quad \mcl R_{2n}=0,\quad \ell^{\mcl R}_{2n}\ge 2\sqrt{n} \quad\text{ and }\quad \ell^{\wt{\mcl L}}_{2n}< \ep\sqrt{n}.
    \eqe
     Indeed, the first two conditions guarantees that $2n$ is a block and so $2n=J_{2k}$ for some $k>0$, the third condition guarantees that there are at least $\sqrt{n}$ boundary points in $(0,J_{2k}]$ and so $2k\geq \sqrt{n}$, and the fourth condition guarantees that there are at most $\ep\sqrt{n}$ upper blocks in $(0,J_{2k}]$.

    Let $A_n$ be the event that \eqref{eqn-local-time-n} holds.
    Using that $\BB P[(\wt{\mcl L}_{2n-1},\wt{\mcl L}_{2n},\mcl R_{2n})=(1,0,0)]\sim\frac{1}{2\pi n}$ for large $n$, the estimates in~\eqref{eqn-estimate-limit} implies that
    $\BB P\left[A_n \right] \ge \frac{C_3}{n}$
    for some constant $C_3>0$ only depending on $\ep$. In particular, $\sum_{n=1}^\infty \BB P\left[A_n \right]=\infty$.
    We now want to apply
    the Kochen-Stone theorem~\cite{kochen1964note}, which asserts that if $\sum_{n=1}^\infty \BB P\left[A_n \right]=\infty$ and
    \eqb\label{eq:suff-bor}
    \liminf_{N\to \infty} \frac{\sum_{n_1,n_2=1}^N \BB P[A_{n_1}\cap A_{n_2}]}{(\sum_{n=1}^N \BB P[A_{n}])^2} < \infty,
    \eqe
    then $\BB P[A_n\text{ infinitely often}]>0$.
    For $n_2 > n_1>0$, we have
    \eqbn
    \BB P[A_{n_1}\cap A_{n_2}] \le \BB P[(\wt{\mcl L}_{2n_1},\wt{\mcl L}_{2n_2}, \mcl R_{2n_1}, \mcl R_{2n_2})=(0,0,0,0)] \le \frac{C_4}{n_1(n_2-n_1)},
    \eqen
    and similarly $\BB P[A_{n}] \le \frac{C_4}{n}$ for $n>0$, with some constant $C_4$.
    It follows that
    \eqbn
    \sum_{n_1,n_2=1}^N \BB P[A_{n_1}\cap A_{n_2}]\le 2\sum_{1\le n_1\le n_2\le N} \BB P[A_{n_1}\cap A_{n_2}] \le 2C_4\left(\sum_{n_1=1}^N\frac{1}{n_1}+\sum_{n_1,n_2=1}^N \frac{1}{n_1n_2}\right) \le C_5 \log^2(N),
    \eqen
    for another constant $C_5$. Combined with our above lower bound for $\BB P[A_n]$, this implies \eqref{eq:suff-bor}.
    Therefore,
    $\BB P[A_n\text{ infinitely often}]>0$, and Kolmogorov's zero-one law assures that $A_n$ happens infinitely often, almost surely.
\end{proof}

\begin{proof}[Proof of Lemma~\ref{lem-path-finiteness}]
We prove the lemma via a Burton-Keane type argument. Since the UIHPMS is invariant under even translations along the boundary~\eqref{eqn-bdy-translate}, we first look at the case of good boundary points of the form $J_{2k}$ for $k\in \BB Z$. The odd case will be treated analogously at the end of the proof.  
{For $k \in\BB Z$, let $E_k$ be the event that $J_{2k} $ is a good boundary point, i.e., $J_{2k} = H_m$ for some $m\in\BB Z$, and the boundary path $P_m$ is semi-infinite, that is $P_m$ does not hit another good boundary point other than $H_m$.
By the translation invariance of the UIHPMS~\eqref{eqn-bdy-translate} and the fact that the set of $k\in\BB Z$ such that $J_{2k}$ is good is independent from the UIHPMS (see the discussion just below~\eqref{eqn-good-bdy-compare}), $p := \BB P[E_k]$ does not depend on $k$.} We need to show that $p = 0$.

Assume for contradiction that $p>0$.
By the Birkhoff ergodic theorem, 
\eqb  \label{eqn-birkhoff}
    \lim_{k\to \infty}\frac{1}{k}\#\left\{ k'\in [1,k]\cap \BB Z :  E_{k'}\text{ occurs} \right\} = p.
\eqe 
By Lemma~\ref{lem-block} with $\ep=p/4$ and \eqref{eqn-birkhoff}, almost surely, there exist arbitrarily large values of $k\in\BB Z_{>0}$ such that: 
\begin{enumerate}[(i)]
    \item $J_{2k}$ is a block;
    \item there are at most $pk/4$ upper blocks in $(0,J_{2k})\cap\BB Z$;
    \item $\#\{k'\in [1,k]\cap \BB Z :  E_{k'}\text{ occurs}\} > pk/2$.
\end{enumerate} 

Fix $k\in\BB Z_{>0}$ with the above three properties.
{For each $k' \in (0,k)\cap\BB Z$ such that $E_{k'}$ occurs, let $m(k') \in \BB Z$ such that $H_{m(k')} = J_{2k'}$.}
Each semi-infinite path $P_{m(k')}$ for $k'\in (0,k)\cap \BB Z$ such that $E_{k'}$ occurs does not intersect any other such semi-infinite path.
Since $J_{2k}$ is a block, there is no arc connecting $(0,J_{2k} )\cap \BB Z$ and $(J_{2k} ,\infty)\cap \BB Z$. Therefore, each semi-infinite path $P_{m(k')}$ for $k' \in (0,k) \cap\BB Z$ must exit the interval $(0,J_{2k} )\cap \BB Z$ via some arc joining $(0,J_{2k})\cap \BB Z$ to $(-\infty,0]\cap \BB Z$. This arc must be above the real line, since by the construction of the UIHPMS there are no arcs below the real line that cross the ray $\{1/2\} \times (-\infty,0]$. In other words, the right endpoint of such an arc is an upper block. It follows that the number of upper blocks in $(0,J_{2k})\cap \BB Z$ is at least the number of semi-infinite paths started from points in $(0,J_{2k})\cap\BB Z$, which is at least $pk/2$. But, we have chosen $k$ so that the number of upper blocks in $(0,J_{2k})\cap \BB Z$ is at most $pk/4$. This yields a contradiction, and thus $p=0$.

Exactly the same argument shows that a.s.\ the event $E_k'$ that $J_{2k+1} = H_m$ for some $m\in\BB Z$ and $P_m$ is semi-infinite does not occur for any $k\in\BB Z$. This finishes the proof.
\end{proof}

\subsection{Infinitely many paths separating the origin from \texorpdfstring{$\infty$}{infinity}}
\label{sec-infinite-arcs}

Lemma~\ref{lem-path-finiteness} implies that a.s.\ there are no semi-infinite paths in the UIHPMS which start at a good boundary point and never hit another good boundary point. The following lemma will allow us in Section~\ref{sec-uihpms-proof} to rule out more general types of infinite paths.

\begin{lem} \label{lem-infinite-arcs}
Almost surely, there exist infinitely many $m \in\BB Z_{>0}$ such that the boundary path $P_m$ ends at $H_{m'}$ for some $m' \in\BB Z_{\le 0}$.
\end{lem}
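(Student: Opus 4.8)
The plan is to re-encode the statement as a fact about a translation-invariant ergodic random perfect matching of $\BB Z$, and then show that such a matching must have infinitely many edges crossing a fixed cut.

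\emph{Step 1 (the perfect matching).} I would work with the auxiliary decorated map $\mcl M^{\op{cut}}$ from Section~\ref{sec-aperiodic}, in which every good boundary point $H_m$ is incident to exactly one arc, namely its upper arc. By Lemma~\ref{lem-path-finiteness}, a.s.\ every boundary path $P_m$ is finite. Since a good boundary point is a degree-one ``dead end'' of $\mcl M^{\op{cut}}$, the path $P_m$ is precisely the connected component of arcs of $\mcl M^{\op{cut}}$ containing $H_m$, read off from $H_m$; in particular, if $P_m$ ends at $H_{m'}$ then $P_{m'}$ is the reversal of $P_m$ and ends at $H_m$. Hence $\sigma(m):=m'$ defines (a.s.) a fixed-point-free involution of $\BB Z$, i.e.\ a perfect matching. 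In this language the lemma asserts that a.s.\ $N_0:=\#\{m>0:\sigma(m)\le 0\}=\infty$; moreover each such $m$ produces a boundary path joining a point $\le 0$ to a point $\ge 1$ which, together with the downward vertical rays at its two endpoints (crossed by no arc), disconnects $0$ from $\infty$ — these are the ``shields'' used in Section~\ref{sec-uihpms-proof}.

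\emph{Step 2 (ergodicity and reduction).} By Lemma~\ref{lem-good-translate}, conditionally on $\{H_0=0\}$ the pair $(\wh{\mcl M}^{\op{cut}},\wh\Gamma^{\op{cut}})$ is invariant under translation by $\wh H_m$ for every $m\in\BB Z$, and this action is ergodic (already with $r=1$). Since $\sigma$ is a measurable functional of $(\wh{\mcl M}^{\op{cut}},\wh\Gamma^{\op{cut}})$, the random perfect matching $\sigma$ of $\BB Z$ is invariant and ergodic under the unit shift $m\mapsto m+1$ (conditioning on $\{H_0=0\}$ changes no probabilities of shift-invariant events). Writing $N_j:=\#\{m\le j:\sigma(m)>j\}$ for the number of matching edges crossing the cut $j+\tfrac12$, one checks $N_j-N_{j-1}=+1$ if $j$ is matched to the right and $-1$ if to the left, so $|N_j-N_{j-1}|=1$; hence $\{N_0=\infty\}=\{\forall j,\ N_j=\infty\}$ is shift-invariant and has probability $0$ or $1$. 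It therefore suffices to exclude the case $N_0<\infty$ a.s.

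\emph{Step 3 (a general matching dichotomy).} I would first isolate the following purely combinatorial fact (Lemma~\ref{lem-ergodic-matching}): for any shift-invariant ergodic random perfect matching of $\BB Z$, a.s.\ either $N_j=\infty$ for all $j$, or $N_j<\infty$ for all $j$ and there are infinitely many $j$ with $N_j=0$. Proof sketch: with $\ep_j:=N_j-N_{j-1}\in\{\pm1\}$ and $\mu:=\BB E[\ep_0]$, the ergodic theorem gives $N_j\sim\mu j$ and $N_{-j}\sim-\mu j$; since $N_j\ge 0$ for all $j$, necessarily $\mu=0$. In the finite case, let $k_0\ge 0$ be minimal with $\BB P[N_0=k_0]>0$. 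If $k_0\ge 1$, then a.s.\ $N_j\ge k_0$ for all $j$, so at any index $j^*$ with $N_{j^*}=k_0$ one must have $\ep_{j^*}=-1$ and $\ep_{j^*+1}=+1$; a short count of the edges crossing the cuts at $j^*\pm\tfrac12$ then forces $N_{j^*}\ge k_0+1$, a contradiction. Hence $k_0=0$, i.e.\ $\BB P[N_0=0]>0$, and by the ergodic theorem cut points $\{j:N_j=0\}$ occur with positive density.

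\emph{Step 4 (excluding cut points — the main obstacle).} It remains to show that, for the $\sigma$ coming from the UIHPMS, the second alternative of Step 3 never occurs, i.e.\ a.s.\ there is no $j$ with $N_j=0$. The event ``a cut point exists'' is shift-invariant, so I may assume it has probability one; then by ergodicity cut points have density $\rho>0$ among the good boundary points, and I aim for a contradiction. I would combine this with the block construction of Section~\ref{sec-path-finiteness}: if $J_{2k}$ is a \emph{block} then (since at a block $|\mcl R|_{J_{2k}}=0$ and $\mcl L|_{[0,J_{2k}]}$ attains its unique minimum at $J_{2k}$, and since in $\mcl M^{\op{cut}}$ no lower arc crosses $\{1/2\}\times(-\infty,0]$) no arc of $\mcl M^{\op{cut}}$ joins $(0,J_{2k})\cap\BB Z$ to $(J_{2k},\infty)\cap\BB Z$, so a boundary path that starts in $(0,J_{2k})$ and ever reaches $(J_{2k},\infty)\cup(-\infty,0]$ must do so by consuming one of the (disjoint) upper blocks in $(0,J_{2k})$. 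Applying Lemma~\ref{lem-block} with $\ep=\rho/4$, one selects arbitrarily large $k$ for which $J_{2k}$ is a block with at most $\rho k/4$ upper blocks in $(0,J_{2k})$, while among the $\asymp k$ good boundary points of $(0,J_{2k})$ a proportion $\asymp\rho$ are cut points; tracking, near the rightmost cut point in $(0,J_{2k})$, which boundary paths are forced to be confined to $(0,J_{2k})$ and which must cross out (and hence each eat a distinct upper block) is designed to produce more upper blocks than are available. This bookkeeping — a one-sided Burton–Keane count adapted to boundary paths, in the spirit of the proof of Lemma~\ref{lem-path-finiteness}, and the place where the precise interplay between good boundary points, the coin flips $\{\xi_k\}$, and the walk increments must be used carefully — is the delicate point of the argument. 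Once Step 4 is in place, Steps 2–4 together give $N_0=\infty$ a.s., which is exactly the assertion of Lemma~\ref{lem-infinite-arcs}.
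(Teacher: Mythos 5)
Your Steps 1--2 track the paper's own proof: reduce to the re-rooted map $(\wh{\mcl M}^{\op{cut}},\wh\Gamma^{\op{cut}})$ of Lemma~\ref{lem-good-translate}, read off a stationary perfect matching from the boundary paths (well defined by Lemma~\ref{lem-path-finiteness}), and invoke ergodicity. Two problems already appear in Step 3. First, your ``general matching dichotomy'' is false without the non-crossing hypothesis: the brick-wall matching $\sigma(2i)=2i+3$ (given a uniform phase shift in $\{0,1\}$ so that it is stationary and unit-shift ergodic) has $N_j\in\{1,2\}$ for every $j$, so $N_j$ is finite everywhere yet never zero. Non-crossing is essential and must actually be verified for your $\sigma$; the paper does this via a Jordan-curve argument around the set $A$ in~\eqref{eqn-path-region}. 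Second, even granting non-crossing, the ``short count at the cuts $j^*\pm\tfrac12$'' does not force $N_{j^*}\ge k_0+1$: at a minimum $j^*$ one only learns that $j^*$ is matched leftward, $j^*+1$ rightward, and the $k_0$ crossing edges nest strictly outside both matches, which is a locally consistent picture. The correct deterministic argument (the second half of the paper's proof of Lemma~\ref{lem-ergodic-matching}) instead follows the outermost crossing edge $\{a_{k_0},b_{k_0}\}$: any edge crossing $b_{k_0}+\tfrac12$ would have to open strictly to the left of $a_{k_0}$ by non-crossing, contradicting outermostness, so $N_{b_{k_0}}=0$.

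The fatal gap is Step 4. A cut point asserts \emph{confinement} of boundary paths, which consumes no upper blocks, so a Burton--Keane count has nothing to count; indeed the configuration in which every $H_{2i-1}$ is matched to $H_{2i}$ by a short path has cut points of density $1/2$, uses no upper blocks at all, and (after a uniform phase shift) is stationary and ergodic under the unit shift. Hence no argument based only on unit-shift ergodicity plus block counting can exclude cut points. What excludes them in the paper is a parity rigidity entirely absent from your proposal: if $j_1\le j_2$ are both exposed, every index in $\{j_1,\dots,j_2-1\}$ is matched inside that set, so $j_2-j_1$ is even and all exposed integers share a parity; the events that all exposed integers are even, respectively odd, are then equiprobable, disjoint, and each invariant under the $2\BB Z$-shift, hence each has probability zero by the \emph{strong} ergodicity of Lemma~\ref{lem-good-translate} with $r=2$ (which is precisely why the good boundary points were introduced). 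You need this parity/strong-ergodicity step, or a genuine substitute, to close Step 4.
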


We will deduce Lemma~\ref{lem-infinite-arcs} from a general result (Lemma~\ref{lem-ergodic-matching}) for random perfect matchings.

\begin{defn}
    A \textbf{perfect matching} of $\BB Z$ is a function $\phi : \BB Z\rta \BB Z$ such that $\phi(k) \not= k$ and $\phi(\phi(k)) = k$ for each $k\in\BB Z$. We say that $j,k \in\BB Z$ are \textbf{matched} if $\phi(j) = k$, equivalently, $\phi(k) =  j$. We say that $\phi$ is \textbf{non-crossing} if there do not exist integers $j_1,j_2$ such that $j_1  < j_2 < \phi(j_1) < \phi(j_2)$. 
    \end{defn}

    \begin{defn}\label{defn:ergodic}
    A random perfect matching $\phi$ is \textbf{stationary} if $\phi(\cdot +   m) \eqD \phi(\cdot)$ for each $m\in\BB Z$.
    A stationary perfect matching is \textbf{strongly ergodic} if any event which is invariant under shifts of the form $\phi(\cdot) \mapsto \phi(\cdot +2m)$ for all $m\in\BB Z$ has probability zero or one. 
    \end{defn}
    
    \begin{lem} \label{lem-ergodic-matching}
    Let $\phi$ be a strongly ergodic random perfect matching of $\BB Z$.
    Almost surely, for each $j \in\BB Z$ there exists $ k \geq j$ such that $\phi(k) \leq j -1$. If $\phi$ is non-crossing, then a.s.\ for each $j \in\BB Z$ there are infinitely many integers $ k \geq j$ such that $\phi(k) \leq j -1$.
    \end{lem}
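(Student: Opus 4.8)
\textbf{Proof proposal for Lemma~\ref{lem-ergodic-matching}.}

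The plan is to prove the two statements in turn, using stationarity and ergodicity to turn a ``positive probability'' statement into an ``almost sure'' one, and then using the non-crossing property to upgrade ``there exists one $k$'' to ``there are infinitely many $k$''.

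First consider the first assertion. Fix $j \in \BB Z$; by stationarity it suffices to take $j = 0$. Let $F$ be the event that there is \emph{no} $k \geq 0$ with $\phi(k) \leq -1$; equivalently, $F$ is the event that $\phi$ maps $\BB Z_{\geq 0}$ into itself. Since $\phi$ is an involution, on $F$ the restriction $\phi|_{\BB Z_{\geq 0}}$ is itself a perfect matching of $\BB Z_{\geq 0}$, and consequently $\phi$ maps $\BB Z_{<0}$ into itself as well. In other words, on $F$ the matching $\phi$ ``splits'' at every nonnegative integer: for each $m \geq 0$ it maps $\BB Z_{\geq m}$ into itself and $\BB Z_{<m}$ into itself. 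I would now argue that $\BB P[F] = 0$. Suppose $\BB P[F] > 0$. For $m \in \BB Z$ let $F_m$ be the event that $\phi$ maps $\BB Z_{\geq m}$ into itself (so $F = F_0$). By stationarity $\BB P[F_m] = \BB P[F]$ for all $m$, and $F_m \subseteq F_{m'}$ whenever $m \leq m'$ \emph{on the event $F$}; more precisely, on $F$ all events $F_m$ for $m \geq 0$ hold simultaneously. The key point is that the events $F_m$ are nested in one direction once one of them occurs: if $\phi$ maps $\BB Z_{\geq m}$ into itself then it maps $\BB Z_{\geq m+1}$ into itself iff $\phi(m) \neq m$, which is automatic; so actually $F_m \subseteq F_{m+1}$ always. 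Hence $\{F_m\}_{m \in \BB Z}$ is an increasing sequence of events, and the event $F_\infty := \bigcup_{m} F_m$ is invariant under $\phi(\cdot) \mapsto \phi(\cdot + 1)$, in particular under even shifts, so by strong ergodicity $\BB P[F_\infty] \in \{0,1\}$. Since $\BB P[F_m] = \BB P[F] > 0$ and the $F_m$ increase, $\BB P[F_\infty] = 1$. On the other hand, the event $G_m$ that $\phi$ maps $\BB Z_{<m}$ into itself satisfies $G_m \subseteq G_{m-1}$, is decreasing, its union is $\phi$-shift invariant, and has $\BB P[G_m] = \BB P[\text{$\phi$ maps } \BB Z_{<0} \text{ into } \BB Z_{<0}]$; but whenever $F_m$ occurs, $G_m$ occurs too (both pieces are matched internally). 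So $\BB P[G_m] \geq \BB P[F_m] = \BB P[F] > 0$, and since $F_m \uparrow$, for $m$ large $\BB P[F_m] > 1 - \BB P[F]$, forcing $\BB P[F_m \cap G_m^c] $ small — but I want a contradiction, so let me instead count: on $F_\infty$ (probability $1$), $\phi$ eventually splits $\BB Z$ at every sufficiently large $m$, hence the number of matched pairs $(k, \phi(k))$ with $k < 0 \leq \phi(k)$ or $\phi(k) < 0 \leq k$ is zero; combined with the mirror-image argument applied to $F'_m$ (the event that $\phi$ maps $\BB Z_{\leq m}$ into itself) one sees $\phi$ also splits at every sufficiently negative $m$, and then every pair $(k,\phi(k))$ lies in a bounded interval. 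An integrability/mass-transport computation then shows the expected number of matched pairs straddling a given point is $0$ while also being $\geq \BB P[F] > 0$, a contradiction. So $\BB P[F] = 0$, which is exactly the first assertion for $j = 0$, hence for all $j$.

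The cleanest way to run the contradiction is the mass-transport principle: each $k \in \BB Z$ sends unit mass to $\phi(k)$; by stationarity, for each $j$ the expected mass crossing the ``wall'' between $j-1$ and $j$ from right to left equals the expected mass crossing from left to right, and both equal $c := \BB E[\#\{k \geq 0 : \phi(k) \leq -1\}]$. If $c = 0$ then a.s.\ no pair straddles the wall at $0$, and by stationarity a.s.\ no pair straddles any wall, which forces $\phi(k) = k$ for all $k$ (since $k$ and $k+1$ would be on opposite sides of the wall between them) — contradicting that $\phi$ is a perfect matching (with $\phi(k) \neq k$). Therefore $c > 0$, and in particular $\BB P[\#\{k \geq 0 : \phi(k) \leq -1\} \geq 1] > 0$; call this event $H$. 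Its negation is $F$ above. By strong ergodicity applied to the $\phi$-shift-invariant event $\bigcap_{m \in \BB Z}\{\exists k \geq m : \phi(k) \leq m-1\}$ — wait, that event is not obviously invariant; instead I use that $\BB P[H] > 0$ together with the stationary ergodic structure to conclude $\BB P[H] = 1$: the event ``$\exists$ infinitely many $m > 0$ with some pair straddling the wall at $m$'' is invariant under even shifts and has positive probability (since by stationarity and $\BB P[H] > 0$ and Borel–Cantelli-type reasoning via ergodicity the density of such walls is a.s.\ a positive constant equal to $c$), hence probability $1$. This gives a.s.\ for each $j$ at least one straddling pair at the wall at $j$, i.e.\ some $k \geq j$ with $\phi(k) \leq j - 1$. (One must be slightly careful about which $\sigma$-algebra ``even shifts'' act on; the straddling-pair-at-wall-$m$ indicators form a stationary sequence in $m$, and the ergodic theorem plus positivity of the mean $c$ give positive density, hence infinitely many, a.s.)

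For the second assertion, assume in addition that $\phi$ is non-crossing. Fix $j$; by the first part there is a.s.\ at least one $k \geq j$ with $\phi(k) \leq j-1$. Suppose for contradiction that on a positive-probability event there are only finitely many such $k$; let $k^*$ be the largest one. Then for all $k > k^*$ we have $\phi(k) \geq j$, but also $\phi(k) \leq j-1$ fails, so $\phi$ maps $\BB Z_{> k^*}$ into $\BB Z_{\geq j}$; since $\phi$ is an involution and $k^* \geq j$, in fact $\phi$ maps $\BB Z_{> k^*}$ into $\BB Z_{\geq j}$, and by non-crossing the pair $(k^*, \phi(k^*))$ with $\phi(k^*) \leq j - 1 < j \leq k^*$ ``encloses'' the interval $[j, k^*]$, so every $k$ with $j \leq k \leq k^*$ whose partner is $\leq j-1$ must actually be $\leq k^*$ (fine) and, more usefully, non-crossing forces $\phi$ to map $\BB Z_{> k^*}$ \emph{into itself}: if some $k > k^*$ had $\phi(k) \in [j, k^*]$, the pairs $(\phi(k^*), k^*)$ and $(\phi(k), k)$ with $\phi(k^*) < j \leq \phi(k) \leq k^* < k$ would cross. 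Hence on this event $\phi$ maps $\BB Z_{> k^*}$ into itself, i.e.\ the first assertion \emph{fails} for $j' = k^* + 1$ (there is no $k \geq j'$ with $\phi(k) \leq j' - 1$). But the first assertion holds a.s.\ for every $j'$ simultaneously (countable intersection). This contradiction shows that a.s., for every $j$, there are infinitely many $k \geq j$ with $\phi(k) \leq j-1$.

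The main obstacle I anticipate is the bookkeeping in the mass-transport step: making precise that expected-right-to-left-crossing equals expected-left-to-right-crossing and that vanishing crossing mass forces $\phi = \mathrm{id}$ requires care about stationarity of the relevant point process and about integrability (the count $\#\{k \geq 0 : \phi(k) \leq -1\}$ could a priori be infinite, but stationarity plus the involution property keep its expectation finite — or one works with indicators of individual walls, which are bounded). The ergodic upgrade from ``positive probability of a straddling pair'' to ``infinitely many straddling walls a.s.'' should then be routine via Birkhoff applied to the stationary sequence of wall-crossing indicators, exactly as in the proof of Lemma~\ref{lem-path-finiteness}.
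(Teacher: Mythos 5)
Your treatment of the non-crossing upgrade (second assertion) is correct and is essentially the paper's argument: take the largest $k_*\geq j$ with $\phi(k_*)\leq j-1$ and use the crossing of $(\phi(k_*),k_*)$ with $(\phi(k),k)$ to show $k_*+1$ would violate the first assertion. The problem is in the first assertion, where there is a genuine gap at the decisive step. Your mass-transport/stationarity argument correctly yields $\BB P[H]>0$, where $H$ is the event that some $k\geq 0$ has $\phi(k)\leq -1$ (in the paper's language, that $0$ is not \emph{exposed}). But you then pass from $\BB P[H]>0$ to ``a.s.\ for each $j$ there is a straddling pair at the wall at $j$,'' i.e.\ to $\BB P[H]=1$, via the observation that the event ``infinitely many walls are straddled'' is shift-invariant and hence has probability one. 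That event having probability one does not imply that \emph{every} wall is straddled: the Birkhoff argument only gives that straddled walls have positive density, and the event $H$ itself is not shift-invariant, so strong ergodicity cannot be applied to it directly. A concrete obstruction: the matching that pairs $2k\leftrightarrow 2k+1$ for all $k$ (randomly shifted by $0$ or $1$ to make it stationary) has a positive density of straddled walls yet a positive density of exposed integers; it is excluded only by the strong ergodicity hypothesis, which your argument never brings to bear on the right event. (Separately, your earlier claim that $F_m\subseteq F_{m+1}$ ``always'' is false --- $\phi$ can map $\BB Z_{\geq m}$ into itself while matching some $k\geq m+1$ to $m$ --- but you abandon that line, so it is not load-bearing.)

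The missing idea is the paper's parity observation: if $j_1\leq j_2$ are both exposed, every element of $\{j_1,\dots,j_2-1\}$ is matched within that set, so $j_2-j_1$ is even; hence a.s.\ all exposed integers share a single parity. The two events $\{\text{infinitely many even exposed integers}\}$ and $\{\text{infinitely many odd exposed integers}\}$ are each invariant under shifts by $2\BB Z$, so by strong ergodicity each has probability $0$ or $1$; by stationarity (shift by $1$) they have equal probability; and by the parity constraint they are a.s.\ disjoint. Hence both are null, and combined with the Birkhoff step (positive probability of being exposed would force infinitely many exposed integers) this gives $\BB P[H]=1$. This is exactly where the hypothesis of invariance under \emph{even} shifts earns its keep, and it is the step your proof needs but does not contain.
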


    \begin{proof}[Proof of Lemma~\ref{lem-ergodic-matching}]
	We say that $j \in\BB Z$ is \textbf{exposed} if there does not exist $k \geq j$ such that $\phi(k) \leq j-1$. If we represent the matching by an arc diagram (with each arc matching $x$ and $\phi(x)$ plotted in the region $[x\wedge \phi(x) , x\vee \phi(x)]\times[0,\infty)$) then $j$ being exposed is equivalent to the condition that there are no arcs crossing $\{j-1/2\}\times[0,\infty)$.

    We claim that a.s.\ there are no exposed integers. By the stationarity of $\phi$, the probability that $j\in\BB Z$ is exposed does not depend on $j$. By the Birkhoff ergodic theorem, if any integer has a positive probability to be exposed, then a.s.\ there are infinitely many exposed integers.
    Hence we just need to show that the probability that there are infinitely many exposed integers is zero.
    
    Suppose that $j_1 \leq j_2$ are both exposed. By the definition of exposed, every integer in the set $\{j_1,\dots,j_2-1\}$ must have its match in $\{j_1,\dots,j_2-1\}$. 
    Hence $\# \{j_1,\dots,j_2-1\}$ must be even, so $j_1$ and $j_2$ must have the same parity. 
    Consequently, a.s.\ either every exposed integer is even or every exposed integer is odd. {By stationarity,} the probabilities of the events
    \eqb \label{eqn-parity-events}
    \{\text{$\exists$ infinitely many even exposed integers}\} \quad \text{and} \quad 
    \{\text{$\exists$ infinitely many odd exposed integers}\} 
    \eqe
    are the same. The two events in~\eqref{eqn-parity-events} are invariant under translations of the form $\phi\mapsto \phi(\cdot+2m)$ for $m\in\BB Z$, so by strong ergodicity each of these events has probability zero or one. Since we have just seen that these two events are disjoint, they must each have probability zero. Hence a.s.\ there are no exposed integers.
    
    Now assume that $\phi$ is non-crossing and consider a $j\in\BB Z$ with the property that there are only finitely many integers $ k \geq j$ such that $\phi(k) \leq j -1$.
    We will show that there exists an exposed integer. This, combined with our earlier result, will then imply that no such $j$ exists.
    
    To construct an exposed integer, let $k_*$ be the largest $k \geq j$ such that $\phi(k) \leq j-1$. 
    We claim that $k_*+1$ is exposed. Indeed, suppose for contradiction that there exists $k\geq k_*+1$ with $\phi(k) \leq k_*$. 
    By the maximality of $k_*$, we must have $\phi(k) \geq j$. Since $\phi(k_*) \leq j-1$, we have $\phi(k) \not= k_*$ and so $\phi(k) \leq k_*-1$. 
    Therefore, 
    \eqbn
    \phi(k_*) \leq j-1 < j \leq \phi(k)   < k_* < k,
    \eqen
    which contradicts the non-crossing condition.
    \end{proof}
     
{
    \begin{proof}[Proof of Lemma~\ref{lem-infinite-arcs}]
    We divide the proof in three main steps.
    
    \medskip
    
    \noindent\textit{\underline{Step 1:} Reducing to the setting of Lemma~\ref{lem-good-translate}.} 
    Let $(\mcl M^{\op{cut}} , \Gamma^{\op{cut}})$ be the variant of the UIHPMS where we remove the lower arcs joining the good boundary points $\{H_m\}_{m\in\BB Z}$, as in Section~\ref{sec-aperiodic}. By Definition~\ref{def-bdy-path}, the boundary path $P_m$ for $m\in\BB Z$ does not traverse any lower arcs in the UIHPMS joining pairs of good boundary points.  
    Hence, the definition of $P_m$ and the statement of the lemma are unaffected if we replace the UIHPMS by $(\mcl M^{\op{cut}} , \Gamma^{\op{cut}})$. 
    
    Let $(\wh{\mcl M}^{\op{cut}} ,\wh\Gamma^{\op{cut}})$ be sampled from the law of $(\mcl M^{\op{cut}} , \Gamma^{\op{cut}})$ conditioned on the event that $H_0 = 0$, as in Lemma~\ref{lem-good-translate}. Define the good boundary points $\{\wh H_m\}_{m\in\BB Z}$ and the boundary paths $\{\wh P_m\}_{m\in\BB Z}$ with $(\wh{\mcl M}^{\op{cut}} ,\wh\Gamma^{\op{cut}})$ in place of the original UIHPMS. 
    
	We claim that it suffices to show that a.s.\
	\begin{enumerate}
	\item[$(\boxdot)$]\text{$\forall m_0 \in \BB Z$, $\exists$ infinitely many $m > m_0$ s.t.\ the path $\wh P_m$ in $\wh{\mcl M}^{\op{cut}}$ ends at $\wh H_{m'}$ for some $m' \leq m_0$.}\label{eqn-infinite-arcs-show}
	\end{enumerate}
	\newcommand{\refShowSq}{{(\hyperref[eqn-infinite-arcs-show]{$\boxdot$})}}
	Indeed, the law of $(\wh{\mcl M}^{\op{cut}} ,\wh\Gamma^{\op{cut}})$ is absolutely continuous with respect to the law of $( {\mcl M}^{\op{cut}} , \Gamma^{\op{cut}})$.
	So~\refShowSq\ implies that with positive probability, the event~\refShowSq\ holds with $\mcl M^{\op{cut}}$, $P_m$ and $H_{m'}$ in place of $\wh{\mcl M}^{\op{cut}}$, $\wh P_m$ and $\wh H_{m'}$. This event depends on the UIHPMS in a manner which is invariant by translations of the form~\eqref{eqn-bdy-translate}. By the zero-one law for translation invariant events, we get that this event in fact has probability one. 
	
\medskip
 
\noindent\textit{\underline{Step 2:} Constructing a random perfect matching.}
    Recall that $\wh P_m$ is defined as in Definition~\ref{def-bdy-path} with $(\wh{\mcl M}^{\op{cut}} ,\wh\Gamma^{\op{cut}})$ in place of the UIHPMS. 
    Define $\phi : \BB Z \to \BB Z$ by the condition that $\phi(m) =m'$ if and only if $\wh P_m$ ends at $\wh H_{m'}$. 
    By Lemma~\ref{lem-path-finiteness} and the absolutely continuity of $(\wh{\mcl M}^{\op{cut}} ,\wh\Gamma^{\op{cut}})$ with respect to $( {\mcl M}^{\op{cut}} , \Gamma^{\op{cut}})$, a.s.\ each $\wh P_m$ is a finite path ending at a good boundary point. Hence a.s.\ $\phi$ is well-defined. We will prove~\refShowSq\ by applying Lemma~\ref{lem-ergodic-matching} to $\phi$. 
    
    We first check that $\phi$ is a perfect matching. 
    By the definition of $(\wh{\mcl M}^{\op{cut}} ,\wh\Gamma^{\op{cut}})$, none of the good boundary points $\wh H_m$ for $m\in\BB Z$ is incident to a lower arc of $\wh{\mcl M}^{\op{cut}}$. Hence, $\wh P_m$ traverses an upper arc immediately before hitting $\wh H_{\phi(m)}$. Therefore, $\wh P_{\phi(m)}$ is the time reversal of $\wh P_m$ and so $\phi(\phi(m)) = m$ and $\phi(m) \neq m$. Thus $\phi$ is a perfect matching of $\BB Z$. 
\medskip

\begin{figure}[ht!]
    \begin{center} 
        \includegraphics[width=0.6\textwidth]{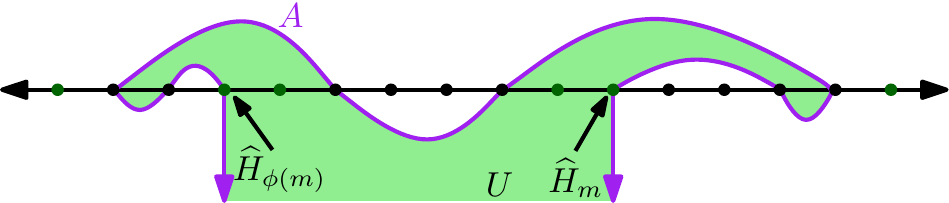} 
    \caption{\label{fig-A-U} The sets $A$ and $U$ used in Step 3 of the proof of Lemma~\ref{lem-infinite-arcs}. Good boundary points are shown in green, and arcs not traversed by $\wh P_m \subset A$ are not shown. The key property of $A$ is that it is not crossed by any arc of $ \wh{\mcl M}^{\op{cut}}$. }
    \end{center}
    \vspace{-3ex}
\end{figure}

\noindent\textit{\underline{Step 3:} Non-crossing, stationarity, and ergodicity.}
    We next show that $\phi$ is non-crossing.  
    Fix $m \in\BB Z$. We are going to show that each point in $(m\wedge\phi(m), m\vee \phi(m)) \cap\BB Z $ is matched with a point in $(m\wedge\phi(m), m\vee \phi(m)) \cap\BB Z$. See Figure~\ref{fig-A-U} for an illustration. Let
    \eqb\label{eqn-path-region}
    A := \wh P_m \cup \left(  \{\wh H_m\} \times (-\infty,0] \right)  \cup \left( \{\wh H_{\phi(m)}\} \times (-\infty,0] \right)  .
    \eqe 
    By the definition of boundary points, for each $m'\in \BB Z$ no arc of $\wh{\mcl M}^{\op{cut}}$ can cross the infinite ray $\{\wh H_{m'}\} \times (-\infty,0]$. This implies in particular that $A$ is the trace of a bi-infinite simple path in $\BB C$ from $\infty$ to $\infty$. By the Jordan curve theorem $A$ separates $\BB C$ into exactly two open connected components. Let $U$ be the open connected component which contains an unbounded subset of the semi-infinite box $(\wh H_{\phi(m)}\wedge \wh H_m,\wh H_{\phi(m)}\vee \wh H_m) \times (-\infty,0)$.
    
    If $m' \in \BB Z\setminus \{m,\phi(m)\}$, then the infinite ray $\{\wh H_{m'}\} \times (-\infty,0]$ intersects $U$ (resp.\ $\BB C\setminus \ol U$) provided $m' \in (m\wedge\phi(m), m\vee \phi(m)) \cap\BB Z $ (resp.\ $m' \in \BB Z\setminus [m\wedge\phi(m), m\vee \phi(m)]  $). This ray cannot cross $A$ (by definition of boundary points), so must be entirely contained in either $U$ or $\BB C\setminus \ol U$. 
    Hence
    \eqbn
    U \cap \{\wh H_{m'}\}_{m' \in \BB Z} = \{\wh H_{m'}\}_{ m' \in (m\wedge\phi(m), m\vee \phi(m)) \cap\BB Z } .
    \eqen
    Since the paths $\wh P_{m'}$ for $m'\in \BB Z\setminus \{m,\phi(m)\}$ cannot cross $A$, this implies the desired property.
      
    By Lemma~\ref{lem-good-translate}, $\phi$ is stationary and strongly ergodic in the sense of Definition~\ref{defn:ergodic}. Therefore, we can apply Lemma~\ref{lem-ergodic-matching} to get that a.s.~\refShowSq\ holds.  
    \end{proof}
}

\subsection{Proofs of Theorem~\ref{thm-infinite-path-half} and Proposition~\ref{prop-alternating}}
\label{sec-uihpms-proof}

We are now ready to prove our main theorem (Theorem~\ref{thm-infinite-path-half}) for the UIHPMS. The key idea is that the paths $P_m$ as in Lemma~\ref{lem-infinite-arcs} act as ``shields'' which an infinite path in the UIHPMS cannot cross.

\begin{proof}[Proof of Theorem~\ref{thm-infinite-path-half}]
Fix $n\in\BB N$. We will show that a.s.\ there is no bi-infinite path of arcs in the UIHPMS which intersects $[-n,n]$. Sending $n\to \infty$ shows that a.s.\ there is no bi-infinite path of arcs in the UIHPMS. See Figure~\ref{fig-infinite-path-half} for an illustration.

\begin{figure}[ht!]
	\begin{center} 
		\includegraphics[width=0.6\textwidth]{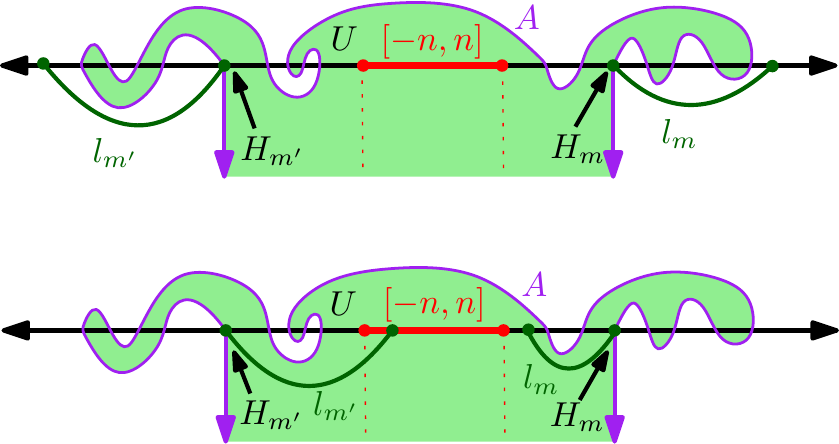} 
		\caption{\label{fig-infinite-path-half} Illustration of the proof of Theorem~\ref{thm-infinite-path-half}. The numbers $m \in \BB Z_{>0}$ and $m' \in \BB Z_{<0}$ are chosen so that the boundary path $P_m$ ends at $H_{m'}$ and is disjoint from $[-n,n] \times (-\infty,0]$. This implies that any bi-infinite path of arcs which hits a point of $[-n,n] \cap\BB Z$ must enter the domain $U$. This is possible only through the lower arcs $l_{m'}$ and $l_m$ of the UIHPMS incident to $H_{m'}$ and $H_m$. We show that, regardless of the configuration of these arcs, the bi-infinite path cannot enter $U$ (the two possible configurations are shown in the upper and lower panels of the figure) concluding that there is no bi-infinite path. }
	\end{center}
	\vspace{-3ex}
\end{figure}

  By, e.g., the definition of the UIHPMS by cutting, there are only finitely many arcs of the UIHPMS which intersect the semi-infinite box $[-n,n] \times (-\infty, 0]$ (recall that we are assuming that each arc joining $x < y$ is draw in such a way that it is contained in $[x,y] \times \BB R$). 
  If $m,m' \in \BB Z$ are chosen so that $P_m$ does not end at $H_{m'}$, then the paths $P_m$ and $P_{m'}$ are disjoint. 
  Therefore, there are only finitely many $m\in\BB Z$ such that $P_m$ traverses an arc of the UIHPMS which intersects $[-n,n]\times (-\infty,0]$.  
  From this and Lemma~\ref{lem-infinite-arcs}, we get that a.s.\ there exists a large enough $m\in\BB Z_{>0}$ such that $P_m$ ends at $H_{m'}$ for some $m'\in\BB Z_{<0}$ and $P_m$ is disjoint from $[-n,n]\times (-\infty,0]$.  

    Define $A$ and $U$ exactly as in the proof of Lemma~\ref{lem-infinite-arcs} with $P_m$ instead of $\wh P_m$, see the paragraph including~\eqref{eqn-path-region}. The condition that $P_m$ is disjoint from $[-n,n]\times (-\infty,0]$ implies that $[-n,n] \subset U$.
    There are only finitely many arcs of the UIHPMS which intersect $U$, so a bi-infinite path of arcs $\gamma$, if it exists, can spend only a finite amount of time in $\ol U$.
    Since $\gamma$ is bi-infinite, it follows that if $\gamma$ hits a point of $[-n,n]\cap\BB Z\subset U$, then there are at least two times when $\gamma$ traverses an arc which joins a point of $\ol U \cap \BB Z$ and a point of $\BB Z\setminus \ol U$. 
    
    By the definition of $A$ and the fact that no lower arcs of the UIHPMS cross the infinite rays $\{H_m\}\times (-\infty,0]$ and $\{H_{m'}\} \times (-\infty,0]$, the only arcs of the UIHPMS which intersect $A$ are the arcs traversed by $P_m$ and the lower arcs incident to $H_{m'}$ and $H_m$. 
    Call these two lower arcs $l_{m'}$ and $l_m$, respectively. 
    Since $P_m\subset A = \bdy U$, these two lower arcs $l_{m'}$ and $l_m$ are the only arcs of the UIHPMS which can possibly join a point of  $\ol U \cap \BB Z$ and a point of $\BB Z\setminus \ol U$. 
    Hence, if $\gamma$ intersects $[-n,n]\cap\BB Z$ then each of $l_{m'}$ and $l_m$ joins $\ol U \cap \BB Z$ and a point of $\BB Z\setminus \ol U$, i.e., $l_m$ joins $H_m$ to a point of $\BB Z\setminus \ol U$ and similarly for $l_{m'}$ (see the first panel in Figure~\ref{fig-infinite-path-half}).
    
    But, $l_{m'}$ and $l_m$ are also the only two arcs of the UIHPMS which can possibly join a point of $U \cap\BB Z$ to a point of $\BB Z\setminus U$ (see the second panel in Figure~\ref{fig-infinite-path-half}). But we have just seen that if $\gamma$ intersects $[-n,n]\cap\BB Z$, then neither $l_{m'}$ nor $l_m$ has an endpoint in $U\cap\BB Z$. Hence, in this case there are no arcs joining a point of $U\cap\BB Z$ to a point of $\BB Z\setminus U$, so a.s.\ there is no bi-infinite path of arcs in the UIHPMS which intersects $[-n,n] \cap\BB Z\subset U$.  
\end{proof}

Using the same idea, we prove that there is a unique infinite path in the PIHPMS.

\begin{proof}[Proof of Proposition~\ref{prop-alternating}] 
By the definition of the PIHPMS given in~\eqref{eqn-sle-rw}, we can couple the PIHPMS with the UIHPMS in such a way that the upper arcs for both infinite meandric systems are the same, the lower arcs joining non-boundary points for both meandric systems are the same, and the sequence of boundary points $\{J_k\}_{k\in\BB Z}$ for both meandric systems is the same.
{Hence we can identify the good boundary points $\{H_m\}_{m\in\BB Z}$ with boundary points of the PIHPMS.} This implies that the boundary paths $P_m$ for $m\in\BB Z$ (Definition~\ref{def-bdy-path}) are unaffected if we replace the UIHPMS by the PIHPMS.

Recall that the path of arcs $\gamma^\circ$ started from $0$ in the PIHPMS is always semi-infinite because $\gamma^\circ$ cannot finish forming a loop as no arc in the lower half plane is incident to $0$ by construction (note that $\gamma^\circ$ is different from $P_0$ since it does not stop when it hits a boundary point). 

By Lemma~\ref{lem-infinite-arcs}, a.s.\ there exist infinitely many $m\in \BB Z_{>0}$ such that $P_m$ ends at $H_{m'}$ for some $m'\in \BB Z_{\leq 0}$. Since $\gamma^\circ$ is semi-infinite, the same argument as in the proof of Theorem~\ref{thm-infinite-path-half} shows that $\gamma^\circ$ must hit either $H_m$ or $H_{m'}$ for each such $m$, and hence $\gamma^\circ$ must traverse $P_m$ or $P_{m'}$ (the time reversal of $P_m$) for each such $m$. This shows that $\gamma^\circ$ hits infinitely many points in each of $\{H_m\}_{m < 0} \subset \{J_k\}_{k<0}$ and $\{H_m\}_{m > 0} \subset \{J_k\}_{k > 0}$, almost surely.
The same argument as in the proof of Theorem~\ref{thm-infinite-path-half} also shows that any other infinite path of arcs in the PIHPMS must also traverse infinitely many of the paths $P_m$ for $m \in \BB Z_{>0}$ as above.
Therefore, any other infinite path must share a portion of arcs with $\gamma^\circ$. But this is possible only if such infinite path is a subpath of $\gamma^\circ$. So $\gamma^\circ$ is the unique maximal infinite path of arcs in the PIHMPS, i.e, the unique infinite path in $\Gamma^\circ$.
\end{proof}

\section{Justification for Conjectures~\ref{conj-system} and~\ref{conj-largest}}
\label{sec-justification}

\subsection{\texorpdfstring{SLE$_8$ on $\sqrt 2$-LQG}{SLE on LQG} via mating of trees} 
\label{sec-mating}

Let $\frk S_n$ be a uniform meandric system of size $n$ and let $(\mathcal M_n,P_n,\Gamma_n)$ be the associated planar map decorated by a Hamiltonian path (corresponding to the real line) and a collection of loops, as in the discussion at the beginning of Section~\ref{sec-conj}. We have already seen in Sections~\ref{sec-mated-crt} and~\ref{sec-lqg} that the infinite-volume analog $(\mathcal M,P)$ of $(\mathcal M_n,P_n)$ is closely connected to $\sqrt 2$-LQG decorated by SLE$_8$. More precisely, due to the encoding of the UIMS by random walks (Section~\ref{sec-infinite}) and the convergence of random walk to Brownian motion, $(\mathcal M,P)$ is connected to the mated-CRT map $\mcl G$, equipped with the left-right ordering of its vertices (Section~\ref{sec-mated-crt}). Furthermore, the mated-CRT map $\mcl G$ is closely connected to $\sqrt 2$-LQG decorated by SLE$_8$ due to the results of~\cite{wedges} (Theorem~\ref{thm-mating}). This strongly suggests that the scaling limit of $(\mathcal M_n,P_n)$ should be given by $\sqrt 2$-LQG decorated by SLE$_8$. 

In fact, the random walk excursions which encode the upper and lower arc diagrams of $(\mathcal M_n,P_n)$ converge under appropriate scaling to two independent Brownian excursions. This can already be interpreted as a convergence statement for $(\mathcal M_n,P_n)$ toward $\sqrt 2$-LQG decorated by SLE$_8$ in the so-called \textbf{peanosphere sense}. See~\cite{ghs-mating-survey} for a survey of this type of convergence and the results that can be proven using it. 

To justify Conjecture~\ref{conj-system}, we still need to explain why $\Gamma_n$ should converge to CLE$_6$. To do this, we will give a physics argument which also provides an alternative heuristic for the scaling limit of $(\mathcal M_n,P_n)$. 

\subsection{Physics argument} 
\label{sec-physics}

We will now explain why Conjecture~\ref{conj-system} can be viewed as a special case of Conjecture~\cite[Conjecture 6.2]{bgs-meander}, which in turn is derived from physics heuristics in~\cite{dgg-meander}, building on~\cite{JK-FLP2,JK-DPL,DCN-FPL-exact,JZJ-FPL-exact}. 

\smallskip

Let $n,m > 0$ and let $G$ be a finite 4-regular graph. The \textbf{fully packed $O(n\times m)$ loop model} on $G$ is the probability measure on pairs of collections of loops (simple cycles) $\Gamma_1, \Gamma_2$ in $G$ with the following properties:
\begin{itemize}
\item Each vertex of $G$ is visited by exactly one loop in each of $\Gamma_1$ and $\Gamma_2$.
\item Each edge of $G$ is visited by either exactly one loop in $\Gamma_1$ or exactly one loop in $\Gamma_2$ (but not both).
\end{itemize}
The probability of each possible configuration $(\Gamma_1,\Gamma_2)$ is proportional to $n^{\# \Gamma_1} m^{\# \Gamma_2}$. 

\smallskip

Suppose now that $G$ is a planar map, instead of just a graph. This in particular means that we have a canonical cyclic ordering of the four edges incident to each vertex of $G$. The \textbf{crossing fully packed $O(n\times m)$ loop model} on $G$ is the variant of the $O(n\times m)$ loop model where we impose the additional constraint that a loop of $\Gamma_1$ and a loop of $\Gamma_2$ cross at each vertex of $G$. Equivalently, the edges incident to each vertex, in cyclic order, must alternate between edges of $\Gamma_1$ and edges of $\Gamma_2$. 

\smallskip

One can also define the (crossing) fully packed $O(n\times m)$ loop model when one or both of $n$ or $m$ is equal to zero. In the case when, e.g., $n = 0$, one considers pairs $(\Gamma_1,\Gamma_2)$ as above subject to the constraint that $\Gamma_1$ consists of a single loop (necessarily a Hamiltonian cycle), and the probability of each configuration is proportional to $m^{\# \Gamma_2}$. 

The relevance of the (crossing) fully packed $O(n\times m)$ loop model to the present paper is that meandric systems are equivalent to the crossing fully packed $O(0\times 1)$ loop model on a planar map, as we now explain. Let $\frk S_n$ be a meandric system of size $n$ and consider the corresponding decorated planar map $(\mathcal M_n,P_n,\Gamma_n)$. 
Then $\mathcal M_n$ is a 4-regular planar map with $2n$ vertices, $P_n$ is a Hamiltonian cycle on $\mathcal M_n$, $\Gamma_n$ is a collection of loops on $\mathcal M_n$ such that each vertex is visited by exactly one loop, and each edge is visited by either the Hamiltonian cycle or one of the loops (but not both). Moreover, at each vertex of $\mathcal M_n$, the Hamiltonian cycle $P_n$ is crossed by a loop of $\Gamma_n$ (this corresponds to the condition that loops of $\frk S_n$ do not touch $\BB R$ without crossing it in Definition~\ref{def-system}). 

Conversely, every 4-regular planar map decorated by a Hamiltonian path and a collection of loops which satisfy the above conditions gives rise to a meandric system: just choose a planar embedding of the map into $\BB R^2\cup\{\infty\}$ under which the Hamiltonian cycle is mapped to the real line.

Hence, the set of possibilities for $(\mathcal M_n,P_n,\Gamma_n)$ is exactly the same as the set of possibilities for a 4-regular planar map with $2n$ edges decorated by a realization of the crossing fully packed $O(0\times 1)$ loop model. In particular, if $\frk S_n$ is sampled uniformly, then $\mathcal M_n$ is a sample from the set of 4-regular planar maps with $2n$ vertices, weighted by the number of possible crossing fully packed $O(0\times 1)$ loop model configurations on the map. Moreover, the conditional law of $(P_n,\Gamma_n)$ given $\mathcal M_n$ is that of the fully packed $O(0\times 1)$ loop model on $\mathcal M_n$. 

Jacobsen and Kondev~\cite{JK-FLP2,JK-DPL} gave a prediction for the scaling limit of the fully packed $O(n\times m)$ loop model on $\BB Z^2$ in terms of a conformal field theory whose central charge is an explicit function of $n$ and $m$, which was later verified (at a physics level of rigor) in~\cite{DCN-FPL-exact,JZJ-FPL-exact}. Building on this, Di Franceso, Golinelli, and Guitter~\cite[Section 3]{dgg-meander} gave a prediction for the scaling limit of the crossing fully packed $O(n\times m)$ loop model on a random planar map. Section 6 of~\cite{bgs-meander} reviews the arguments leading to these predictions and translates the predictions into the language of SLE and LQG. In particular, from \cite[Conjecture 6.2]{bgs-meander} for $n=0$ and $m=1$, we get that the scaling limit of random planar maps decorated by the crossing fully packed $O(0\times 1)$ loop model should be given by $\sqrt 2$-LQG decorated by SLE$_8$ and CLE$_6$. Since this decorated random planar map model is equivalent to a uniform meandric system (as discussed just above), this leads to Conjecture~\ref{conj-system}. 

\subsection{Predictions for exponents via KPZ} 
\label{sec-kpz}

We now explain how Conjecture~\ref{conj-system} leads to predictions for various exponents associated with meandric systems (including Conjecture~\ref{conj-largest}). In this and the next section, we assume that the reader has some familiarity with SLE and LQG.

Consider a $\gamma$-LQG sphere, represented by a metric and a measure on $\BB C$. 
If $X\subset\BB C$ is a random set sampled independently from this metric and measure, then we can define the Hausdorff dimensions $\Delta_0$ and $\Delta_\gamma$ of $X$ with respect to the Euclidean and $\gamma$-LQG metrics, respectively. We re-scale $\Delta_\gamma$ by the reciprocal of the dimension of the whole space, so that $\Delta_\gamma \in [0,1]$. The \textbf{Knizhnik-Polyakov-Zamolodchikov (KPZ) formula}~\cite{kpz-scaling} states that a.s.\ 
\eqb \label{eqn-kpz}
\Delta_0 = \left(2+\frac{\gamma^2}{2} \right) \Delta_\gamma - \frac{\gamma^2}{2} \Delta_\gamma^2 .
\eqe
See~\cite{benjamini-schramm-cascades,shef-kpz, rhodes-vargas-log-kpz,bjrv-gmt-duality,shef-renormalization,aru-kpz,grv-kpz,ghs-dist-exponent,ghm-kpz,wedges,gwynne-miller-char,gp-kpz} for various rigorous versions of the KPZ formula.

Now consider $(\mathcal M_n,P_n,\Gamma_n)$ as above and for $k\in\BB N$ let $\ell_n^k$ be the $k$th largest loop in $\Gamma_n$, i.e., the one with the $k$th largest number of vertices. Conjecture~\ref{conj-system} tells us that if $k$ is fixed, then as $n\rta\infty$, the loop $\ell_n^k$ should converge to the $k$th {largest loop in a} CLE$_6$ on an independent $\sqrt 2$-LQG sphere, i.e., the one with the $k$th largest $\sqrt 2$-LQG length. The Euclidean dimension of {the $k$th largest loop in a} CLE$_6$  is $\Delta_0 = 7/4$, the same as the dimension of an SLE$_6$ curve~\cite{beffara-dim}. By~\eqref{eqn-kpz}, the (re-scaled) dimension of {the $k$th largest loop in a CLE$_6$} with respect to the $\sqrt 2$-LQG metric is
\eqb \label{eqn-loop-exponent-kpz} 
\alpha := \Delta_{\sqrt 2 } = \frac12 \left(3 - \sqrt 2 \right) \approx 0.7929.
\eqe
This means that the number of $\sqrt 2$-LQG balls of $\sqrt 2$-LQG mass $\ep$ needed to cover the $k$th largest loop in a CLE$_6$ is about $\ep^{-\alpha}$. The analog of this in the discrete setting says that 
\eqb \label{eqn-largest-loop}
\BB E\left[\# \{\text{vertices in $\ell_n^k$}\}\right] \approx n^\alpha ,\quad \text{with $\alpha$ as in~\eqref{eqn-loop-exponent-kpz}}. 
\eqe
This gives Conjecture~\ref{conj-largest}. 

Using similar techniques to the ones above, we can also derive predictions for other exponents associated with uniform meandric systems. 
For example, let $\ell_n^1$ be the largest loop in $\Gamma_n$ and let $\op{Cross}_n$ be the number of times that $\ell_n^1$ crosses the vertical line $\{n\} \times \BB R$, i.e., the number of arcs of $\ell_n^1$ which disconnect $n$ from $\infty$ on either of the two sides of the horizontal line. To estimate the growth rate of $\op{Cross}_n$ as $n\rta\infty$, we look at the continuum analog of the set of crossing points. Consider an SLE$_8$ curve $\eta$ from $\infty$ to $\infty$ and an independent whole-plane CLE$_6$. Assume that $\eta$ is parametrized by $\sqrt 2$-LQG area with respect to an independent unit area quantum sphere, so that $\eta : [0,1]\rta \BB R$. 
According to Conjecture~\ref{conj-system}, the continuum analog of the set of crossing points in the definition of $\op{Cross}_n$ is the intersection of $\eta([0,1/2]) \cap \eta([1/2,1])$ with the largest (in the sense of $\sqrt 2$-LQG length) loop in the CLE$_6$.

The set $\eta([0,1/2]) \cap \eta([1/2,1])$ is the union of two SLE$_2$-type curves~\cite[Footnote 4]{wedges}. Therefore, the Euclidean Hausdorff dimension of the intersection of this set with the largest loop in the CLE$_6$ should be equal to the Euclidean Hausdorff dimension of an SLE$_2$ curve intersected with an independent SLE$_6$ curve. Using~\cite{beffara-dim}, on the event that the intersection is non-empty, its Euclidean Hausdorff dimension should be
\eqb
\Delta_0 = 2 - (2-5/4) - (2-7/4) = 1. 
\eqe
Plugging this into~\eqref{eqn-kpz} (with $\gamma=\sqrt 2$) as above, we arrive at the prediction
\eqb \label{eqn-crossing-exponent}
\BB E\left[ \op{Cross}_n \right] \approx n^\nu , \quad \text{where} \quad \nu = \frac12 (3-\sqrt 5) \approx 0.3820.
\eqe

\subsection{Simulations}
\label{sec-sim}

We give details on our numerical simulations in this section.

A sample of a (finite) uniform meandric system of size $n$ can be obtained from a sample of two independent simple random walks of size $2n$ started from $0$, conditioned to stay non-negative and to end at $0$ using~\eqref{eqn-arc-walk}. These conditioned walks $\mcl X_i$ can be sampled quickly from the conditional probability $\BB P\left[\mcl X_{i}-\mcl X_{i-1}=1 \,|\, \mcl X_{1}, \dots, \mcl X_{i-1}\right]$, which boils down to counting the number of simple walk bridges conditioned to be non-negative. Those numbers can be explicitly computed by the reflection property of simple random walks in any case. The corresponding meandric system of size $n$ is then constructed by~\eqref{eqn-arc-walk} as a combinatorial graph and analyzed by JuliaGraphs\cite{Graphs2021}.

Our first simulation computes the median sizes (numbers of vertices) of the 5 largest loops from 100 uniformly sampled meandric systems of size $n$, for increasing values of $n$. The result in Figure~\ref{fig-plots} (Left) suggests that the constant $\alpha\approx 0.7929$ in Conjecture~\ref{conj-largest} is correct.
This is distinguishable from the value $0.8$ guessed in~\cite{kargin-meander-system} because the 95\% confidence interval for the largest loop exponent is $0.7929\pm 0.0022$. 
The size of $k$th largest loop can be written as $C_k n^\alpha$ where $C_k$ is some random variable. In Figure~\ref{fig-plots} (Left), the $y$-intercept is related to $\log(C_k)$ for each $k$. One implication of Conjecture~\ref{conj-system} is that the law of $C_{k+1}/C_{k}$ is tight for each $k$.

Our second simulation computes the medians of $\op{Cross}_n$ (defined in Section~\ref{sec-kpz}) from 100 uniformly sampled meandric systems of size $n$, for increasing values of $n$. The result in Figure~\ref{fig-plots} (Right) suggests that the KPZ prediction~\eqref{eqn-crossing-exponent} is correct. This is strong evidence that our conjectured limiting objects SLE$_8$ and CLE$_6$ are independent, which is not trivial from the definition of meandric systems. See the discussion after Conjecture~\ref{conj-system}.

\begin{figure}[ht!]
    \begin{center}
    \includegraphics[width=0.48\textwidth]{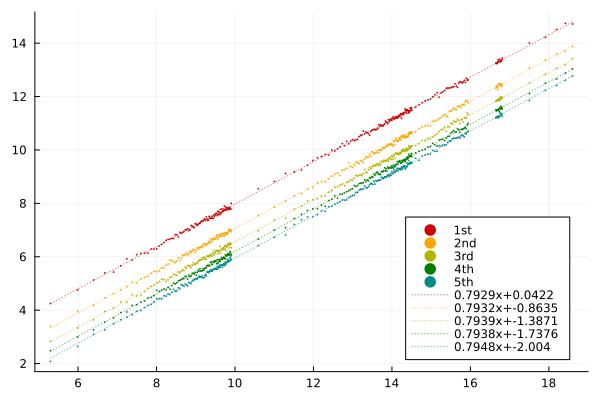}
    \includegraphics[width=0.48\textwidth]{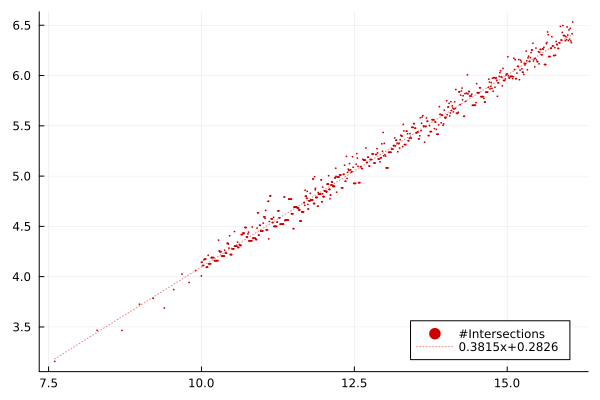}  
    \caption{\label{fig-plots}\textbf{Left:} The plot of $\log(|\ell^{k}_n|)$ versus $\log(2n)$, where $|\ell^{k}_n|$ is the number of vertices of the $k$th largest loop of $\frk S_n$. The 95\% confidence intervals for the slopes are $0.7929\pm 0.0022, 0.7932\pm 0.0021, 0.7939\pm 0.0021, 0.7938\pm 0.0020, 0.7948\pm 0.0019$, respectively, which all include our conjectured value $(3-\sqrt2)/2\approx 0.7929.$ \textbf{Right:} The plot of $\log(|\op{Cross}_n|)$ versus $\log(2n)$, where $\op{Cross}_n$ is defined in Section~\ref{sec-kpz}. {The 95\% confidence interval for the slope is $0.3815\pm 0.0015$, which includes our conjectured value $\frac12 (3-\sqrt5)\approx 0.3820$.}
    }
    \end{center}
    \vspace{-3ex}
\end{figure}

We now give the details of the simulations in Figure~\ref{fig-cle-sim}.
There, we also visualize how the arcs of arc diagrams (which are usually embedded as smooth circular arcs in $\BB R^2$ throughout the paper) associated with a uniformly sampled meandric system may look like fractal loops of a CLE in a proper embedding. For this purpose, we consider a uniform meandric system \emph{with boundary} (as defined below; its infinite-volume version is defined in Section~\ref{sec-half-plane}) and embed the underlying planar map via the Tutte embedding~\cite{gms-tutte}, a.k.a.\ harmonic embedding, so that it conjecturally approximates the $\sqrt{2}$-LQG wedge on the disk. We use a finite meandric system with boundary because Tutte embeddings are easier to define for planar maps with boundary.


The \textbf{uniform meandric system with boundary} can be constructed as in~\eqref{eqn-arc-walk} from two independent conditioned simple random walks with $2n$ steps started from $0$, both conditioned to stay non-negative, but one ends at $0$ as before while the other ends at $2\lfloor n^{1/2}\rfloor$.
These conditioned random walks $\mcl X_i$ can be also sampled from the corresponding conditional probability $\BB P\left[\mcl X_{i}-\mcl X_{i-1}=1 \,|\, \mcl X_{1}, \dots, \mcl X_{i-1}\right]$ exactly as before. Then we may use~\eqref{eqn-arc-walk} to construct arc diagrams, but for the second sampled walk, there are $2\lfloor n^{1/2}\rfloor$ unmatched points called \emph{(good) boundary points}, c.f.~Section~\ref{sec-half-plane} and Section~\ref{sec-aperiodic}.
With these boundary points, we may apply the Tutte embedding.

A path of arcs started from any of the boundary points ends at another boundary point because boundary points have degree 1, whereas non-boundary points have degree 2.
Thus it defines a \emph{boundary path}, c.f.\ Definition~\ref{def-bdy-path}.
We also link each pair of boundary points successively to form loops, c.f. Figure~\ref{fig-cutting} (Left) for the corresponding procedure in the UIHPMS. After coloring loops according to their sizes, we finally obtain the pictures shown in Figure~\ref{fig-cle-sim}.
This construction is analogous to the random walk construction of the UIHPMS defined in Section~\ref{sec-half-plane}.\footnote{Simple walks with $2n$ steps started from $0$ conditioned to stay positive and end at $2\lfloor n^{1/2}\rfloor$ are in bijection with reflected simple walks with $2n+2\lfloor n^{1/2}\rfloor$ steps started from $0$ conditioned to have exactly $2\lfloor n^{1/2}\rfloor$ zeros (after time zero) and to end at $0$, from the bijection in the proof of Lemma~\ref{lem-walk-estimate}. With the latter random walk, an arc diagram can be constructed from~\eqref{eqn-rw-inf-adjacency-half}, but no extra linking is necessary as there are no unmatched points. See also Figure~\ref{fig-cutting} (Right).}

Instead of linking every successive pair of boundary points as above, we may exclude two marked points (specifically, we marked $0$ and another point close to $n$ in our simulations).
As each point other than these two marked points is incident to two arcs, there is always a path from one marked point to the other.
We expect this path approximates a chordal SLE$_6$ curve between two marked points. See the discussion prior to Conjecture~\ref{conj-sle}. The simulation in Figure~\ref{fig-sle-sim} does not contradict any known qualitative behaviors of SLE$_6$.

We also note that there are other embeddings, e.g., circle packing, Riemann uniformization, or Smith embedding (square tiling) under which planar maps without boundary should converge to LQG.
However, the representation of paths and loops on these embeddings may not be as straightforward as in the Tutte embedding.

Every simulation in this section can also be done for random meandric systems encoded by correlated random walks. For example, for $\gamma=1$ (correlation $-\cos(\pi\gamma^2/4) = -1/\sqrt 2$), the simulation suggests the size of the largest loop grows like $n^\alpha$ where $\alpha$ is in the 95\% confidence interval $0.8379\pm 0.0100$. This is consistent with the prediction $\frac12 (5-\sqrt{11})\approx 0.8417$ coming from KPZ if we assume that the loops converge to CLE$_6$. See Remark~\ref{remark-correlated} for further discussion.

\bibliography{cibib, extra}
\bibliographystyle{hmralphaabbrv}

\noindent\textbf{Data availability statement:}
Data sharing is not applicable to this article as no new data were created or analyzed in this study.

\medskip

\noindent\textbf{Conflict of interest:}
The authors have no competing interests to declare that are relevant to the content of this article.

\end{document}